\newcommand{\R}{{\mathbb{R}}}
\newcommand{\NP}{{\mathbb{P}}}
\newcommand{\SA}{{\mathcal{A}}}
\newcommand{\SC}{{\mathcal{C}}}
\newcommand{\SF}{{\mathcal{F}}}
\newcommand{\SH}{{\mathcal{H}}}
\newcommand{\SL}{{\mathcal{L}}}
\newcommand{\SM}{{\mathcal{M}}}
\newcommand{\SR}{{\mathcal{R}}}
\renewcommand{\SS}{{\mathcal{S}}}
\newcommand{\ST}{{\mathcal{T}}}
\newcommand{\SU}{{\mathcal{U}}}
\newcommand{\SV}{{\mathcal{V}}}
\newcommand{\rank}{{{\operatorname{rank}}}}
\newcommand{\image}{{\operatorname{image}}}
\newcommand{\Id}{{\operatorname{Id}}}
\newcommand{\id}{{\operatorname{id}}}
\newcommand{\Op}{{\mathcal{O}p}}
\newcommand{\GL}{{\operatorname{GL}}}
\newcommand{\Hom}{{\operatorname{Hom}}}
\newcommand{\hyp}{{\operatorname{hyp}}}
\newcommand{\ellip}{{\operatorname{ell}}}
\newcommand{\cont}{{{\operatorname{cont}}}}
\newcommand{\step}{{{\operatorname{step2}}}}
\newcommand{\Gr}{{\operatorname{Gr}}}
\newcommand{\Dist}{{\operatorname{Dist}}}
\newcommand{\Pf}{{\operatorname{Pf}}}
\renewcommand{\Pr}{{{\operatorname{Pr}}}}
\newcommand{\Conv}{{\operatorname{Conv}}}
\newcommand{\Avoid}{{{\operatorname{Avoid}}}}
\newcommand{\Thin}{{{\operatorname{Thin}}}}
\newcommand{\Sym}{{\operatorname{Sym}}}
\newcommand{\HConf}{{\operatorname{H-Conf}}}
\newcommand{\barHConf}[1][]{{\overline{\operatorname{H-Conf}_{#1}}}}
\newcommand{\sbarHConf}[1][]{{\overline{\operatorname{H-Conf}^*_{#1}}}}
\newcommand{\barSA}{{\overline{\mathcal{A}}}}
\newcommand{\formConfig}{{{\oplus^{n-k}\, T^*M}}}
\newtheorem{proposition}{Proposition}[section]
\newtheorem{theorem}[proposition]{Theorem}
\newtheorem{definition}[proposition]{Definition}
\newtheorem{lemma}[proposition]{Lemma}
\newtheorem{corollary}[proposition]{Corollary}
\newtheorem{remark}[proposition]{Remark}
\newtheorem{question}[proposition]{Question}
\newtheorem{example}[proposition]{Example}
\title{Convex integration with avoidance and hyperbolic (4,6) distributions}
\subjclass[2020]{Primary: 57R57. Secondary: 57C17.}
\author{Javier Mart\'inez-Aguinaga}
\address{Universidad Complutense de Madrid, 
		Departamento de \'{A}lgebra, Geometr\'{i}a y Topolog\'{i}a, Facultad de Matem\'{a}ticas, and Instituto de Ciencias Matem\'{a}ticas CSIC-UAM-UC3M-UCM, C. Nicol\'{a}s Cabrera, 13-15, 28049 Madrid, Spain}
\email{frmart02@ucm.es}
\author{\'Alvaro del Pino}
\address{Utrecht University, Department of Mathematics, Budapestlaan 6, 3584 Utrecht, The Netherlands}
\email{a.delpinogomez@uu.nl}
\begin{document}

\begin{abstract}
This paper tackles the classification, up to homotopy, of tangent distributions satisfying various non-involutivity conditions. All of our results build on Gromov's convex integration. For completeness, we first prove that that the full $h$-principle holds for step-2 bracket-generating distributions. This follows from classic convex integration, no refinements of the theory are needed. The classification of $(3,5)$ and $(3,6)$ distributions follows as a particular case.\\

We then move on to our main example: A complete h-principle for hyperbolic (4,6) distributions. Even though the associated differential relation fails to be ample along some principal subspaces, we implement an ``avoidance trick'' to ensure that these are avoided during convex integration. Using this trick we provide the first example of a differential relation that is ample in coordinate directions but not in all directions, answering a question of Eliashberg and Mishachev.\\

This so-called ``avoidance trick'' is part of a general avoidance framework, which is the main contribution of this article. Given any differential relation, the framework attempts to produce an associated object called an ``avoidance template''. If this process is successful, we say that the relation is ``ample up to avoidance'' and we prove that convex integration applies. The example of hyperbolic (4,6) distributions shows that our framework is capable of addressing differential relations beyond the applicability of classic convex integration.
\end{abstract}
\maketitle
\setcounter{tocdepth}{1}
\tableofcontents

\section{Introduction} \label{sec:introduction}

\subsection{An overview of convex integration} \label{ssec:introConvexIntegration}

Convex integration appeared first in the work of J. Nash on $C^1$ isometric immersions/embeddings \cite{Na}. Broadly speaking, the idea is that a short immersion can be corrected, one codirection at a time, by introducing oscillations that increase its length. This process can be iterated in such a way that, after infinitely many corrections at progressively smaller scales, one obtains an isometric map that is only $C^1$.

In \cite{Gr73}, M. Gromov turned the ideas of Nash into a scheme capable of constructing and classifying solutions of more general differential relations\footnote{The crucial observation of Gromov is that the arguments presented in \cite{Na} and \cite{Sm,Hi} can be understood as integration processes. Recent work of M. Theillière  \cite{T1,T2} connects Gromov's approach to these earlier literature, showing that, for certain differential relations, one can perform convex integration \emph{without integrating}, relying instead on explicit corrugations. This yields solutions with self-similarity properties.}. The implementation is rather involved (particularly for differential relations of order higher than one), but the rough idea remains the same: We start with an arbitrary section $f$, which we correct one derivative at a time, inductively in the order of the derivatives. Namely, given a locally defined codirection $\lambda$ and an order $k$, we add oscillations to $f$ in order to adjust its pure derivative of order $k$ along $\lambda$. Once we iterate over all orders and, for each order, over a well-chosen collection of codirections, we will have corrected all the derivatives of $f$, yielding a solution of our differential relation $\SR$. There are several subtleties one must deal with:

\subsubsection{Errors}

On a given step, we correct the pure derivative of order $k$ along some $\lambda$. As we do so, we may introduce errors in all other derivatives, potentially destroying what we had achieved in previous steps. Therefore, a key part of the argument is proving that oscillations along $\lambda$ can be added at the expense of adding arbitrarily small errors in all other derivatives of order at most $k$.

This leads us to restrict our attention to open relations, because the errors will then be absorbed by openness. Note that, under this assumption of openness, we do not need to introduce infinitely many corrections at different scales anymore. This differs from the isometric immersion case.

\subsubsection{The formal datum}

Another key point is that we need to make sense of what ``correcting'' is. Indeed, at each step we must study the space of all possible oscillations of $f$ along $\lambda$ and select one that is closer to being a solution of $\SR$. In order to do this, our initial data will not be $f$ but a pair $(f,F)$, where $F$ is a formal solution of $\SR$ (i.e. a choice of Taylor polynomial solving $\SR$ at each point). The formal datum $F$ guides the convex integration process: At each step we add oscillations to both $f$ and $F$ so that their derivatives along $\lambda$ agree. The process terminates when we produce a holonomic pair $(g,G)$ (i.e. $G$ is the Taylor polynomial of $g$ at all points and, since $G$ is a formal solution, $g$ is thus a solution).

\subsubsection{Ampleness}

The argument we are outlining only works if, at each step, we can find suitable oscillations for $f$ and $F$. The way to do this is to consider $\Pr_{\lambda,F}$: the space consisting of all Taylor polynomials that differ from $F$ only in the direction of $\lambda$ (and in order $k$); we call this the \emph{principal subspace} associated to $F$ and $\lambda$. Inside of $\Pr_{\lambda,F}$ we can find $\SR_{\lambda,F}$, the subset of Taylor polynomials that are still solutions of $\SR$. Our oscillations will be chosen within this subset.

Crucially, we know that $\SR_{\lambda,F}$ is non-empty, because it contains $F$. It is also open by assumption. To carry out the proof we also require that it is \emph{ample}: this means that the connected component $\tilde\SR_{\lambda,F} \subset \SR_{\lambda,F}$ containing $F$ has the whole of $\Pr_{\lambda,F}$ as its convex hull. The geometric way of interpreting this condition is that the space of admissible order-$k$ derivatives along $\lambda$ is large and, upon integration, can be used to approximate any Taylor polynomial of order one less.

A relation $\SR$ is said to be ample if each $\SR_{\lambda,F}$ is ample.

\subsubsection{Ampleness in coordinate directions}

We are then interested in open and ample relations $\SR$. In practice, openness is readily checked, but ampleness takes some effort: A priori, it is a condition that has to be verified for each formal solution $F$, each order $k$, and each codirection $\lambda$. However, as is apparent from the explanations above, one need not study all $\lambda$, but only sufficiently many of them (finitely many per chart) to correct all derivatives. A pair $(\SR,\{\lambda_i\})$ consisting of a relation $\SR$ and a suitable collection of codirections $\{\lambda_i\}$ is said to be \emph{ample in coordinate directions} if this weaker condition holds.

In \cite[p. 171]{EM}, Y. Eliashberg and N. Mishachev posed the following question:
\begin{question} \label{quest:EM}
Is there a (geometrically meaningful) differential relation $\SR$ that is ample in coordinate directions but not ample?
\end{question}
The present article provides the first such examples. We construct them using a convex integration scheme that we call \emph{convex integration up to avoidance}. This is our main contribution and we introduce it next.

\subsection{Statement of the main result} \label{ssec:resultsConvexIntegration}

This article extends the applicability of convex integration to open relations that may not be ample nor ample in coordinate directions. The relevant (weaker) condition that they must satisfy instead is called \emph{ampleness up to avoidance} (Definition \ref{def:main}). Our main theorem reads:
\begin{theorem} \label{thm:main}
The full $C^0$-close $h$-principle holds for differential relations that are open and ample up to avoidance.
\end{theorem}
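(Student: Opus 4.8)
The plan is to follow the classical Gromov convex integration scheme, replacing the role of ampleness by the weaker hypothesis of ampleness up to avoidance, the key new ingredient being the avoidance template. First I would reduce the global statement to a one-step local statement: given a relation $\SR$ that is open and ample up to avoidance, a formal solution $F$ over a chart, an order $k$, and a codirection $\lambda$, one constructs a $C^0$-small perturbation of the pair $(f,F)$ whose pure $\lambda$-derivative of order $k$ matches $F$ while introducing only arbitrarily small errors in all other derivatives of order $\le k$. In the classical argument one picks, at each point, a loop in $\tilde\SR_{\lambda,F}$ whose barycenter is the desired target Taylor polynomial; the obstruction we must overcome is that $\tilde\SR_{\lambda,F}$ need not have all of $\Pr_{\lambda,F}$ as its convex hull along the ``bad'' principal subspaces. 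The avoidance template is precisely the device that selects, in a continuous (parametric) and coherent way, a subset of the domain of the chart where these bad principal subspaces occur, together with an isotopy of the formal datum that steers $F$ away from them; so the second step is to use the template to replace $F$ by a homotopic formal solution $F'$ for which, on the relevant region, the principal subspaces encountered are all ample in the usual sense.

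The third step is the bookkeeping that makes the avoidance compatible with the iteration over orders and codirections. Here I would proceed inductively, exactly as in \cite{EM} and \cite{Gr73}: one fixes a finite cover of $M$ by charts, a partition of unity, and an ordering of the finitely many codirections $\{\lambda_i\}$ per chart needed to correct all pure derivatives; then one runs the one-step lemma once for each $(k,\lambda_i)$, each time first applying the avoidance template to reposition the formal datum and then doing the standard convex-integration oscillation. The errors introduced at each step are controlled by the openness of $\SR$ (they are absorbed into a shrinking neighborhood of the relation), and the $C^0$-smallness is guaranteed by choosing the oscillation frequency large enough. One must check that applying the template at a later step does not destroy the holonomy achieved at earlier steps; this is where the precise definition of the avoidance template (Definition~\ref{def:main}) — in particular its compatibility with the stratification by principal subspaces and its parametric nature — is used, and it is the technical heart of the argument.

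The main obstacle, and the step I expect to require the most care, is showing that the avoidance isotopy can be performed \emph{without leaving the space of formal solutions} and \emph{without undoing previous corrections}: one needs the reparametrization of $F$ provided by the template to be supported away from the region where $f$ and $F$ have already been made holonomic, and simultaneously to be large enough to clear the bad principal subspaces. This is a genuinely new difficulty compared to classical convex integration, where no such repositioning is ever needed. The resolution is to build the template so that its support can be localized (using the relative and parametric forms of the hypothesis) and to interleave the avoidance moves with the oscillations at a finer scale than the oscillations themselves, so that holonomy, once achieved on a compact piece, is preserved up to the openness tolerance. Finally, the parametric, relative, and $C^0$-dense versions of the conclusion follow by carrying parameters and the relative data through the same induction, exactly as in the classical theory, since the avoidance template is assumed to exist parametrically and relative to a closed subset. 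Assembling these pieces yields the full $C^0$-close $h$-principle, proving Theorem~\ref{thm:main}.
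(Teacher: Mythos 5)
There is a genuine gap, and it lies exactly at the step you call the technical heart. You assume that the avoidance template supplies ``an isotopy of the formal datum that steers $F$ away from'' the non-ample principal subspaces, so that you can keep the coordinate codirections of a fixed atlas and push $F$ into a position where those fixed directions become ample. Nothing in Definition \ref{def:avoidanceTemplate} provides such an isotopy: a template is an open subset $\SA \subset \SR \times_M \HConf(TM)$ whose properties are (II) ampleness of $\SA(\Xi)$ \emph{along the directions of} $\Xi$ and (III) density of $\SA(F)$ \emph{in the space of hyperplane configurations} for each fixed $F$. Neither property says anything about the set of formal data that are good for a \emph{fixed} configuration $\Xi$, nor gives any mechanism to homotope a global formal solution into $\SA(\Xi)$ while remaining a formal solution; indeed, the whole point of Question \ref{quest:EM} and of the hyperbolic $(4,6)$ example is that the relation need not be ample in coordinate directions, so for a fixed frame there simply are formal data whose coordinate principal subspaces are non-ample, and no repositioning result is available (or claimed). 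Moreover, even a local perturbation of $F$ would run into the coherence problem described in Subsection \ref{ssec:comparison}: after a cut-off or after one convex-integration step the modified datum need not be in good position for the next direction, and your suggestion to ``interleave the avoidance moves with the oscillations at a finer scale'' is not an argument that this process closes up.

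The paper's proof inverts the roles: the formal solution $F$ is never moved to achieve avoidance; instead the \emph{codirections} are. Given a template, one associates to any principal cover $\SC$ an auxiliary over-relation $\SA(\SC) \subset \SR$, which is open and ample along $\SC$ (Proposition \ref{prop:avoidanceRelation}, using Properties (I) and (II)); the key step is then Proposition \ref{prop:main}, a Thurston-style jiggling argument (cubical subdivision, colouring, and perturbation of translation-invariant principal frames) which uses openness of $\SA$ and the density Property (III) to produce a cover $\SC$ adapted to the given $F$, so that $F$ is already a formal solution of $\SA(\SC)$. After that, classic convex integration along principal frames (Theorem \ref{thm:convexIntegration2}) applies verbatim to $\SA(\SC)$, order by order with a separate template per order, and the parametric and relative statements follow formally by passing to the foliated lift of the template (Lemmas \ref{lem:liftParametric} and \ref{lem:liftFoliated}) rather than by re-running a parametric one-step lemma. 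So your outline reproduces the classical scheme but misidentifies what the new hypothesis provides; the missing idea is to fix $F$ and jiggle the frames, not to fix the frames and move $F$.
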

This result is restated in slightly more generality in Theorem \ref{thm:mainRestate}, Section \ref{sec:avoidance}. We emphasise that we do not require our differential relations to be of first order.

Ampleness up to avoidance effectively allows us to take the relation of interest $\SR$ and a formal solution $F: M \to \SR$, and find a smaller relation $\SR(F) \subset \SR$ that is now ample along coordinate directions and has $F$ as a formal solution. In particular we can now answer Question \ref{quest:EM}:
\begin{corollary}
There are open relations $\SR$ such that:
\begin{itemize}
\item $\SR$ is ample up to avoidance and Diff-invariant.
\item $\SR$ is not ample nor ample in coordinate directions.
\item Each relation $\SR(F)$ is ample in coordinate directions, but not necessarily ample nor Diff-invariant.
\end{itemize}
\end{corollary}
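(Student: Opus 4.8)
The plan is to let $\SR=\SR_\hyp$ be the first-order differential relation governing hyperbolic $(4,6)$ distributions on a $6$-manifold $M$; that is, $\SR_\hyp$ lives in the $1$-jet space of the bundle $\Gr_4(TM)\to M$ of rank-$4$ tangent distributions, and its holonomic solutions are exactly the $(4,6)$ distributions $\SD$ of hyperbolic type. Recall that a $(4,6)$ distribution carries a curvature tensor $\Omega_\SD\colon\Lambda^2\SD\to TM/\SD$; choosing a coframe, its ``image'' is a $2$-plane $W\subset\Lambda^2\SD^*$, and $\omega\mapsto\omega\wedge\omega$ restricts to a binary quadratic form $q_W$ on $W$ whose projectivised zero locus in $\NP(W)\cong\NP^1$ has two, one, or zero real points according as $\SD$ is hyperbolic, parabolic, or elliptic. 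Hyperbolicity $=$ ``$q_W$ non-degenerate and indefinite'' is an open $\GL$-orbit condition on the $1$-jet of $\SD$, so $\SR_\hyp$ is open; and it is defined purely in terms of the intrinsic jet of $\SD$, with no reference to a chart, so $\SR_\hyp$ is $\Diff$-invariant. That $\SR_\hyp$ is ample up to avoidance is the content of the sections of this paper devoted to hyperbolic $(4,6)$ distributions, which we invoke. This gives the first bullet.

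For the second bullet, I would first produce a single formal solution $F\colon M\to\SR_\hyp$ and a codirection $\lambda$ for which $\SR_{\lambda,F}$ fails to be ample --- this is exactly the obstruction that forces the avoidance machinery. Moving the $1$-jet of $\SD$ in the $\lambda$-direction deforms $\Omega_\SD$, hence the quadratic form $q_W$, along an affine slice of the space of quadratic forms; the non-hyperbolic locus (elliptic, parabolic and degenerate jets) meets the principal subspace $\Pr_{\lambda,F}$ in a hypersurface that both disconnects $\SR_{\lambda,F}$ and obstructs $\Conv(\tilde\SR_{\lambda,F})$ from exhausting $\Pr_{\lambda,F}$; the explicit computation is carried out in the body. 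In particular $\SR_\hyp$ is not ample. To rule out ampleness in coordinate directions, observe that any collection $\{\lambda_i\}$ of codirections adequate for convex integration must include, at each relevant point, at least one codirection; since diffeomorphisms of $M$ realise every linear isomorphism of $T_pM$ and $\SR_\hyp$ is $\Diff$-invariant, a single non-ample principal subspace can be transported to a non-ample principal subspace along any prescribed direction and at any point. Hence for every candidate $\{\lambda_i\}$ some $\SR_{\lambda_i,F}$ is non-ample, so $\SR_\hyp$ is not ample in coordinate directions.

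For the third bullet, fix a formal solution $F$. The relation $\SR(F)\subset\SR_\hyp$ produced from the avoidance template (Definition \ref{def:main}) has $F$ as a formal solution and is ample in coordinate directions: this is precisely what ampleness up to avoidance buys us, and is the hypothesis of Theorem \ref{thm:main} spelled out. It is in general not $\Diff$-invariant, since the template is assembled from non-canonical data (a choice of region, or of homotopy, steering clear of the bad locus), so that $\phi^*\SR(F)$ and $\SR(\phi^*F)$ need not agree. And it is in general not ample: $\SR(F)$ is only engineered to be ample along the finitely many codirections employed by the scheme, and along other codirections the truncation typically destroys ampleness --- in the $(4,6)$ situation one verifies directly that some principal subspace of $\SR(F)$ along a non-coordinate codirection is disconnected, so that $\SR(F)$, being derived from the geometric relation $\SR_\hyp$, witnesses the phenomenon asked about in Question \ref{quest:EM}. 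This establishes all three bullets.

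I expect the substantive difficulty to lie not in this corollary but in the input it quotes, the construction of an avoidance template for hyperbolic $(4,6)$ distributions: that requires stratifying the space of $(4,6)$ jets by $\GL$-orbit type, identifying exactly which principal subspaces fail ampleness together with the shape of the bad locus inside each of them, and choosing a coherent family of avoiding paths, all of which is the technical heart of the paper. Granting that, the only mildly delicate point here is the precise logical form of ``not ample in coordinate directions'' and its deduction from $\Diff$-invariance plus a single non-ample principal subspace, as sketched above.
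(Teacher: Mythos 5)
Your proposal is correct and follows essentially the same route as the paper: the paper offers no separate proof of this corollary, but simply points to the hyperbolic $(4,6)$ example, where Diff-invariance plus a single non-ample principal subspace (Proposition \ref{prop:firstAvoidance}) rules out ampleness in coordinate directions, the avoidance template of Section \ref{sec:46} gives ampleness up to avoidance, and the relations $\SA(\SC)$ from Propositions \ref{prop:avoidanceRelation} and \ref{prop:main} play the role of your $\SR(F)$. One cosmetic caveat: the paper's template and non-ampleness computations are carried out for the relation $\SS^\hyp$ on pairs of annihilating $1$-forms rather than for the Grassmannian-level relation $\SR^\hyp$ you work with, so strictly speaking the cleanest example for the corollary is $\SS^\hyp$ (the paper itself blurs this distinction in its informal statements).
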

A concrete example is given in Theorem \ref{thm:46} below, which is our main application. To get there and introduce the rest of our applications, we go into the theory of tangent distributions.

\subsection{An overview of tangent distributions} \label{ssec:distributionsIntro}

A distribution $\xi$ on a manifold $M$ is a subbundle of $TM$. Since the Lie bracket is a (naturally defined) first order operator acting on vector fields, we can apply it to the sections $\Gamma(\xi)$ of $\xi$. This defines for us a sequence of modules:
\[ \Gamma^1(\xi) \subset \Gamma^2(\xi) \subset \Gamma^3(\xi) \subset \cdots \]
\[ \Gamma^1(\xi) := \Gamma(\xi), \qquad \Gamma^{i+1}(\xi) := [\Gamma^1(\xi),\Gamma^i(\xi)], \]
that we call the \emph{Lie flag}. For simplicity, we will always assume that $\xi$ is \emph{regular}, i.e. there is a distribution $\xi_i$ such that $\Gamma^i(\xi) = \Gamma(\xi_i)$. The rank of $\xi_i$ is then a measurement of the non-involutivity of $\xi$. 

\subsubsection{The nilpotentisation} \label{sssec:nilpotentisation}

One may then observe that the flag of distributions
\[ \xi_1 = \xi \subset \xi_2 \subset \xi_3 \subset \cdots \]
stabilises: i.e. there exists some smallest $i_0$ such that $\xi_i = \xi_{i_0}$ for all $i \geq i_0$. This means that $\Gamma^{i_0}(\xi)$ is involutive and thus $\xi_{i_0}$ is the tangent bundle of a foliation $\SF$ on $M$. We say that $\xi$ \textbf{bracket-generates} $\SF$. When $\xi_{i_0} = TM$, we say that $\xi$ is \textbf{bracket-generating} and we call the number $i_0$ the \textbf{step}.

We define the \textbf{nilpotentisation} $\SL(\xi)$ of $\xi$ as the graded vector bundle
\[ \xi_1 \oplus \xi_2/\xi_1 \oplus \cdots \oplus \xi_i/\xi_{i-1} \oplus \cdots \oplus \xi_{i_0}/\xi_{i_0-1}. \]
One can then observe that the composition
\[ \Gamma^j(\xi) \times \Gamma^i(\xi) \longrightarrow \Gamma^{i+j}(\xi) \longrightarrow  \Gamma^{i+j}(\xi)/\Gamma^{i+j-1}(\xi)\]
of the Lie bracket with the projection is $C^\infty$-linear. In particular, it descends to a bilinear map
\[ \Omega_{i,j}(\xi): \xi_j/\xi_{j-1} \times \xi_i/\xi_{i-1} \longrightarrow \xi_{i+j}/\xi_{i+j-1} \]
that we call the (i,j)-\textbf{curvature}. All the curvatures together endow $\SL(\xi)$ with a fibrewise Lie bracket compatible with the grading\footnote{We will say that $\SL(\xi)$ is a bundle of positively graded Lie algebras. Note that these algebras need not be modelled on a single graded Lie algebra. I.e. the bundle is not locally trivial, as the Lie bracket is allowed to vary smoothly from fibre to fibre.}.

Note that $\SL(\xi)$ (regarded as a graded vector bundle) is the graded version of $\xi_{i_0}$ (regarded as a vector bundle filtered by the $\xi_i$). In particular, there is a vector bundle isomorphism between the two that is unique up to homotopy. A concrete way of defining such an isomorphism is by selecting a metric on $TM$.

\subsubsection{Formal distributions}

$\SL(\xi)$ captures the non-involutivity of $\xi$ in a more refined manner than the Lie flag. We can think of it as a partial formal datum associated to $\xi$. Indeed, by construction, $\SL(\xi)$ is uniquely determined by the $(i_0-1)$-jet of $\xi$ at each individual point. Because we are interested in differential relations that depend only on the curvatures, we are happy to forget the full jet and focus on $\SL(\xi)$ instead; this motivates the upcoming definitions.

In general, given a foliation $\SF$, we will say that a \emph{formal $\SF$-generating distribution} is a positively graded Lie algebra bundle structure on $T\SF$ such that the degree-$1$ part is a generating set. The space of formal $\SF$-generating distributions is denoted by $\Dist^f(\SF)$; we topologise it using the (weak) $C^\infty$-topology. In particular, in families, each of the graded pieces and the bracket vary smoothly and therefore the rank of each graded piece remains constant.

We denote by $\Dist(\SF)$ the space of regular distributions that are contained in and generate by Lie brackets $\SF$. We similarly topologise it using the $C^\infty$-topology, turning the nilpotentisation procedure described above into a continuous inclusion:
\[ \SL: \Dist(\SF) \longrightarrow \Dist^f(\SF). \]
As we pointed out before, $\SL$ is only defined up to homotopy, but a concrete and consistent choice for all distributions at once can be made by choosing a metric on $TM$.

\subsubsection{Formal distributions with constraints}

The inclusion $\SL$ becomes more interesting once we introduce some natural differential constraints. Fix a $\GL$-invariant open $\SU$ in the space of positively graded Lie algebras of dimension $\rank(\SF)$. Recall that we are interested in distributions whose first layer bracket-generates the rest. This implies that the smallest $\SU$ we want to look at consists of those Lie algebras generated by their degree-$1$ part.

We will say that $F \in \Dist^f(\SF)$ is a \textbf{formal $\SU$-distribution} if $F(p) \in \SU$ for all $p \in M$. Note that an identification of $\SF_p$ with $\R^{\rank(\SF)}$ is needed for this to make sense, but the concrete choice we make is irrelevant due to $\GL$-invariance. The subspace of all such $F$ is denoted by $\Dist^f(\SF,\SU)$. This process effectively lifts $\SU$ to a Diff-invariant differential relation $\SR_\SU$ contained in the space of $(i_0-1)$-jets of distributions. Its solutions (i.e. those distributions whose nilpotentisation takes values in $\SU$) will be denoted by $\Dist(\SF,\SU)$.

The nilpotentisation map can be regarded then as an inclusion
\[ \SL_\SU: \Dist(\SF,\SU) \longrightarrow \Dist^f(\SF,\SU), \]
that we sometimes call the \textbf{scanning map}. The main question in the topological study of distributions reads:
\begin{question} \label{quest:hPrinciple}
Fix $\SU$. Is $\SL_\SU$ a weak homotopy equivalence (for any foliated manifold and relative to boundary conditions)? 
\end{question}
A positive answer to this question is often phrased by saying that the differential relation $\SR_\SU$ satisfies the full $h$-principle.

\subsubsection{Two results of Gromov} \label{sssec:Gromov}

We remind the reader that Gromov's method of flexible sheaves \cite{Gr71} applies to open and Diff-invariant differential relations to provide a full $h$-principle over \emph{open manifolds}. All the \emph{non-foliated} examples $\Dist(TM,\SU)$ described above fit within this scheme due to the openness and $\GL$-invariance of $\SU$, as long as $M$ is open.

Another well-known remark of Gromov says that the foliated case can be regarded as a parametric version of the standard case. More precisely, the following claims are equivalent:
\begin{itemize}
\item The full $h$-principle holds for $\Dist(T\SF,\SU)$, for all foliations $\SF$ of rank $n$.
\item The full $h$-principle holds for $\Dist(TM,\SU)$, for all manifolds $M$ of dimension $n$.
\end{itemize}

Due to these observations, we will tackle Question \ref{quest:hPrinciple} for the case $T\SF = TM$, where $M$ is a closed manifold.

\subsection{Applications in the study of distributions}

We now review what is known about Question \ref{quest:hPrinciple} for various choices of $\SU$. We will explain what the contributions of this article are as we go along. Our main application is Theorem \ref{thm:46} below.

\subsubsection{$h$-Principle for step $2$}

Let $M$ be a smooth manifold. One expects the answer to Question \ref{quest:hPrinciple} to be positive if the differential constraints we introduce are rather weak (i.e. if $\SU$ is large). As we stated above, the weakest assumption we are interested in is that $\SU$ consists of Lie algebras generated by their first layer, so $\Dist(TM,\SU)$ is the space of bracket-generating distributions in $M$.

Under this weak assumption we prove:
\begin{theorem}\label{thm:step2}
Let $M$ be a smooth manifold of dimension at least $4$. The complete $C^0$-close $h$-principle holds for bracket-generating distributions of step $2$ in $M$.
\end{theorem}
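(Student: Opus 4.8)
The plan is to present step-$2$ bracket-generating distributions as the solutions of an explicit open, $\Diff$-invariant, first-order differential relation, and then to verify by hand that this relation is \emph{ample}. Once ampleness is in place, the complete $C^0$-close $h$-principle (parametric, and relative to boundary conditions) is supplied by classical convex integration — indeed ample relations are ample up to avoidance with empty avoidance template, so this is the corresponding special case of Theorem \ref{thm:main} — and no refinement of the theory is needed. Since $\SR_\SU$ is open and $\Diff$-invariant, the open and the foliated cases follow from the two remarks of Gromov recalled in Section \ref{sssec:Gromov} (the latter being a parametric version of the non-foliated one), so it suffices to work with $M$ closed and $T\SF=TM$.

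First I would fix notation. Writing $k=\rank\xi$ and $n=\dim M$, a step-$2$ bracket-generating distribution is the same thing as a section $\xi$ of $\Gr_k(TM)$ whose $(1,1)$-curvature $\Omega_{1,1}\colon \Lambda^2\xi\to TM/\xi$ is fibrewise surjective; as $\Omega_{1,1}$ depends only on the $1$-jet of $\xi$, this identifies $\SR_\SU$ with an open, $\Diff$-invariant relation in the $1$-jet space of sections of $\Gr_k(TM)$. Openness is immediate. The formal datum is the evident one — a rank-$k$ subbundle $H\subset TM$ together with a surjective bundle map $\beta\colon\Lambda^2 H\to TM/H$ — and the scanning map sends $\xi$ to $(\xi,\Omega_{1,1})$.

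The core of the argument is the ampleness computation. Fix a formal solution $F$, a point $p$ and a codirection $\lambda\in T_p^*M\setminus 0$; I would describe $\SR_{\lambda,F}$ inside $\Pr_{\lambda,F}\cong\Hom(\xi_p,T_pM/\xi_p)$. The mechanism is a single local observation: $\Omega_{1,1}|_p$ depends on the $1$-jet of $\xi$ only through the derivatives of $\xi$ in directions tangent to $\xi_p$. Hence, if $\lambda$ annihilates $\xi_p$, varying the $\lambda$-derivative of $\xi$ leaves $\Omega_{1,1}|_p$ unchanged, so $\SR_{\lambda,F}=\Pr_{\lambda,F}$ and ampleness is trivial. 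If instead $\lambda|_{\xi_p}\neq 0$, split $\xi_p=\langle X_1\rangle\oplus\xi'$ with $\xi'=\xi_p\cap\ker\lambda$; a direct computation in an adapted local frame shows that, as the $\lambda$-derivative ranges over $\Pr_{\lambda,F}$, the components of $\Omega_{1,1}$ pairing $X_1$ with $\xi'$ sweep out affinely and surjectively the whole of $\Hom(\xi',T_pM/\xi_p)$, the components on $\Lambda^2\xi'$ remain fixed at some value $\gamma$, and the remaining $m$ directions of $\Pr_{\lambda,F}$ (those feeding only $\Omega_{1,1}(X_1,X_1)=0$) are inert. Setting $W=\image\gamma$ and letting $r$ be the codimension of $W$ in $T_pM/\xi_p$ — with $r\le k-1$ forced by $F\in\SR$ — the relation $\SR_{\lambda,F}$ is, up to the inert directions, the preimage under a surjective linear map of the set $\{A\in\Hom(\R^{k-1},\R^{r}):\rank A=r\}$.

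It then remains to recognise this last set as ample, which is classical: if $r<k-1$ its complement has codimension at least $2$ (this is the full-rank, i.e.\ submersion-type, relation), while if $r=k-1$ it is the determinant-nonzero relation on $(k-1)\times(k-1)$ matrices, one of whose two connected components has convex hull equal to the whole ambient space as soon as $k-1\ge 2$. Since a step-$2$ bracket-generating distribution on a manifold of dimension at least $4$ necessarily has rank at least $3$ (rank $2$ would force $n=3$), we are always in the regime $k-1\ge 2$, and ampleness follows. I expect the main — if modest — obstacle to be the bookkeeping in this second case: pinning down exactly which curvature components move and controlling the affine constants, and in particular handling the borderline situation $r=k-1$, where $\SR_{\lambda,F}$ is disconnected and one must check that it is the component of $F$ that has full convex hull; this is precisely the point where the hypothesis $\dim M\ge 4$ is genuinely used. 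The classifications of $(3,5)$- and $(3,6)$-distributions advertised in the abstract then drop out as special cases.
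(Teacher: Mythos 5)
Your proposal is correct and follows the same strategy as the paper: verify classic ampleness of the first-order relation cutting out step-$2$ distributions, observe that $\lambda|_{\xi_p}=0$ gives trivial ampleness, and otherwise reduce the principal-subspace analysis to the ampleness of the full-rank locus in $\Hom(\R^{k-1},\R^{r})$, which is thin-complemented for $r<k-1$ and is the $\GL_{k-1}$ relation for $r=k-1$, ample precisely because $\dim M\ge 4$ forces $k\ge 3$ (your $r$ is the paper's $n-k-l$, and the component issue for $\GL$ is settled exactly as in Proposition \ref{prop:amplenessGL}). The one genuine difference is in the formulation: you compute directly on $J^1(\Gr_k(TM))$, identifying $\Pr_{\lambda,F}$ with $\Hom(\xi_p,T_pM/\xi_p)$ and tracking how the curvature $\Omega_{1,1}$ moves, whereas the paper passes to tuples of annihilating $1$-forms ($\SS^\step \subset J^1(\formConfig)$), where the principal subspaces are read off from the symbol of the exterior differential; since the annihilator bundle need not be globally trivial, the paper must first localise to balls via holonomic approximation along a codimension-$1$ skeleton (Lemma \ref{lem:localisation}), a step your Grassmannian formulation bypasses at the cost of the adapted-frame computation you sketch (which is standard and correct: the curvature at $p$ only sees derivatives along $\xi_p$, the $X_1$--$\xi'$ components sweep out $\Hom(\xi',T_pM/\xi_p)$ affinely, and the $\Lambda^2\xi'$ block is frozen). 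Both routes end at the same linear-algebra fact and the same use of the dimension hypothesis, so the proofs are essentially equivalent in content.
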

The result is sharp, since the $3$-dimensional case corresponds to contact structures, which are known not to abide by the full $h$-principle \cite{Ben}. The proof is presented in Section \ref{sec:flexBG} and is, in fact, a routinary application of convex integration. 

\begin{remark} \label{rem:AIM}
Theorem \ref{thm:step2} partially answers an open question raised during the workshop on Engel Structures held in April 2017 at AIM (American Institute of Mathematics, San Jose, California). Concretely, \cite[Problem 6.2]{AIM} asks whether any parallelizable $n$-manifold admits a $k$-plane field $\xi \subset TM$ with maximal growth vector. This question is further refined to ask whether any formal distribution of maximal growth admits a holonomic representative up to homotopy.

For step $2$, our result answers the question and its refinement positively and goes a bit beyond. Indeed, for $k>3$ we provide a full classification in terms of formal data and not just a existence statement. On the other hand, we do not tackle the higher step case. This is left as an interesting open question.
\end{remark}

\subsubsection{Maximal non-involutivity}

Theorem \ref{thm:step2} says that being bracket-generating is a very flexible condition (in dimension 4 onwards). As such, we would like to consider more restrictive assumptions on $\SU$. Our guiding example is Contact Topology, the study of contact structures. These are distributions whose nilpotentisation is non-degenerate, in the sense that the first curvature is a non-degenerate two-form. This is equivalent to the fact that a contact structure has as many non-trivial Lie brackets as possible. This non-degeneracy is, ultimately, responsible for the contact scanning map not being a homotopy equivalence in general \cite{Ben}, even though partial flexibility results do hold \cite{El89}. In the last few years we have seen spectacular progress in our understanding of higher-dimensional contact structures \cite{BEM,Mur}.

We will henceforth focus on distributions presenting a similar flavour of non-degeneracy. We will call this \textbf{maximal non-involutivity}; the precise meaning of this will be explained for each dimension and rank as we go along.

\subsubsection{Even-contact structures}

In even dimensions, a hyperplane field is maximally non-involutive if its curvature has corank $1$ (i.e. it has a 1-dimensional kernel). Such distributions are called even-contact structures. For them, Question \ref{quest:hPrinciple} was answered positively by McDuff \cite{McD}, proving that they are (topologically) much more flexible than contact structures. However, interesting questions about them from a geometric perspective remain open \cite{Pia}.

\subsubsection{Dimensions 3 and 4}

In dimension $3$, a regular bracket-generating distribution is necessarily a contact structure; we have already mentioned that the $h$-principle fails for them. In dimension $4$, a corank-$1$ regular bracket-generating distribution is an even-contact structure.

The remaining case in dimension $4$ corresponds to rank $2$. In this situation, a maximally non-involutive distribution is a regular bracket-generating distribution of step $3$; these are called Engel structures. Various results have appeared in the last few years regarding their classification \cite{Vo,CPPP,CPP,PV} and the classification of their submanifolds \cite{Gei,PP,CP} but a definite answer to Question \ref{quest:hPrinciple} is still open.

\subsubsection{Dimension 5}

In dimension $5$, maximally non-involutive hyperplanes are contact structures.

Rank-$3$ distributions are maximally non-involutive if they are of step $2$. In particular, as a corollary of Theorem \ref{thm:step2}, we have:
\begin{theorem} \label{thm:35and36}
Let $M$ be $5$ or $6$ dimensional. The complete $C^0$-close $h$-principle holds for maximally non-involutive rank-$3$ distributions.
\end{theorem}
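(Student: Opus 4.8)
The plan is to deduce Theorem \ref{thm:35and36} directly from Theorem \ref{thm:step2}, by checking that the relevant choice of $\SU$ for rank-$3$ distributions on $5$- and $6$-manifolds is precisely the step-$2$ bracket-generating relation (possibly intersected with a condition that is automatically open and, crucially, does not shrink the relation in a way that obstructs ampleness). First I would identify the target relation: a maximally non-involutive rank-$3$ distribution $\xi$ on $M^5$ satisfies $\xi_2 = TM$ with $\operatorname{rank}(\xi_2/\xi_1) = 2$, while on $M^6$ it satisfies $\xi_2 = TM$ with $\operatorname{rank}(\xi_2/\xi_1) = 3$; in both cases the curvature $\Omega_{1,1}(\xi)\colon \Lambda^2 \xi_1 \to \xi_2/\xi_1$ must be surjective, and the step is $2$. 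So the corresponding $\SU$ is contained in the locus of step-$2$ bracket-generating Lie algebras, and one must check that it coincides with it: this is a pointwise linear-algebra statement, namely that for $(n_1,n_2)=(3,2)$ and $(3,3)$ a surjective skew-symmetric bracket $\Lambda^2 \R^3 \to \R^{n_2}$ always exists and the set of such is $\GL$-invariant and open. Since $\dim \Lambda^2\R^3 = 3 \geq n_2$ in both cases, surjectivity is the generic condition and defines the full open stratum; hence the $(3,5)$ and $(3,6)$ relations are \emph{the same} as the step-$2$ bracket-generating relation restricted to the appropriate rank.

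Next I would invoke Theorem \ref{thm:step2}: for $M$ of dimension $\geq 4$, the full $C^0$-close $h$-principle holds for bracket-generating distributions of step $2$, and the convex-integration argument producing it works for each fixed rank of the first layer (the rank of $\xi_2/\xi_1$ being locally constant on the formal-solution side, as noted in the discussion of $\Dist^f(\SF)$). One then restricts attention to formal solutions with $\operatorname{rank}(\xi_1) = 3$ and $\operatorname{rank}(\xi_2/\xi_1) = n - 3$ (with $n = 5$ or $6$), which is an open-and-closed condition on the space of formal step-$2$ distributions; the $h$-principle, being a statement about each connected component, restricts to this sublocus. Since $5, 6 \geq 4$, the hypothesis of Theorem \ref{thm:step2} is met. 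This yields that the scanning map $\SL_\SU$ for maximally non-involutive rank-$3$ distributions is a weak homotopy equivalence, relative to boundary conditions, which is exactly the full $C^0$-close $h$-principle claimed.

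The one point requiring genuine care — and what I expect to be the main obstacle — is verifying that ``maximally non-involutive rank-$3$'' really is an \emph{open} condition cut out inside step-$2$ distributions by the locally constant rank datum alone, with no further closed constraint sneaking in. Concretely: on a $5$-manifold a rank-$3$ step-$2$ distribution automatically has $\operatorname{rank}(\xi_2/\xi_1) = 2$ (since $3 + 2 = 5$ and step $2$ forces $\xi_2 = TM$), so there the two notions literally coincide and there is nothing to check beyond bookkeeping. On a $6$-manifold, however, a rank-$3$ step-$2$ distribution could a priori have $\operatorname{rank}(\xi_2/\xi_1) \in \{1, 2, 3\}$; ``maximally non-involutive'' singles out the value $3$. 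I must confirm that the value $3$ component is open (immediate, as rank is upper semicontinuous for the curvature and locally constant on formal solutions) and that Theorem \ref{thm:step2}'s proof applies componentwise — i.e. that convex integration does not force a change in the rank of the nilpotentisation's layers. This follows because the formal datum $F$ is fixed throughout convex integration and its graded ranks are part of the data; the corrected holonomic solution $(g,G)$ has $G$ homotopic to $F$ through formal solutions, hence the same layer ranks. With this observation in place, Theorem \ref{thm:35and36} is an immediate corollary of Theorem \ref{thm:step2}.
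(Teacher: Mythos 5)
Your proposal is correct and takes essentially the same route as the paper: Theorem \ref{thm:35and36} is deduced as an immediate corollary of Theorem \ref{thm:step2}, after observing that for rank-$3$ distributions in dimensions $5$ and $6$ maximal non-involutivity coincides with being bracket-generating of step $2$ (since $\dim \wedge^2 \R^3 = 3 \geq n-3$, surjectivity of the curvature is exactly the maximal non-degeneracy condition). The only caveat is that your dimension-$6$ worry is vacuous: ``bracket-generating of step $2$'' in Theorem \ref{thm:step2} already means $\xi_2 = TM$ (the curvature $\Omega^F$ is an epimorphism onto $TM/\xi$), so $\rank(\xi_2/\xi_1)=3$ is automatic and no componentwise restriction argument is needed.
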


Maximally non-involutive distributions of rank-$2$ are the so-called (2,3,5) distributions of Cartan \cite{Car}, which have been classified only in open manifolds \cite{DH}. If we replace maximal non-involutivity by some concrete closed growth-vector condition, there are other interesting classes of distributions (e.g. Goursat structures) whose classification is open as well.

\subsubsection{Dimension 6}

Our main application concerns rank $4$ distributions in $6$-dimensional manifolds. It turns out that maximally non-involutive $(4,6)$-distributions come in two families, \emph{elliptic} and \emph{hyperbolic}. The statement reads:
\begin{theorem} \label{thm:46}
Let $M$ be a $6$-dimensional manifold. The complete $C^0$-close $h$-principle holds for rank-$4$ distributions of hyperbolic type.
\end{theorem}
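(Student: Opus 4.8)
The plan is to deduce Theorem \ref{thm:46} from the main abstract result, Theorem \ref{thm:main}, by verifying that the differential relation $\SR_\SU$ associated to hyperbolic $(4,6)$ distributions is open and \emph{ample up to avoidance}. Openness is immediate: the hyperbolicity condition on the nilpotentisation is a $\GL$-invariant open condition on the space of graded Lie algebra structures on a $4$-dimensional bundle with $2$-dimensional top layer (the $(4,6)$ case has $i_0 = 2$, so the relevant jet order is $1$, and one is simply asking that the curvature $\Omega_{1,1}: \Lambda^2 \xi \to TM/\xi$ — a pair of $2$-forms up to basis change in the quotient — define a pencil of the hyperbolic projective type). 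The substance of the argument is therefore entirely in producing an avoidance template, i.e. in showing that along the principal subspaces where the relation fails to be ample, the bad set can be globally and consistently avoided during the convex integration corrections.

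Concretely, I would first make the relation explicit in coordinates. Fix a formal solution $F$: this is a field of hyperbolic pencils, which at each point can be normalised so that the curvature is $(\omega_1,\omega_2)$ with, say, $\omega_1 = e^1\wedge e^2 + e^3 \wedge e^4$ and $\omega_2 = e^1\wedge e^3 - e^2\wedge e^4$ (or whichever normal form the authors adopt for the hyperbolic type). For each coordinate codirection $\lambda = e^i$ and each order-$1$ correction, one computes the principal subspace $\Pr_{\lambda,F}$ and identifies $\SR_{\lambda,F}$ inside it; the failure of ampleness will occur precisely when the remaining freedom in the pencil is forced through a ``degenerate'' locus (a pencil acquiring a repeated root, i.e. becoming of parabolic/tangency type, or degenerating in rank). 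The next step is to show that this bad locus has positive codimension inside each principal subspace and, crucially, that its complement-component structure is compatible across the finitely many codirections $\{\lambda_i\}$ one uses per chart — this is exactly the data an avoidance template packages. I expect the hyperbolic case to be favourable here (as opposed to the elliptic case, which is presumably why only the hyperbolic theorem is claimed): hyperbolicity means the two roots of the pencil are real and distinct, so the degeneration locus is ``one-sided'' and can be pushed off in a preferred direction, whereas an elliptic pencil's complex-conjugate roots leave no real path to slide along. This sliding is the ``avoidance trick'' advertised in the abstract, and it is what reduces $\SR$ to the smaller, coordinate-ample relation $\SR(F)$.

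The main obstacle, and where I would spend the bulk of the work, is the \emph{globalisation and consistency} of the avoidance template — verifying Definition \ref{def:main} in full. It is not enough that the bad set be avoidable in a single principal subspace for a single codirection: convex integration corrects derivatives successively over a whole collection of codirections, and a correction made to avoid the bad set for $\lambda_i$ must not force us back into it when we later correct along $\lambda_{i+1}$. One must therefore produce the avoidance data coherently over all of $M$ (using $\GL$-invariance and a partition-of-unity-type argument to patch local normal forms, keeping track of the fact that the nilpotentisation bundle $\SL(\xi)$ need not be locally trivial as a bundle of graded Lie algebras), and check the compatibility conditions between the template and the convex-integration scheme. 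A secondary technical point is handling the parametric and relative (boundary condition) versions needed for the \emph{full} $h$-principle, but these should follow formally once the template is in place, since Theorem \ref{thm:main} already delivers the full $C^0$-close $h$-principle from ampleness up to avoidance. Finally, one should remark that the resulting $\SR(F)$ is ample in coordinate directions but generally neither ample nor $\Diff$-invariant, which is what supplies the promised answer to Question \ref{quest:EM} via the stated Corollary.
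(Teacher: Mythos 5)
Your high-level strategy coincides with the paper's (reduce to Theorem \ref{thm:main} by exhibiting an avoidance template, after passing from distributions to pairs of annihilating $1$-forms), but the proposal stops exactly where the real work begins, and the part you do sketch mislocates both the bad set and the difficulty. In the paper the ampleness analysis is carried out by mapping each principal subspace $\Pr_{\lambda,F}$, via the pair of curvatures, into the space of symmetric $2\times 2$ matrices $\cong \R^3$: hyperbolicity is $\{\det<0\}$, the outside of the cone, which is ample when the map is onto; ampleness fails precisely when the image degenerates to a plane tangent to the cone, i.e.\ when the forms $dF_i|_{\xi\cap\ker\lambda}$ become linearly dependent (the singularity $\Sigma^{(1)}$ of Proposition \ref{prop:firstAvoidance}). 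In that bad case the obstruction is a \emph{codimension-one} wall inside the principal subspace, so your assertion that ``the bad locus has positive codimension inside each principal subspace'' is not the relevant point; what has positive codimension is the set of bad codirections $\Sigma^{(1)}(F)\subset T_p^*M$ for fixed $F$ (Lemma \ref{lem:algebraic1}), and that is what feeds the density Property (III) of a template. Your heuristic that hyperbolic pencils can be ``pushed off in a preferred direction'' is not how the argument goes: one does not slide past the degeneration, one jiggles the codirections so that the degenerate principal subspaces are never used.

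The genuinely missing idea is that removing $\Sigma^{(1)}$ alone does not produce a template: one must check that for $\lambda,\nu$ in the \emph{same} configuration $\Xi$, the slice $\Sigma^{(1)}(\nu)\cap\Pr_{\lambda,F}$ is thin, and this fails on a second singularity $\Sigma^{(2)}$ depending on pairs of codirections (Proposition \ref{prop:secondAvoidance}); one then has to verify in turn that $\Sigma^{(2)}(\nu_1,\nu_2)\cap\Pr_{\lambda,F}$ is thin whenever $F\notin\Sigma^{(2)}(\nu_1,\nu_2)$ (Proposition \ref{prop:thirdAvoidance}), so that the process closes up after two steps and $\SA$ satisfies Properties (I)--(III) (Theorem \ref{thm:avoidanceAchieved}). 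This two-step structure is the mathematical core of the theorem, and it is exactly the answer to your (correct) worry that a correction adapted to $\lambda_i$ may be spoiled when correcting along $\lambda_{i+1}$; but the resolution is pointwise and algebraic, not the global partition-of-unity patching you propose to spend most of the effort on. Globalisation, parametric and relative versions are already delivered wholesale by Theorem \ref{thm:main} via the jiggling argument of Proposition \ref{prop:main}, and the passage from the Diff-invariant relation on distributions to the relation $\SS^\hyp$ on pairs of forms is handled by holonomic approximation along a codimension-one skeleton (Lemma \ref{lem:reductionToFormsHyp}), a reduction your sketch also leaves implicit. As written, the proposal is a plausible plan whose decisive verifications (the cone computation identifying $\Sigma^{(1)}$, and the second avoidance step $\Sigma^{(2)}$ with its thinness lemmas) are absent.
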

The proof can be found in Section \ref{sec:46} and it is a consequence of our main result Theorem \ref{thm:main}. We emphasise that this result requires ampleness up to avoidance and is beyond the scope of classic convex integration.

\begin{remark} \label{rem:ellipticFail}
We conjecture that the answer to Question \ref{quest:hPrinciple} is negative for elliptic $(4,6)$ distributions. In Corollary \ref{cor:ellipticity} we will show that ampleness fails badly for the differential relation that defines them.
\end{remark}

The remaining cases are: Corank-$1$ (which are even-contact structures), rank-$3$ (classified by Theorem \ref{thm:35and36}) and rank-$2$ (the so-called $(2,3,5,6)$ structures, for which nothing is known).

\subsubsection{Where is rigidity?}

This article began as our attempt to pinpoint those pairs $(k,n)$, consisting of a rank $k$ and a dimension $n$, for which the relation defining maximally non-involutive distributions is not ample. Our goal was to narrow down the (open and Diff-invariant) classes of distributions that may display rigid behaviours (as contact structures do). Our previous discussion provides a list of candidates in dimensions up to $6$. Proving rigidity and/or constructing suitable overtwisted classes in each case are interesting open questions.

Beyond dimension 6, we propose, jointly with I. Zelenko, the following concrete conjecture. Consider maximally non-involutive distributions of corank-$2$. These are always of step $2$, with the exception of dimension $4$ (the Engel case, which we leave out of the discussion). Then:
\begin{itemize}
\item In odd rank $(2l+1,2l+3)$, maximal non-involutivity means that the top-wedge of the pencil of curvatures has maximal linear span (i.e. of dimension $l+1$). The differential relation is then the complement of a singularity of codimension $l$. We expect flexibility to hold due to (classic) ampleness.
\item In even rank $(4l,4l+2)$ we see a elliptic versus hyperbolic dichotomy (just like for $(4,6)$). We expect flexibility to hold in the hyperbolic case, but avoidance to be necessary to prove it. Elliptic distributions are good candidates for rigidity.
\item In even rank $(4l+2,4l+4)$ maximal non-involutivity is equivalent to hyperbolicity. We expect flexibility based on avoidance.
\end{itemize}
We intend to address this conjecture in future work. 

\begin{remark} \label{rem:Adachi}
Recently (and independently from us) J. Adachi has also tackled the classification of non-involutive distributions using (classic) convex integration \cite{Ada2}.

He looks at distributions that are of \emph{odd rank} and satisfy a certain non-involutivity condition that we call \emph{Adachi non-integrability}. This notion is weaker than standard maximal non-involutivity (as defined in the first item above) and, in fact, it is not an intrinsic property of the distribution, but a property of a preferred coframe. In particular, our aforementioned conjecture does not follow from his statements.

On the other hand, Adachi non-integrability is stronger than being of step 2. The two notions coincide for (3,5) and (3,6). In particular, his claims should recover our step-$2$ result (Theorem \ref{thm:step2}) for odd rank. Theorem \ref{thm:35and36} would then be a particular case.

Our main result about hyperbolic (4,6) distributions (Theorem \ref{thm:46}) falls outside of the scope of his paper.
\end{remark}

\subsection{Structure of the paper}

In Sections \ref{sec:jetSpace} and \ref{sec:ampleness} we review jet spaces and convex integration. Even though the material covered is classic, we hope readers will see it as a helpful starting point into the theory of convex integration. In particular, Section \ref{sec:ampleness} provides a comparison of the various flavours of ampleness that have appeared in the literature.

Sections \ref{sec:avoidance} and \ref{sec:mainProof} introduce avoidance (pre-)templates (Definition \ref{def:avoidanceTemplate}), ampleness up to avoidance (Definition \ref{def:main}), and contain the proof of our main result Theorem \ref{thm:main}.

In Section \ref{sec:exactForms} we explain how avoidance may be used to prove the $h$-principle for exact, linearly-independent differential forms. We see this as a particularly easy example of how the theory is used and we include it for the sake of exposition. We note that classic convex integration without avoidance can already be used to prove such an $h$-principle.

In Section \ref{sec:1dimBundle} we apply avoidance to relations involving functions into $\R$. We prove that, under mild assumptions, our theory produces empty avoidance pretemplates, so the relations are not ample up to avoidance. The concrete case of functions without critical points is spelled out in Subsection \ref{ssec:functions}; note that this relation indeed does not abide by the $h$-principle, due to Morse theory. Other examples of this type can be found in Lemma \ref{lem:contact} (contact structures) and Corollary \ref{cor:ellipticity} (elliptic (4,6) structures).

In Section \ref{sec:flexBG} we study step-2 distributions and we prove Theorems \ref{thm:step2} and \ref{thm:35and36}.

In Section \ref{sec:maximalNonInvolutivity} we review the theory of rank-4 distributions in dimension 6. This sets the stage for our main application in Section \ref{sec:46}: We show that the relation $\SR^\hyp$ defining hyperbolic (4,6) distributions is ample up to avoidance. The $h$-principle for $\SR^\hyp$ follows then from Theorem \ref{thm:main}.

\textbf{Acknowledgements:} The authors are thankful to I. Zelenko for his interest in this project and his willingness to share his insights into the geometry of distributions. They are also grateful to F. Presas for his continued interest and support. During the development of this work the first author was supported by the ``Programa Predoctoral de Formaci\'on de Personal Investigador No Doctor'' scheme funded by the Basque department of education (``Departamento de Educaci\'on del Gobierno Vasco''). Similarly, the second author was funded by the NWO grant 016.Veni.192.013; this grant also funded the visits of the first author to Utrecht.

\section{Jets and relations} \label{sec:jetSpace}

We now introduce jet spaces (Subsection \ref{ssec:jetSpace}). We put particular emphasis in the geometry behind principal directions and subspaces (Subsection \ref{ssec:principalSubspaces}). This will allow us to discuss differential relations and over-relations and study them ``one direction at a time'' (Subsection \ref{ssec:overRelations}). In Subsection \ref{ssec:foliatedRelations} we introduce foliated analogues of these concepts; these will be used to phrase parametric statements in later Sections.

\subsection{Jet spaces} \label{ssec:jetSpace}

Given a smooth fibre bundle $X \rightarrow M$, we denote by $J^r(X)$ its associated space of $r$-jets. We have projections from the space of $r$-jets to the space of $r'$-jets, $r' < r$:
\[ \pi^r_{r'}: J^r(X) \rightarrow J^{r'}(X).\]
Furthermore, we write $\pi^r_M: J^r(X) \to M$ for the base projection. The projection $\pi^r_{r-1}$ is an affine fibration. Given a section $f: M \to X$, we write $j^rf: M \to J^r(X)$ for its $r$-jet, which is a holonomic section of jet space. 


\subsubsection{Jet spaces in local coordinates} \label{sssec:jetsCoords}

If we work in a local chart of $X$, we can identify $M$ with a vector space $V$ and the fibres of $X$ with a vector space $W$. Doing so allows us to identify, locally:
\[ J^r(X) \supset J^r(V \times W) \cong V \times W \times \Hom(V,W) \times \Sym^2(V,W) \times \cdots \times \Sym^r(V,W), \]
by sending a Taylor polynomial at a given point in $V$ (an element of the right-hand side) to the jet it represents in $J^r(X)$. Here $\Sym^r(V,W)$ denotes the space of symmetric tensors of order $r$ (i.e. homogeneous polynomials) with entries in $V$ and values in $W$. In particular, we are identifying the (affine) fibres of $\pi^r_{r-1}$ with their underlying vector space $\Sym^r(V,W)$.

\subsection{Principal subspaces} \label{ssec:principalSubspaces}

The following notion formalises the idea of two $r$-jets that agree except along a pure derivative of order $r$:
\begin{definition} \label{def:perpJet}
Given a hyperplane $\tau\subset T_pM$, we say that two sections $f,g: M \to X$ have the same $\bot(\tau,r)$-jet at $p \in M$ if
\[ D_p|_\tau j^{r-1}f =  D_p|_\tau j^{r-1}g, \]
where $D_p|_\tau$ means taking the differential at $p$ and restricting it to $\tau$.
\end{definition}
When $\tau$ is a hyperplane field, the $\bot(\tau,r)$-jets form a bundle, which we denote by 
\[ J^{\bot(\tau,r)}(X). \]
There are affine fibrations:
\[ \pi^r_{\bot(\tau,r)} : J^r(X) \rightarrow J^{\bot(\tau,r)}(X), \]
\[ \pi^{\bot(\tau,r)}_{r-1}: J^{\bot(\tau,r)}(X) \rightarrow J^{r-1}(X). \]

In practice, the hyperplane field $\tau$ may be defined only over an open subset $U$ of $M$, but we will still write $J^{\bot(\tau,r)}(X)$ instead of $J^{\bot(\tau,r)}(X|_U)$. Given a section $f: M \to X$, we write 
\[ j^{\bot(\tau,r)}f: M \to J^{\bot(\tau,r)}(X) \]
for the corresponding section of $\bot(\tau,r)$-jets. A section of this form is said to be \emph{holonomic}.

\begin{definition} \label{def:principalSubspaces}
The fibers of $\pi^r_{\bot(\tau,r)}$ are said to be the \textbf{principal subspaces} associated to $\tau$ (and $r$). They are all affine subspaces parallel to one another. Given $z \in J^r(X)$, we write
\[ \Pr_{\tau,z} := (\pi^r_{\bot(\tau,r)})^{-1}\left(\pi^r_{\bot(\tau,r)}(z)\right) \]
for the principal subspace that contains it. 
\end{definition}

Instead of talking about hyperplanes, it is often convenient to talk about covectors $\lambda \in T^*M$. We then write $\bot(\lambda,r) := \bot(\ker(\lambda),r)$. When $\lambda$ is a global 1-form, the bundle $J^{\bot(\lambda,r)}(X)$ is only defined in the open set $\{\lambda \neq 0\}$. However, we can define $\Pr_{\lambda,z}$ everywhere by setting $\Pr_{\lambda,z} = \{z\}$ if $\lambda = 0$.

In the context of convex integration, we will attempt to add to $f: M \to X$ oscillations of order $r$ along codirections $\lambda$; this will amount to pushing $j^rf$ along $\Pr_{\lambda,j^rf}$.
 
\subsubsection{Principal subspaces in coordinates} \label{sssec:principalSubspacesCoords}

As in Subsubsection \ref{sssec:jetsCoords}, we use vector spaces $V$ and $W$ as the local models for $M$ and the fibre of $X \to M$, respectively. 

\begin{lemma} \label{lem:principalImage}
Consider a codirection $\lambda \in T_0^*V$ and an element $z \in J^r(V \times W)$ lying over $0 \in V$. The principal subspace $\Pr_{\lambda,z}$ is the image of the affine map:
\begin{equation*}
	\begin{array}{rccl}
		W  & \longrightarrow  &  J^r(V \times W) \\
		w & \longmapsto & z + \lambda^{\otimes r} \otimes w.
	\end{array}
\end{equation*}
\end{lemma}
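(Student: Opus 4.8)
The statement is essentially an unwinding of the coordinate description of jet spaces from Subsubsection \ref{sssec:jetsCoords} together with Definition \ref{def:perpJet}. The plan is to show directly that the affine map in question has image contained in $\Pr_{\lambda,z}$, and then that it surjects onto it; both follow by comparing Taylor coefficients.

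First I would set up coordinates. Using the identification
\[ J^r(V \times W) \cong V \times W \times \Hom(V,W) \times \Sym^2(V,W) \times \cdots \times \Sym^r(V,W), \]
an element $z$ lying over $0 \in V$ is a tuple $(0, w_0, L_1, \dots, L_r)$ with $L_i \in \Sym^i(V,W)$. The tensor $\lambda^{\otimes r} \otimes w$ is the element of $\Sym^r(V,W)$ sending $(v_1,\dots,v_r) \mapsto \lambda(v_1)\cdots\lambda(v_r)\, w$, which is indeed symmetric, so $z + \lambda^{\otimes r}\otimes w$ differs from $z$ only in its top ($\Sym^r$) slot. I would note the map is visibly affine (translation by a linear image of $w$) and injective (since $\lambda \neq 0$, the assignment $w \mapsto \lambda^{\otimes r}\otimes w$ is injective), so it parametrises an affine subspace of dimension $\dim W$.

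Next I would identify $\Pr_{\lambda,z}$ concretely. By Definition \ref{def:principalSubspaces}, $\Pr_{\lambda,z}$ is the fibre of $\pi^r_{\bot(\lambda,r)}$ through $z$, i.e. the set of $r$-jets $z'$ over $0$ whose $\bot(\lambda,r)$-jet agrees with that of $z$. By Definition \ref{def:perpJet}, two $r$-jets have the same $\bot(\ker\lambda, r)$-jet at $0$ exactly when their $(r-1)$-jets agree and, in addition, the differential of the $(r-1)$-jet restricted to $\ker\lambda$ agrees. In coordinates, agreement of $(r-1)$-jets means $w_0, L_1, \dots, L_{r-1}$ coincide; the extra condition on $D|_{\ker\lambda}$ of the $(r-1)$-jet says precisely that the top tensors $L_r, L_r'$ agree on any $r$-tuple containing at least one vector from $\ker\lambda$. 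Thus $\Pr_{\lambda,z}$ consists of those $(0, w_0, L_1,\dots,L_{r-1}, L_r')$ with $L_r' - L_r$ vanishing on all $r$-tuples meeting $\ker\lambda$.

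Finally I would check that this difference space is exactly $\{\lambda^{\otimes r} \otimes w : w \in W\}$. A symmetric tensor $S \in \Sym^r(V,W)$ vanishes on every $r$-tuple with at least one entry in the hyperplane $\ker\lambda$ if and only if $S$ factors through the $r$-fold symmetric product of $V/\ker\lambda \cong \R$; choosing the basis of $V^*$ dual to a splitting, such an $S$ is $\lambda^{\otimes r} \otimes w$ where $w = S(e,\dots,e)$ for $e$ any vector with $\lambda(e) = 1$. This is a standard multilinear algebra fact (a symmetric $r$-tensor is determined by its restriction to the diagonal, i.e. by $v \mapsto S(v,\dots,v)$, via polarisation; vanishing on tuples meeting $\ker\lambda$ forces this restriction to be proportional to $\lambda(v)^r$). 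Combining, $\Pr_{\lambda,z} = z + \{\lambda^{\otimes r}\otimes w\} = \image(w \mapsto z + \lambda^{\otimes r}\otimes w)$, as claimed. The only mildly nontrivial step is this last multilinear algebra identification, and even that is routine; no genuine obstacle is expected.
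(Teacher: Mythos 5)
Your proposal is correct: the paper states Lemma \ref{lem:principalImage} without proof, and your coordinate computation — identifying the fibre of $\pi^r_{\bot(\lambda,r)}$ through $z$ as the jets agreeing with $z$ up to order $r-1$ and whose top symmetric tensors agree on all $r$-tuples meeting $\ker\lambda$, then identifying the difference space with $\{\lambda^{\otimes r}\otimes w : w\in W\}$ by the standard multilinear algebra argument — is precisely the routine verification the authors leave implicit. The only minor remark is that injectivity of $w\mapsto \lambda^{\otimes r}\otimes w$ requires $\lambda\neq 0$ (as the paper notes right after the lemma), but this is not needed for the statement about the image.
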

In particular, the vector subspace underlying $\Pr_{\lambda,z}$ is 
\[ \{\lambda^{\otimes r} \otimes w \,\mid\, w \in W\} \subset \Sym^r(V,W), \]
which we call the \textbf{principal direction} $\Pr_\lambda$. Do note that the map from Lemma \ref{lem:principalImage} is defined for all $\lambda$, and in fact varies smoothly with $\lambda$, but is an affine isomorphism between $W$ and the principal subspace if and only if $\lambda \neq 0$. An element in $\Pr_\lambda$ is said to be \textbf{principal} or \textbf{pure}. We recall:
\begin{lemma} \label{lem:principalBasis}
$\Sym^r(V,W)$ admits a basis consisting of principal elements.
\end{lemma}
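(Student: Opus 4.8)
The statement to prove is Lemma~\ref{lem:principalBasis}: that $\Sym^r(V,W)$ admits a basis consisting of principal elements, i.e. elements of the form $\lambda^{\otimes r}\otimes w$ for $\lambda\in V^*$ and $w\in W$.

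The plan is to reduce immediately to the case $W=\R$, since $\Sym^r(V,W)=\Sym^r(V,\R)\otimes W$: if $\{P_i\}$ is a basis of $\Sym^r(V,\R)$ consisting of $r$-th powers $\lambda_i^{\otimes r}$ of covectors, and $\{w_j\}$ is any basis of $W$, then $\{P_i\otimes w_j\} = \{\lambda_i^{\otimes r}\otimes w_j\}$ is a basis of $\Sym^r(V,W)$ consisting of principal elements. So the content is the classical fact that the $r$-th powers of linear forms span the space of degree-$r$ homogeneous polynomials on $V$.

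For this core claim I would argue as follows. Write $n=\dim V$ and work with a basis $x_1,\dots,x_n$ of $V^*$, so $\Sym^r(V,\R)$ is spanned by the monomials $x^\alpha = x_1^{\alpha_1}\cdots x_n^{\alpha_n}$ with $|\alpha|=r$. It suffices to show each monomial lies in the linear span of the set of $r$-th powers $\ell^{\otimes r}$. The standard tool is polarization: for a linear form written as $\ell_t = \sum_i t_i x_i$ with parameters $t=(t_1,\dots,t_n)$, expanding $\ell_t^{\otimes r} = \bigl(\sum_i t_i x_i\bigr)^{r}$ by the multinomial theorem gives $\sum_{|\alpha|=r}\binom{r}{\alpha}\, t^\alpha\, x^\alpha$, a polynomial in $t$ whose coefficient of $t^\alpha$ is $\binom{r}{\alpha}x^\alpha$ (a nonzero multiple of the monomial we want). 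Taking the mixed partial derivative $\partial^\alpha/\partial t^\alpha$ of the identity $\ell_t^{\otimes r}=\sum_\beta \binom{r}{\beta}t^\beta x^\beta$ at $t=0$ isolates $\alpha!\binom{r}{\alpha}x^\alpha$ on the right, while on the left each such derivative of $\ell_t^{\otimes r}$ is, by the chain/Leibniz rule, a linear combination of expressions $\ell_t^{\otimes r}$ or their lower specializations — more cleanly, one realizes $\partial^\alpha/\partial t^\alpha\big|_{t=0}$ of $\ell_t^{\otimes r}$ as a finite linear combination (with rational coefficients, by finite differences) of the values $\ell_{t}^{\otimes r}$ at finitely many rational points $t$. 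Hence $x^\alpha$ lies in the $\R$-span of finitely many $r$-th powers. As this holds for every $\alpha$, the $r$-th powers span $\Sym^r(V,\R)$, and one extracts a basis from them.

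The only mild subtlety — and the step I expect to require the most care — is making the ``polarization'' argument clean over $\R$ without dividing by zero or invoking characteristic-zero lemmas sloppily: concretely, presenting the finite-difference formula that writes $\partial^\alpha/\partial t^\alpha|_0$ applied to a homogeneous-degree-$r$ polynomial in $t$ as an explicit $\R$-linear combination of its values at the grid points $t\in\{0,1,\dots,r\}^n$. Everything else (the reduction $\Sym^r(V,W)=\Sym^r(V,\R)\otimes W$, the multinomial expansion, extracting a basis from a spanning set) is routine linear algebra. Alternatively, if one prefers to avoid the bookkeeping entirely, I would simply cite the well-known fact that the Veronese variety is non-degenerate, i.e. its linear span is all of $\Sym^r(V,\R)$; but I think it is cleanest here to give the two-line finite-difference proof so the lemma is self-contained.
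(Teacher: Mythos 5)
Your argument is correct, but it is worth pointing out that the paper does not prove this lemma at all: it is stated with ``We recall'' as a classical fact (equivalently, the non-degeneracy of the Veronese embedding, i.e.\ that the $r$-th powers of linear forms span $\Sym^r(V,\R)$), so you have supplied a self-contained proof of something the authors simply cite. Your route is sound: the reduction $\Sym^r(V,W)\cong\Sym^r(V,\R)\otimes W$ is legitimate, and it has the added benefit of producing a basis of the form $\{\lambda_i^{\otimes r}\otimes w_j\}$ with a \emph{single} collection of covectors $\lambda_i$ serving all of $W$ --- which is exactly the structure the paper uses afterwards when it speaks of a principal basis of hyperplanes $\tau_i=\ker\lambda_i$ whose principal directions $\Pr_{\lambda_i}=\{\lambda_i^{\otimes r}\otimes w\}$ span the fibre. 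The core step (powers span) via the multinomial expansion of $\ell_t^{\otimes r}$ and the finite-difference realisation of $\partial^\alpha/\partial t^\alpha|_{t=0}$ as a linear combination of values on the grid $\{0,1,\dots,r\}^n$ is watertight, since the map $t\mapsto\ell_t^{\otimes r}$ is polynomial of degree $r$ and Lagrange interpolation in each variable expresses the derivative functional through evaluations; your vaguer ``chain/Leibniz'' phrasing should simply be dropped in favour of that finite-difference formulation, as you yourself suggest. An alternative that avoids all bookkeeping is the duality argument: a linear functional on $\Sym^r(V,\R)$ vanishing on every $\ell^{\otimes r}$ is a degree-$r$ homogeneous polynomial on $V^*$ vanishing identically on $V^*$, hence zero over $\R$, so the powers span; either way the lemma is established, and nothing in your proposal conflicts with how the paper uses the statement.
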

The Lemma says that any two elements $F,G \in J^r(X)$ lying over the same $H \in J^{r-1}(X)$ differ by a finite sequence of modifications along principal subspaces of order $r$. See Figure \ref{fig:principalCone}.

\begin{figure}[ht]
		\includegraphics[scale=0.55]{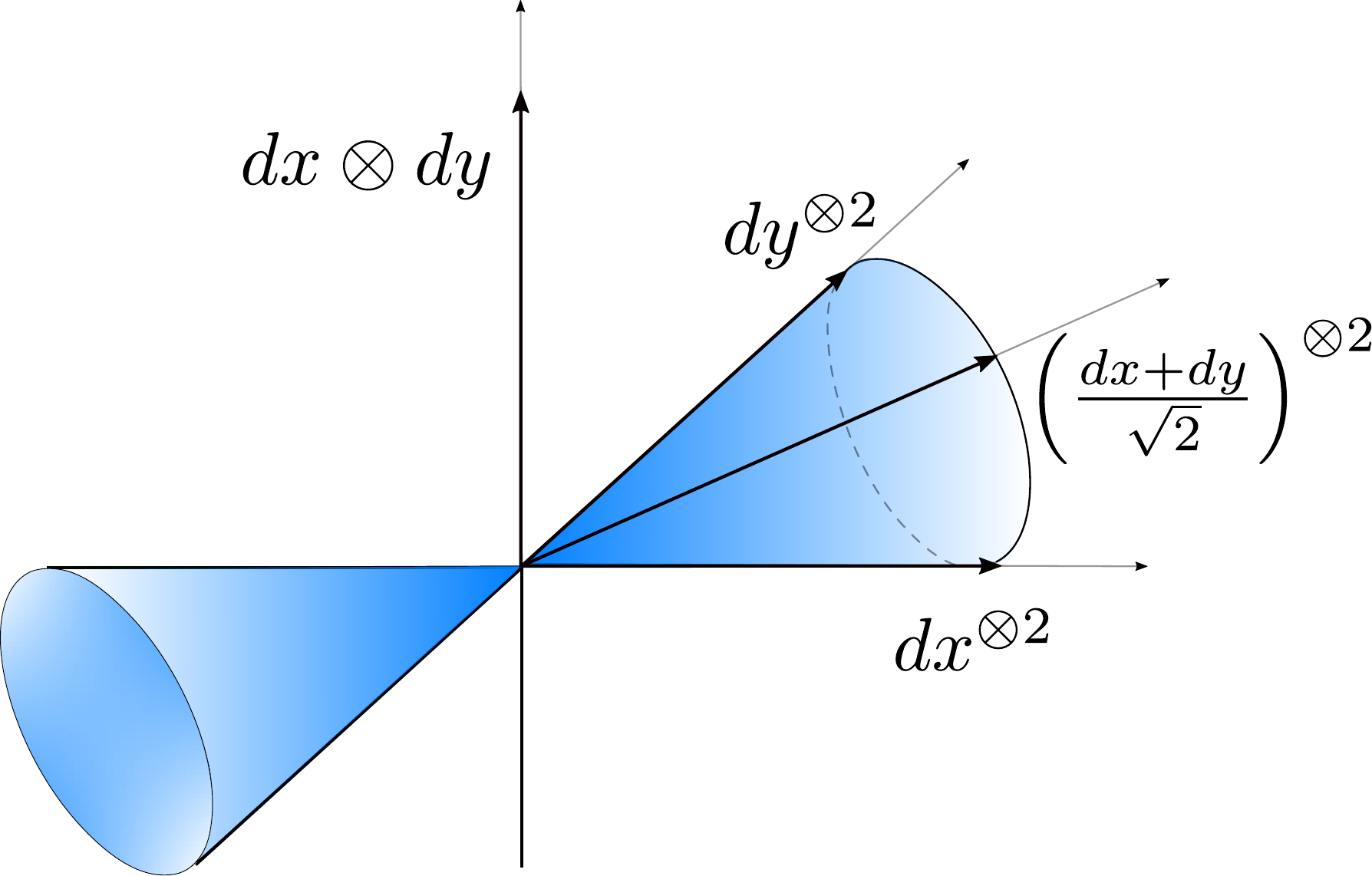}
		\centering
		\caption{The principal cone in $J^2(\R^2,\R)$. We use coordinates $(x,y)$ in the base $\R^2$. We identify the fibre of $J^2 \to J^1$ with the symmetric bilinear maps $\Sym^2(\R^2,\R)$; this is what we depict. The pure directions are then of the form $\alpha^{\otimes 2}$, with $\alpha \in T^*\R^2$. They are shown forming a blue cone, on the right hand side. We single out three principal vectors: $dx \otimes dx$, $dy \otimes dy$, and $\frac{dx+dy}{\sqrt{2}} \otimes \frac{dx+dy}{\sqrt{2}}$. The cone linearly spans the whole fibre, since these three vectors form a principal basis. The vectors contained in the left-hand-side cone are not principal, as they are of the form $-\alpha^{\otimes 2}$.} \label{fig:principalCone}
\end{figure}

\subsubsection{Principal paths} \label{sssec:principalPaths}

We have formalised the idea that two jets differ from one another along a single pure derivative by saying that they have same underlying $\perp(\tau,r)$-jet. We can similarly define the notion of two jets differing by a finite sequence of changes along pure derivatives:
\begin{definition}
Fix $z \in J^{r-1}(X)$. A \textbf{principal path} over $z$ is a sequence 
\[ \{F_i \in (\pi^r_{r-1})^{-1}(z)\}_{i=0,\cdots,I} \]
such that $F_{i+1}-F_i$ is principal. We say that $I$ is the \textbf{(principal) length} of the path.
\end{definition}
Do note that, unless $F_i = F_{i+1}$, the pair $(F_i,F_{i+1})$ uniquely determines the principal subspace $\Pr_{\lambda,F_i} = \Pr_{\lambda,F_{i+1}}$ containing both.

Fix $z \in J^{r-1}(X)$ and set $p = \pi^{r-1}_M(z) \in M$. According to Lemma \ref{lem:principalBasis}, we can fix an ordered collection of hyperplanes $\{\tau_i \subset T_pM\}_{i=0,\cdots,I}$ such that the corresponding principal directions span the fibre $(\pi^r_{r-1})^{-1}(z) \cong \Sym^r(V,W)$. If this collection is minimal, we say that it is a \textbf{(principal) basis}; in this case $I = \dim(\Sym^r(V,\R))$. It follows that any two elements $F,G \in (\pi^r_{r-1})^{-1}(z)$ can be connected by a principal path of length exactly $I$. Choosing a basis uniquely determines a principal path between $F$ and $G$. See Figure \ref{fig:pureDecomposition}.

\begin{figure}[ht]
		\includegraphics[scale=1.1]{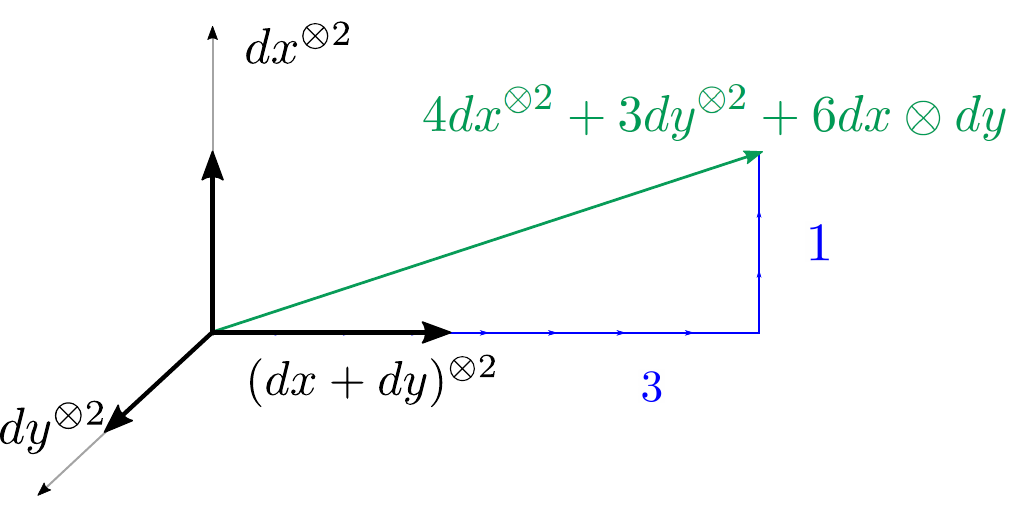}
		\centering
		\caption{Any direction in the fibre $J^r \to J^{r-1}$ can be written in a unique manner once a principal basis has been fixed. In this example, as in Figure \ref{fig:principalCone}, we work in $J^2(\R^2,\R)$. Its fibre is identified with $\Sym^2(\R^2,\R)$ and we fix $\{(dx+dy) \otimes (dx+dy),dx \otimes dx, dy \otimes dy\}$ as principal basis. This provides a preferred principal path between any two given vectors. In the figure we show a vector in green and how it connects to the origin using a path (which in this concrete case is of length $2$).} \label{fig:pureDecomposition}
\end{figure}

\subsection{Over-relations} \label{ssec:overRelations}

We are interested in finding and classifying solutions of differential relations. More generally, we define:
\begin{definition} \label{def:overRelation}
Let $X \to M$ be a fibre bundle. An \textbf{over-relation} of order $r$ is a smooth manifold $\SR$ endowed with a smooth map 
\[ \iota_\SR: \SR \to J^r(X), \]
that we sometimes call the \textbf{anchor}. If $\iota_\SR$ is an inclusion, we will say that $\SR$ is a \textbf{differential relation}. The over-relation $\SR$ is said to be \textbf{open} if the map $\iota_\SR$ is a submersion.

A \textbf{formal solution} of $\SR$ is a section $F: M \to \SR$ of $\pi^r_M \circ \iota_\SR$. A formal solution is a genuine \textbf{solution} if $\iota_\SR(F)$ is holonomic.
\end{definition}
Observe that the map $\pi^r_M \circ \iota_\SR: \SR \to M$ need not be a fibration if $\SR$ is open. It is, however, a \emph{microfibration}, meaning that the homotopy lifting property holds for small times\footnote{More generally, Gromov defines open over-relations as those where $\iota_\SR$ is a microfibration \cite[p. 175]{Gr86} but not necessarily submersive. Such generality is unnecessary for our purposes.}. A trivial but key observation is the following:
\begin{lemma}
Let $\iota_\SR: \SR \to J^r(X)$ be an open over-relation and let $\SR' \subset \SR$ be an open subset. Then $\iota_\SR|_{\SR'}$ is an open over-relation as well.
\end{lemma}

The main motivating example for our usage of over-relations is the following:
\begin{example} \label{ex:lowerOrderRelation}
Given an over-relation $\iota_\SR: \SR \to J^r(X)$ of order $r$, the projection 
\[ \pi^r_{r'} \circ \iota_\SR: \SR \to J^{r'}(X) \]
is an over-relation of order $r' < r$. That is, over-relations are crucial if we want to construct solutions inductively on $r$.

Observe that openess of $\iota_\SR$ implies openness of $\SR$, since the maps $\pi^r_{r'}$ are submersive.
\end{example}

We need to understand how over-relations $\iota_\SR: \SR \to J^r(X)$ relate to our idea of introducing oscillations along a given principal subspace. Given $z \in J^r(E)$, we write 
\[ \SR_{\tau,z} := \iota_\SR^{-1}(\Pr_{\tau,z}) \]
for the restriction of $\SR$ to the principal subspace containing $z$. Given $F \in \SR$, we write 
\[ \Pr_{\tau,F} := \Pr_{\tau,\iota_\SR(F)} \qquad \SR_{\tau,F} := \iota_\SR^{-1}(\Pr_{\tau,F}). \]
We use analogous notation when dealing with covectors $\lambda$ instead of hyperplanes $\tau$.

\subsection{The foliated setting} \label{ssec:foliatedRelations}

One can generalise all the discussion up to this point to differential relations that vary with respect to a parameter. The language of foliations is convenient for this purpose.

We fix a foliated manifold $(N,\SF)$ and a bundle $Y \to N$. We write $J^r(Y;\SF)$ for the bundle of leafwise $r$-jets (i.e. equivalence classes of sections of $Y$ up to $r$-order tangency along the leaves of $\SF$). Note that $J^r(Y;\SF)$ is the disjoint union of all the individual $J^r(Y|_L)$, with $L$ ranging over the leaves of $\SF$, endowed with the natural smooth structure.

Apart from the usual projections among these bundles for varying $r$, we have a forgetful map
\[ \pi^\SF: J^r(Y) \to J^r(Y;\SF), \]
that just remembers the leafwise jets. In particular, if $L$ is a leaf of $\SF$ we obtain a projection map $J^r(Y)|_L \to J^r(Y|_L)$.

A section of $J^r(Y;\SF)$ is holonomic if its restriction to each leaf is holonomic. A section $F$ of $J^r(Y)$ is leafwise holonomic if the corresponding $\pi^\SF \circ F|_L$ are holonomic; this is weaker than $F$ itself being holonomic.

\begin{definition}
A \textbf{foliated over-relation} is a map 
\[ \iota_\SS: \SS \to J^r(Y;\SF) \]
where $\SS$ is smooth manifold. We say it is \textbf{open} if it is submersive.
\end{definition}
We can restrict $\SS$ to a leaf $L$ of $\SF$ and yield a (standard) over-relation $\SS|_L \to J^r(Y|_L)$.

\subsubsection{Parametric lifts of over-relations} \label{sssec:parametricLift}

The most important example of foliated over-relation is the following:
\begin{definition} \label{def:parametricLift}
Let $X \to M$ be a bundle, $\iota_\SR: \SR \to J^r(Y)$ an over-relation, and $K$ a compact manifold serving as parameter space. Set $N := M \times K$ and write $\pi_M$ and $\pi_K$ for the projections mapping to $M$ and $K$, respectively. Endow $N$ with the foliation $\SF$ consisting of the fibres of $\pi_K$. Set $Y := X \times K = \pi_M^*X$ and $\SS := \SR \times K = \pi_M^*\SR$.

The \textbf{parametric lift} of $\SR$ to $M \times K$ is the foliated over-relation 
\[ \pi_M^*\iota_\SR:\quad \SS \to J^r(Y;\SF). \]
\end{definition}
Do note that the leaves of $\SF$ are copies $M \times \{k\}$ of $M$ and, identifying both using $\pi_M$, we have that $\SS$ restricts to $\SR$ along each $M \times \{k\}$. Families $(F_k)_{k \in K}$ of formal solutions of $M \to \SR$ are then equivalent to formal solutions $F: N \to \SS$. The family $(F_k)_{k \in K}$ consists of holonomic sections if and only if $F$ is (leafwise) holonomic.

We remark:
\begin{lemma}
The parametric lift of an open over-relation is open.
\end{lemma}

\subsubsection{Non-foliated preimage} \label{sssec:forgetfulLift}

Any $\iota_\SS: \SS \to J^r(Y;\SF)$ defines an over-relation in $J^r(Y)$ by pullback. This will be relevant later on, because it will allow us to rephrase statements about $\SS$ to statements about the pullback (therefore reducing the foliated theory to the non-foliated one).
\begin{definition}
Let $\iota_\SS: \SS \to J^r(Y;\SF)$ be a foliated over-relation. Its \textbf{non-foliated preimage} is the over-relation
\[ \SS^* := \{(F,z) \in \SS \times J^r(Y) \,\mid\, \iota_\SS(F) = \pi^\SF(z) \}, \]
with anchor map $\iota_\SS^*: \SS^* \to J^r(Y)$ defined by the expression $\iota_\SS^*(F,z) := z$.
\end{definition}
I.e. $\SS^*$ is the pullback of $J^r(Y) \to J^r(Y;\SF)$ to $\SS$. It follows that:
\begin{lemma}
The non-foliated preimage of an open, foliated over-relation is open.
\end{lemma}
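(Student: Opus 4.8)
The plan is to recognise $\SS^*$ as a fibre product and to exploit the standard fact that submersions are stable under base change. By Definition \ref{def:overRelation}, asserting that the over-relation $\SS^*$ is open means exactly that its anchor $\iota_\SS^* \colon \SS^* \to J^r(Y)$ is submersive, so this is the statement to establish; along the way one also checks that $\SS^*$, as defined, is genuinely a smooth manifold.

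First I would record the elementary observation that the forgetful map $\pi^\SF \colon J^r(Y) \to J^r(Y;\SF)$ is a surjective submersion. In a foliated chart $U \times T$ adapted to $\SF$ (with $U$ modelling a plaque and $T$ a transversal), both $J^r(Y)$ and $J^r(Y;\SF)$ are affine bundles over $U \times T$, and $\pi^\SF$ is the fibrewise affine projection that discards the derivatives taken in the $T$-directions; this is a split surjection on every fibre, hence $\pi^\SF$ is a submersion. In particular $\pi^\SF$ is transverse to the smooth map $\iota_\SS$, so the fibre product
\[ \SS^* \;=\; \SS \times_{J^r(Y;\SF)} J^r(Y) \;=\; \{(F,z) \in \SS \times J^r(Y) \,\mid\, \iota_\SS(F) = \pi^\SF(z)\} \]
is a smooth submanifold of $\SS \times J^r(Y)$, and it fits into a cartesian square with $\iota_\SS^*\colon (F,z) \mapsto z$ on top, the projection $(F,z)\mapsto F$ on the left, $\iota_\SS$ on the bottom, and $\pi^\SF$ on the right.

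The second and final step is base change of submersions: in a cartesian square the arrow parallel to a submersion is again a submersion. Concretely, a tangent vector to $\SS^*$ at $(F,z)$ is a pair $(v,w) \in T_F\SS \oplus T_zJ^r(Y)$ with $d\iota_\SS(v) = d\pi^\SF(w)$, and $d\iota_\SS^*(v,w) = w$. Given an arbitrary $w \in T_zJ^r(Y)$, the vector $d\pi^\SF(w)$ lies in $\image(d\iota_\SS)$ because openness of $\SS$ makes $\iota_\SS$ submersive; choosing $v \in T_F\SS$ with $d\iota_\SS(v) = d\pi^\SF(w)$ gives $(v,w) \in T_{(F,z)}\SS^*$ with $d\iota_\SS^*(v,w) = w$. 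Hence $\iota_\SS^*$ is submersive, i.e.\ $\SS^*$ is open. The only point requiring any care — and it is a mild one — is the claim that $\pi^\SF$ is a submersion, i.e.\ that forgetting transverse derivatives is an honest bundle projection rather than something more singular; granting that, the statement is the purely formal stability of openness under pullback along $\pi^\SF$, and no genuine obstacle arises.
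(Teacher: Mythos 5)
Your argument is correct and coincides with the paper's: the paper simply observes that $\SS^*$ is the pullback of $J^r(Y) \to J^r(Y;\SF)$ along $\iota_\SS$ and leaves the openness as the standard stability of submersions under base change, which is exactly what you spell out (your extra check that $\pi^\SF$ is a submersion is harmless, though submersivity of $\iota_\SS$ alone already gives both smoothness of the fibre product and the lifting of tangent vectors).
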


\section{Ampleness and convex integration} \label{sec:ampleness}

In this Section we recall some key ideas behind convex integration. Our goal is not to be comprehensive, but rather to fix notation and discuss its different incarnations, as introduced by Gromov \cite{Gr86}. We also borrow from Spring \cite{Spring} and Eliashberg-Mishachev \cite{EM}.

We first recall the three standard flavours of ampleness; each of them is the basis of a concrete implementation of convex integration. Classic ampleness is explained in Subsection \ref{ssec:ampleness1}. Ampleness along principal frames (often called ampleness in coordinate directions) is explained in Subsection \ref{ssec:ampleness2}. Ampleness in the sense of convex hull extensions appears in Subsection \ref{ssec:ampleness3}.

We then compare them in Subsection \ref{ssec:comparison}. This will clarify how ampleness up to avoidance (to appear in Section \ref{sec:avoidance}) fits within this greater context.

\subsection{Ampleness in affine spaces} \label{ssec:affineAmpleness}

We define ampleness for subsets of affine spaces first. We adapt it to relations in jet spaces in upcoming Subsections.
\begin{definition} \label{def:ampleAffine}
Let $A$ be an affine space. Then:
\begin{itemize}
\item Let $B \subset A$ be a subset. Given $b \in B$, we write $B_b$ for the path-component containing it. We say that $B$ is \textbf{ample} if the convex hull $\Conv(B,b) := \Conv(B_b)$ of each $B_b \subset B$ is the whole of $A$.
\item Let $C$ be a topological space and $\iota: C \to A$ be a continuous map. The map $\iota$ is \textbf{ample} if $\Conv(C,c) := \Conv(\iota(C_c)) = A$ for each $c \in C$. 
\end{itemize}
Furthermore, we say that ampleness holds trivially if for each $c \in C$ either $\iota(C_C)=\emptyset$ or $\iota(C_C)=A$.
\end{definition}

A particularly relevant case in the examples to come is the following:
\begin{example} \label{def:thinSet}
A stratified subset $\Sigma \subset \R^n$ of codimension at least $2$ is said to be \emph{thin}. Its complement is ample. See Figure \ref{fig:thinSet}.
\end{example}

\begin{figure}[ht]
		\includegraphics[scale=0.08]{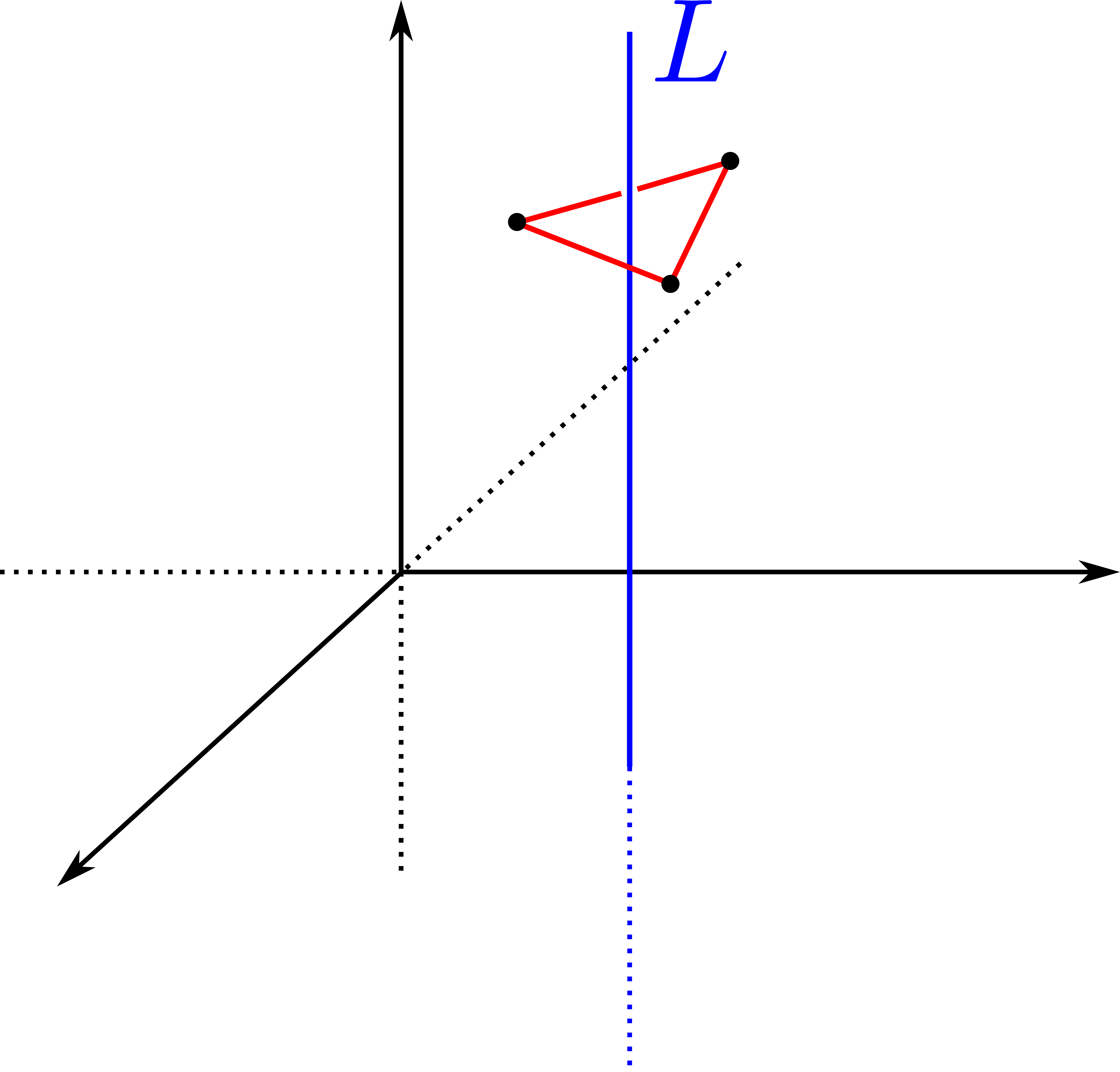}
		\centering
		\caption{Example of a thin set $L\subset\R^3$. Any point in $L$ is a convex combination of points in the complement. One such example is shown in the image, where three black points in $\R^3\setminus L$ convexly generate a point in $L$ (surrounded by red lines). \label{fig:thinSet}}
\end{figure}

Not all ample subsets have thin complements. The following example shows an ample subset whose complement has codimension one:
\begin{example} \label{ex:symmetricMatrices}
The subset of $\R^3$ defined by
\[\SH^- := \lbrace (x,y,z)\in\R^3 \,\mid\, xy-z^2 < 0 \rbrace\]
is the outer-component of a cone. It is ample and thus any point $p\in\R^3$ can be expressed as a convex combination of points in $\SH^-$. The remaining part of the complement of the cone
\[\SH^+ := \lbrace (x,y,z)\in\R^3 \,\mid\, xy-z^2 > 0 \rbrace\]
consists of two components, neither of which is ample.
\end{example}
The set $\SH^-$ will reappear later on in our study of hyperbolic $(4,6)$ distributions. $\SH^+$ appears in the study of elliptic $(4,6)$ distributions. See Section \ref{sec:46}.

\subsection{Ampleness in all principal directions} \label{ssec:ampleness1}

We now define the most commonly used notion of ampleness for differential relations. It is also the most restrictive one.
\begin{definition} \label{def:ampleness1}
Fix a bundle $X \to M$ and an over-relation $\iota_\SR: \SR \to J^r(X)$. Let $\lambda \in T_p^*M$ be a covector. We say that $\iota_\SR$ is
\begin{itemize}
\item \textbf{ample along the principal direction determined by $\lambda$} if, for every $F \in \SR$ projecting to $p$, the map $\iota_\SR: \SR_{\lambda,F} \to \Pr_{\lambda,F}$ is ample.
\item \textbf{ample in all principal directions} if the over-relations $(\pi^r_{r'} \circ \iota_\SR)_{r' = 1,\cdots,r}$ are ample along all non-zero covectors $\lambda$.
\end{itemize}
\end{definition}
Being the most commonly used flavour, we sometimes just say that $\iota_\SR$ is \textbf{ample}. Gromov's convex integration is usually stated as:
\begin{theorem} \label{thm:convexIntegration1}
The complete $C^0$-close $h$-principle holds for any open over-relation that is ample in all principal directions.
\end{theorem}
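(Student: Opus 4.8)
\textbf{Proof proposal for Theorem \ref{thm:convexIntegration1}.}

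The plan is to reduce the full statement to the one-step lemma that is the heart of convex integration, and then iterate over derivatives and charts. First I would recall the \emph{one-dimensional} (fibered) problem: given a parametric family of loops in an affine space whose convex hulls cover a prescribed reference path, one can produce a nearby path whose derivative (integrated leafwise) tracks the reference path and whose $C^0$-distance to a prescribed primitive is as small as desired. This is the classical "integration with controlled primitive" statement; it is purely analytic and requires only that the reference path lies in the convex hull of the loop, which is exactly what ampleness along a principal direction buys us. I would state it once, cite Gromov/Spring/Eliashberg--Mishachev, and move on.

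Second, I would set up the inductive skeleton. Working with the over-relations $\pi^r_{r'}\circ\iota_\SR$ for $r'=1,\dots,r$, one corrects derivatives order by order; within a fixed order, using Lemma \ref{lem:principalBasis} one chooses, on each chart of a finite cover, a principal basis $\{\tau_i\}$ so that a finite sequence of modifications along the corresponding principal subspaces connects any two jets over a common lower-order jet (the principal-path language of Subsubsection \ref{sssec:principalPaths}). On each such step, the over-relation restricted to the relevant principal subspace, $\SR_{\lambda,F}\to\Pr_{\lambda,F}$, is ample by hypothesis; since $\iota_\SR$ is a submersion, the path-component $\tilde\SR_{\lambda,F}$ through the current formal solution is an open submanifold mapping to $\Pr_{\lambda,F}$ with full convex hull, so the one-dimensional lemma applies fiberwise. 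The output of one step is a new section $f$ whose pure derivative along $\lambda$ of the appropriate order is $C^0$-close to the target prescribed by the formal solution, while the $C^0$-distance of $f$ itself (and of all its lower-order derivatives) to the previous stage is as small as we wish. Openness of $\SR$ is what allows the small errors introduced in the other derivatives to be absorbed: after finitely many steps we still have a genuine formal solution of $\SR$, and a holonomic one in the corrected derivatives.

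Third, I would address globalisation and the relative/parametric form. The local corrections are patched using a partition of unity subordinate to the finite chart cover, which — by the standard trick of interpolating between the old and new sections and re-running the step on the overlap — does not destroy the achieved holonomy on the region where corrections are already complete; one keeps a "region already solved" that grows to all of $M$. For the relative statement (given a solution near a closed set $A$, or given boundary data) one arranges every correction to be supported away from $\Op(A)$, which is possible because ampleness is a pointwise condition and cutoffs can be localised. The parametric statement over a compact $K$ is obtained for free by appealing to the foliated formalism of Subsection \ref{ssec:foliatedRelations}: a family over $K$ is a leafwise problem for the parametric lift $\pi_M^*\SR\to J^r(Y;\SF)$, and leafwise ampleness of the lift is immediate from ampleness of $\SR$; running the leafwise convex integration above produces the family-version of the $h$-principle with the same $C^0$ control. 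Combining these yields the full $C^0$-close $h$-principle, i.e. that $\SL$ (the inclusion of solutions into formal solutions) is a weak homotopy equivalence, relative to boundary conditions.

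The main obstacle — and the only genuinely delicate point — is the bookkeeping in the inductive step: one must show that a correction made to adjust the order-$k$ derivative along $\lambda_i$ introduces only \emph{arbitrarily small} errors in all other derivatives of order $\le k$ (so that previously corrected directions are not spoiled and openness can absorb the damage), and that these errors can be made uniformly small simultaneously over the finite collection of directions and charts. This is precisely where the scale of the oscillation must be taken small relative to the openness of $\SR$ and to the derivatives of the cutoff functions; quantifying this (and checking it survives the partition-of-unity patching and the relative cutoffs near $A$) is the technical core. Everything else is either the cited one-dimensional analytic lemma or formal manipulation of jet bundles already set up in Sections \ref{sec:jetSpace} and \ref{sec:ampleness}.
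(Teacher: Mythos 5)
The paper does not actually prove Theorem \ref{thm:convexIntegration1}: it is quoted as a classical result, with pointers to \cite[Corollary 1.3.2]{Gr73}, \cite[Section 2.4]{Gr86} and \cite[Part 4]{EM}, so there is no in-paper argument to measure yours against. Your outline is essentially the standard proof implemented in those sources (and in \cite{Spring}): the one-dimensional integration lemma, induction over the orders $r'$ through the over-relations $\pi^r_{r'}\circ\iota_\SR$, principal bases chosen chart by chart (Lemma \ref{lem:principalBasis} and the principal-path language), patching relative to the region already solved, and the parametric/relative case via the foliated lift. The last point is in fact the only ingredient the paper does spell out itself (Lemma \ref{lem:reductionToFoliation} and Corollary \ref{cor:convexIntegration1}), and your treatment of it agrees with the paper's.

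One imprecision is worth correcting. In your description of the output of a single step, the new section's pure order-$r'$ derivative along $\lambda$ is \emph{not} $C^0$-close to the derivative prescribed by the formal solution; it oscillates rapidly inside (a small neighbourhood of) the component $\tilde\SR_{\lambda,F}$ of the relation, and it is only because the convex hull of that component contains the prescribed formal derivative that the section itself, together with all the remaining derivatives of order at most $r'$, can be kept $C^0$-close to the previous stage. The formal datum is then homotoped so that its $\lambda$-component agrees with the new genuine derivative; that is how the pair becomes holonomic in that direction. If the pure derivative could itself be kept close to the formal one, no oscillation would be needed and the theorem would be trivial. Beyond that, deferring the quantitative bookkeeping (oscillation scale versus the openness of $\SR$ and the derivatives of the cut-offs) is acceptable in this context, since that estimate is exactly the content of the references the paper cites; just be aware that it, together with the correct formulation of the one-step lemma above, is the entire substance of the proof rather than a routine afterthought.
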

This result was first proven, only for first order, in \cite[Corollary 1.3.2]{Gr73}. The statement for all orders appeared later in \cite[Section 2.4, p. 180]{Gr86}. The first order case is treated as well in \cite[Part 4]{EM}.

\subsubsection{The foliated setting} \label{sssec:foliatedAmpleness1}

Fix a bundle $Y \to (N,\SF)$ and a foliated over-relation $\iota_\SS: \SS \to J^r(Y;\SF)$. We say that $\SS$ is \textbf{ample along all foliated principal directions} if, for each leaf $L$, the restriction $\SS|_L$ satisfies Definition \ref{def:ampleness1}.

By construction, the ampleness of the non-foliated preimage $\SS^* \to J^r(Y)$ of $\SS$ can be read purely along $\SF$:
\begin{lemma} \label{lem:reductionToFoliation}
Fix a leaf $L$, a point $p \in L$, a formal datum $z \in J^r(Y)|_p$, and a codirection $\lambda \in T_p^*N$. Write $z' \in J^r(Y|_L)$ for the leafwise jet of $z$ and $\lambda'$ for the restriction $\lambda|_L \in T_p^*L$.

The following conditions are equivalent:
\begin{itemize}
\item $\SS^*$ is ample along the principal subspace $\Pr_{\lambda,z} \subset J^r(Y)$.
\item $\SS|_L$ is ample along the principal subspace $\Pr_{\lambda',z'} \subset J^r(Y|_L)$.
\end{itemize}
\end{lemma}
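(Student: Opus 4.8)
The plan is to unwind both conditions through the explicit local description of principal subspaces from Lemma \ref{lem:principalImage} and the definition of the non-foliated preimage $\SS^*$, and observe that the ampleness statement is really a statement about a single fibre of $\pi^r_{r-1}$ that is canonically identified on both sides.

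First I would set up local coordinates adapted to the foliation: near $p$, choose a chart of $N$ identifying $\SF$ with the fibres of a projection $V \times V' \to V'$, so that the leaf $L$ through $p$ is modelled on $V$ and the parameter (transverse) directions on $V'$. Trivialising $Y$ with fibre $W$, we get the local identification $J^r(Y) \cong (V\times V') \times W \times \cdots \times \Sym^r(V\times V', W)$, while $J^r(Y;\SF)$ is obtained by only remembering the leafwise derivatives, i.e. the symmetric tensors with entries in $V$ only; the forgetful map $\pi^\SF$ is the corresponding projection. Now given $z \in J^r(Y)|_p$, write $z'= \pi^\SF(z)$. By definition, a point of $\SS^*$ over $\Pr_{\lambda,z}$ is a pair $(F, \tilde z)$ with $\iota_\SS(F) = \pi^\SF(\tilde z)$ and $\tilde z \in \Pr_{\lambda,z}$. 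By Lemma \ref{lem:principalImage}, $\Pr_{\lambda,z} = \{ z + \lambda^{\otimes r}\otimes w \mid w \in W\}$; applying $\pi^\SF$ to this family gives exactly $\{ z' + (\lambda')^{\otimes r}\otimes w \mid w \in W\} = \Pr_{\lambda',z'}$, using that $\pi^\SF(\lambda^{\otimes r}\otimes w) = (\lambda')^{\otimes r}\otimes w$ because $\pi^\SF$ simply restricts the tensor entries to $V$ (and crucially the pure covector $\lambda^{\otimes r}$ restricts to the pure covector $(\lambda')^{\otimes r}$). Hence the constraint $\iota_\SS(F) = \pi^\SF(\tilde z)$ together with $\tilde z \in \Pr_{\lambda,z}$ forces $\iota_\SS(F) \in \Pr_{\lambda',z'}$, and conversely every $F \in \SS|_L$ with $\iota_\SS(F) \in \Pr_{\lambda',z'}$ extends (uniquely, once we fix the lower-order data of $z$ and the transverse derivatives, which are the same for all points of $\Pr_{\lambda,z}$) to a point of $\SS^*$ over $\Pr_{\lambda,z}$.

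The second step is to make the identification of affine spaces precise and natural. The map $w \mapsto z + \lambda^{\otimes r}\otimes w$ is an affine isomorphism $W \to \Pr_{\lambda,z}$ when $\lambda \ne 0$ (equivalently $\lambda' \ne 0$; if $\lambda|_L = 0$ both principal subspaces are singletons and ampleness holds trivially on both sides, so we may assume $\lambda' \ne 0$), and similarly $w \mapsto z' + (\lambda')^{\otimes r}\otimes w$ is an affine isomorphism $W \to \Pr_{\lambda',z'}$. Composing, we get a canonical affine isomorphism $\Pr_{\lambda,z} \cong \Pr_{\lambda',z'}$ (both identified with $W$), under which $\pi^\SF$ restricted to $\Pr_{\lambda,z}$ becomes the identity $W \to W$. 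Under this isomorphism, the fibred product description above shows that $\iota_\SS^*: \SS^*_{\lambda,z} \to \Pr_{\lambda,z}$ is identified with $\iota_\SS: \SS|_{L,\lambda',z'} \to \Pr_{\lambda',z'}$ — not merely up to homeomorphism of the source but genuinely as the same map of spaces over the same affine space $W$ (the projection $\SS^*_{\lambda,z} \to \SS|_{L,\lambda',z'}$, $(F,\tilde z)\mapsto F$, is a homeomorphism because $\tilde z$ is determined by $F$ and the fixed lower-order/transverse data). Since ampleness of a map into an affine space (Definition \ref{def:ampleAffine}) depends only on the map, its target affine space, and the path-components of the source, the two ampleness statements coincide.

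The only genuinely substantive point — and the one I would be most careful about — is the claim that $\pi^\SF$ carries the principal subspace $\Pr_{\lambda,z} \subset J^r(Y)$ bijectively onto $\Pr_{\lambda',z'} \subset J^r(Y|_L)$ and sends the relevant path-components to the relevant path-components. The bijectivity is the coordinate computation above; the point needing a line of justification is that $\pi^\SF$ really does restrict $\lambda^{\otimes r}$ to $(\lambda')^{\otimes r}$ — i.e. that "leafwise jet" and "principal direction along a covector" interact the way the notation suggests. This is immediate from the invariant description: $\pi^\SF$ on the fibre of $\pi^r_{r-1}$ is the restriction map $\Sym^r(T_pN, W) \to \Sym^r(T_pL, W)$ dual to the inclusion $T_pL \hookrightarrow T_pN$, and the restriction of the pure tensor $\lambda^{\otimes r} \otimes w$ is $(\lambda|_{T_pL})^{\otimes r}\otimes w = (\lambda')^{\otimes r}\otimes w$. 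Everything else is bookkeeping: the lower-order data of $z$ and its transverse derivatives are constant along $\Pr_{\lambda,z}$, so the fibre-product defining $\SS^*$ over that principal subspace collapses to a fibre product over $\Pr_{\lambda',z'}$, giving the homeomorphism of sources that makes the two ampleness conditions literally the same.
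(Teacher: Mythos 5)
Your main argument is correct and is exactly the unwinding that the paper leaves implicit (the lemma is stated there without proof, as holding ``by construction''): on the fibres of $\pi^r_{r-1}$ the forgetful map $\pi^\SF$ is the restriction $\Sym^r(T_pN,W)\to\Sym^r(T_pL,W)$, so it sends $z+\lambda^{\otimes r}\otimes w$ to $z'+(\lambda')^{\otimes r}\otimes w$; when $\lambda'\neq 0$ this makes $\pi^\SF\colon \Pr_{\lambda,z}\to\Pr_{\lambda',z'}$ an affine isomorphism intertwining the two parametrisations by $W$, the projection $(F,\tilde z)\mapsto F$ is then a homeomorphism from $(\SS^*)_{\lambda,z}$ onto $(\SS|_L)_{\lambda',z'}$ commuting with the anchor maps, and ampleness is invariant under such an identification. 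That is the whole content in the non-degenerate case.

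The one step that fails as written is your parenthetical handling of the degenerate case: $\lambda\neq 0$ is \emph{not} equivalent to $\lambda'\neq 0$, and when $\lambda|_{T_pL}=0$ but $\lambda\neq 0$ it is false that ``both principal subspaces are singletons''. In that case $\Pr_{\lambda',z'}=\{z'\}$ is a singleton by the paper's convention, but $\Pr_{\lambda,z}\subset J^r(Y)$ is a full affine copy of $W$. The equivalence still holds, but for a different reason: $\pi^\SF$ is then constant equal to $z'$ on $\Pr_{\lambda,z}$, so $(\SS^*)_{\lambda,z}=\iota_\SS^{-1}(z')\times\Pr_{\lambda,z}$, each of whose path components (if any) has anchor image all of $\Pr_{\lambda,z}$; hence $\SS^*$ is trivially ample along $\Pr_{\lambda,z}$, while ampleness of $\SS|_L$ along the singleton $\Pr_{\lambda',z'}$ is also trivial. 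This is not a throwaway case: covectors annihilating $T\SF$ are precisely the new principal directions created by the parameters (in the parametric lift they are the pure $K$-directions), and verifying that the lifted relation is trivially ample along them is the substantive point behind Corollary \ref{cor:convexIntegration1}. Replace the singleton claim by this argument and your proof is complete.
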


It immediately follows that:
\begin{corollary} \label{cor:convexIntegration1}
The complete $C^0$-close $h$-principle holds for any open, foliated over-relation that is ample in all foliated principal directions.
\end{corollary}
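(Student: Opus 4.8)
The plan is to reduce the foliated statement directly to the non-foliated version of convex integration, namely Theorem \ref{thm:convexIntegration1}. The key device is the non-foliated preimage $\iota_\SS^* : \SS^* \to J^r(Y)$ introduced above. Concretely, given a foliated over-relation $\iota_\SS : \SS \to J^r(Y;\SF)$ that is ample in all foliated principal directions, I would first observe that $\SS^*$ is open (by the Lemma stating that the non-foliated preimage of an open foliated over-relation is open), and then show that $\SS^*$ is ample in all principal directions of $J^r(Y)$. The latter is exactly the content of Lemma \ref{lem:reductionToFoliation}: for a codirection $\lambda \in T_p^*N$, ampleness of $\SS^*$ along $\Pr_{\lambda,z}$ is equivalent to ampleness of $\SS|_L$ along $\Pr_{\lambda',z'}$, where $L$ is the leaf through $p$ and $\lambda', z'$ are the leafwise restrictions. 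Since $\SS|_L$ is ample in all principal directions by hypothesis (for every leaf $L$), and since the same argument applies verbatim to the lower-order projections $\pi^r_{r'} \circ \iota_\SS^*$ (these are again non-foliated preimages of the corresponding projections of $\SS$, which remain ample along all foliated directions), we conclude that $\SS^*$ is ample in all principal directions in the sense of Definition \ref{def:ampleness1}.

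Having established that $\SS^*$ is open and ample in all principal directions, Theorem \ref{thm:convexIntegration1} gives the complete $C^0$-close $h$-principle for $\SS^*$. The remaining step is to transfer this conclusion back to $\SS$. Here I would unwind the definitions: a formal solution of $\SS$ over $N$ is a section $F : N \to \SS$ with $\pi^r_N \circ \iota_\SS \circ F = \id_N$, and the forgetful map $\pi^\SF : J^r(Y) \to J^r(Y;\SF)$ lets one pass between sections of $\SS$ and sections of $\SS^*$. A leafwise-holonomic section of $\SS$ is precisely the data needed to build a section of $\SS^*$ whose $J^r(Y)$-component is leafwise holonomic; conversely, a holonomic solution of $\SS^*$ projects to a genuine (leafwise holonomic) solution of $\SS$. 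One must check that this correspondence is continuous in families and compatible with $C^0$-approximation and with boundary conditions, so that the weak homotopy equivalence statement for $\SS^*$ yields the weak homotopy equivalence statement for $\SS$.

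The main obstacle I anticipate is the bookkeeping in this last transfer step, rather than any genuinely new idea: one has to be careful that "holonomic for $\SS^*$" (i.e. $\iota_\SS^*(F,z) = z$ is holonomic in $J^r(Y)$) really does correspond to "leafwise holonomic for $\SS$" and not to something strictly stronger, and that the $h$-principle obtained for $\SS^*$ — which a priori concerns all of $J^r(Y)$ — restricts correctly to the leafwise data we care about. The subtlety is that a section of $J^r(Y)$ can be leafwise holonomic without being holonomic, so one should phrase the convex integration for $\SS^*$ relative to the transverse directions being fixed, or equivalently note that in the parametric-lift picture the transverse coordinate is simply a parameter that convex integration does not touch. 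In practice this is handled by the parametric form of Theorem \ref{thm:convexIntegration1}, so the corollary is genuinely a routine consequence of the already-established non-foliated machinery together with Lemma \ref{lem:reductionToFoliation}; I would present it as such, with the emphasis on the equivalence of ampleness conditions and a brief remark on why holonomy transfers correctly.
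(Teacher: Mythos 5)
Your proposal follows the paper's intended argument exactly: pass to the non-foliated preimage $\SS^*$, which is open and, by Lemma \ref{lem:reductionToFoliation}, ample in all principal directions (the same leafwise check handling the lower-order projections), then apply Theorem \ref{thm:convexIntegration1} and transfer the conclusion back to $\SS$ — precisely the deduction the paper labels as immediate. Your worry about full versus leafwise holonomy is harmless rather than a gap: a holonomic solution of $\SS^*$ projects to a leafwise-holonomic solution of $\SS$, and formal solutions of $\SS$ lift along the affine (contractible-fibred) map $J^r(Y)\to J^r(Y;\SF)$, so no special relative-to-transverse-directions or parametric refinement of Theorem \ref{thm:convexIntegration1} is needed.
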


A particularly beautiful consequence of these statements is the following. Suppose $K$ is a compact manifold, $X \to M$ is a bundle, and $\iota_\SR: \SR \to J^r(X)$ is an open over-relation that is ample along all principal directions. Then, the parametric lift $\SR \times K$ is, by definition, ample along all foliated principal directions. Applying Lemma \ref{lem:reductionToFoliation} we deduce that $(\SR \times K)^* \to J^r(X \times K)$ is ample along all principal directions. It follows that, in order to prove Theorem \ref{thm:convexIntegration1} for arbitrary parameters (and relatively in parameter and domain), it is sufficient to prove the non-parametric version (relatively in domain). Indeed: the $K$-parametric statement for $\SR$ is just the $0$-parametric statement for $\SR \times K$.

\subsection{Ampleness along principal frames} \label{ssec:ampleness2}

As we pointed out in the Introduction, we do not need ampleness in all directions, since it is sufficient to be able to proceed over a base in the space of derivatives. This motivates the following definitions.
\begin{definition} \label{def:hyperplaneField}
A \textbf{locally-defined hyperplane field} is a pair $(U,\tau)$ consisting of an open set $U \subset M$ and a germ of hyperplane field $\tau$ along the closure $\bar{U}$.

The hyperplane field $(U,\tau)$ is \textbf{integrable} if $\tau$ integrates to a codimension-$1$ foliation.
\end{definition}
Our hyperplane fields will live on charts and therefore they will always be locally-defined. The condition that $\tau$ is a germ along the closure $\bar{U}$ is included to make some of our later statements cleaner. The reader can think of $\bar{U}$ as being some closed ball in $M$. Often, we just write $\tau$ and we leave $U$ implicit; we say that $U$ is the \emph{support} of $\tau$. 

\begin{definition} \label{def:principalCover}
A \textbf{principal frame} of order $r$ is a collection $C$ of locally-defined hyperplane fields satisfying:
\begin{itemize}
\item All of the fields in $C$ are integrable and have the same support $U$.
\item $C$ is a principal basis in each of the fibres of $\pi^r_{r-1}$ lying over $U$.
\end{itemize}
A \textbf{principal cover} of order $r$ is a collection $\SC$ of principal frames of order $r$ whose supports cover $M$.

A principal direction/subspace defined by a hyperplane in $\SC$/$C$ will be called a $\SC$/$C$-principal direction/subspace.
\end{definition}

The second flavour of ampleness reads:
\begin{definition} \label{def:ampleness2}
An over-relation $\iota_\SR: \SR \to J^r(X)$ is \textbf{ample along principal frames in order $r$} if each point $p \in M$ admits an $r$-order principal frame $C$ with support $U \ni p$ such that $\SR$ is ample along all $C$-principal directions.

The over-relation $\iota_\SR$ is \textbf{ample along the principal cover} $\SC$ if $\SR$ is ample along all $\SC$-principal directions.

The over-relation $\iota_\SR$ is \textbf{ample along principal frames} if the relations $(\pi^r_{r'} \circ \iota_\SR)_{r'=1,\cdots,r}$ are ample along principal frames in their respective order.
\end{definition}

If $r=1$ (or if $r>1$ but $\dim(M)=1$), a principal cover can be obtained by picking a covering of $M$ and in each chart setting $\{\tau_i = \ker(dx_i)\}$, where $\{dx_i\}$ is the coordinate coframe. For this reason, when one deals with first order jets, ampleness along a principal cover is also called \emph{ampleness along coordinate directions}; see \cite[Definition 1.2.6]{Gr73} and \cite[p. 167]{EM}.

\begin{theorem} \label{thm:convexIntegration2}
The complete $C^0$-close $h$-principle holds for any open over-relation that is ample along principal frames.
\end{theorem}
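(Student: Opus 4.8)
The strategy is to reduce Theorem \ref{thm:convexIntegration2} to Theorem \ref{thm:convexIntegration1} (convex integration along all principal directions) by replacing the given over-relation $\iota_\SR: \SR \to J^r(X)$ with an auxiliary one built out of a fixed principal cover. The point is that ampleness along principal frames is exactly the hypothesis needed to run the inductive convex-integration scheme, but carefully: one proceeds chart by chart and, within each chart, order by order, correcting pure derivatives only along the finitely many hyperplane fields belonging to the chosen principal frame $C$. So the proof amounts to setting up that bookkeeping and checking that each elementary step is legitimate.

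First I would reduce to the single-order situation. By Example \ref{ex:lowerOrderRelation}, the projections $\pi^r_{r'}\circ\iota_\SR$ for $r' < r$ are open over-relations, and by hypothesis each is ample along principal frames in its own order; so it suffices to prove the following one-order statement and then iterate downward from $r' = 1$ to $r' = r$, exactly as in the classical scheme: \emph{given a section $f$ with $j^{r-1}f$ already $C^0$-close to $\pi^r_{r-1}\circ\iota_\SR(F)$ (the holonomy of the lower-order truncation having been arranged in the previous stage), and a formal solution $F$, one can perturb $f$ in a $C^0$-small way so that $j^rf$ becomes $C^0$-close to $\iota_\SR(F)$ while preserving the lower-order holonomy up to small error.} Openness of $\SR$ absorbs the errors, as discussed in the Introduction.

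Second, localise. Take the principal cover $\SC$ witnessing ampleness along principal frames in order $r$, pass to a locally finite refinement subordinate to a partition of unity, and order the charts $U_1, U_2, \dots$. Working on one chart $U_j$ at a time with its principal frame $C_j = \{\tau_1, \dots, \tau_I\}$ (here $I = \dim \Sym^r(V,\R)$ by Definition \ref{def:principalCover} and Lemma \ref{lem:principalBasis}), Lemma \ref{lem:principalBasis} / the discussion of principal paths in \S\ref{sssec:principalPaths} guarantees that the successive pure corrections along $\tau_1, \dots, \tau_I$ together adjust \emph{every} component of the order-$r$ derivative. Each individual correction is the one-codirection convex-integration move: since $\iota_\SR$ is ample along the principal direction of $\tau_i$, the connected component of $\SR_{\tau_i, F}$ containing $F$ has convex hull all of $\Pr_{\tau_i, F}$, which is precisely the input Gromov's one-dimensional lemma needs to produce oscillations of $f$ (using that $\tau_i$ is integrable, so we may integrate along the transverse coordinate) realising the desired average derivative up to arbitrarily small $C^0$-error, while changing lower-order derivatives by an amount controlled by the amplitude of the oscillation. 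Cutting off with the partition of unity keeps the perturbation supported in $U_j$ and keeps what was achieved on $U_1, \dots, U_{j-1}$ intact up to small error. Iterating over $j$, and then over the orders $r' = 1, \dots, r$, and finally invoking openness to conclude that the limiting section is holonomic and lands in $\SR$, completes the existence statement; the parametric and relative versions follow by the same argument carried out in families (or, as in \S\ref{sssec:foliatedAmpleness1}, by passing to parametric lifts), and the $C^0$-closeness is built in because all perturbations are $C^0$-small.

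The main obstacle — and really the only non-formal point — is the \textbf{one-codirection convex-integration step in higher order} when $\tau$ is merely a locally defined \emph{integrable} hyperplane field rather than a coordinate hyperplane. One must choose coordinates adapted to the foliation integrating $\tau$, identify the principal subspace $\Pr_{\tau, F}$ with $\Sym^r$ applied to the normal codirection, and verify that adding rapidly oscillating terms in the transverse variable adjusts the pure order-$r$ derivative along $\tau$ to any prescribed value in $\Conv(\tilde\SR_{\tau,F})$ while perturbing all other derivatives of order $\le r$ arbitrarily little; this is exactly Gromov's higher-order one-dimensional lemma (\cite[Section 2.4]{Gr86}), and the role of integrability of $\tau$ is to make the transverse coordinate globally meaningful on the chart so that the oscillation is well-defined. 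The rest — refining covers, ordering charts, partitions of unity, bookkeeping of accumulated errors against openness — is standard and identical to the proof of Theorem \ref{thm:convexIntegration1}; indeed the cleanest writeup simply cites that proof and points out that it only ever uses finitely many codirections per chart, namely a principal basis, which is what ampleness along principal frames supplies.
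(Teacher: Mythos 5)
Your sketch should be measured against the fact that the paper does not prove Theorem \ref{thm:convexIntegration2} at all: it treats it as a classical result, citing \cite[Theorem 1.3.1]{Gr73} and \cite[p.~172]{EM} for first order, \cite[p.~179, Principal Stability Theorem C]{Gr86} for arbitrary order, and the thesis \cite{Den} for an implementation that, like yours, avoids convex hull extensions and adapts the chart-by-chart scheme of \cite{Gr73}. Judged against those references, your outline is the standard route and is correct as a plan: induct on the order $r'$ using Example \ref{ex:lowerOrderRelation}, localise to the charts of the principal cover, correct one pure derivative at a time along the frame $\tau_1,\dots,\tau_I$ (which exhausts the fibre of $\pi^r_{r-1}$ precisely because the frame is a principal basis), absorb errors by openness, and handle parameters by families or parametric lifts. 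The genuinely non-formal ingredient—the higher-order one-codirectional lemma, in its relative form, together with the simultaneous homotopy of the formal datum—you delegate to \cite[Section 2.4]{Gr86}, which is exactly the delegation the paper itself makes, so there is no divergence of approach; but be aware that your phrase about the partition of unity keeping earlier gains ``intact up to small error'' glosses the one place where care is needed: preservation on $U_1,\dots,U_{j-1}$ is not an automatic consequence of cutting off, it is guaranteed by invoking the relative version of the one-dimensional lemma (unchanged near regions where the section is already holonomic, $C^0$-control elsewhere) and by updating the formal solution after each step so that subsequent steps are still guided by a formal solution of the open relation. With that caveat made explicit, your argument is the same as the cited proofs, in particular the \cite{Gr73}/\cite{Den}-style implementation, rather than a new one.
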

For first order, this is the main result in \cite[Theorem 1.3.1]{Gr73}; it appears in \cite[p. 172]{EM} as well. For arbitrary order, it follows from \cite[p. 179, Principal Stability Theorem C]{Gr86}. An alternate implementation for arbitrary order appeared in the master thesis \cite{Den}; it avoids convex hull extensions and adapts instead the idea from \cite{Gr73}.

\subsubsection{The foliated setting}

Consider a bundle $Y \to (N,\SF)$ and a foliated over-relation $\SS \to J^r(Y;\SF)$. It is possible to adapt Definition \ref{def:ampleness2} to the foliated setting by relying on principal covers that consist of leafwise hyperplane fields. This is ultimately unnecessary for us, so we leave the details to the reader. The same comment applies to the next section.

\subsection{Ampleness in the sense of convex hull extensions} \label{ssec:ampleness3}

If an (over)-relation is ample along principal frames, all formal solutions can be made holonomic, one derivative at a time, by introducing oscillations along the codirections given by the frames. However, one can imagine a situation where different formal solutions need oscillations along different principal frames or even oscillations along collections of codirections that do not form a frame at all.

In order to formalise this idea, we introduce the concept of convex hull extensions\footnote{Our definition differs slightly from the one in \cite[p. 177]{Gr86}. The reason is that we want our open over-relations to be manifolds that submerse onto jet space (instead of more general microfibrations with domain a (quasi)topological space). Assuming $\SR$ itself was a manifold, Gromov's convex hull extension would still yield instead a topological space with conical singularities. The upcoming convex integration statements are unaffected by this change.}:
\begin{definition} \label{def:convexHullExtension}
Let $\iota_\SR: \SR \to J^r(X)$ be an over-relation. Its \textbf{convex hull extension} is the set
\[ \Conv(\SR) \,:=\, \{(F,\lambda,z) \in \SR \times_M T^*M \times_M J^r(X) \,\mid\, z \in \Conv(\SR_{\lambda,F},F)\} \]
with anchor map $(F,\lambda,z) \mapsto z$.
\end{definition}
We observe:
\begin{lemma}
Suppose $\SR$ is open. Then, $\Conv(\SR)$ is an open over-relation. In particular, its underlying space is a smooth manifold.
\end{lemma}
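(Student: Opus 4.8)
The goal is to show that the convex hull extension $\Conv(\SR)$ of an open over-relation $\iota_\SR:\SR\to J^r(X)$ is again an open over-relation, i.e. that the anchor map $(F,\lambda,z)\mapsto z$ is a submersion from a smooth manifold. First I would note that $\Conv(\SR)$ sits inside the fibre product $\SR\times_M T^*M\times_M J^r(X)$, which is a smooth manifold (a bundle over $M$ with fibre $\SR_p\times T_p^*M\times J^r(X)_p$, using that $\iota_\SR$ submerses onto $M$ via $\pi^r_M\circ\iota_\SR$ being a microfibration, hence the composition is a smooth submersion onto an open subset of $M$; in fact since $\iota_\SR$ is a submersion, $\pi^r_M\circ\iota_\SR$ is a submersion onto its open image). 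So the only thing to check is that $\Conv(\SR)$ is an \emph{open subset} of this fibre product — openness of the subset plus submersivity of the (smooth) projection to $J^r(X)$ restricted to an open subset then gives everything, since an open subset of a manifold is a manifold and the restriction of a submersion to an open subset is a submersion.

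\textbf{Key steps.}
The heart of the matter is a continuity/openness statement for convex hulls of path-components, carried out fibrewise. Fix a point $(F_0,\lambda_0,z_0)\in\Conv(\SR)$, lying over $p_0\in M$. By definition $z_0\in\Conv\big((\SR_{\lambda_0,F_0})_{F_0}\big)$, where the subscript denotes the path-component through $F_0$. By Carath\'eodory's theorem in the affine space $\Pr_{\lambda_0,F_0}$ (of dimension $\dim W$, where $W$ is the fibre of $X\to M$), we may write $z_0$ as a convex combination $z_0=\sum_{j=0}^{N}t_j\,\iota_\SR(G_j)$ of finitely many points $\iota_\SR(G_j)$ with $G_j$ in the path-component of $F_0$ inside $\SR_{\lambda_0,F_0}$, where $N\le\dim W$ and the $t_j>0$ sum to $1$. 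Now I would use: (i) openness of $\iota_\SR$ to extend each $G_j$ to a local section $G_j(F,\lambda)$ of $\SR$ over a neighbourhood of $(F_0,\lambda_0)$ in $\SR\times_M T^*M$, continuously in $(F,\lambda)$, landing in $\SR_{\lambda,F}$ — more precisely, since $\iota_\SR$ is a submersion and the principal subspaces $\Pr_{\lambda,F}$ vary smoothly (Lemma \ref{lem:principalImage}), the intersections $\SR_{\lambda,F}=\iota_\SR^{-1}(\Pr_{\lambda,F})$ form a smooth family and one can find such continuous extensions $G_j$ remaining in the \emph{same} path-component for $(F,\lambda)$ close to $(F_0,\lambda_0)$; (ii) the fact that $G_j(F,\lambda)$ and $F$ itself lie in the same path-component of $\SR_{\lambda,F}$ for $(F,\lambda)$ near $(F_0,\lambda_0)$ — this follows because path-components are locally constant in this family, as $\SR_{\lambda,F}$ is an open submanifold of the (contractible, affine) fibre and the family is locally a product up to isotopy. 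Then for $(F,\lambda,z)$ near $(F_0,\lambda_0,z_0)$, the point $z$ — which lies in $\Pr_{\lambda,F}$, an affine space close to $\Pr_{\lambda_0,F_0}$ — can be written as a convex combination $z=\sum_j s_j\,\iota_\SR(G_j(F,\lambda))$ with barycentric coordinates $s_j$ close to $t_j$, hence still positive; this is the standard fact that a point in the interior of a simplex stays in the interior under small perturbations of the vertices and the point. Therefore $(F,\lambda,z)\in\Conv(\SR)$, proving openness.

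\textbf{Main obstacle.}
The delicate point — and the one I would spend the most care on — is step (i)–(ii): producing the continuous families $G_j(F,\lambda)$ staying in a fixed path-component of $\SR_{\lambda,F}$ as $(F,\lambda)$ varies, and correspondingly knowing that $F$ stays in that same path-component. This is where the openness of $\iota_\SR$ (submersivity) is essential: it lets us locally trivialise $\SR$ near each $\iota_\SR^{-1}(\Pr_{\lambda_0,F_0})$ as a family of open subsets of a fixed affine space, so that path-components form a locally constant sheaf over the parameter $(F,\lambda)$ and the $G_j$ can be transported along. One should also be slightly careful at covectors $\lambda$ with $\lambda=0$, where $\Pr_{\lambda,F}=\{\iota_\SR(F)\}$ collapses and $\SR_{\lambda,F}$ is a point; but then $\Conv(\SR_{\lambda,F},F)=\{\iota_\SR(F)\}$ as well, so the condition $z\in\Conv(\SR_{\lambda,F},F)$ just says $z=\iota_\SR(F)$, a closed condition in that slice — however this does not break openness of $\Conv(\SR)$ as a subset of the full fibre product because near such a point the defining condition still describes an open set once one accounts for the collapsing direction; I would handle this by working with the smooth affine map $w\mapsto \iota_\SR(F)+\lambda^{\otimes r}\otimes w$ of Lemma \ref{lem:principalImage}, which is defined and varies smoothly for all $\lambda$, and phrasing the convex-combination condition in terms of the $w$-coordinates, where the degeneracy at $\lambda=0$ becomes harmless. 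Once these points are settled, the rest is the routine Carath\'eodory-plus-perturbation argument sketched above, and the final sentence of the lemma (that the underlying space is a smooth manifold) is immediate from openness inside the ambient smooth fibre product.
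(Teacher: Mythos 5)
There is a genuine structural error in your reduction. You claim that the only thing to check is that $\Conv(\SR)$ is an open subset of the fibre product $\SR\times_M T^*M\times_M J^r(X)$. It never is: every $(F,\lambda,z)\in\Conv(\SR)$ satisfies $z\in\Pr_{\lambda,F}$, because $\Conv(\SR_{\lambda,F},F)$ lives inside the principal subspace, and $\Pr_{\lambda,F}$ is an affine subspace of the fibre of $J^r(X)\to M$ of dimension at most $\dim W$, hence of positive codimension in that fibre for every $r\geq 1$. So $\Conv(\SR)$ sits inside a positive-codimension locus of your ambient fibre product and cannot be open there; in particular neither the manifold statement nor the submersivity statement can be obtained along the route you describe, and the tautological submersivity of the projection $(F,\lambda,z)\mapsto z$ from the triple fibre product is not the relevant point. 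The paper's proof supplies exactly the missing intermediate object: it first shows, via Lemma \ref{lem:principalImage}, that the locus $A=\{(F,\lambda,z)\,\mid\,z\in\Pr_{\lambda,F}\}$ is a smooth affine bundle over $\SR\times_M T^*M$ with fibre $W$, with anchor written as $(F,\lambda,w)\mapsto \iota_\SR(F)+\lambda^{\otimes r}\otimes w$; this anchor is a submersion \emph{because} $\iota_\SR$ is (this is where openness of $\SR$ enters for the submersion claim), and only then does one check that $\Conv(\SR)$ is an open subset of $A$. Note also that working in the $(F,\lambda,w)$-coordinates is not an optional patch for the $\lambda=0$ stratum: as a literal subset of the triple fibre product, the locus $\{z\in\Pr_{\lambda,F}\}$ has collapsing fibres at $\lambda=0$ and is not smooth there, so the parametrised bundle must be the ambient space throughout.

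The heart of your argument — Carath\'eodory in $\Pr_{\lambda_0,F_0}$, local sections of the submersion $\iota_\SR$ to transport the points $G_j$ continuously while staying in the path-component of $F$, and stability of strictly positive barycentric coordinates under small perturbations — is sound, and it is in effect the detailed justification behind the paper's one-line assertion that $\Conv(\SR)$ is open in $A$ ``due to the openness of $\SR$''. But it must be run inside $A$, i.e.\ in the $(F,\lambda,w)$-coordinates, and near $\lambda=0$ the correct statement to prove is quantitative: by submersivity, the image of the path-component of $F$ in $\SR_{\lambda,F}$ contains a neighbourhood of $\iota_\SR(F)$ in $\Pr_{\lambda,F}$ of a size that is locally uniform in $(F,\lambda)$, while the point $\iota_\SR(F)+\lambda^{\otimes r}\otimes w$ approaches $\iota_\SR(F)$ as $\lambda\to 0$ with $w$ bounded; this is what makes the degeneracy harmless, and it is not covered by the Carath\'eodory perturbation step, which presupposes $\lambda_0\neq 0$. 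With the ambient space corrected and this case argued, your proposal becomes a correct elaboration of the paper's proof.
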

\begin{proof}
Let $W \to J^r(X)$ be the pullback of the vertical tangent space of $X$; i.e. the subspace of $TX$ consisting of vectors tangent to the fibres of $X \to M$. Using Lemma \ref{lem:principalImage} we observe that the space
\[ A = \{(F,\lambda,z) \in \SR \times_M T^*M \times_M J^r(X) \,\mid\, z \in \Pr_{\lambda,F}\} \]
is a smooth fibre bundle over $\SR \times_M T_p^*M$ with affine fibre isomorphic to $W$. Using the Lemma once more, we see that the anchor map $A \to J^r(X)$ given by $(F,\lambda,z) \mapsto z$ can equivalently be written as 
\[ (F,\lambda,z = \iota_\SR(F) + \lambda^{\otimes r} \otimes w) \,\mapsto\, \iota_\SR(F) + \lambda^{\otimes r} \otimes w, \]
which is a submersion because $\SR$ itself was. The proof is complete noting that $\Conv(\SR)$ is an open subset of $A$, due to the openness of $\SR$.
\end{proof}

We write $\Conv^l(\SR)$ for the $l$-fold convex hull extension of $\SR$. An element in $\Conv^l(\SR)$ is then an element $F \in \SR$, together with a principal path of length $l$ starting at $\iota_\SR(F)$. A section of $\Conv^l(\SR)$ is thus a smoothly-varying choice of principal path at each point. Do note that the hyperplanes associated to such paths vary smoothly, but need not be integrable; for this reason, it is convenient to restrict our attention to the following nice subclass of sections:
\begin{definition}
A section $(F,\lambda_1,z_1,\cdots,\lambda_l,z_l): M \to \Conv^l(\SR)$ is said to be \textbf{integrable} if the $\lambda_i$ are integrable.
\end{definition}
Do note that the $\lambda_i$ are allowed to vanish and thus we speak of integrability in the locus $\{\lambda_i \neq 0\}$.

The following definition corresponds to the idea of being able to connect, using convex hull extensions, a formal datum $F$ to the holonomic section corresponding to its zero order part $\pi^r_0 \circ \iota_\SR \circ F$.
\begin{definition}
A formal solution $F: M \to \SR$ is (integrably) \textbf{short} if, for some $l$, there is a (integrable) holonomic solution $G: M \to \Conv^l(\SR)$ of the form $(F,\cdots, j^r(\pi^r_0 \circ \iota_\SR \circ F))$.

Assume that $F$ is holonomic in a neighbourhood of a closed subset $M' \subset M$. Then, $F$ is \textbf{short relative to $M'$} if the codirections $\{\lambda_i\}_{i=1}^l$ associated to $G = (F,\lambda_1,z_1,\cdots,\lambda_l,z_l)$ can be chosen to be zero over $\Op(M')$.
\end{definition}
Do note that if a solution is short it is already a solution of $\pi^r_{r-1} \circ \iota_\SR$. That we need such an assumption is not surprising, since the convex hull extension machinery works purely in order $r$. As such, in order to provide a full flexibility statement, we need to consider convex hull extensions for each $\pi^r_{r'} \circ \iota_\SR$, $r'=1,\cdots,r$.

The last flavour of ampleness comes in two slightly different incarnations:
\begin{definition} \label{def:ampleness3}
An over-relation $\iota_\SR: \SR \to J^r(X)$ is \textbf{ample in the sense of (integrable) convex-hull extensions} if the following property holds: Fix
\begin{itemize}
\item An order $r'=1,\cdots,r$,
\item A compact manifold $K$,
\item A $K$-family of formal solutions $F: M \times K \to \SR \times K$ that is holonomic of order $r'-1$.
\end{itemize}
Then, the family $F$ is (integrably) short for $\pi^r_{r'} \circ \iota_\SR$, relative to the regions in which it is already $r'$-holonomic.
\end{definition}

Convex integration, in full generality, reads:
\begin{theorem} \label{thm:convexIntegration3}
The complete $C^0$-close $h$-principle holds for any open over-relation that is ample in the sense of (integrable) convex hull extensions.
\end{theorem}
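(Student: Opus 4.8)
The plan is to reduce Theorem~\ref{thm:convexIntegration3} to the more classical Theorem~\ref{thm:convexIntegration2} by a two-layer induction: an outer induction on the jet order $r'=1,\dots,r$, and, for each fixed order, an inner induction that processes the principal path prescribed by the shortness datum one step at a time. Concretely, suppose $F\colon M\times K\to\SR\times K$ is a $K$-family of formal solutions that is holonomic of order $r'-1$ and short relative to the region where it is already $r'$-holonomic. By Example~\ref{ex:lowerOrderRelation} it suffices to treat the over-relation $\pi^r_{r'}\circ\iota_\SR$ of order $r'$, so we may as well assume $r=r'$. Shortness hands us, after passing to $\Conv^l(\SR)$ for some $l$, an (integrable) holonomic section $G=(F,\lambda_1,z_1,\dots,\lambda_l,z_l)$ whose terminal point is $j^r(\pi^r_0\circ\iota_\SR\circ F)$; the codirections $\lambda_i$ are integrable and vanish over $\Op(M')$.

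The heart of the argument is a single-step lemma: given a formal solution landing in $\Conv^{l}(\SR)$ with first leg $(F,\lambda_1,z_1,\dots)$, one can produce a genuinely holonomic perturbation $\widetilde F$ of $F$ (in order $r$, $C^0$-close, relative to the locus $\{\lambda_1=0\}$) whose order-$r$ derivative along $\lambda_1$ has been pushed close to $z_1$, so that $\widetilde F$ now carries a shortness datum of principal length $l-1$. This is exactly the content of one-directional convex integration along the integrable hyperplane field $\tau_1=\ker\lambda_1$: since $\lambda_1$ is integrable, we may work leafwise on the codimension-$1$ foliation it defines and integrate the oscillation, producing the required holonomic correction while controlling the errors introduced in all lower-order derivatives by openness of $\SR$. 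Crucially, because $z_1\in\Conv(\SR_{\lambda_1,F},F)$, the relevant one-dimensional relation is ample \emph{for this particular datum}, which is all the one-step convex integration procedure needs — no global ampleness along a frame is required. Iterating this lemma $l$ times drives the principal path down to length $0$, i.e. it makes $F$ holonomic in order $r$ on the nose, while at each stage the relativeness over $\Op(M')$ is preserved because all the $\lambda_i$ were chosen to vanish there. Feeding the parameter manifold $K$ through exactly as in the reduction discussed after Corollary~\ref{cor:convexIntegration1} yields the parametric (and parameter-relative) statement, hence the full $h$-principle.

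I would organise the write-up so that the single-step lemma is stated and proved first (this is where one genuinely invokes the analytic core of convex integration, and where one can cite \cite[p.~179, Principal Stability Theorem~C]{Gr86} or the treatment in \cite{EM,Den} for the one-directional construction), then assemble the order-$r$ statement by the inner induction on principal length, and finally run the outer induction on $r'$. One must be a little careful about the bookkeeping of neighbourhoods: each application of the single-step lemma is only valid on the open set where the relevant section of $\Conv^l(\SR)$ is defined and where the previous corrections have already been installed, so the induction should be phrased with a shrinking family of neighbourhoods of $M'$, or equivalently one fixes the chart decomposition at the outset and argues chart by chart with a partition-of-unity patching — the standard convex integration localisation.

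The main obstacle, and the point deserving the most care, is the interplay between the outer induction on order and the relativeness hypothesis. When we correct order $r'$, the corrections are $C^0$-small as maps $M\to J^{r'}(X)$, but they are \emph{not} small in order $r'+1,\dots,r$; thus after finishing order $r'$ the family $F$ is holonomic up to order $r'$ but its higher-order data have been disturbed. The definition of ampleness in the sense of convex hull extensions (Definition~\ref{def:ampleness3}) is precisely engineered to absorb this: it demands shortness for $\pi^r_{r''}\circ\iota_\SR$ for \emph{every} $r''$, applied to whatever $(r''-1)$-holonomic family we have produced so far, so the hypothesis is reusable at the next stage. Making this loop airtight — verifying that the output of stage $r'$ is a legitimate input to stage $r'+1$, including the relative condition and the parametric dependence — is the step I expect to be the genuine content of the proof; the one-directional convex integration itself is, at this point in the paper, a black box we are entitled to use.
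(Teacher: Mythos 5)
The paper itself does not prove Theorem~\ref{thm:convexIntegration3}: it treats it as a known result, citing \cite[p.~123, Theorem 8.4]{Spring} for the integrable version and noting that the version without integrability is implicit in \cite[p.~179, Principal Stability Theorem C]{Gr86} (modulo an approximation claim the authors explicitly say they have not checked). Your proposal is essentially an attempt to reconstruct Spring's argument, and the outer skeleton (reduce to pure order $r'$, induct on the principal length, invoke one-directional convex integration along the integrable hyperplane fields, keep the corrections zero over $\Op(M')$) is the right one. But your single-step lemma --- the only place where real work happens --- is set up at the wrong end of the principal path, and as stated it fails for $l\ge 2$.

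One-directional convex integration corrugates an honest section, and the only honest section available is $f=\pi^r_0\circ\iota_\SR\circ F$, whose $r$-jet is the \emph{terminal} point $z_l$ of the path, not its initial point. The fundamental lemma needs the current pure derivative of the section being corrugated to lie in the convex hull of the target set; the certificate attached to the first leg, $z_1\in\Conv(\SR_{\lambda_1,F},F)$, constrains $z_1$ relative to $\iota_\SR(F)$ and says nothing about $j^rf$, so it cannot license a corrugation of $f$ along $\lambda_1$ when $l\ge 2$. Moreover $z_1$ need not lie in the image of $\iota_\SR$, so a ``genuinely holonomic perturbation $\widetilde F$ of $F$ whose derivative along $\lambda_1$ is close to $z_1$'' would in general be neither a solution nor a formal solution of $\SR$, and the claim that it ``carries a shortness datum of length $l-1$'' has no meaning, let alone a proof. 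The correct induction peels the path from the holonomic end: one proves, for an arbitrary open over-relation $\SS$, that holonomic solutions of $\Conv(\SS)$ deform, $C^0$-close, relatively and parametrically, to holonomic solutions of $\SS$ --- here the hull condition of the \emph{last} leg is exactly the membership of the current jet $j^rf$ that the fundamental lemma requires --- and then applies this with $\SS=\Conv^{l-1}(\SR),\Conv^{l-2}(\SR),\dots,\SR$, using the fact (proved after Definition~\ref{def:convexHullExtension}) that convex hull extensions of open over-relations are again open. Two further caveats: the parametric statement is not obtained ``as in the reduction after Corollary~\ref{cor:convexIntegration1}'' (that reduction uses ampleness along principal directions of the non-foliated preimage); it comes from Definition~\ref{def:ampleness3} demanding shortness of the whole $K$-family, with $K$ then carried through the corrugation as a parameter. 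And even once repaired, your argument only yields the integrable reading of the theorem; dropping integrability needs, in addition, Gromov's approximation of continuous hyperplane fields by piecewise foliation charts, which is precisely the point the paper singles out as delicate.
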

This result, assuming integrability, is presented in detail in \cite[p. 123, Theorem 8.4]{Spring}. The statement, without integrability, was already implicit in \cite[p. 179, Principal Stability Theorem C]{Gr86}. The integrability hypothesis is restrictive; we discuss it further in the next Subsection.

\subsection{A comparison of the different incarnations of ampleness} \label{ssec:comparison}

As stated earlier, classic ampleness (ampleness in all principal directions) is the most restrictive of the notions we have introduced. Indeed, it is immediate that Theorem \ref{thm:convexIntegration2} implies Theorem \ref{thm:convexIntegration1}. Furthermore, Theorem \ref{thm:convexIntegration3} implies both: Ampleness along a frame says that we can connect any formal solution $F$ to $j^r(\pi^r_0 \circ \iota_\SR \circ F)$ using the given principal frames, proving integrable shortness of $F$. This works for families and relatively as well.

It is obvious that ampleness in the sense of integrable convex hull extensions is more restrictive than the version without integrability. In particular, ampleness in the sense of convex hull extensions is the most general of the four definitions given.

As we wrote above, Theorem \ref{thm:convexIntegration3}, without integrability, is contained in Gromov's text implicitly; it can be deduced from \cite[p. 179, Principal Stability Theorem C]{Gr86}. In \cite{Spring}, Spring works always under integrability assumptions; this allows him to directly invoke $1$-dimensional convex integration in the foliation charts associated to the integrable hyperplane fields appearing in the definition of integrable shortness.

The key claim that Gromov uses to drop integrability, see \cite[p. 177]{Gr86}, is that any \emph{continuous} hyperplane field can be piecewise approximated by foliation charts. This can then be used to approximate any section of $\Conv^l(\SR)$ by an integrable one (at the expense of increasing $l$). We interpret this as an \emph{$h$-principle without homotopical assumptions} (see also Remark \ref{rem:hPrincipleWOHA}) saying that there is a weak equivalence between the space of sections and the subspace of integrable ones. The authors of the present paper have not checked this claim in detail. In fact, it is not important for our results:
\begin{remark} \label{rem:ampleness3Integrability}
None of the results from this paper rely on Definition \ref{def:ampleness3} or Theorem \ref{thm:convexIntegration3}. Our arguments reduce the $h$-principle for relations that are ample up to avoidance to the $h$-principle for relations that are ample along a principal frame (Theorem \ref{thm:convexIntegration2}). Nonetheless, in Corollary \ref{cor:ampleness3Integrability} we prove that a relation that is ample up to avoidance is ample in the sense of integrable convex hull extensions.
\end{remark}

\subsubsection{Computability of ampleness in all principal directions}

Ampleness in all principal directions is the most restrictive but easiest to check of the four incarnations. The reason is that it is \emph{pointwise} in nature: we just go through each fibre of $J^r(X)$ checking ampleness, one principal direction at a time. In practice, one often deals with Diff-invariant relations described as the complement of some fibrewise (semi-)algebraic condition (which we call the \emph{singularity}). It is then sufficient to check a single fibre of $J^r(X)$ and a single codirection; the problem boils down then to checking the intersection of the singularity with each principal subspace. In practice, this can be already quite involved\footnote{In \cite{MT}, P. Massot and M. Theillière prove that convex integration can be used to prove holonomic approximation in spaces of 1-jets. This is a beautiful application of classic convex integration in which checking ampleness is highly non-trivial.} unless the relation is relatively simple. 

Classic ampleness turns out to be limited in its applications. In practice, we only encounter it if all formal solutions $F \in \SR$ present some form of symmetry guaranteeing that they sit equally nicely with respect to all codirections $\lambda \in T^*M$; we will see this in examples in Section \ref{sec:flexBG}. The relation defining hyperbolic $(4,6)$ distributions, despite being Diff-invariant, does not satisfy this. We expect most differential relations of codimension-$1$ not to satisfy it.

\subsubsection{Computability of ampleness along principal frames}

Ampleness along principal frames turns out to be not so different from ampleness in all directions. The two are equivalent if we assume Diff-invariance. In terms of computability, once a concrete principal cover $\SC$ is given, checking $\SC$-ampleness is, by definition, easier than checking it in all directions. 

\subsubsection{Computability of ampleness in the sense of convex hull extensions}

Ampleness in the sense of convex hull extensions is incredibly general, but notoriously difficult to check. The reason is that it is not a pointwise condition: A formal solution $F$ is short if we can connect it, using a smooth family of principal paths $G$, to its underlying holonomic section; both $F$ and $G$ are global objects.

Suppose we want to construct $G$ and thus prove that $F$ is short. Convex integration is local in nature, so we try to find a suitable cover of $M$ to proceed. Given a point $p \in M$, we may be able to define $G(p)$ and then extend it locally, by openess, to some open neighbourhood $U_p$. Finding $G(p)$ is a pointwise process and does not need ampleness in all principal \emph{directions}; it is sufficient to find a suitable sequence of ample principal \emph{subspaces} starting from $F$. We do this for all $p$ and we extract a cover $\{U_i\}$ with a section $G_i$ defined over $U_i$. In order to patch these up, we start with $G_1$ and we glue it with $F$ using a cut-off close to $\partial U_1$. The problem now is that the principal subspaces that behaved nicely with respect to $F$ need not behave nicely with respect to $G_1$. In particular, $G_2$ may not help us at all in the overlap $U_1 \cap U_2$.

Furthermore, unlike the previous two flavours, ampleness in the sense of convex hull extensions is not readily parametric. To deal with families one has to prove that the family in question, as a whole, is short.

\emph{Ampleness up to avoidance} is designed to deal with these considerations and make the aforementioned sketch of argument work. It is also computable pointwise, as we explain in Subsection \ref{sssec:computabilityAvoidance}.

\section{Avoidance} \label{sec:avoidance}

Let $\iota_\SR: \SR \to J^r(X)$ be an open over-relation. Our goal is to construct a so-called \emph{avoidance template} $\SA$ associated to $\SR$; if we succeed in constructing $\SA$, we will say that $\SR$ is \emph{ample up to avoidance}. Our main Theorem \ref{thm:main}, whose proof we postpone to the next Section, says that this is a sufficient condition for the $h$-principle to hold. 

Templates (and more general objects called pre-templates) are introduced in Subsections \ref{ssec:avoidanceTemplates} and \ref{ssec:liftingTemplate}. These definitions require us to introduce some auxiliary notation about configurations of hyperplanes; this is done in Subsection \ref{ssec:hyperplanes}. In Subsection \ref{ssec:removing} we present some simple constructions of pre-templates. These constructions can yield empty pre-templates when $\SR$ is very far from being ample; this is explained in Subsection \ref{ssec:trivialPretemplates}.

\subsection{Configurations of hyperplanes} \label{ssec:hyperplanes}

Given a positive integer $a$ and a vector space $V$, we write
\[ \HConf_a(V) := \{ (H_1,\cdots,H_a) \in (\NP V^*)^a \,\mid\, H_i \neq H_j, \text{ for all $i \neq j$}\}/\Sigma_a. \]
I.e. the smooth, non-compact manifold consisting of all unordered configurations $[H_1,\cdots,H_a]$ of $a$ distinct hyperplanes in $V$. Its non-compactness is due to collisions (i.e. any sequence in which $H_i$ approaches $H_j$ has no convergent subsequence). In order to consider collections of arbitrary finite cardinality, we consider the union:
\[  \HConf(V) := \coprod_{a=0}^\infty \HConf_a(V), \]
where $\HConf_0(V) := \{\emptyset\}$ is the space containing only the empty configuration.

Given two configurations $\Xi, \Xi' \in \HConf(V)$ we will write $\Xi' \subset \Xi$ if every hyperplane in the former is contained in the latter.

\subsubsection{Repetitions}

In practice, we will deal with ordered collections of hyperplanes that may have repetitions. Concretely, these correspond to points in the closed manifold
\[  \barHConf[a](V) := (\NP V^*)^a. \]
Consider the open dense subset $\sbarHConf[a](V) \subset \barHConf[a](V)$ consisting of those collections with no repetitions. Its complement is an algebraic subvariety. By construction, we have a quotient map
\[ \pi: \sbarHConf[a](V) \longrightarrow \HConf_a(V) \]
whose fibres are isomorphic to the symmetric group $\Sigma_a$. As before, we write 
\[ \barHConf(V)  := \coprod_{a=0}^\infty \barHConf[a](V), \qquad 
   \sbarHConf(V) := \coprod_{a=0}^\infty \sbarHConf[a](V), \]
where $\barHConf[0](V)$ and $\sbarHConf[0](V)$ are the singleton set $\{\emptyset\}$.

\subsubsection{Bundles of configurations}

Fix a manifold $M$. We write 
\[ \HConf(TM) \to M \]
for the smooth fibre bundle with fibre $\HConf(T_pM)$ at a given $p \in M$. Similarly, we write $\barHConf(TM)$ and $\sbarHConf(TM)$. By construction, we have a quotient map 
\[ \sbarHConf(TM) \longrightarrow \HConf(TM) \]
given by the fibrewise action of the symmetric groups.

\subsection{Avoidance templates and ampleness} \label{ssec:avoidanceTemplates}

Fix a bundle $X \to M$, an over-relation $\iota_\SR: \SR \to J^r(X)$, and a subset $\SA$ of the fibered product $\SR \times_M \HConf(TM)$.

Given a family of hyperplanes $\Xi \in \HConf(T_pM)$, we write
\[ \SA(\Xi) := \SA \cap (\SR \times_M \{\Xi\}). \]
Using the canonical identification $\SR \times_M \{\Xi\} \cong \SR_p$, we regard $\SA(\Xi)$ as a subset of the fibre $\SR_p$ lying over $p \in M$. If we are given a collection of hyperplane fields $\Xi: M \to \HConf(TM)$ instead, we will similarly write $\SA(\Xi)$ for the union of all the subsets $\SA(\Xi(p))$ as $p$ ranges over the entirety of $M$. In this case, $\SA(\Xi)$ is a subset of $\SR$. If it is a smooth submanifold, the map $\iota_\SR: \SA(\Xi) \to J^r(E)$ is an over-relation.

Given some $F \in \SR$ lying over a point $p$, we similarly denote
\[ \SA(F) := \SA \cap \left(\{F\} \times \HConf(T_pM) \right). \]
As before, we regard $\SA(F)$ as the subset of $\HConf(T_pM)$ consisting of those $\Xi$ such that $F \in \SA(\Xi)$. If $F$ is instead a section $M \to \SR$, $\SA(F)$ will be the subset of $\HConf(TM)$ given by the union of all $\SA(F(p))$, as $p$ ranges over all points in $M$.

\begin{definition} \label{def:avoidanceTemplate}
An open subset $\SA \subset \SR \times_M \HConf(TM)$ is an \textbf{(avoidance) pre-template} if the following property holds:
\begin{itemize}
\item[I.] If $\Xi' \subset \Xi \in \HConf(TM)$ is a subconfiguration, then $\SA(\Xi) \subset \SA(\Xi')$.
\end{itemize}

The pre-template $\SA$ is an \textbf{(avoidance) template} if, additionally:
\begin{itemize}
\item[II.] Given $\Xi \in \HConf(TM)$, $\SA(\Xi)$ is ample along the principal directions determined by $\Xi$.
\item[III.] Given $F \in \SR$ lying over $p \in M$, $\SA(F)$ is dense in each $\HConf_m(T_pM)$.
\end{itemize}
\end{definition}
Property (I) guarantees coherence: removing hyperplanes from $\Xi$ makes the relation $\SA(\Xi)$ bigger. In particular, if $\SA(\Xi)$ is ample along $\Xi$, then $\SA(\Xi')$ is ample along $\Xi' \subset \Xi$. 

Our main definition reads:
\begin{definition} \label{def:main}
An open over-relation $\iota_\SR: \SR \to J^r(X)$ is said to be \textbf{ample up to avoidance} if each of the over-relations 
\[ (\pi^r_{r'} \circ \iota_\SR:\, \SR \to J^{r'}(X))_{r'=1,\cdots,r} \]
admits an avoidance template.
\end{definition}
Observe that $\SR \times_M \HConf(TM)$ is an avoidance template if and only if $\SR$ is ample in all principal directions.

\subsubsection{Computability of avoidance} \label{sssec:computabilityAvoidance}

We stated in Subsection \ref{ssec:comparison} that ampleness up to avoidance is as computable as classic ampleness. There are two parts to this claim.

First we note that verifying whether a given open subset $\SA \subset \SR \times_M \HConf(TM)$ is a template boils down to pointwise checks. Property (I) is often given by construction. Property (III) is often checked together with openness and follows as soon as the complement of $\SA(F)$ is given, fibrewise, by some algebraic equality. Property (II) is the most involved, but it is no different from checking ampleness along a principal frame.

The second part of the claim is that the construction of templates is algorithmic. Indeed, we present two possible constructions in Subsection \ref{ssec:removing}. However, the reader should just think of these as rough guidelines. In practice (for instance, in the proof of Theorem \ref{thm:46}), one needs to make adjustments in order to produce a template. Still, the adjustments that need to be made are somewhat standard; see Remark \ref{rem:thinSingularitiesAreGood}.

Lastly, we observe that ampleness up to avoidance is parametric in nature, much like classic convex integration. Namely, given a template $\SA$ for $\SR$, we can define an associated foliated template for any parametric lift $\SR \times K$; see Subsection \ref{ssec:foliatedTemplates}. The parametric version of Theorem \ref{thm:main} will follow then from the non-parametric one.

\subsection{Lifted avoidance templates} \label{ssec:liftingTemplate}

Definition \ref{def:avoidanceTemplate} is intuitive conceptually but, in practice (see the proofs of Propositions \ref{prop:avoidanceRelation} and \ref{prop:main}), it is often more convenient to deal with the following notion:
\begin{definition} \label{def:liftingTemplate}
Let $\pi$ be the quotient map $\sbarHConf(TM) \to \HConf(TM)$. We write
\[ \bar\SA \subset \SR \times_M \sbarHConf(TM) \subset \SR \times_M \barHConf(TM) \]
for the preimage of a given subset
\[ \SA \subset \SR \times_M \HConf(TM). \]
\end{definition}
Given $\Xi \in \barHConf(TM)$, we write $\SA(\Xi) := \SA(\pi(\Xi))$. Similarly, given $F \in \SR$, we write $\barSA(F)$ for the preimage by $\pi$ of $\SA(F)$.

We remark:
\begin{lemma} \label{lem:liftingTemplate}
Fix a subset $\SA \subset \SR \times_M \HConf(TM)$. Then, $\SA$ is a pre-template if and only if
\begin{itemize}
\item $\barSA$ is open.
\item $\barSA$ is invariant under the action of the permutation groups $\Sigma_*$.
\item[$\overline{\text{I}}$.] Consider $\Xi', \Xi \in \barHConf(TM)$. Suppose $\pi(\Xi')$ is a subconfiguration of $\pi(\Xi)$. Then $\barSA(\Xi) \subset \barSA(\Xi')$.
\end{itemize}

Furthermore, $\SA$ is a template if and only if, additionally:
\begin{itemize}
\item[$\overline{\text{II}}$.] Given $\Xi \in \HConf(TM)$, $\barSA(\Xi)$ is ample along the principal directions determined by $\Xi$.
\item[$\overline{\text{III}}$.] Given $F \in \SR$ lying over $p \in M$, $\barSA(F)$ is dense in each $\barHConf[m](T_pM)$.
\end{itemize}
\end{lemma}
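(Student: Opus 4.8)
\textbf{Proof strategy for Lemma \ref{lem:liftingTemplate}.} The statement is a translation of the three (resp. six) conditions defining a pre-template (resp. template) in Definition \ref{def:avoidanceTemplate} through the quotient map $\pi\colon \sbarHConf(TM) \to \HConf(TM)$, so the plan is simply to verify each condition in turn, using that $\pi$ is an open, surjective, $\Sigma_*$-equivariant local diffeomorphism onto its image (its fibres are the free orbits of the permutation groups). I would organise the proof as a sequence of equivalences, handling the pre-template conditions first and then the two extra template conditions.

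First I would record the basic properties of $\pi$: over the open dense locus $\sbarHConf(TM)$ it is a covering map with structure group $\Sigma_a$ on the cardinality-$a$ stratum, and by definition $\bar\SA = \pi^{-1}(\SA)$. From this, openness of $\SA$ in $\SR \times_M \HConf(TM)$ is equivalent to openness of $\bar\SA$ in $\SR \times_M \sbarHConf(TM)$ (hence in $\SR \times_M \barHConf(TM)$, since $\sbarHConf(TM)$ is open there), because $\pi$ is continuous and open. Equivariance of $\bar\SA$ under $\Sigma_*$ is automatic for any preimage of a $\Sigma_*$-invariant object, and conversely a $\Sigma_*$-invariant open subset of $\sbarHConf(TM)$ descends to a well-defined open subset of $\HConf(TM)$; so the first two bulleted conditions in the Lemma together encode precisely that $\SA$ is a well-defined open subset, which is part of being a pre-template. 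For condition $(\overline{\text{I}})$, I would note that the notion ``$\pi(\Xi')$ is a subconfiguration of $\pi(\Xi)$'' is exactly the relation $\Xi' \subset \Xi$ of Subsection \ref{ssec:hyperplanes} read upstairs, and that by definition $\barSA(\Xi) = \pi^{-1}(\SA(\pi(\Xi)))$; thus the inclusion $\barSA(\Xi) \subset \barSA(\Xi')$ holds if and only if $\SA(\pi(\Xi)) \subset \SA(\pi(\Xi'))$, which is Property (I). Since every subconfiguration of $\pi(\Xi)$ is realised as $\pi(\Xi')$ for a suitable lift $\Xi'$, nothing is lost in passing between the two formulations.

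For the template conditions, I would observe that the principal directions determined by a configuration depend only on the underlying set of hyperplanes, hence are unchanged under $\pi$ and under reordering or the presence of repetitions (a repeated hyperplane contributes the same principal direction, and ampleness along a collection of principal directions is insensitive to repeating one of them). Moreover $\barSA(\Xi)$ and $\SA(\pi(\Xi))$ are, via the canonical identifications $\SR \times_M \{\Xi\} \cong \SR_p \cong \SR \times_M \{\pi(\Xi)\}$, the same subset of $\SR_p$. Therefore condition $(\overline{\text{II}})$ is literally condition (II), phrased for configurations with possible repetitions, and the two are equivalent. Finally, for $(\overline{\text{III}})$ I would use that $\pi\colon \sbarHConf[a](T_pM) \to \HConf_a(T_pM)$ is a finite covering map and that $\sbarHConf[a](T_pM)$ is open and dense in $\barHConf[a](T_pM)$: a subset $S \subset \HConf_a(T_pM)$ is dense if and only if $\pi^{-1}(S)$ is dense in $\sbarHConf[a](T_pM)$ (covering maps are open and surjective, so preimages of dense sets are dense, and conversely $\pi$ maps dense sets to dense sets), and $\pi^{-1}(S)$ is dense in $\sbarHConf[a](T_pM)$ if and only if it is dense in the closed manifold $\barHConf[a](T_pM)$ by density of $\sbarHConf[a](T_pM)$. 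Applying this with $S = \SA(F) \cap \HConf_m(T_pM)$ and $\pi^{-1}(S) = \barSA(F) \cap \sbarHConf[m](T_pM)$ gives the equivalence of (III) and $(\overline{\text{III}})$.

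There is no real obstacle here; the only point demanding mild care is the bookkeeping around repetitions and permutations, i.e. checking that ``subconfiguration'', ``the principal directions determined by $\Xi$'', and ``dense in each $\barHConf[m]$'' are all genuinely invariant under $\pi$ and unaffected by adding a repeated hyperplane — which is why the Lemma is stated with the equivariance and openness of $\bar\SA$ as explicit hypotheses rather than being folded silently into the definition. Assembling the equivalences condition-by-condition then yields both the pre-template and the template halves of the statement.
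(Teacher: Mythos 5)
Your proposal is correct and follows essentially the same route as the paper, which proves the lemma by noting that $\sbarHConf(TM)\subset\barHConf(TM)$ is open with complement an algebraic variety of positive codimension (handling the density/openness transfer) and that $\pi$ is a quotient map (handling the remaining equivalences); your write-up simply makes these covering-map and density bookkeeping steps explicit.
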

\begin{proof}
First note that $\sbarHConf(TM) \subset \barHConf(TM)$ is open. Its complement, which is an algebraic variety and thus of positive codimension, consists of all configurations that involve repetitions. The claim follows from this fact and the observation that $\pi$ is a quotient map.
\end{proof}
Conversely, any open, $\Sigma_*$-invariant subset of $\SR \times_M \sbarHConf(TM)$ is the $\barSA$ of some template $\SA$ as long as Properties ($\overline{\text{I}}$), ($\overline{\text{II}}$) and ($\overline{\text{III}}$) hold.

\subsection{Foliated templates} \label{ssec:foliatedTemplates}

We will prove in Section \ref{sec:mainProof} that the parametric analogue of Theorem \ref{thm:main} follows from Theorem \ref{thm:main} itself. Compare this to Theorem \ref{thm:convexIntegration1} and Corollary \ref{cor:convexIntegration1}. This is best implemented using the foliated setting, which we now introduce. 

Fix a foliated manifold $(N,\SF)$, a bundle $Y \to N$, and an over-relation $\iota_\SS: \SS \to J^r(Y;\SF)$. We look at subsets $\SA \subset \SS \times_N \HConf(\SF)$. We define $\SA(\Xi)$ and $\SA(F)$ in the obvious manner. Then:
\begin{definition} \label{def:avoidanceTemplateParametric}
An open subset $\SA \subset \SS \times_N \HConf(\SF)$ is a \textbf{foliated pre-template} if the following property holds:
\begin{itemize}
\item[I.] If $\Xi' \subset \Xi \in \HConf(\SF)$ is a subconfiguration, then $\SA(\Xi) \subset \SA(\Xi')$.
\end{itemize}

The pre-template $\SA$ is a \textbf{foliated template} if, additionally:
\begin{itemize}
\item[II.] Given $\Xi \in \HConf(\SF)$, $\SA(\Xi)$ is ample along the principal directions determined by $\Xi$.
\item[III.] Given $F \in \SS$ lying over $p \in N$, $\SA(F)$ is dense in each $\HConf_m(\SF_p)$.
\end{itemize}
\end{definition}

The following observation follows immediately from the leafwise nature of Definition \ref{def:avoidanceTemplateParametric}:
\begin{lemma} \label{lem:liftParametric}
Let $X \to M$ be a bundle and $\iota_\SR: \SR \to J^r(X)$ an over-relation. Fix a compact manifold $K$. Suppose $\SR$ admits a template $\SA$. Then the parametric lift $\SR \times K$ admits a foliated template $\SA \times K$.
\end{lemma}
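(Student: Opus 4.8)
The plan is to unwind the definitions, since Lemma \ref{lem:liftParametric} is essentially a bookkeeping statement: once the parametric lift and its foliation are set up as in Definition \ref{def:parametricLift}, the foliated template $\SA \times K$ is a direct product and each of Properties (I), (II), (III) descends leafwise. First I would recall the set-up: we have $N = M \times K$ with the foliation $\SF$ whose leaves are the copies $M \times \{k\}$, and $Y = \pi_M^*X$, and $\SS = \pi_M^*\SR = \SR \times K$ with anchor $\pi_M^*\iota_\SR$, so that along each leaf $M \times \{k\}$ we recover $\SR$ with its anchor $\iota_\SR$. Correspondingly, $\HConf(\SF) \to N$ is the bundle whose fibre over $(p,k)$ is $\HConf(T_p(M \times \{k\})) \cong \HConf(T_pM)$; hence there is a canonical identification $\SS \times_N \HConf(\SF) \cong (\SR \times_M \HConf(TM)) \times K$, under which I define $\SA \times K$ to be the image of $\SA \times K$. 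Since $\SA$ is open in $\SR \times_M \HConf(TM)$ and $K$ is a manifold, $\SA \times K$ is open in $\SS \times_N \HConf(\SF)$, so it qualifies as a candidate foliated (pre-)template.

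Next I would verify the three properties. For Property (I): a subconfiguration $\Xi' \subset \Xi$ in $\HConf(\SF_{(p,k)})$ corresponds, under the identification of $\SF_{(p,k)}$ with $T_pM$, to a subconfiguration $\Xi' \subset \Xi$ in $\HConf(T_pM)$, and $(\SA \times K)(\Xi)$ is by construction $\SA(\Xi) \times \{k\}$ (viewed inside $\SS_{(p,k)} \cong \SR_p$). So Property (I) for $\SA \times K$ is exactly Property (I) for $\SA$ applied fibrewise, with the extra $K$-coordinate carried along trivially. For Property (II): the principal directions determined by $\Xi$ inside $J^r(Y|_{M \times \{k\}}) \cong J^r(X)$ are precisely the principal directions determined by $\Xi$ inside $J^r(X)$, because the leafwise jet bundle along the leaf $M\times\{k\}$ is canonically $J^r(X)$, and ampleness of a subset along a principal direction is a notion intrinsic to the affine principal subspace (Definition \ref{def:ampleAffine}, Definition \ref{def:ampleness1}); thus $(\SA \times K)(\Xi) = \SA(\Xi) \times \{k\}$ is ample along those directions because $\SA(\Xi)$ is. For Property (III): fix $(F,k) \in \SS = \SR \times K$ lying over $(p,k)$; then $(\SA \times K)(F,k)$, viewed as a subset of $\HConf(\SF_{(p,k)}) \cong \HConf(T_pM)$, equals $\SA(F)$, which is dense in each $\HConf_m(T_pM) \cong \HConf_m(\SF_{(p,k)})$ by hypothesis. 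Hence all three properties hold and $\SA \times K$ is a foliated template.

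I do not anticipate a genuine obstacle here; the only thing requiring a little care is making the canonical identifications precise — in particular checking that the leafwise $r$-jet bundle $J^r(Y;\SF)$ restricted to the leaf $M \times \{k\}$ is canonically isomorphic to $J^r(X)$ (this is immediate from $Y = \pi_M^*X$ and the definition of leafwise jets), and that the anchor $\pi_M^*\iota_\SR$ restricts to $\iota_\SR$ along each leaf (immediate from Definition \ref{def:parametricLift}). Once these identifications are in place, every clause of Definition \ref{def:avoidanceTemplateParametric} for $\SA \times K$ reduces verbatim to the corresponding clause of Definition \ref{def:avoidanceTemplate} for $\SA$, evaluated on the leaf. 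I would therefore present the proof as: set up the identification $\SS \times_N \HConf(\SF) \cong (\SR \times_M \HConf(TM)) \times K$, define $\SA \times K$ accordingly, note it is open, and then check (I), (II), (III) leafwise as above, remarking that the $K$-parameter plays no role beyond being carried along.
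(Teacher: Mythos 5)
Your argument is correct and matches the paper's treatment: the paper states Lemma \ref{lem:liftParametric} as an immediate consequence of the leafwise nature of Definition \ref{def:avoidanceTemplateParametric}, which is exactly what you verify by identifying $\SS \times_N \HConf(\SF)$ with $(\SR \times_M \HConf(TM)) \times K$ and checking openness and Properties (I)--(III) leaf by leaf. You have simply written out in full the identifications the paper leaves implicit, so no further changes are needed.
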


Furthermore:
\begin{lemma} \label{lem:liftFoliated}
Let $Y \to (N,\SF)$ be a bundle over a foliated manifold. If an over-relation $\SS \to J^r(Y;\SF)$ admits a foliated template $\SA$, its non-foliated preimage $\SS^* \to J^r(Y)$ admits a template $\SA^*$.
\end{lemma}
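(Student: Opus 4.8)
The plan is to construct $\SA^*$ as the natural pullback of $\SA$ along the map $\SS^* \to \SS$, carefully tracking how the two notions of configuration bundle ($\HConf(\SF)$ on $N$ versus $\HConf(TM)$ on $M$) are related, and then to verify the three properties of Definition \ref{def:avoidanceTemplate} leafwise.

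First I would recall the setup. Unwinding Definition \ref{def:parametricLift}-style notation, a foliated template $\SA$ sits inside $\SS \times_N \HConf(\SF)$. An element of $\SS^*$ is a pair $(F,z)$ with $F \in \SS$, $z \in J^r(Y)$, and $\iota_\SS(F) = \pi^\SF(z)$; it lies over the point $p = \pi^r_N(z) \in N$. Since $\SF \subset TN$, there is a restriction map $T^*_pN \to \SF_p^*$ dual to the inclusion $\SF_p \hookrightarrow T_pN$, inducing a map $\HConf(T_pN) \dashrightarrow \HConf(\SF_p)$ defined on the open locus of configurations of hyperplanes none of which contains $\SF_p$ (equivalently, whose annihilating covectors restrict to nonzero covectors on $\SF_p$). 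The definition would then be
\[ \SA^* := \{ ((F,z),\Xi) \in \SS^* \times_N \HConf(TN) \,\mid\, \Xi \text{ restricts to a well-defined } \Xi|_\SF,\ (F,\Xi|_\SF) \in \SA \}. \]
Because $\SA$ is open in $\SS \times_N \HConf(\SF)$ and the restriction map is continuous and open on its domain of definition, $\SA^*$ is open in $\SS^* \times_N \HConf(TN)$.

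Next I would check the three axioms. Property (I) is inherited directly: if $\Xi' \subset \Xi$ as configurations on $N$ then, restricting to $\SF$, the surviving hyperplanes of $\Xi'|_\SF$ form a subconfiguration of $\Xi|_\SF$, so Property (I) for $\SA$ gives $\SA(\Xi|_\SF) \subset \SA(\Xi'|_\SF)$, which translates back to $\SA^*(\Xi) \subset \SA^*(\Xi')$. For Property (II), the point is that the correspondence between principal subspaces downstairs and upstairs is exactly Lemma \ref{lem:reductionToFoliation}: for a covector $\lambda \in T_p^*N$ with $\lambda' := \lambda|_\SF \neq 0$, the principal subspace $\Pr_{\lambda,z} \subset J^r(Y)$ maps isomorphically (under $\pi^\SF$) onto $\Pr_{\lambda',z'} \subset J^r(Y|_L)$, and this isomorphism intertwines $\iota_{\SS^*}$ restricted to $\SA^*(\Xi)$ with $\iota_\SS$ restricted to $\SA(\Xi|_\SF)$; ampleness of one side along $\Xi|_\SF$ therefore gives ampleness of the other along $\Xi$. (For hyperplanes of $\Xi$ that do contain $\SF_p$, the restriction is ill-defined, but such $\Xi$ are simply excluded from $\SA^*$, so nothing needs to be checked there.) For Property (III), density of $\SA((F,z))$ in each $\HConf_m(T_pN)$: the configurations containing $\SF_p$ form a closed subset of positive codimension in $\HConf_m(T_pN)$ (the condition ``$H \supset \SF_p$'' is algebraic and nontrivial whenever $\SF_p \neq 0$), hence the locus where restriction to $\SF$ is defined is open and dense; within that locus the restriction map is a submersion onto $\HConf_m(\SF_p)$, so the preimage of the dense set $\SA(F) \subset \HConf_m(\SF_p)$ (density being Property (III) for $\SA$) is dense. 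Combining gives density of $\SA^*((F,z))$.

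The main obstacle, and the only place genuine care is needed, is the bookkeeping at the boundary of the domain of the restriction map $\HConf(TN) \dashrightarrow \HConf(\SF)$ — namely making sure that excluding configurations with a hyperplane through $\SF_p$ does not damage openness of $\SA^*$ or Property (III), and that it is harmless for the subsequent use of $\SA^*$ in convex integration (where one only ever needs codirections whose principal subspaces are nondegenerate, i.e. exactly those with $\lambda|_\SF \neq 0$). Everything else is a routine leafwise translation via Lemma \ref{lem:reductionToFoliation} and the observation that ``positive-codimension-subvariety'' statements transfer along the restriction submersion; I would phrase the argument so that the reader sees it is entirely parallel to the proof of Lemma \ref{lem:reductionToFoliation} and Corollary \ref{cor:convexIntegration1}.
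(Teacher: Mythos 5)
Your construction is essentially the paper's: the paper also defines $\SA^*$ as the pullback of $\SA$ under the fibrewise ``intersect with $\SF$'' map, defined on an open locus $\HConf'(TN) \subset \HConf(TN)$ whose complement is an algebraic subvariety of positive codimension, and it verifies openness and Property (III) from that, Property (II) via Lemma \ref{lem:reductionToFoliation}, and Property (I) from the pullback structure.

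The one point to correct is your description of the domain of the restriction map. Requiring only that no hyperplane of $\Xi$ contain $\SF_p$ (i.e. $\lambda|_{\SF_p} \neq 0$ for each annihilating covector) is not enough for $\Xi|_\SF$ to be an element of $\HConf(\SF_p)$: two distinct hyperplanes $H_i \neq H_j$ in $T_pN$ may have $H_i \cap \SF_p = H_j \cap \SF_p$, in which case the restricted collection has repetitions and the condition ``$(F,\Xi|_\SF) \in \SA$'' does not make sense as written. The paper's $\HConf'(TN)$ therefore imposes, in addition to transversality to $\SF$, that the intersections $H_i \cap \SF$ be pairwise distinct. Your hedge ``restricts to a well-defined $\Xi|_\SF$'' implicitly demands this, but your parenthetical gloss identifies the domain only with the non-containment locus, which is too large. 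The fix is harmless for the rest of your argument: the enlarged bad locus (containment or collision of restrictions) is still a closed, fibrewise algebraic subvariety of positive codimension, and on its complement the restriction map is a proper submersion onto $\HConf(\SF)$, so your openness, Property (I), Property (II) (via Lemma \ref{lem:reductionToFoliation}), and density arguments go through verbatim.
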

\begin{proof}
We define $\SA^*$ as a subset of $\SS^* \times_N \HConf(TN)$. Consider the subspace $\HConf'(TN)$ of $\HConf(TN)$ consisting of those configurations $[H_1,\cdots,H_a] \in \HConf(TN)$ that satisfy:
\begin{itemize}
\item All $H_i \in [H_1,\cdots,H_a] $ intersect $\SF$ transversely.
\item For all $i \neq j$, the intersections $H_i \cap \SF$ and $H_j \cap \SF$ are distinct.
\end{itemize}
Then, the intersection with $\SF$ defines a surjection $\HConf'(TN) \to \HConf(\SF)$ which can easily be shown to be submersive. In fact, it is a proper map with compact fibres isomorphic to a product of projective spaces, showing that
\[ \pi:\, \SS^* \times_N \HConf'(TN) \,\longrightarrow\, \SS \times_N \HConf(\SF), \]
is a fibration. This allows us to define
\[ \SA^* := \, \pi^{-1}(\SA) \subset \SS^* \times_N \HConf'(TN) \subset \SS^* \times_N \HConf(TN). \]

The openness of $\SA^*$, as well as Properties (I), (II), and (III), follow from the analogous properties for $\SA$. Concretely: Property (I) follows from $\pi: \SA^* \to \SA$ being a fibration. Openess and Property (III) are a consequence of the fact that $\HConf'(TN) \subset \HConf(TN)$ is open and its fibrewise complement is an algebraic subvariety (and thus of positive codimension). Property (II) follows from Lemma \ref{lem:reductionToFoliation}.
\end{proof}

\subsection{Removing processes} \label{ssec:removing}

The most straightforward way of producing templates consists of iteratively removing those principal subspaces along which the relation is not ample.
\begin{definition}
Let $\iota_\SR: \SR \to J^r(X)$ be an over-relation. We set:
\[ \Avoid^0(\SR) := \SR \times_M \HConf(TM). \]
Inductively, we define $\Avoid^{l+1}(\SR)$ to be the complement in $\Avoid^l(\SR)$ of the closure of
\[ \{(F,\Xi)  \,\mid\, \text{for some $\tau \in \Xi$, the component of $F$ in $\Avoid^l(\SR)(\Xi)_{\tau,F}$ is not ample} \}. \]
\end{definition}
Do note that, crucially, $\Avoid^1(\SR)$ need not be a template. Indeed, upon removing elements from $\Avoid^0(\SR)$, we may have lost ampleness along subspaces that were not problematic previously. This justifies the necessity of iterating the construction.

\begin{definition} \label{def:removing}
Suppose that the process just described terminates, meaning that there is a step $l_0$ such that
\[ \Avoid^l(\SR) = \Avoid^{l_0}(\SR) \qquad \text{for every $l \geq l_0$}. \]
Then, $\Avoid^\infty(\SR) := \Avoid^{l_0}(\SR)$ is the \textbf{standard pre-template} associated to $\iota_\SR$.
\end{definition}

By construction:
\begin{lemma}
Each $\Avoid^l(\SR)$ is a pre-template. Additionally, $\Avoid^\infty(\SR)$ satisfies Property (II) in the definition of a template. 
\end{lemma}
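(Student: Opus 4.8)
The plan is to prove the two assertions separately, and for each to reduce everything to the inductive definition of the $\Avoid$-construction. First I would check that each $\Avoid^l(\SR)$ is a pre-template, i.e.\ that it is open and satisfies Property (I). Openness is immediate by construction: $\Avoid^0(\SR) = \SR \times_M \HConf(TM)$ is open (as $\SR$ is open and $\HConf(TM)$ is a manifold), and at each step we pass to the complement of a \emph{closure}, hence again to an open set; so by induction every $\Avoid^l(\SR)$ is open. For Property (I), I would argue inductively on $l$, showing that for every $\Xi' \subset \Xi$ one has $\Avoid^l(\SR)(\Xi) \subset \Avoid^l(\SR)(\Xi')$. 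The base case $l=0$ holds trivially since $\Avoid^0(\SR)(\Xi) = \SR_p = \Avoid^0(\SR)(\Xi')$. For the inductive step, suppose $F \in \Avoid^{l+1}(\SR)(\Xi)$; then $F \in \Avoid^l(\SR)(\Xi) \subset \Avoid^l(\SR)(\Xi')$ by the inductive hypothesis, so it remains to see $F$ was not removed from $\Avoid^l(\SR)(\Xi')$ at the $(l+1)$-st step. The key point here is the monotonicity of ``being in the bad set'': if the component of $F$ in $\Avoid^l(\SR)(\Xi')_{\tau,F}$ were non-ample for some $\tau \in \Xi' \subset \Xi$, then — using the inductive hypothesis $\Avoid^l(\SR)(\Xi) \subset \Avoid^l(\SR)(\Xi')$ at the level of the principal slices $\tau$ — the component of $F$ in $\Avoid^l(\SR)(\Xi)_{\tau,F}$ would be contained in a non-ample set along the same principal direction and, by the nestedness, would itself fail to be ample; so $F$ would have been removed from $\Avoid^l(\SR)(\Xi)$ as well, contradicting $F \in \Avoid^{l+1}(\SR)(\Xi)$. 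One must be a little careful that removing a \emph{closure} interacts well with this, but that is handled by noting that the bad locus for $\Xi'$ pulls back inside (the closure of) the bad locus for $\Xi$ under the inclusion of slices, together with continuity of the component-of-$F$ assignment. This establishes Property (I) for all $l$.

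Next I would address Property (II) for $\Avoid^\infty(\SR)$. Here the crucial observation is exactly why the process is iterated: $\Avoid^\infty(\SR) = \Avoid^{l_0}(\SR) = \Avoid^{l_0+1}(\SR)$, i.e.\ the fixed point condition says that no further points get removed. Fix $(F,\Xi) \in \Avoid^\infty(\SR)$ and a hyperplane $\tau \in \Xi$; I must show the component of $F$ in $\Avoid^\infty(\SR)(\Xi)_{\tau,F}$ is ample along the principal direction $\Pr_\tau$. But if it were not, then $(F,\Xi)$ would lie in the bad set defining $\Avoid^{l_0+1}(\SR)$ inside $\Avoid^{l_0}(\SR) = \Avoid^\infty(\SR)$, hence in its closure, hence would be removed — contradicting $\Avoid^{l_0+1}(\SR) = \Avoid^{l_0}(\SR)$ and $(F,\Xi) \in \Avoid^{l_0}(\SR)$. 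This is precisely Property (II) in Definition \ref{def:avoidanceTemplate}.

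I expect the only genuine subtlety — and the step I would spend the most care on — to be the interaction between the operation ``remove the closure of the bad set'' and the slicing $\SR \mapsto \SR(\Xi)_{\tau,F}$ in the inductive proof of Property (I); everything else is bookkeeping. Concretely, one needs that if $F$ survives into $\Avoid^{l+1}(\SR)(\Xi)$ then its $\tau$-component in the smaller relation $\Avoid^l(\SR)(\Xi)$ is ample, and one must transfer this to the possibly-larger relation $\Avoid^l(\SR)(\Xi')$ without the ampleness being destroyed; the correct statement to invoke is the elementary fact (noted right after Property (I) in the text) that if $\SA(\Xi)$ is ample along $\Xi$ then $\SA(\Xi')$ is ample along $\Xi' \subset \Xi$ whenever $\SA(\Xi) \subset \SA(\Xi')$, applied componentwise. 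One also has to record that the assignment $F \mapsto (\text{component of }F)$ is open/closed-compatible so that taking the closure of the bad locus does not accidentally remove a point whose $\Xi'$-slice is still fine; this follows from local constancy of path-components in families of open sets. With these observations in place the proof is a short induction plus the fixed-point argument for Property (II).
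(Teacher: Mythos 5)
Your proof is correct and follows essentially the same route as the paper's: openness because one removes closed sets at each stage, Property (I) by induction using that nested slices give nested components of $F$ and hence nested convex hulls (so non-ampleness for $\Xi'$ forces non-ampleness for $\Xi$), and Property (II) directly from the stabilisation $\Avoid^{l_0+1}(\SR)=\Avoid^{l_0}(\SR)$. The closure subtlety you flag is real, but the paper's own proof passes over it just as quickly, so your argument matches it in both substance and level of detail.
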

\begin{proof}
Openness follows from the fact that we are inductively removing closed sets. For Property (I) we reason inductively as well: By induction hypothesis, $\Avoid^l(\SR)(\Xi)$ is contained in $\Avoid^l(\SR)(\Xi')$ whenever $\Xi' \subset \Xi$. Suppose $F$ is an element of both. Then, the analogous statement for the components of $F$ in $\Avoid^l(\SR)(\Xi)_{\tau,F}$ and $\Avoid^l(\SR)(\Xi')_{\tau,F}$ is also true. In particular, if the latter is not ample, neither is the former. I.e. if $(F,\Xi')$ is removed, so is $(F,\Xi)$, proving the claim.

The second statement follows by definition of the removal process terminating.
\end{proof}
As we will observe in examples, $\Avoid^\infty(\SR)$ need not satisfy Property (III); whether it does needs to be checked in each concrete application.

\subsubsection{Thinning} \label{sssec:thinning}

In applications, the following more restrictive notion can also be useful.
\begin{definition}
Let $\iota_\SR: \SR \to J^r(X)$ be an over-relation. We write $\Thin(\SR)$ for the complement in $\SR \times_M \HConf(TM)$ of the closure of
\[ \{(F,\Xi) \,\mid\, \text{for some hyperplane $\tau \in \Xi$, the complement of $\SR_{\tau,F}$ is not thin} \}. \]
\end{definition}
We denote the $l$-fold iterate of this construction by $\Thin^l(\SR)$.
\begin{definition} \label{def:thinning}
Assuming that there is a step $l_0$ in which this process stabilises, we say that $\Thin^\infty(\SR) := \Thin^{l_0}(\SR)$ is the \textbf{thinning pre-template} of $\SR$.
\end{definition}

Much like earlier:
\begin{lemma}
$\Thin^l(\SR)$ is a pre-template. Additionally, $\Thin^\infty(\SR)$ satisfies Property (II) in the definition of a template. 
\end{lemma}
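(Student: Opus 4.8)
The plan is to mirror, essentially verbatim, the argument already given for the analogous statement about $\Avoid^l(\SR)$ and $\Avoid^\infty(\SR)$, replacing ``the component of $F$ in $\Avoid^l(\SR)(\Xi)_{\tau,F}$ is ample'' with ``the complement of $\SR_{\tau,F}$ is thin''. The two key points to establish are: first, that each $\Thin^l(\SR)$ is a pre-template (i.e. open and satisfies Property (I)); and second, that the terminal object $\Thin^\infty(\SR)$ satisfies Property (II) of a template.

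For openness, I note that $\Thin^l(\SR)$ is defined by removing the closure of a set from the open set $\Thin^{l-1}(\SR)$, so it is open by induction, with the base case $\Thin^0(\SR) := \SR \times_M \HConf(TM)$ being open since $\SR$ is open. For Property (I), I argue inductively as in the $\Avoid$ case: suppose $\Xi' \subset \Xi$ and assume $\Thin^{l}(\SR)(\Xi) \subset \Thin^{l}(\SR)(\Xi')$ (this uses $l$ in place of $l-1$; the base case is the trivial equality $\SR = \SR$). Let $F$ lie in both $\Thin^{l+1}(\SR)(\Xi)$ and we must show it is not removed at stage $l+1$ relative to $\Xi'$ only if it is not removed relative to $\Xi$; but since every hyperplane $\tau \in \Xi'$ also lies in $\Xi$, the conditions ``$\SR_{\tau,F}$ has non-thin complement for some $\tau \in \Xi'$'' is implied by the corresponding condition for $\Xi$. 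Hence if $(F,\Xi')$ is removed, so is $(F,\Xi)$, which gives the containment after passing to complements. A subtlety worth spelling out is that thinness of the complement of $\SR_{\tau,F}$ depends only on the principal subspace $\Pr_{\tau,F}$ and $\tau$ itself, not on the ambient $\Xi$ — unlike the $\Avoid$ construction, there is no reference to $\Thin^l(\SR)(\Xi)$ inside the removal condition at all. This means the iteration is in fact vacuous: $\Thin^1(\SR) = \Thin^2(\SR) = \cdots$, so $\Thin^\infty(\SR) = \Thin^1(\SR)$ and the stabilisation hypothesis in Definition \ref{def:thinning} is automatically satisfied. I would remark on this but the statement as phrased does not require me to exploit it.

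For Property (II) of $\Thin^\infty(\SR)$: given $\Xi \in \HConf(TM)$ and $F \in \Thin^\infty(\SR)(\Xi)$ lying over $p$, and given $\tau \in \Xi$, by construction $(F,\Xi)$ survived the removal, so the complement of $\SR_{\tau,F}$ inside $\Pr_{\tau,F}$ is thin. By Example \ref{def:thinSet}, the complement of a thin set is ample, so $\SR_{\tau,F}$ — hence a fortiori its path-component containing $F$, since an ample set has ample path-components once we recall that $\SR_{\tau,F}$ is open and thinness of the complement forces the set to be connected and indeed to have ample hull — is ample along the principal direction determined by $\tau$. Here I should be slightly careful: ampleness as in Definition \ref{def:ampleAffine} asks each path-component's convex hull to be the whole affine space, and the complement of a thin (codimension $\geq 2$) set is not only ample but path-connected, so there is a single component and it is all of $\Pr_{\tau,F}$ up to a thin set, whose convex hull is everything. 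Thus $\Thin^\infty(\SR)(\Xi)$ is ample along all principal directions of $\Xi$, which is Property (II).

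The main obstacle — though it is a mild one — is the bookkeeping around which object the thinness condition ``sees'': making precise that thinness of the complement of $\SR_{\tau,F}$ is intrinsic to the pair $(\tau, \Pr_{\tau,F})$ and therefore the $\Thin$ removal, unlike $\Avoid$, does not need iteration, while still phrasing the lemma so that it reads uniformly with its $\Avoid$ counterpart. Everything else (openness, Property (I), deducing ampleness from thinness via Example \ref{def:thinSet}) is routine and parallels the already-established lemma for $\Avoid^\infty(\SR)$.
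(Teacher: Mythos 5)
There is a genuine gap, and it stems from a misreading of the iteration. In the $l$-fold iterate the removal condition at step $l+1$ is meant to be tested against the already-thinned set, i.e.\ one removes those $(F,\Xi)$ for which the complement of $\Thin^l(\SR)(\Xi)_{\tau,F}$ inside $\Pr_{\tau,F}$ is not thin — exactly parallel to the $\Avoid$ iteration, which explicitly references $\Avoid^l(\SR)(\Xi)_{\tau,F}$. Your claim that the iteration is vacuous (so that $\Thin^\infty(\SR)=\Thin^1(\SR)$ automatically) would render the stabilisation hypothesis in Definition \ref{def:thinning} meaningless, and it is contradicted by the paper's own use of the construction: Proposition \ref{prop:exactForms} asserts as nontrivial content that the process ``terminates in one step'', and Lemma \ref{lem:nonZeroThin} (``a second step is not necessary'') is proved precisely to verify that the sets removed in the first step intersect the surviving principal subspaces in thin sets.

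Because of this misreading, your argument for Property (II) proves the wrong statement. You show that for a surviving $(F,\Xi)$ and $\tau\in\Xi$ the set $\SR_{\tau,F}$ is ample (complement thin), and then conclude that $\Thin^\infty(\SR)(\Xi)$ is ample along $\tau$. But Property (II) concerns $\Thin^\infty(\SR)(\Xi)_{\tau,F}$, which is $\SR_{\tau,F}$ with further points removed during the thinning, and removing a non-thin set from an ample set can destroy ampleness — this is exactly the phenomenon highlighted in Remark \ref{rem:thinSingularitiesAreGood} and Figure \ref{fig:cubicas}. The correct (and very short) argument is the one the paper uses for $\Avoid^\infty$: stabilisation at step $l_0$ means that no $(F,\Xi)\in\Thin^{l_0}(\SR)$ has, for any $\tau\in\Xi$, a non-thin complement of $\Thin^{l_0}(\SR)(\Xi)_{\tau,F}$ (otherwise it would be removed at step $l_0+1$), so every such restriction has thin complement and is therefore ample; Property (II) follows by definition of the process terminating. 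Your treatment of openness and of Property (I) is fine and matches the paper's proof for $\Avoid^l(\SR)$, provided the induction is phrased with the corrected removal condition.
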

One can also conceive removing pieces from $\SR$ using schemes different from those presented in Definitions \ref{def:removing} and \ref{def:thinning}. In fact, this will be necessary for our main application Theorem \ref{thm:46}; see Section \ref{sec:46}.

\subsection{Trivial pre-templates} \label{ssec:trivialPretemplates}

It is unclear to the authors whether the standard avoidance/thinning processes always terminate regardless of what $\SR$ is. One could imagine a situation where we keep removing pieces from $\SR$ but never stabilise. Furthermore, even if they terminate, they may produce uninteresting results. This is not surprising, as many relations are simply not ample up to avoidance:
\begin{lemma} \label{lem:emptyAvoid}
Fix a fibre bundle $X \to M$ and a differential relation $\SR \subset J^r(X)$. Assume that:
\begin{itemize}
\item Each $\SR_{\tau,z}$ is trivially ample or all its components are non-ample.
\item Each fibre of $\pi^r_{r-1}$ contains an element not in $\SR$.
\end{itemize}
Then, $\Avoid^\infty(\SR)(\Xi) = \emptyset$, where $\Xi$ is any tuple that includes a principal basis.
\end{lemma}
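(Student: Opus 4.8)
The plan is to run the standard removal process and show that, under the two hypotheses, the first removal step $\Avoid^1(\SR)$ already kills every configuration containing a principal basis, so $\Avoid^\infty(\SR)(\Xi)=\emptyset$ for such $\Xi$. Let me set up the key observation: fix $z\in J^r(X)$ lying over $p\in M$, set $H=\pi^r_{r-1}(z)\in J^{r-1}(X)$, and let $\Xi$ be a tuple of hyperplanes over $p$ that includes a principal basis $\{\tau_1,\dots,\tau_I\}$. I claim there is \emph{some} $\tau_j$ in this basis such that the component of $z$ in $\Avoid^0(\SR)(\Xi)_{\tau_j,z} = \SR_{\tau_j,z}$ is not ample. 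Suppose not; then by the first hypothesis, for every $j$ the relation $\SR_{\tau_j,z}$ is trivially ample, i.e. (since $z\in\SR_{\tau_j,z}$, so it is nonempty) we have $\SR_{\tau_j,z}=\Pr_{\tau_j,z}$, the \emph{entire} principal subspace. Because $\{\tau_j\}$ is a principal basis, the principal directions $\Pr_{\tau_j}$ span the whole fibre $(\pi^r_{r-1})^{-1}(H)\cong\Sym^r(V,W)$ (Lemma \ref{lem:principalBasis}), and by Lemma \ref{lem:principalBasis}'s consequence on principal paths, any element $w$ of this fibre is reached from $z$ by a principal path $z=F_0,F_1,\dots,F_I=w$ with each step along one of the $\Pr_{\tau_j}$. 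Since each $\SR_{\tau_j,\cdot}$ is the full principal subspace, each $F_i\in\SR$ by induction, hence $w\in\SR$; thus the whole fibre $(\pi^r_{r-1})^{-1}(H)$ lies in $\SR$, contradicting the second hypothesis. This contradiction proves the claim.

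Next I would assemble this into the removal bookkeeping. By the claim, for every $(F,\Xi)$ with $\Xi$ containing a principal basis (taking $z=\iota_\SR(F)$), there is a hyperplane $\tau\in\Xi$ for which the component of $F$ in $\Avoid^0(\SR)(\Xi)_{\tau,F}=\SR_{\tau,F}$ is not ample. Hence $(F,\Xi)$ belongs to the set whose closure is removed in passing from $\Avoid^0(\SR)$ to $\Avoid^1(\SR)$; therefore $(F,\Xi)\notin\Avoid^1(\SR)$. As this holds for all such $(F,\Xi)$, we get $\Avoid^1(\SR)(\Xi)=\emptyset$ whenever $\Xi$ includes a principal basis. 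Since the sequence $\Avoid^l(\SR)$ is decreasing (we only remove sets at each stage), $\Avoid^l(\SR)(\Xi)=\emptyset$ for all $l\geq 1$, and in particular $\Avoid^\infty(\SR)(\Xi)=\emptyset$ once the process stabilises. (If one worries about termination, note the process has already stabilised on the slice over such $\Xi$ after one step, so the conclusion $\Avoid^\infty(\SR)(\Xi)=\emptyset$ is unambiguous.)

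Two small points deserve care. First, the phrase ``trivially ample'' in Definition \ref{def:ampleAffine} means the image is empty or the whole affine space; here the relevant component always contains $F$ (equivalently $z$), so the empty alternative is excluded and ``trivially ample'' forces equality with $\Pr_{\tau_j,z}$ — this is exactly what the induction along the principal path needs. Second, one should make sure the principal path argument is applied at the level of the full fibre of $\pi^r_{r-1}$: the principal basis $\{\tau_j\}$ over $p$ has the property that $\{\Pr_{\tau_j}\}$ spans $\Sym^r(V,W)$, and any two elements of the fibre are joined by a principal path using only these directions, which is precisely the content recalled after Lemma \ref{lem:principalBasis} and in Subsubsection \ref{sssec:principalPaths}.

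The main obstacle I anticipate is purely notational rather than conceptual: one must be careful that $\Avoid^0(\SR)(\Xi)_{\tau,F}$ really is $\SR_{\tau,F}$ (it is, since $\Avoid^0(\SR)=\SR\times_M\HConf(TM)$ and slicing by $\Xi$ recovers $\SR_p$), and that removing the \emph{closure} of the bad set still removes our particular $(F,\Xi)$ — which is automatic since $(F,\Xi)$ is itself in the bad set, not merely in its closure. No genuine analytic difficulty arises; the statement is essentially a formal consequence of the definitions together with the spanning property of principal bases.
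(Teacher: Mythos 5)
There is a genuine gap, and it is located exactly where you wave your hands: the induction along the principal path. Assuming, for contradiction, that the component of $z$ in $\SR_{\tau_j,z}$ is ample for every $\tau_j$ in the basis, you correctly deduce $\SR_{\tau_j,z}=\Pr_{\tau_j,z}$ \emph{for principal subspaces through $z$}. But the inductive step ``each $F_i\in\SR$ since each $\SR_{\tau_j,\cdot}$ is the full principal subspace'' requires $\SR_{\tau_{j},F_i}=\Pr_{\tau_{j},F_i}$ at the \emph{intermediate} points $F_1,F_2,\dots$ of the path, and nothing in your hypothesis controls those subspaces: your assumption only concerns the finitely many principal subspaces through $z$ itself. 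Consequently your key claim — that for every $(F,\Xi)$ with $\Xi$ containing a principal basis some $\SR_{\tau,F}$ already has a non-ample component, so that $\Avoid^1(\SR)(\Xi)=\emptyset$ — is simply false. The paper's own example of functions without critical points (Subsection \ref{ssec:functions}) is a counterexample: there $\SR_{\lambda,F}$ has non-ample components only when $dF$ is proportional to $\lambda$, so any $F$ with $dF$ not proportional to any covector of the basis survives the first removal step; the set $\Avoid^1(\SR)(\Xi)$ is nonempty, and it takes several iterations of the process to empty it out. This cannot be repaired by the closure operation either, since the bad set (and hence its closure) is a proper closed subset in that example.

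The statement is nevertheless true, but the proof has to track how far a jet is from the complement of $\SR$: the paper argues by induction on the length $k$ of a $\Xi$-principal path joining $z_k$ to some $z_0\notin\SR$, showing $z_k\notin\Avoid^k(\SR)(\Xi)$. The point of the inductive step is a translation argument: the complement of $\SR_{\tau,z_0}$, shifted along the path, lies in the complement of $\Avoid^{k-1}(\SR)(\Xi)$ inside $\Pr_{\tau,z_{k-1}}$, which forces $\Avoid^{k-1}(\SR)(\Xi)_{\tau,z_{k-1}}$ to be empty or to have only non-ample components, so $z_k$ is removed at step $k$. Emptiness of $\Avoid^\infty(\SR)(\Xi)$ then follows because, $\Xi$ containing a principal basis and each fibre of $\pi^r_{r-1}$ containing a non-solution, every point of the fibre is at finite principal distance from the complement of $\SR$. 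In short: your reduction to a single avoidance step misreads the iterative nature of the removal process; the multi-step induction is the missing idea, not a notational subtlety.
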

Do note that $\Avoid^\infty(\SR)$ is only interesting for those $\Xi$ that include a basis. Otherwise there are not enough directions to span the complete fibre of $\pi^r_{r-1}$.

\begin{proof}
Consider $\Xi \in \HConf(T_pM)$ including a principal basis. We work over a fixed fiber of $\pi^r_{r-1}$ lying over $p$. The Lemma follows as a consequence of the following inductive claim:
\begin{itemize}
\item Let $z_k$ differ from some $z_0 \notin \SR$ by a $\Xi$-principal path of length $k$. Then, it follows that $z_k \notin \Avoid^k(\SR)(\Xi)$.
\end{itemize}
The base case $k=0$ is definitionally true.

Consider the inductive step $k$. Given $z_k$, there is some $z_{k-1}$ such that $\tau := z_k - z_{k-1}$ is principal and $z_{k-1}$ differs from $z_0$ by a principal path $\nu$ of length $k-1$.

Due to our assumptions on $\SR$, either $\SR_{\tau,z_0}$ is empty or its components are not ample. We can then take its complement $\SR^c_{\tau,z_0}$ and note that the shift
\[ \SR^c_{\tau,z_0} + \nu \subset \Pr_{\tau,z_{k-1}} \]
is, by inductive hypothesis, disjoint from $\Avoid^{k-1}(\SR)(\Xi)_{\tau,z_{k-1}}$. It follows that $\Avoid^{k-1}(\SR)(\Xi)_{\tau,z_{k-1}}$ is empty or its components are non-ample. Therefore, $\Avoid^k(\SR)(\Xi)_{\tau,z_{k-1}}$ is empty. In particular, $z_k$ is not in $\Avoid^k(\SR)(\Xi)$.
\end{proof}
A couple of concrete instances where Lemma \ref{lem:emptyAvoid} applies are the relation defining functions without critical points (Subsection \ref{ssec:functions}) and the relation defining contact structures (Lemma \ref{lem:contact}).

Exactly the same reasoning shows:
\begin{lemma} \label{lem:emptyThin}
Let $\SR \subset J^r(X)$ be a differential relation such that:
\begin{itemize}
\item Every $\SR_{\tau,z}$ is trivially ample or has a complement that is not thin.
\item Each fibre of $\pi^r_{r-1}$ contains an element not in $\SR$.
\end{itemize}
Then $\Thin^\infty(\SR)(\Xi) = \emptyset$, where $\Xi$ is any tuple that includes a principal basis.
\end{lemma}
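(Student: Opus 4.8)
The plan is to imitate, almost verbatim, the proof of Lemma \ref{lem:emptyAvoid}, replacing the dichotomy ``empty or all components non-ample'' by ``trivially ample or non-thin complement'' and $\Avoid^l(\SR)$ by $\Thin^l(\SR)$ throughout. Fix $\Xi\in\HConf(T_pM)$ containing a principal basis, and work inside a single fibre of $\pi^r_{r-1}$ lying over $p$. By the second hypothesis I pick $z_0\notin\SR$ in this fibre. Since $\Xi$ contains a principal basis, every element $z$ of the fibre is joined to $z_0$ by a $\Xi$-principal path of some bounded length (at most $I=\dim\Sym^r(V,\R)$), so it suffices to establish the inductive claim:
\begin{itemize}
\item if $z_k$ differs from $z_0\notin\SR$ by a $\Xi$-principal path of length $k$, then $z_k\notin\Thin^k(\SR)(\Xi)$.
\end{itemize}
The base case $k=0$ is definitionally true. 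For the inductive step, let $z_{k-1}$ be the predecessor of $z_k$ along the path, let $\tau\in\Xi$ be the hyperplane with $z_k-z_{k-1}$ principal, and let $\nu$ denote the length-$(k-1)$ principal path from $z_0$ to $z_{k-1}$.

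The one point where the argument genuinely differs from Lemma \ref{lem:emptyAvoid} is in first disposing of the ``trivially ample'' option. If $\SR_{\tau,z_0}$ is trivially ample then, because $z_0\in\Pr_{\tau,z_0}$ but $z_0\notin\SR$, it is not the whole of $\Pr_{\tau,z_0}$; hence it is empty and $\SR^c_{\tau,z_0}=\Pr_{\tau,z_0}$. Otherwise $\SR^c_{\tau,z_0}$ is not thin by hypothesis. In either case $\SR^c_{\tau,z_0}$ is not thin, and the same holds for the affine translate $\SR^c_{\tau,z_0}+\nu\subset\Pr_{\tau,z_{k-1}}$. Now each $w\in\SR^c_{\tau,z_0}$ satisfies $w\notin\SR$, so $w+\nu$ differs from a point not in $\SR$ by a $\Xi$-principal path of length $k-1$; by the inductive hypothesis $w+\nu\notin\Thin^{k-1}(\SR)(\Xi)$. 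Therefore $\Thin^{k-1}(\SR)(\Xi)_{\tau,z_{k-1}}$ is disjoint from $\SR^c_{\tau,z_0}+\nu$, i.e.\ its complement inside $\Pr_{\tau,z_{k-1}}$ contains the non-thin set $\SR^c_{\tau,z_0}+\nu$ and is hence itself not thin. By definition of the thinning step this forces the \emph{entire} slice $\Thin^{k-1}(\SR)(\Xi)\cap\Pr_{\tau,z_{k-1}}$ to be removed when passing to $\Thin^k(\SR)$; in particular $z_k\notin\Thin^k(\SR)(\Xi)$, completing the induction. Finally $\Thin^\infty(\SR)(\Xi)\subset\Thin^m(\SR)(\Xi)$ for all $m$, so taking $m$ past the length needed to reach an arbitrary $z$ gives $z\notin\Thin^\infty(\SR)(\Xi)$; as $z$ (and the chosen $\pi^r_{r-1}$-fibre) was arbitrary, $\Thin^\infty(\SR)(\Xi)=\emptyset$.

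The main obstacle is a minor set-theoretic one: I need that a superset of a non-thin set is non-thin. This is immediate once ``thin'' is read as ``contained in a stratified (in practice, semialgebraic) subset of codimension at least $2$'', which is the relevant reading in all intended applications; it is exactly the thinning-side analogue of the fact — used silently in the proof of Lemma \ref{lem:emptyAvoid} — that a subset of a set whose every path-component is non-ample again has every path-component non-ample. Handling the closures appearing in the definition of $\Thin^{l+1}$ is harmless because, at every inductive stage, it is the \emph{whole} slice $\Thin^{k-1}(\SR)(\Xi)_{\tau,z_{k-1}}$ that is discarded, not merely a proper subset of it.
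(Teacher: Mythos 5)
Your proposal is correct and is essentially the paper's own proof: the paper establishes Lemma \ref{lem:emptyThin} by declaring that ``exactly the same reasoning'' as in Lemma \ref{lem:emptyAvoid} applies, and your induction on the length of a $\Xi$-principal path, with the two substitutions you make (trivial ampleness at $z_0\notin\SR$ forcing $\SR_{\tau,z_0}=\emptyset$ so that its complement is the whole principal subspace, and a superset of a non-thin set being non-thin), is precisely that adaptation.
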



\section{Proof of the main Theorem} \label{sec:mainProof}

In this Section we tackle the proof of Theorem \ref{thm:main}. We restate it now in a slightly more general form that applies to over-relations:
\begin{theorem} \label{thm:mainRestate}
Fix a smooth bundle $X \to M$ and an open over-relation $\iota_\SR: \SR \to J^r(X)$. Suppose that $\iota_\SR$ is ample up to avoidance. Then, the full $C^0$-close $h$-principle applies to $\SR$.
\end{theorem}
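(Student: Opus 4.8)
\textbf{Proof strategy for Theorem \ref{thm:mainRestate}.}

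The plan is to reduce the $h$-principle for a relation $\SR$ that is ample up to avoidance to the $h$-principle for relations that are ample along a principal frame (Theorem \ref{thm:convexIntegration2}), proceeding inductively on the order $r'=1,\dots,r$ exactly as in classic convex integration. The key point is that an avoidance template $\SA$ for $\pi^r_{r'}\circ\iota_\SR$ allows us, starting from a fixed formal solution $F$ that is already $(r'-1)$-holonomic, to manufacture a \emph{smaller} sub-over-relation $\SR(F)\subset\SR$ which (a) still has $F$ as a formal solution and (b) is ample along a principal frame adapted to $F$. Once we have this, Theorem \ref{thm:convexIntegration2} upgrades $F$ to an $r'$-holonomic solution of $\SR(F)\subset\SR$, and we iterate over $r'$.

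First I would set up the induction and the localisation. Fix $F:M\to\SR$ that is $(r'-1)$-holonomic (and holonomic of order $r'$ near a closed set $M'$, for the relative statement); we want it $r'$-holonomic up to a $C^0$-small homotopy through formal solutions. The core construction, which I would isolate as a proposition, is: using Property (III) of the template (density of $\SA(F(p))$ in each $\HConf_m(T_pM)$) together with openness of $\SA$, one finds for each $p\in M$ a principal frame $C_p$ at $p$ — i.e. an ordered configuration $\Xi_p\in\HConf(T_pM)$ containing a principal basis — with $(F(p),\Xi_p)\in\SA$. By openness this persists on a neighbourhood $U_p$, and by compactness (working leafwise-trivially, or just chart by chart since the final statement is local in $M$) we extract a locally finite cover $\{U_i\}$ with integrable hyperplane fields $\Xi_i$ defined over $\overline{U_i}$ such that $(F,\Xi_i)\in\SA$ over $\overline{U_i}$. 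Here I would use Property (I) to shrink/merge: on overlaps we may pass to sub-configurations without leaving $\SA$, so the frames can be organised into a genuine principal cover $\SC$ with $F\in\SA(\Xi)$ for every $\SC$-principal hyperplane field $\Xi$. Now define $\SR(F):=\SA(\Xi)$ ranging over the (finitely many per chart) hyperplane fields of $\SC$; since $\SA$ is open, $\SR(F)$ is an open over-relation, it contains $F$ (a formal solution), and by Property (II) it is ample along every $\SC$-principal direction — that is, $\SR(F)$ is ample along the principal frame $\SC$. One then has to repeat this argument for each of the lower-order projections $\pi^r_{r''}\circ\iota_\SR$, $r''\le r'$, using their respective templates, so that $\SR(F)$ is ample along principal frames in \emph{all} orders $\le r'$; here Property (I) is again what guarantees the order-$r'$ choice does not destroy ampleness in the lower orders, since restricting hyperplanes only enlarges $\SA(\Xi)$.

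With $\SR(F)$ in hand, apply Theorem \ref{thm:convexIntegration2} (relative version, to respect the boundary condition near $M'$): $F$, being a formal solution of the open over-relation $\SR(F)$ that is ample along the principal frame $\SC$ and holonomic of order $r'-1$, is homotopic through formal solutions of $\SR(F)$ — and hence of $\SR$ — to one that is $r'$-holonomic, with $C^0$-control and rel $\Op(M')$. Iterating from $r'=1$ to $r'=r$ produces a genuine solution, which proves the $h$-principle; the $C^0$-closeness and relative statements are inherited at each stage. For the parametric statement I would invoke Lemma \ref{lem:liftParametric} and Lemma \ref{lem:liftFoliated}: a template for $\SR$ yields a foliated template for the parametric lift $\SR\times K$, whose non-foliated preimage $(\SR\times K)^*$ then admits an honest template, so the $K$-parametric $h$-principle for $\SR$ is the $0$-parametric statement for $(\SR\times K)^*$ and follows from the non-parametric case just proved.

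The main obstacle is the \emph{globalisation/patching} step: producing a \emph{single} principal cover $\SC$ and a \emph{single} sub-over-relation $\SR(F)$ that simultaneously works over all of $M$ and is ample along $\SC$ in every order. This is precisely the difficulty flagged in Subsection \ref{ssec:comparison} — the frames that behave well with respect to $F$ near one point need not behave well near another, and naive gluing fails. The template axioms are designed to defuse exactly this: density (III) gives enough freedom to choose good frames pointwise, openness gives local persistence, and the coherence axiom (I) — that shrinking a configuration enlarges $\SA(\Xi)$ — is what lets overlapping local choices be reconciled by passing to common sub-configurations without ever leaving the template. Making this bookkeeping precise (locally finite covers, compatible integrable hyperplane fields, simultaneous control over all orders $r'\le r$, and the relative-to-$M'$ refinement) is where the real work of the proof lies; the convex-integration input itself is then a black box, namely Theorem \ref{thm:convexIntegration2}.
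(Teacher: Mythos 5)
Your overall architecture agrees with the paper's: construct from the template an auxiliary over-relation adapted to the given formal solution $F$ that is ample along a principal frame, feed it to Theorem \ref{thm:convexIntegration2}, iterate over the orders $r'$ with the separate templates $\SA_{r'}$, and deduce the parametric/relative statements from Lemmas \ref{lem:liftParametric} and \ref{lem:liftFoliated}. However, there is a genuine gap precisely at the step you yourself flag as the real work. Your mechanism for reconciling overlapping local choices --- ``use Property (I) to shrink/merge: on overlaps we may pass to sub-configurations without leaving $\SA$'' --- goes the wrong way. Property (I) states that $\SA(\Xi)\subset\SA(\Xi')$ when $\Xi'\subset\Xi$, i.e.\ it lets you \emph{discard} hyperplanes while staying in the template; but what the construction actually requires at a point $p$ lying in several charts is membership in $\SA$ of the \emph{concatenation} of all frames whose supports contain $p$ (this is exactly how the avoidance relation $\SA(\SC)$ of Definition \ref{def:avoidanceRelation} is cut out, and it is what Property (II) is applied to in Proposition \ref{prop:avoidanceRelation}). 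Knowing $(F,\Xi_i)\in\SA$ and $(F,\Xi_j)\in\SA$ separately does not give $(F,\Xi_i\cup\Xi_j)\in\SA$ --- Property (I) only gives the converse implication --- and discarding hyperplanes on overlaps is not an option either, since each chart must retain a full principal basis to correct all pure derivatives. So the naive cover extracted from pointwise density plus openness does not assemble into a principal cover $\SC$ with $F$ a formal solution of $\SA(\SC)$.

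The paper closes this gap with a quantitative jiggling argument (Proposition \ref{prop:main} and Subsection \ref{ssec:jiggling}), which has no counterpart in your sketch: one subdivides a fixed cubical atlas finely, bounds uniformly the number of overlapping cubes and colours them so that same-coloured cubes have no common neighbours, takes translation-invariant frames whose concatenations over a cube have diameter $O(1/c)$ in $\barHConf(TM)$, upgrades the pointwise density Property (III) to a uniform $\varepsilon$--$\delta$ statement (Lemma \ref{lem:boundTemplate}), and then perturbs the frames colour by colour with nested margins $\varepsilon_0>\varepsilon_1>\cdots>\varepsilon_{d_2}$ satisfying Equation \ref{eq:jigglingBound}, so that the configuration formed by \emph{all} frames near each cube lands in $\barSA(F)$ with a safety margin that later, smaller perturbations cannot destroy. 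Note also that Property (III) is a pointwise, fixed-cardinality density statement; making it uniform over neighbourhoods and over the bounded but varying cardinalities of concatenated frames is exactly what the compactness/colouring bookkeeping achieves, and this is where openness of $\SA$ is used quantitatively rather than merely for ``local persistence''. Without some argument of this type (or a substitute for it), your construction of $\SR(F)$ does not exist, and the reduction to Theorem \ref{thm:convexIntegration2} does not go through.
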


The proof consists of two local-to-global steps. The starting point is our assumption that a template $\SA$ exists.
\begin{itemize}
\item[1.] Given any principal cover $\SC$, we use the pointwise data given by $\SA$ to produce an over-relation $\SA(\SC) \to J^r(X)$ globally on $M$. By construction, $\SA(\SC)$ will be ample with respect to $\SC$. This is the content of Proposition \ref{prop:avoidanceRelation}.
\item[2.] Given a formal solution $F: M \to \SR$, we choose a cover $\SC$ of $M$ such that $F$ is still a formal solution of $\SA(\SC)$. This follows from a jiggling-type argument that is explained in Proposition \ref{prop:main}.
\end{itemize}
Both steps are rather discontinuous in nature. This is not surprising, since covers are discontinuous objects themselves. One of the consequences of this is that the over-relation $\SA(\SC)$ may not be a fibration (even if $\SR$ and $\SA$ were).

This sketch of argument proves that all formal solutions $F$ are short for $\SR$. From this, and the parametric nature of avoidance templates, we deduce the full $h$-principle for $\SR$. We put all these pieces together in Subsection \ref{ssec:mainProof}.

\subsection{Avoidance relations associated to principal covers} \label{ssec:avoidanceRelation}

Fix a smooth bundle $X \to M$, an open over-relation $\iota_\SR: \SR \to J^r(X)$, an avoidance template $\SA$, and a principal cover $\SC$. Since the elements of $\SC$ are defined only locally, the cardinality of $\SC$ may change from point to point. This implies that we cannot regard $\SC$ as a smooth section $M \to \barHConf(TM)$.

Nonetheless, for our purposes, the following discontinuous construction is enough. To each subset of codirections $C \subset \SC$ (not necessarily a principal frame) we associate the closed set:
\[ T_C \,:=\, \bigcap_{\tau \in C} \overline{U_\tau} \subset M, \]
where $U_\tau$ is the support of the hyperplane field $\tau$. Recall that each $\tau \in \SC$ is defined as a germ along the closure $\overline{U_\tau}$ of its support. In particular, once we pick some order for the elements of $C$, we can think of $C$ as a germ of smooth section
\[ C|_{\Op(T_C)}: \Op(T_C) \to \barHConf(TM). \]
In particular, the expression $\barSA(C)|_{\Op(T_C)}$ denotes a well-defined subset of $\SR|_{\Op(T_C)}$. Here $\barSA$ is the lift of $\SA$ to $\SR \times_M \barHConf(TM)$.

\begin{definition} \label{def:avoidanceRelation}
The \textbf{avoidance over-relation} associated to $\SA$ and $\SC$ is the set
\[ \SA(\SC) := \SR \setminus \left(\bigcup_{C \subset \SC} \barSA(C)^c|_{T_C} \right), \]
where the superscript $c$ denotes taking complement. As a subset of $\SR$, the anchor of $\SA(\SC)$ into $J^r(X)$ is $\iota_\SR$. 
\end{definition}

\subsection{Proof of Theorem \ref{thm:main}} \label{ssec:mainProof}

Before we get to the proof we introduce two key auxiliary results. The first one states that avoidance relations are open and ample:
\begin{proposition} \label{prop:avoidanceRelation}
Let $\SA$ be a template and $\SC$ be a principal cover. Then, the avoidance over-relation $\iota_\SR: \SA(\SC) \to J^r(X)$ is an open over-relation ample along $\SC$.
\end{proposition}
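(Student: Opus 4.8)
The statement has two parts: openness of $\iota_\SR: \SA(\SC) \to J^r(X)$, and ampleness of $\SA(\SC)$ along $\SC$. The strategy is to pass to the lifted picture of Definition \ref{def:liftingTemplate}, where all the relevant objects are genuine (local) smooth sections $C|_{\Op(T_C)} : \Op(T_C) \to \barHConf(TM)$, and to control everything locally, point by point, using the closed sets $T_C$.

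\emph{Openness.} Fix $F_0 \in \SA(\SC)$ lying over $p \in M$. Since $\SC$ is locally finite (its supports form an open cover, and only finitely many meet a small neighbourhood of $p$), only finitely many subsets $C \subset \SC$ have $p \in T_C$; call them $C_1,\dots,C_N$, and shrink to a neighbourhood $V \ni p$ meeting no other $T_C$. For each $C_j$ with $p \in T_{C_j}$, we have $F_0 \in \barSA(C_j)|_{\Op(T_{C_j})}$, and since $\barSA$ is open (Lemma \ref{lem:liftingTemplate}) and $C_j|_{\Op(T_{C_j})}$ is a continuous section, the preimage of $\barSA$ under $F \mapsto (F, C_j(\pi_M \circ \iota_\SR (F)))$ is an open neighbourhood $W_j$ of $F_0$ in $\iota_\SR^{-1}(\pi_M^{-1}(V))$. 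Intersecting $W_1 \cap \cdots \cap W_N$ with $\iota_\SR^{-1}(\pi_M^{-1}(V))$ gives an open neighbourhood of $F_0$ contained in $\SA(\SC)$: indeed for any $C$ with $T_C \cap V = \emptyset$ the constraint $\barSA(C)^c|_{T_C}$ is vacuous over $V$. This shows $\SA(\SC)$ is open in $\SR$; since $\SR$ is an open over-relation and $\SA(\SC)$ is an open subset, $\iota_\SR|_{\SA(\SC)}$ is again submersive (using the Lemma in Example \ref{ex:lowerOrderRelation}–style reasoning, i.e. that open subsets of open over-relations are open over-relations), hence an open over-relation.

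\emph{Ampleness along $\SC$.} Let $\tau \in \SC$ with support $U_\tau$, fix $p \in U_\tau$ and $F \in \SA(\SC)$ over $p$; we must show $\iota_\SR : \SA(\SC)_{\tau,F} \to \Pr_{\tau,F}$ is ample, i.e. the component of $F$ in $\SA(\SC)_{\tau,F}$ has convex hull all of $\Pr_{\tau,F}$. Let $C_0 := \{\tau\} \cup \{\sigma \in \SC : p \in T_{\{\sigma\}}, \text{i.e. } p \in \overline{U_\sigma}\}$, a finite configuration containing $\tau$ with $p \in T_{C_0}$. By construction of $\SA(\SC)$, the fibre $\SA(\SC)_{\tau,F}$ contains $\barSA(C_0)(\Xi)_{\tau,F}$ where $\Xi = C_0(p)$ — more precisely, over the point $p$, every point of $\SA(\SC)_p$ must lie in $\barSA(C)(C(p))$ for each $C \subset \SC$ with $p \in T_C$, so in particular $\SA(\SC)_{\tau,F} \subset \barSA(C_0)(C_0(p))_{\tau,F}$, while conversely $\barSA(C')(C'(p)) \subset \SA(\SC)_p$ for $C'$ the \emph{full} collection of all $\sigma$ with $p \in T_{\{\sigma\}}$... here I need to be slightly careful: the correct containment to extract is $\SA(\SC)_p = \bigcap_{C \subset \SC,\, p \in T_C} \barSA(C)(C(p))$ as subsets of $\SR_p$. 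By Property (I) of the template (in its lifted form $\overline{\text{I}}$), since $\tau \in C_0$ and each $C \subset \SC$ containing $\tau$ with $p \in T_C$ is a subconfiguration of (the union of) $C_0$ with some other hyperplanes, monotonicity gives that this intersection, restricted to the principal subspace $\Pr_{\tau,F}$, equals $\barSA(\Xi^{\max})(\Xi^{\max})_{\tau,F}$ for $\Xi^{\max}$ the maximal configuration $C_0(p)$ — the bigger configuration gives the smaller, hence the binding, relation. Now Property (II) of the template says $\barSA(\Xi^{\max})$ is ample along the principal direction of $\tau \in \Xi^{\max}$; and Property (I)/coherence (the remark following Definition \ref{def:avoidanceTemplate}) ensures this ampleness is inherited by the connected component of $F$. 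Hence $\iota_\SR : \SA(\SC)_{\tau,F} \to \Pr_{\tau,F}$ is ample.

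\emph{Main obstacle.} The delicate point is the bookkeeping in the ampleness step: correctly identifying $\SA(\SC)_p$, as a subset of the fibre $\SR_p$, with the intersection $\bigcap \barSA(C)(C(p))$ over all $C \subset \SC$ with $p \in T_C$, and then invoking Property (I) to collapse this intersection to the single ``largest relevant configuration'' so that Property (II) applies. One must check that the configuration obtained by taking \emph{all} hyperplanes $\sigma \in \SC$ through $p$ (which is finite by local finiteness of $\SC$) has no repetitions — automatic if $\SC$ is a genuine principal cover, since the hyperplane fields in a single principal frame are distinct, and we may assume after a small perturbation that fields from different frames are generically distinct at $p$ — so it defines a point of $\HConf(T_pM)$ and Property (II) is literally applicable. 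The rest (local finiteness, continuity of the local sections $C|_{\Op(T_C)}$, inheritance of ampleness by connected components) is routine.
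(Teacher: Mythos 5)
Your proof is correct and is essentially the paper's own argument: openness comes from the fact that the sets $\barSA(C)^c|_{T_C}$ are closed (and only finitely many are relevant near a point), and ampleness comes from identifying $\SA(\SC)_p$ with $\barSA(C_{\max}(p))$ for the maximal collection $C_{\max}=\{\sigma\in\SC \mid p\in\overline{U_\sigma}\}$ via Property (I), after which Property (II) applies to the direction $\tau$. The only superfluous step is the proposed perturbation to avoid repeated hyperplanes at $p$: no perturbation is needed, since for a tuple with repetitions $\barSA(\Xi)$ is by convention $\SA$ of the underlying configuration and the coherence Property (I) already handles this case.
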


The second one says that we can choose avoidance relations $\SA(\SC)$ adapted to a given formal datum:
\begin{proposition} \label{prop:main}
Fix an smooth bundle $X \to M$, an open over-relation $\iota_\SR: \SR \to J^r(X)$, an avoidance template $\SA$, and a formal solution $F: M \to \SR$. Then, there is a principal cover $\SC$ such that $F$ takes values in $\SA(\SC)$.
\end{proposition}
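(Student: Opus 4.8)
The plan is to build the principal cover $\SC$ chart by chart, using a jiggling/genericity argument to ensure that the formal datum $F$ lands in $\SA(\SC)$ over each piece. Recall that by Property (III) of the template (equivalently, Property $\overline{\text{III}}$ in Lemma \ref{lem:liftingTemplate}), for every $F(p) \in \SR$ the set $\barSA(F(p)) \subset \barHConf(T_pM)$ is dense in each $\barHConf[m](T_pM)$; by openness of $\barSA$ it is in fact open and dense. So, fixing a cardinality $m = \dim(\Sym^r(V,\R))$ (the length of a principal basis) and a point $p$, the set of $m$-tuples of hyperplanes $\Xi \in \barHConf[m](T_pM)$ that simultaneously (a) form a principal basis and (b) lie in $\barSA(F(p))$ is open and dense: condition (a) is open and dense (a principal basis is a generic choice, by Lemma \ref{lem:principalBasis}), and condition (b) is open and dense by Property $\overline{\text{III}}$. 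Hence one can always pick a pointwise-good principal basis at $p$, and by openness of $\barSA$ and of the "principal basis" condition, the same tuple (extended to a local hyperplane field using integrability, i.e. via Frobenius on a small ball) remains good on a neighbourhood $U_p \ni p$.

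Next I would pass from these pointwise choices to an honest principal cover. Choose a locally finite cover $\{U_i\}$ of $M$ together with, on each $U_i$, a principal frame $C_i$ (a collection of integrable, locally-defined hyperplane fields with common support $U_i$ forming a principal basis fibrewise) such that $F|_{\overline{U_i}}$ takes values in $\barSA(C_i)$. The subtlety is that Definition \ref{def:avoidanceRelation} removes, over each closed set $T_C = \bigcap_{\tau \in C}\overline{U_\tau}$ and for \emph{every} subset $C \subset \SC$ (not only frames, and in particular for subcollections drawn from several different $C_i$'s overlapping near a point), the complement $\barSA(C)^c$. So I must guarantee that $F$ lies in $\barSA(C)$ for all such mixed subconfigurations $C$, not just for the frames $C_i$. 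Here Property (I)/($\overline{\text{I}}$) does the work in one direction: if $C' \subset C$ then $\barSA(C) \subset \barSA(C')$, so having $F \in \barSA(C_i)$ automatically gives $F \in \barSA(C')$ for every subconfiguration $C' \subset C_i$. The genuinely new constraints come from configurations $C$ that mix hyperplanes from $C_i$ and $C_j$ with $\overline{U_i}\cap\overline{U_j}\neq\emptyset$. To handle these, I would shrink the cover and perturb the hyperplane fields so that over each point only finitely many — and controllably many — of the $C_i$ are active, and then enlarge the requirement at the pointwise selection stage: at a point $p$ lying in the closures of $U_{i_1},\dots,U_{i_s}$, ask that the \emph{union} of the corresponding bases (a configuration of at most $sm$ hyperplanes, still finite) lies in $\barSA(F(p))$. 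This is again an open-dense condition by Property $\overline{\text{III}}$, so it can be met; by openness it persists on a neighbourhood, and then by local finiteness a standard shrinking-and-refining argument produces a cover $\SC = \bigcup_i C_i$ for which every relevant $C \subset \SC$ satisfies $F \in \barSA(C)|_{T_C}$. By Definition \ref{def:avoidanceRelation}, this is exactly the statement that $F$ takes values in $\SA(\SC)$.

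The relative version (keeping $F$ fixed where it is already holonomic, if required) is obtained by the same scheme: over $\Op(M')$ one uses that $F$ is holonomic and leaves the choices there untouched — formally, one can take the hyperplane fields of $\SC$ to restrict there to an already-adapted frame, since on the holonomic locus $F$ is automatically a formal (indeed genuine) solution and hence lies in $\SR = \SA^0$; the avoidance removals only shrink things away from $M'$. The main obstacle, as indicated above, is the combinatorics of the mixed subconfigurations $C \subset \SC$ that appear in Definition \ref{def:avoidanceRelation}: one must arrange the supports $U_\tau$ and their overlaps so that only boundedly many hyperplane fields are simultaneously defined near any point, and then upgrade the pointwise density statement (Property $\overline{\text{III}}$) to cover the union of all those fields at once. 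Everything else — integrability of the chosen fields (Frobenius on small balls), persistence under perturbation (openness of $\barSA$ and of "being a principal basis"), and patching (local finiteness plus a shrinking lemma) — is routine.
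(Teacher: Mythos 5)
You have correctly identified the crux of the problem: Property (I) only controls subconfigurations of a single frame, and the genuinely new constraints come from mixed configurations drawn from several overlapping frames, which forces a perturbation (jiggling) of the frames together with a bound on how many frames are active near any point. This is exactly the strategy of the paper's proof (Subsection \ref{ssec:jiggling}): subdivide an atlas into small cubes, dilate so they overlap, and jiggle the translation-invariant frames.

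However, the step where you close the argument --- ``the union of the corresponding bases lies in $\barSA(F(p))$ is again an open-dense condition, so it can be met; by openness it persists on a neighbourhood, and then by local finiteness a standard shrinking-and-refining argument produces a cover'' --- hides the actual difficulty, and as written it does not go through. The choices are not pointwise-independent: each frame $C_i$ must be good, in union with the frames of \emph{all} its neighbours, at \emph{every} point of the relevant overlaps, and perturbing $C_i$ to repair its unions near one neighbour changes the constraints that $C_j$ must satisfy against \emph{its} neighbours, so naive iteration can undo previous repairs. Moreover, pointwise density of $\barSA(F(p))$ gives a good perturbation at each point with no uniform margin; since both the configuration of frames and the singular set $\barSA(F)^c$ vary with $p$, you need a quantitative, uniform statement to find a single perturbation valid on a whole cube and against all its neighbours. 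The paper supplies precisely the missing mechanisms: (a) a compactness argument upgrading Property (III) to a uniform ``every $\varepsilon$-ball contains a $\delta$-ball avoiding the singularity'' statement (Lemma \ref{lem:boundTemplate}, via polydiscs); (b) the $O(1/c)$ estimates (Lemmas \ref{lem:boundHyperplane} and \ref{lem:boundCover}) showing that, at small scales, all frames active near a cube are nearly constant in a common trivialisation, so the whole overlap is governed by one configuration at the marked point; and (c) a colouring of the cubes with a decreasing hierarchy of constants $\varepsilon_0 > \cdots > \varepsilon_{d_2}$ (Equation \ref{eq:jigglingBound}) so that the jiggling can be performed colour by colour without later, smaller perturbations destroying the margins achieved earlier. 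These are the non-routine ingredients of the proof, not a standard refinement lemma, so your sketch has a genuine gap exactly at the patching stage. (The relative discussion you add is not needed: the Proposition itself is not relative; relativity is handled later via Theorem \ref{thm:convexIntegration2}.)
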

This result is proven in Subsection \ref{ssec:jiggling}.

\begin{proof}[Proof of Proposition \ref{prop:avoidanceRelation}]
Recall the three pointwise Properties in the definition of a template (Definition \ref{def:avoidanceTemplate}).

Using the openness of $\SA$ and the closedness of each $T_C$, we see that $\SA(\SC)$ is the complement in $\SR$ of a finite union of closed subsets $\barSA(C)^c|_{T_C}$. As such, it is an open subset, and thus an open over-relation with respect to $\iota_\SR$.

Ampleness can now be checked at each point $p \in M$ individually. We note that there is a maximal subset $C \subset \SC$ such that $T_C$ contains $p$. According to coherence Property (I) in Definition \ref{def:avoidanceTemplate}, $\barSA(C(p))$ is the smallest among all sets $\barSA(C'(p))$ as $C' \subset \SC$ ranges over all the subcollections satisfying $p \in T_{C'}$. It follows that 
\[ \bigcup_{C' \subset \SC} \barSA(C'(p))^c = \barSA(C(p))^c   \]
and therefore we deduce $\SA(\SC)(p) = \barSA(C(p))$. The ampleness of the former follows then from the ampleness of the latter, which is given by Property (II).

Note that we have not made use of Property (III). It only plays a role in the proof of Proposition \ref{prop:main}.
\end{proof}

\begin{proof}[Proof of Theorems \ref{thm:main} and \ref{thm:mainRestate} assuming Proposition \ref{prop:main}]

We want to be able to homotope any given compact family of formal solutions $(F_k)_{k \in K}: M \to \SR$ to a family of genuine solutions. We regard the family as a formal solution $F: M \times K \to \SR \times K$ of the parametric lift, as in Subsection \ref{sssec:parametricLift}.

Since $\SR$ is ample up to avoidance we can apply Lemmas \ref{lem:liftParametric} and \ref{lem:liftFoliated} to deduce that its parametric lift $\SR \times K$ is also ample up to avoidance. It follows that each of the foliated relations $(\pi_{r'}^r \circ \iota_{\SR \times K})_{r'= 1,\cdots,r}$ admits an avoidance template $\SA_{r'}$.

We apply Proposition \ref{prop:main} to $F$, $\pi_1^r \circ \iota_{\SR \times K}$, and $\SA_1$ to deduce that there is a principal cover $\SC_1$ of $M \times K$ such that $\SA_1(\SC_1)$ is ample along principal directions and $F$ is a formal solution. In particular, $F$ is short for $\pi_1^r \circ \iota_{\SR \times K}$.

We then apply convex integration along a principal cover (Theorem \ref{thm:convexIntegration2}). It follows that $F$ is homotopic to a formal solution 
\[ G_1: M \times K \to \SA_1(\SC_1) \subset \SR \times K \]
that is holonomic up to first order. Applying this reasoning inductively on $r'$ we produce a holonomic solution $G: M \times K \to \SR \times K$ homotopic to $F$. The section $G$ is equivalent to a family of holonomic solutions $(G_k)_{k \in K}: M \to \SR$ homotopic to $(F_k)_{k \in K}$. This concludes the non-relative proof.

For the relative case we observe that, according to Theorem \ref{thm:convexIntegration2}, the homotopy connecting $F$ and $G$ can be assumed to be constant along any closed set in which $F$ was already holonomic. Since we are working in the foliated setting, this proves the parametric nature of the $h$-principle both in parameter ($K$) and domain ($M$).
\end{proof}

\begin{corollary} \label{cor:ampleness3Integrability}
If $\SR$ is ample up to avoidance, it is ample in the sense of integrable convex hull extensions.
\end{corollary}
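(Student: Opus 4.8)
The plan is to deduce Corollary \ref{cor:ampleness3Integrability} directly from the machinery assembled in the proof of Theorems \ref{thm:main} and \ref{thm:mainRestate}. Recall that ampleness in the sense of integrable convex hull extensions (Definition \ref{def:ampleness3}) demands: for each order $r'$, each compact parameter space $K$, and each $K$-family of formal solutions $F : M \times K \to \SR \times K$ that is holonomic of order $r'-1$, the family $F$ is \emph{integrably short} for $\pi^r_{r'} \circ \iota_\SR$, relative to the locus where it is already $r'$-holonomic. So the statement to prove is genuinely just the production of a suitable integrable holonomic section of some $\Conv^l(\cdot)$, not a full $h$-principle; this is exactly the intermediate output of our argument.

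First I would reduce to the non-parametric, single-order situation by passing to the foliated setting: given $K$ and $F$, form the parametric lift $\SR \times K$, which is ample up to avoidance by Lemmas \ref{lem:liftParametric} and \ref{lem:liftFoliated}, so $\pi^r_{r'}\circ\iota_{\SR\times K}$ admits an avoidance template $\SA_{r'}$. Then I would apply Proposition \ref{prop:main} to $F$, $\pi^r_{r'} \circ \iota_{\SR \times K}$ and $\SA_{r'}$: this yields a principal cover $\SC$ of $M \times K$ such that $F$ is a formal solution of the avoidance over-relation $\SA_{r'}(\SC)$, which by Proposition \ref{prop:avoidanceRelation} is open and ample along the principal cover $\SC$. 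Crucially, the principal cover $\SC$ consists, by Definition \ref{def:principalCover}, of \emph{integrable} (locally-defined) hyperplane fields. Hence the principal path connecting $F$ to $j^r(\pi^r_0 \circ \iota_\SR \circ F)$ produced by ampleness along $\SC$ is built from integrable codirections, i.e. $F$ is \emph{integrably} short. The relative statement comes for free because, as in the proof of Theorem \ref{thm:main}, the cover produced by Proposition \ref{prop:main} can be taken with all hyperplane fields vanishing over a neighbourhood of the $r'$-holonomic locus (this is part of the jiggling construction in Subsection \ref{ssec:jiggling}, and it matches the ``short relative to $M'$'' clause in the definition preceding Definition \ref{def:ampleness3}).

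The one point that needs a little care — and which I expect to be the main (mild) obstacle — is matching the bookkeeping of Definition \ref{def:ampleness3} exactly: there one is handed a family already holonomic in order $r'-1$ and asked for shortness of $\pi^r_{r'}\circ\iota_\SR$ (a statement purely in order $r'$), whereas our avoidance machinery naturally produces, for each $r'$, a template for $\pi^r_{r'}\circ\iota_\SR$ and a cover $\SC$ adapted to $F$. So the argument is: being ample along the principal cover $\SC$ means precisely that any formal solution of $\SA_{r'}(\SC)$ — in particular $F$ — can be joined to its underlying order-$0$ holonomic section by an integrable section of $\Conv^l(\SA_{r'}(\SC))$ for some $l$ (this is the elementary observation, already used in Subsection \ref{ssec:comparison}, that ``ampleness along a frame $\Rightarrow$ integrable shortness''). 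Since $\SA_{r'}(\SC) \subset \SR \times K$, such a section is in particular an integrable section of $\Conv^l((\pi^r_{r'}\circ\iota_{\SR\times K}))$, which is exactly what integrable shortness of $F$ for $\pi^r_{r'}\circ\iota_\SR$ (parametrically in $K$) asks for. Running this for every $r' = 1,\dots,r$ and every compact $K$ gives ampleness in the sense of integrable convex hull extensions, completing the proof.

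\begin{proof}[Proof of Corollary \ref{cor:ampleness3Integrability}]
Fix an order $r' \in \{1,\dots,r\}$, a compact manifold $K$, and a $K$-family of formal solutions $F: M \times K \to \SR \times K$ that is holonomic of order $r'-1$; we must show $F$ is integrably short for $\pi^r_{r'}\circ\iota_\SR$, relative to the locus where it is already $r'$-holonomic. As in the proof of Theorem \ref{thm:main}, form the parametric lift $\SR \times K$: by Lemmas \ref{lem:liftParametric} and \ref{lem:liftFoliated} it is ample up to avoidance, so $\pi^r_{r'}\circ\iota_{\SR\times K}$ admits an avoidance template $\SA_{r'}$. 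Apply Proposition \ref{prop:main} to $F$, $\pi^r_{r'}\circ\iota_{\SR\times K}$ and $\SA_{r'}$: we obtain a principal cover $\SC$ of $M \times K$ such that $F$ takes values in the avoidance over-relation $\SA_{r'}(\SC)$, which by Proposition \ref{prop:avoidanceRelation} is open and ample along $\SC$. By Definition \ref{def:principalCover} every hyperplane field in $\SC$ is integrable, and (by the relative clause in Proposition \ref{prop:main}, as used in the proof of Theorem \ref{thm:main}) these fields can be taken to vanish over a neighbourhood of the $r'$-holonomic locus of $F$. Ampleness of $\SA_{r'}(\SC)$ along the integrable cover $\SC$ lets us connect $F$ to $j^r(\pi^r_0 \circ \iota_{\SR\times K} \circ F)$ by an integrable holonomic section $G = (F,\lambda_1,z_1,\dots,\lambda_l,z_l)$ of $\Conv^l(\SA_{r'}(\SC))$ for some $l$, with the $\lambda_i$ zero over the $r'$-holonomic locus. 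Since $\SA_{r'}(\SC) \subset \SR \times K$, the section $G$ is in particular an integrable holonomic section of $\Conv^l(\pi^r_{r'}\circ\iota_{\SR\times K})$; hence $F$ is integrably short for $\pi^r_{r'}\circ\iota_\SR$, relative to its $r'$-holonomic locus. As $r'$ and $K$ were arbitrary, $\SR$ is ample in the sense of integrable convex hull extensions.
\end{proof}
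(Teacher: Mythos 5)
Your argument is correct and is essentially the paper's own proof, only written out in more detail: the paper likewise observes that, in the course of proving Theorem \ref{thm:main}, the family $F$ was shown to be a formal solution of an avoidance relation $\SA(\SC)$ that is ample along a principal cover (whose hyperplane fields are integrable by Definition \ref{def:principalCover}), whence integrable shortness. The only cosmetic slip is attributing the relative clause to Proposition \ref{prop:main} itself (which has none); the correct justification, as you essentially indicate, is that over the $r'$-holonomic locus the principal path is constant and the codirections may be cut off to zero, which the definition of integrable sections permits.
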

\begin{proof}
Fix some arbitrary formal solution $F: M \times K \to \SR \times K$, holonomic of order $r'$. During the proof of Theorem \ref{thm:main} we have shown that $F$ is a formal solution of the avoidance relation $\SA_{r'+1}(\SC_{r'+1})$, which is ample along principal frames. It follows that all formal solutions are integrably short, so Definition \ref{def:ampleness3} applies to $\SR$.
\end{proof}

\subsection{Jiggling for principal covers} \label{ssec:jiggling}

In this Subsection we prove Proposition \ref{prop:main}, completing the proof of Theorems \ref{thm:main} and \ref{thm:mainRestate}. Our goal is to find a principal cover $\SC$ compatible with a given avoidance template $\SA$ and a formal solution $F$. We construct $\SC$ using a \emph{jiggling} argument. Namely, we start with an (arbitrary) principal cover $\SC'$ which we then subdivide repeatedly. When the subdivision is fine enough, we tilt/jiggle the corresponding principal frames in order to obtain the claimed $\SC$.

This argument is (strongly) reminiscent of the classic version of jiggling due to W. Thurston \cite{Th}. For completeness, we recall it in Subsection \ref{ssec:Thurston}; its contents are not really needed for our arguments and can be skipped. Our goal with this is to highlight the similarities between the two schemes. Despite of the many parallels, it is unclear to the authors whether there is some natural generalisation subsuming both results.

\begin{remark} \label{rem:hPrincipleWOHA}
We think of both jiggling arguments (both Thurston's and ours) as \textbf{h-principles without homotopical assumptions}.

Namely, being transverse to a given distribution $\xi$ is a differential relation for submanifolds of $(N,\xi)$. It may not be possible, in general, to find solutions of this relation. However, by dropping the smoothness assumption on the submanifold (allowing it to have instead triangulation-like singularities), Thurston produces solutions. Similarly, given a formal solution $F: M \to \SR$, we can define a first order differential relation for tuples of functions $\{f_i: M \to \R\}$ by requiring $(F,\{df_i\}) \in \SA$. By allowing the functions to be defined only locally (as coordinate codirections of charts), we are effectively introducing discontinuities; this is the flexibility we need to find a suitable $\SC$.

In both cases, the main point is that, due to the presence of discontinuities, there is no formal data associated to the objects we consider. $h$-Principles without homotopical assumptions play now a central role in Symplectic and Contact Topology through the arborealisation programme \cite{Sta,AEN1,AEN2,AEN3}.
\end{remark}

Recall the setup of Proposition \ref{prop:main}: We are given a manifold $M$, a bundle $X \to M$, an over-relation $\iota_\SR: \SR \to J^r(X)$, an avoidance template $\SA$, and a formal solution $F: M \to \SR$. We want to find a principal cover $\SC$ such that $\SA(\SC)$ is ample along $\SC$ and $F$ is still a formal solution of $\SA(\SC)$.

We will assume that $M$ is compact. If not, the upcoming argument can be adapted to use an exhaustion by compacts.

\subsubsection{Picking an atlas} \label{proof:atlas}

We pick an arbitrary atlas $\SU$ of $M$. We require $\SU$ to use closed, cubical charts, i.e. each $(U,\phi) \in \SU$ has image $[-1,1]^n \subset \R^n$. Due to compactness, we may assume that $\SU$ is finite. We will still write $\phi$ to mean an arbitrary but fixed extension of $\phi$ to an open neighbourhood of $U$. We pick $\phi^{-1}(0) \in U$ as a marked point for each $(U,\phi) \in \SU$.

To each ordered pair $((U,\phi_U), (V,\phi_V))$ in $\SU \times \SU$ we associate the transition function $\phi_{UV} := \phi_V \circ \phi_U^{-1}$. Its domain and codomain are the images of $U \cap V$.

\subsubsection{Choosing principal frames} \label{proof:principalFrames}

Given $(U,\phi) \in \SU$, we pick a principal frame $\Xi_U$ with support in $U$. We write $e$ for the cardinality of this frame, which is the dimension of the fibres of $\pi^r_{r-1}$. We require $\phi_*\Xi_U$ to be invariant with respect to the translations in $\R^n$. Such an invariant principal frame is in correspondence with a principal basis at the marked point. During our arguments we think of the two interexchangeably.

The collection of all principal frames $\Xi_U$, as we range over the different $(U,\psi) \in \SU$, defines a principal cover $\SC'$ of $M$.

\subsubsection{Subdivision}  \label{proof:subdivision} 

Fix some real number $C>1$. Let $c$ be a positive integer to be fixed later on during the proof.
 
We subdivide $[-1,1]^n$ into $(2c)^n$ cubes of side $1/c$, homothetic to the original. Given $(U,\phi) \in \SU$, we apply this subdivision to $U$ using $\phi$. This yields a new collection of cubical charts, which we denote by $\SU(c)$. A cube $V \in \SU(c)$ is said to be the \emph{child} of a \emph{parent} cube $U \in \SU$ if it is obtained from $U$ by subdivision. Two children of the same parent are \emph{siblings}.

Each child $V$ inherits the parent chart $\phi$, mapping now to a small cube of side $1/c$ contained in $[-1,1]^n$. The marked point of $V$ is the preimage by $\phi$ of the center of its image. The transition function between two given cubes in $\SU(c)$ is inherited from the parents. In particular, if two cubes are siblings, the transition function between them is the identity (restricted to their overlap).

$\SU(c)$ need not be a cover, since siblings overlap along sets with empty interior. To obtain a cover $\SV(c)$, we dilate each cube in $\SU(c)$, with respect to its center, by $C$. If $c$ is sufficiently large, dilating by $C$ makes sense even for children close to the boundary of the parent. This is why we extended the charts in $\SU$ to slightly bigger opens. After $C$-dilation, each child chart $(V,\phi) \in \SV(c)$ has for image a cube of side $C/c$. The domain of the transition functions is changed accordingly. See Figure \ref{fig:Cdilation}.

\begin{figure}[ht]
		\includegraphics[scale=1.8]{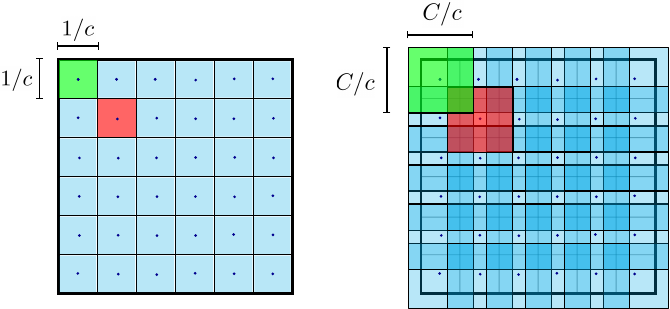}
		\centering
		\caption{The big cube on the left represents the image $[-1,1]^n$ of a chart $(U,\phi) \in \SU$, before the $C$-dilation is introduced. We show how it is subdivided into smaller cubes, as well as the (image by $\phi$ of the) marked point of each smaller cube. Two of the children are marked in green and red. On the right, we depict the $C$-scaling of each child cube. The dilated green and red cubes, which originally met at a single point, now have an intersection with non-empty interior.} \label{fig:Cdilation}
\end{figure}

Lastly, we attach to each cube in $\SV(c)$ a principal frame. It is simply the restriction of the principal frame of the parent. The collection of all these principal frames, as we range over $\SV(c)$, is a principal cover that we call $\SC'(c)$.

We now prove a number of quantitative properties for $\SV(c)$, as we take $c$ to infinity. We fix a fibrewise metric on $\NP T^*M$. This defines a fibrewise metric in $\barHConf(TM)$, since its fibres are simply products of projective spaces.

\subsubsection{A bound on the number of overlapping cubes} \label{proof:boundOverlaps}

Given $U \in \SV(c)$ we write $\SV_U(c)$ for the collection of cubes in $\SV(c)$ that intersect $U$ non-trivially.
\begin{lemma}
There is an upper bound $d_1$, independent of $c$ and $U$, for the cardinality of $\SV_U(c)$.
\end{lemma}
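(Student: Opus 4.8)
The plan is to track how many children of how many parents can meet a fixed dilated child cube $U \in \SV(c)$. First I would fix a cube $U \in \SV(c)$; it is the $C$-dilation of some child $U_0 \in \SU(c)$ of a parent $P \in \SU$, so in the chart $\phi_P$ it is a cube of side $C/c$. Since the atlas $\SU$ is finite, say of cardinality $A$, it suffices to bound, for each parent $Q \in \SU$, the number of children of $Q$ whose $C$-dilation meets $U$; multiplying that bound by $A$ gives $d_1$. So the problem reduces to a purely Euclidean counting statement inside the images of the charts.

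The key step is a uniform estimate on the transition functions. Because $\SU$ is finite and each $\phi_{PQ}$ is smooth on the (relatively compact) image of $P \cap Q$, there is a global Lipschitz constant $L > 1$ for all transition functions $\phi_{PQ}$ and their inverses. If a $C$-dilated child of $Q$ (a cube of side $C/c$ in the chart $\phi_Q$) meets the $C$-dilated child $U$ (a cube of side $C/c$ in the chart $\phi_P$), then, transporting everything into the chart $\phi_Q$ via $\phi_{QP}$, the image of $U$ is contained in a cube of side at most $L \cdot C/c$. Hence every child of $Q$ whose $C$-dilation meets $U$ is contained in a fixed ball of radius $O((L+C)/c)$ around the $\phi_Q$-image of the center of $U$. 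A ball of radius $O((L+C)/c)$ in $\R^n$ can contain the centers of at most $O((L+C)^n)$ cubes of the subdivision lattice of $Q$ (whose mesh is $1/c$), and this count is manifestly independent of $c$. Setting $d_1 := A \cdot O((L+C)^n)$ finishes the argument.

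The main obstacle — really the only subtlety — is bookkeeping the scale factors correctly so that the final bound genuinely does not depend on $c$: one must check that the $1/c$ in the cube side lengths cancels against the $1/c$ in the lattice mesh, leaving only the $c$-independent quantities $L$, $C$, $n$, and $A$. This is exactly the same cancellation that makes the classical fact ``a ball of radius $R$ meets $O(R^n)$ unit cubes'' scale-invariant, so no genuine difficulty arises; one simply has to be careful that the $C$-dilation (which could in principle be applied iteratively in later subdivisions, but here is applied once) and the Lipschitz constants of the fixed finite family of transition functions are all absorbed into constants fixed before $c$ is chosen. With that cancellation in place the lemma follows.
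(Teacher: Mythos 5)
Your proof is correct and follows essentially the same route as the paper: transport $U$ into the other parent's chart via the transition function, use the uniform $C^1$/Lipschitz bound coming from compactness and finiteness of the atlas to see that the transported image has diameter $O(1/c)$, and then count the $O(1)$ cubes of the $1/c$-lattice that can meet it, multiplying by the cardinality of $\SU$. Your version merely makes the constants ($L$, $C$, $n$, $A$) explicit and treats sibling cubes by the same count instead of separately, which is a harmless streamlining.
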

\begin{proof}
This is trivially true when we restrict ourselves to sibling cubes. For unrelated cubes we reason as follows: Write $P \in \SU$ for the parent of $U$ and consider the image $\phi_{PP'}(U) \subset P'$ in some other cube $P' \in \SU$, where $\phi_{PP'}$ is the transition function between $P$ and $P'$. Since $\phi_{PP'}$ is $C^1$-bounded by compactness, the diameter of $\phi_{PP'}(U)$ behaves as $O(1/c)$. The children of $P'$ form a lattice spaced $1/c$. It follows that $\phi_{PP'}(U)$ can only intersect an amount $O(1)$ of them. The claim is complete since $\SU$ has finite cardinality. 
\end{proof}

\subsubsection{Colouring} \label{proof:colours}

\begin{lemma}
There is an integer $d_2$, independent of $c$, such that we can partition $\SV(c)$ into $d_2$ colours $\{\SV(c)^{(i)}\}_{i=1,\cdots,d_2}$ with the following property: If $U, V \in \SV(c)$ belong to different colours, then they have no common neighbours (i.e. elements $W \in \SV(c)$ overlapping non-trivially with both).
\end{lemma}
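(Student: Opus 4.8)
The plan is to recast the statement as a bounded-degree graph-colouring problem and then conclude by the greedy colouring algorithm. First I would introduce the finite graph $G$ whose vertex set is $\SV(c)$ and in which two distinct cubes $U,V$ are joined by an edge precisely when they admit a common neighbour, that is, when $\SV_U(c)\cap\SV_V(c)\neq\emptyset$. By construction, a proper vertex colouring of $G$ is exactly a partition of $\SV(c)$ into colour classes $\{\SV(c)^{(i)}\}$ such that cubes lying in different classes have no common neighbour; so it suffices to colour $G$ with a number of colours bounded independently of $c$.

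The key step is a uniform bound on the maximal degree of $G$. If $V$ is adjacent to $U$ in $G$, then there is some $W\in\SV_U(c)$ with $V\in\SV_W(c)$; hence the set of $G$-neighbours of $U$ is contained in $\bigcup_{W\in\SV_U(c)}\SV_W(c)$ (this automatically covers the cubes overlapping $U$ directly, taking $W=U$). By the previous Lemma one has $|\SV_U(c)|\le d_1$ and $|\SV_W(c)|\le d_1$ for every cube, with $d_1$ independent of $c$; therefore every vertex of $G$ has degree at most $d_1^{2}$.

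Finally, since $\SU$ is finite and each of its charts is subdivided into finitely many cubes, $\SV(c)$ is finite, so $G$ is a finite graph of maximal degree at most $d_1^{2}$. The greedy algorithm then colours $G$ properly with at most $d_1^{2}+1$ colours; padding with empty classes if needed, I would set $d_2:=d_1^{2}+1$, which is independent of $c$ because $d_1$ is, and this produces the claimed partition. The only substantive input is the uniformity in $c$ of the overlap bound $d_1$ supplied by the previous Lemma; granting that, the argument is entirely formal, so I do not expect any real obstacle here — the statement is essentially bookkeeping built on top of the preceding degree bound.
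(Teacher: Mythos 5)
Your construction is essentially correct and follows a genuinely different route from the paper's, but one sentence has the logic backwards. A proper colouring of your graph $G$ (edges between cubes admitting a common neighbour) guarantees that two cubes with a common neighbour receive \emph{different} colours, i.e.\ that cubes in the \emph{same} class have no common neighbour --- not, as you write, that cubes in different classes have none. Note that the lemma as printed states the implication in that reversed form, but that literal reading is vacuous (a single colour class satisfies it) and is not what the jiggling step uses: there, all cubes of one colour are processed simultaneously, and what must be excluded is two same-coloured cubes both perturbing the frame of a shared neighbour --- exactly the property a proper colouring of $G$ provides. So the partition you build, with $d_2=d_1^2+1$ independent of $c$ via the degree bound $\deg_G(U)\le |\bigcup_{W\in\SV_U(c)}\SV_W(c)|\le d_1^2$ and greedy colouring of the finite graph $G$, does deliver the intended statement; just fix the sentence identifying proper colourings with the ``different classes'' condition.

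Regarding the comparison with the paper: the paper argues geometrically, observing that the children of a fixed parent in $\SU$ form a lattice spaced $1/c$ with cubes of size $O(1/c)$, so within one parent a palette of bounded size works, and by finiteness of $\SU$ one takes disjoint palettes for different parents. Your argument is purely combinatorial: the only geometric input is the overlap bound $d_1$ from the previous lemma, which bounds the degree of the common-neighbour graph by $d_1^2$, after which the greedy algorithm finishes. This makes the independence of $c$ completely transparent and avoids any further appeal to the lattice structure, at the cost of a less explicit colouring; both routes produce a $c$-independent $d_2$ with the property needed downstream.
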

\begin{proof}
This property is clear if we restrict to children of a fixed parent in $\SU$, since children are spaced uniformly as $O(1/c)$ and have size $O(1/c)$. Then, by finiteness of $\SU$, the claim follows if we use different sets of colours for each parent in $\SU$.
\end{proof}

\subsubsection{Trivialising the configuration bundles} \label{proof:trivialising}

Given $(U,\phi) \in \SV(c)$, we look at the bundle of hyperplanes $\NP T^*U$. Consider the marked point $u \in U$ and the corresponding fibre $\NP T_u^*U$. Using the parallel transport provided by the translations in the image of $\phi$, we trivialise:
\[ \NP T^*U \,\cong\, U \times \NP T_u^*U. \]
We denote the resulting projection by
\[ \pi_U: \NP T^*U \to \NP T_u^*U. \]

Similarly, we trivialise the bundle $\barHConf(TU)$ as $U \times \barHConf(T_uU)$. This produces again a projection 
\[ \barHConf(TU) \to \barHConf(T_uU). \]
We abuse notation and also call $\pi_U$; it should be apparent from context which of the two we mean.

Due to the compactness of $M$, the charts, projective space, and $\barHConf$, we have that:
\begin{lemma} \label{lem:dilations}
Fix a positive integer $j_0$. Then, there is a constant $A>1$, independent of $c$ and $(U,\phi) \in \SV(c)$, such that the following holds:

Fix a point $x \in U$. Identify $\NP T_u^*U$ with $\NP T_x^*U$ using $\pi_U$. Then, the fibrewise metrics at $x$ and $u$ bound each other from above up to a factor of $A$.

The same statement holds, for every $j \leq j_0$, for the metrics in $\barHConf_j(T_uU)$ and $\barHConf_j(T_xU)$.
\end{lemma}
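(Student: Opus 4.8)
The plan is to reduce the estimate — which a priori ranges over the infinitely many charts of $\SV(c)$ for all $c$ — to a uniform statement about the \emph{finitely many} parent charts in $\SU$. The key structural observation is that, by construction, every child cube $(U,\phi) \in \SV(c)$ carries the \emph{restriction} of a parent chart $(P,\psi) \in \SU$, and the translation-trivialisation $\pi_U$ of $\NP T^*U$ (resp. of the corresponding configuration bundle) is simply the restriction of the parent's translation-trivialisation $\pi_P$ of $\NP T^*P$. Concretely, after the canonical translation identification $\NP T_u^*U \cong \NP T_0^*P$ between the fibres over the marked points of $U$ and of $P$, one has $\pi_U = \pi_P|_{\NP T^*U}$, because translating a covector from $\psi(x)$ to the center of $\psi(U)$ and then to $0$ is the same as translating it from $\psi(x)$ to $0$ directly. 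Writing $\tilde g_p$ for the fibrewise metric at $p$ transported to the fixed fibre $\NP T_0^*P$, this means that identifying $\NP T_u^*U$ with $\NP T_x^*U$ via $\pi_U$ and pushing both metrics to $\NP T_0^*P$ turns the metric at $x$ into $\tilde g_x$ and the metric at $u$ into $\tilde g_u$. Hence it is enough to bound $\tilde g_x$ and $\tilde g_y$ against each other up to a uniform factor, for all $x,y$ ranging over a fixed compact subset $K_P$ of the extended parent chart that contains every $C$-dilated child cube (for all $c$), and then take a maximum over the finitely many parents.

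Phrased this way, the remaining content is a routine compactness argument. For a fixed parent $(P,\psi)$, the assignment $p \mapsto \tilde g_p$ is a smooth, in particular continuous, family of Riemannian metrics on the compact manifold $\NP T_0^*P$, parametrised by $p \in K_P$. The function that sends a pair of Riemannian metrics $(g_1,g_2)$ on $\NP T_0^*P$ to the smallest constant $A \geq 1$ with $A^{-1} g_1 \leq g_2 \leq A g_1$ is continuous (it is a supremum of a continuous function over the compact unit tangent bundle of $\NP T_0^*P$), hence it is bounded on the compact set of pairs $\{(\tilde g_p, \tilde g_q) : p,q \in K_P\}$; call this bound $A_P$. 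Since the fibrewise metric on $\barHConf(TM)$ is built fibrewise out of the one on $\NP T^*M$, the same reasoning applied to $\barHConf_j$ produces constants $A_{P,j}$ for each $j$. We then set $A := \max\{A_P\} \cup \{A_{P,j} : P \in \SU,\ 1 \leq j \leq j_0\}$, which is finite because $\SU$ is finite and $j_0$ is fixed, and which depends neither on $c$ nor on the particular child cube $U$, as claimed.

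The only points requiring genuine care — as opposed to difficulty — are twofold. First, one must check that all $C$-dilated children of a given parent remain inside a single compact subset of the extended parent chart, uniformly in $c$: this is exactly why the charts of $\SU$ were extended slightly beyond $[-1,1]^n$, and for $c$ large a $C$-dilated child deviates from $[-1,1]^n$ by only $O(1/c)$. Second, one must make the identity $\pi_U = \pi_P|_{\NP T^*U}$ (after translation) fully explicit, since this is precisely what allows the comparison constant to descend from the parent to every child \emph{without} deterioration. I expect this second point — the bookkeeping that the child trivialisations are honest restrictions of the parent ones — to be the part most likely to conceal a subtlety, since the whole purpose of the lemma is to obtain constants that do not degrade as $c \to \infty$; everything else reduces to the elementary fact that a continuous positive function on a compact space is bounded.
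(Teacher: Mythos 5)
Your argument is correct and matches the paper's, which offers no detailed proof at all: the lemma is stated there as an immediate consequence of the compactness of $M$, the (finitely many, extended) charts, projective space, and $\barHConf$, and your write-up is just that compactness argument carried out, including the correct observation that the child trivialisations $\pi_U$ are restrictions of the parent translation-trivialisations, so the constants descend from the finitely many parents without degrading as $c \to \infty$.
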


\subsubsection{The diameter of a hyperplane field} \label{proof:boundHyperplane}

Given $(U,\phi) \in \SV(c)$, we look at the principal directions coming from neighbourhouring cubes. We want to show that these form a set whose diameter goes to zero as $c \to \infty$. We formalise this as follows, using the notation from the previous item.
\begin{lemma} \label{lem:boundHyperplane}
Fix a second cube $(V,\psi) \in \SV(c)$. Fix an integrable hyperplane field $\tau: V \to \NP T^*V$, invariant under the translations in $(V,\psi)$. Consider the composition $\pi_U \circ \tau: U \cap V \to \NP T_u^*M$. 

The diameter of $\image(\pi_U \circ \tau)$ behaves like $O(1/c)$ and the constants involved do not depend on $(U,\phi)$, $(V,\psi)$, or $\tau$.
\end{lemma}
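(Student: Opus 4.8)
The plan is to reduce the statement to two ingredients: (1) the hyperplane field $\tau$ on $V$, being translation-invariant in the chart $(V,\psi)$, is a \emph{constant} section when read through $\pi_V$; and (2) the change-of-chart map $\phi_{VU}$ (or rather its lift to projectivised cotangents) is $C^1$-bounded by compactness of $M$ and finiteness of the original atlas $\SU$, so it distorts the constant field by an amount controlled by the diameter of $U\cap V$, which is $O(1/c)$.

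First I would unwind the definitions. Pick the marked point $v\in V$. By translation-invariance, $\pi_V\circ\tau: V\to\NP T_v^*V$ is the constant map onto a single hyperplane $\tau(v)\in\NP T_v^*V$. Now transport to $U$: for $x\in U\cap V$ we have $\tau(x)=\tau(v)$ after identifying $\NP T_x^*V$ with $\NP T_v^*V$ via $\pi_V$; applying the (co)differential of the transition map $\phi_{UV}$ we get $\tau(x)\in\NP T_x^*U$, and then $\pi_U\circ\tau(x)\in\NP T_u^*U$ is obtained by further parallel transport in $U$. So $\pi_U\circ\tau$ is the composition of: the constant map to $\tau(v)$; the pointwise (co)differential of $\phi_{UV}$, which is a smooth map $U\cap V\to \mathrm{Hom}$ of cotangent fibres; and the $U$-parallel transport $\pi_U$. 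The only non-constant piece is the (co)differential of $\phi_{UV}$ together with $\pi_U$, and both vary smoothly in $x$.

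Next I would extract the bound. Since $U,V\in\SV(c)$ are $C$-dilations of children of parents $P,P'\in\SU$, and $\SU$ is a finite atlas with charts extended to slightly larger opens, the transition function $\phi_{P'P}$ is $C^1$-bounded on its domain by a constant depending only on $\SU$. The diameter of $U\cap V$, measured in any of the parent charts, is $O(C/c)=O(1/c)$ with constants independent of the particular cubes (this uses the bound on overlapping cubes from \ref{proof:boundOverlaps} and finiteness of $\SU$, exactly as in that Lemma's proof). Therefore the map $x\mapsto (\text{codifferential of }\phi_{UV}\text{ at }x)$ has image of diameter $O(1/c)$ in the relevant Hom-space; composing with the smooth (hence locally Lipschitz, uniformly by compactness) projectivisation and with $\pi_U$, whose distortion is uniformly bounded by the constant $A$ of Lemma \ref{lem:dilations}, we conclude that $\image(\pi_U\circ\tau)$ has diameter $O(1/c)$, with all implied constants depending only on $M$, the fixed atlas $\SU$, the fixed fibrewise metric on $\NP T^*M$, and $C$ — but not on $(U,\phi)$, $(V,\psi)$, or $\tau$.

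The main obstacle I anticipate is purely bookkeeping rather than conceptual: one must be careful that the uniformity of the constants genuinely survives all the compositions, in particular that the $C^1$-bound on transition functions is taken over the \emph{finitely many} parent charts (not the growing-in-number children), that the parallel transports $\pi_U,\pi_V$ are handled via Lemma \ref{lem:dilations} with a fixed $j_0$ (here effectively $j_0=1$, since we only need the metric comparison on $\NP T^*M$), and that ``diameter $O(1/c)$'' is stated with respect to the chosen fibrewise metric, which is legitimate since $\NP T^*M$ is compact so any two such metrics are comparable. Once these uniformities are pinned down, the estimate is immediate from the mean value inequality applied to the smooth map $\phi_{UV}$ on a set of diameter $O(1/c)$.
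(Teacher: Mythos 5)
Your argument follows essentially the same route as the paper: translation-invariance makes $\tau$ constant when read in the chart of $V$, so the only source of variation in $\pi_U\circ\tau$ is the (co)differential of the parent transition function, whose oscillation over a set of diameter $O(1/c)$ is $O(1/c)$ by uniform bounds coming from the finiteness of $\SU$, with the sibling case (transition equal to the identity) subsumed. One correction in your quantitative step: a $C^1$-bound on $\phi_{P'P}$ does not control the variation of its differential, so the inference ``$C^1$-bounded transition $+$ diameter $O(1/c)$ $\Rightarrow$ the codifferential has image of diameter $O(1/c)$'' needs instead a bound on the \emph{second} derivatives of the parent transition functions (equivalently, a Lipschitz bound on $d\phi_{RP}$), and the mean value/Taylor estimate must be applied to $d\phi_{RP}$ rather than to $\phi_{RP}$ itself; this is exactly how the paper argues, via the Taylor remainder controlled by second derivatives. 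Since the parent atlas is finite and its charts are extended to slightly larger opens, this $C^2$-bound is available, and the rest of your uniformity bookkeeping (metric comparison via Lemma \ref{lem:dilations}, diameters measured in the fixed fibrewise metric) is fine.
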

\begin{proof}
First note that the claim is automatic if $U$ and $V$ are siblings. Indeed, $\tau$ is then translation-invariant for the parent and thus for $U$, so $\pi_U \circ \tau$ is constant. Otherwise, write $P$ for the parent of $U$ and $R$ for the parent of $V$. Let $\phi_{RP}$ be the transition function between the two; it restricts to the transition function between $V$ and $U$.

The Taylor remainder theorem states that
\[ d_{u+h}\phi_{RP} = d_u\phi_{RP} + O(h) \]
and the remainder is controlled by the second derivatives of $\phi_{RP}$, which are bounded independently of $c$, $U$, $V$ and $\tau$. Since the diameter of $U$ is $O(1/c)$, we have that
\[ d_x\phi_{RP}(\pi_U \circ \tau) = d_u\phi_{RP}(\pi_U \circ \tau) + O(1/c) \qquad \text{for all $x \in U \cap V$}, \]
proving the claim.
\end{proof}

\subsubsection{The diameter of a principal cover} \label{proof:boundCover}

We now look at covers instead of individual hyperplane fields. Fix $(U,\phi) \in \SV(c)$ and consider all the neighbouring cubes. For each cube $(V,\psi) \in \SV_U(c)$, suppose a principal frame $\xi_V$ is given (not necessarily the one in $\SC'(c)$ we fixed earlier). We may assume that $\xi_V$ is defined over the whole of $U$ simply by temporarily dilating $V$ (a factor of $2$ is sufficient).

The cardinality of $\SV_U(c)$ is at most $d_2$ and the cardinality of each $\xi_V$ is exactly $e$. By concatenating all the principal frames $(\xi_V)_{V \in \SV_U(c)}$, we can regard them as a section 
\[ s_U: U \to \barHConf_j(TU), \qquad \text{ for some } j \leq d_2.e. \]
Using $\pi_U$, we see $s_U$ as a map $U \to \barHConf_j(T_uU)$.
\begin{lemma} \label{lem:boundCover}
The diameter of $\image(s_U)$ behaves like $O(1/c)$. The constants involved do not depend on $(U,\phi)$ nor on $(\xi_V)_{V \in \SV_U(c)}$. 
\end{lemma}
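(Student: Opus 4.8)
The plan is to deduce the estimate directly from the single–field bound of Lemma~\ref{lem:boundHyperplane}, exploiting the fact that $\barHConf[j](T_uU)$ is a product of boundedly many projective spaces.

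First I would unpack the structure of $s_U$ and of its target. Writing $s_U = (\sigma_1,\dots,\sigma_j)$ in components, each $\sigma_k : U \to \NP T^*U$ is one of the hyperplane fields making up some frame $\xi_V$ with $V \in \SV_U(c)$; in particular each $\sigma_k$ is integrable and invariant under the translations of its own chart. After enlarging $V$ by a bounded dilation factor — which keeps the $C^2$-bounds on the chart uniform, since the charts of $\SU$ were extended to slightly larger opens — we may assume every $\sigma_k$ is defined over all of $U$. On the target side, $\barHConf[j](T_uU) = (\NP T_u^*U)^j$ carries the product of the fixed fibrewise metric, and since $j$ is bounded independently of $c$ and $U$ (the cardinality of $\SV_U(c)$ is uniformly bounded and each frame has the fixed cardinality $e$), there is a universal constant $C' > 1$ with $d([H_1,\dots,H_j],[H_1',\dots,H_j']) \le C' \max_k d(H_k,H_k')$. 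Finally, by construction the projection $\pi_U$ on $\barHConf$ is the componentwise application of the parallel transport $\pi_U$ on $\NP T^*U$, so $\pi_U \circ s_U = (\pi_U\circ\sigma_1,\dots,\pi_U\circ\sigma_j)$.

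Next I would bound each component. Applying Lemma~\ref{lem:boundHyperplane} to $\sigma_k$, viewed as a translation-invariant integrable hyperplane field on the (dilated) cube $V$, gives $\operatorname{diam}(\image(\pi_U\circ\sigma_k)) = O(1/c)$, with constants depending neither on $U$ nor on $V$ nor on $\sigma_k$ — this is precisely that lemma, applied over the domain $U$, whose diameter is $O(1/c)$, with the Taylor remainder controlled by the uniformly bounded second derivatives of the relevant transition function. Combining with the product–metric estimate, for all $x,y \in U$,
\[ d\big(\pi_U(s_U(x)),\,\pi_U(s_U(y))\big) \;\le\; C'\max_{1\le k\le j} d\big(\pi_U(\sigma_k(x)),\,\pi_U(\sigma_k(y))\big) \;\le\; C'\max_{1\le k\le j}\operatorname{diam}\big(\image(\pi_U\circ\sigma_k)\big) \;=\; O(1/c), \]
and taking the supremum over $x,y\in U$ yields $\operatorname{diam}(\image(s_U)) = O(1/c)$.

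I expect the only (and rather mild) obstacle to be bookkeeping the uniformity of all constants: $C'$ depends only on the fibrewise metric and the uniform bound on $j$; the per–component $O(1/c)$ bounds are uniform by Lemma~\ref{lem:boundHyperplane}; the bound on the number of cubes in $\SV_U(c)$ comes from compactness of $M$ and finiteness of $\SU$; and $e$ is fixed once and for all. Hence none of the constants depend on $(U,\phi)$ or on the chosen frames $(\xi_V)_{V\in\SV_U(c)}$, which is exactly the assertion. Conceptually there is nothing beyond Lemma~\ref{lem:boundHyperplane} plus the elementary fact that a metric on a product of boundedly many metric spaces is controlled by the factorwise metrics.
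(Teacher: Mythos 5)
Your argument is correct and is essentially the paper's own proof: the metric on $\barHConf_j(T_uU)$ is the product of the fibrewise metric on $\NP T_u^*U$, so applying Lemma \ref{lem:boundHyperplane} componentwise and using that $j \leq d_2 . e$ is bounded independently of $c$ gives the uniform $O(1/c)$ bound. The extra bookkeeping you spell out (dilating the neighbouring cubes, uniformity of the constants) is exactly what the paper's setup already arranges.
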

\begin{proof}
The metric on $\barHConf(TM)$ is just the product metric inherited from the metric in $\NP T^*M$. Then the claim follows from Lemma \ref{lem:boundHyperplane} due to the finiteness of $j$.
\end{proof}

\subsubsection{Density bounds on the avoidance template} \label{proof:boundTemplate}

The discussion up to this point referred only to coverings and principal frames. The avoidance template $\SA$ and the formal solution $F: M \to \SR$ enter the proof now. We will make use of openness and Property (III) in the definition of a template. Our goal is to provide a quantitative estimate regarding the size of the balls contained in $\barSA(F)$ that one can find on a given $\varepsilon$-ball in $\barHConf(TM)$.

Fix $(U,\phi) \in \SV(c)$ with marked point $u$. Using the projection $\pi_U$ we can associate to $\barSA(F)$ the singularity:
\[ \Sigma_U \,:=\, \pi_U(\barSA(F)^c \cap \barHConf(TU)) \subset \barHConf(T_uU), \]
where the superscript $c$ denotes taking complement.

\begin{lemma} \label{lem:boundTemplate}
Let $\varepsilon > 0$ be given. Then, there exists $\delta > 0$ such that, for any sufficiently large $c$ and any $j \leq d_2.e$, the following property holds:

Fix $(U,\phi) \in \SV(c)$ with marked point $u$. Each $\varepsilon$-ball in $\barHConf_j(T_uU)$ contains a $\delta$-ball disjoint from $\Sigma_U$.
\end{lemma}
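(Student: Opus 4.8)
The plan is to combine the uniform density guarantee coming from Property (III) of the template with the uniform comparison of fibrewise metrics provided by Lemma \ref{lem:dilations}. The key point is that $\varepsilon$ and $\delta$ must be independent of $c$, so every estimate has to be phrased at the marked points and then transported to arbitrary points of $U$ at a uniformly bounded cost.

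First I would work at the marked point $u$ of a cube $U$, where $\barSA(F(u))^c$ is, by Property (III) (in its lifted form $(\overline{\text{III}})$, via Lemma \ref{lem:liftingTemplate}), a closed set with empty interior in each $\barHConf_j(T_uU)$; by openness of $\barSA$ it is in fact a genuinely ``thin'' obstruction, not just nowhere dense. Since $\SR$ is not compact, but $F$ lands in a compact subset of $\SR$ once we fix the compact $M$ (using $F$ continuous and $M$ compact, the image $F(M)$ is compact), we may pass to a compact neighbourhood $\SK \subset \SR$ of $F(M)$. The assignment $G \mapsto \barSA(G)^c$ is then a family of closed, empty-interior subsets of the compact manifold $\barHConf_j(T_uU)$, varying upper-semicontinuously in $G \in \SK$ and in $u$ (which ranges over the compact $M$). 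A standard compactness argument then yields, for each given $\varepsilon>0$, a uniform $\delta_0>0$ such that every $\varepsilon$-ball in $\barHConf_j(T_uU)$ contains a $\delta_0$-ball disjoint from $\barSA(F(u))^c$: if not, one extracts a sequence of centres, formal solutions, and points converging to a limit whose $\varepsilon$-ball would be entirely contained in the limiting obstruction, contradicting empty interior. This is done simultaneously for all $j \leq d_2.e$, of which there are finitely many, so we take the minimum of the finitely many $\delta_0$'s.

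Next I would transport this from $u$ to an arbitrary $x \in U$. The set $\Sigma_U = \pi_U(\barSA(F)^c \cap \barHConf_j(TU))$ is the union over $x \in U$ of $\pi_U(\barSA(F(x))^c)$, each term computed in $\barHConf_j(T_xU)$ and then identified with $\barHConf_j(T_uU)$ via $\pi_U$. Two things must be controlled. First, the metric distortion: by Lemma \ref{lem:dilations} (applied with $j_0 = d_2.e$), the identification $\pi_U$ distorts the fibrewise metric on $\barHConf_j$ by at most a factor $A$ independent of $c$ and $U$; hence a $\delta_0$-ball at $x$ maps into an $A\delta_0$-ball and contains an $(\delta_0/A)$-ball, so applying the previous paragraph's estimate with $\varepsilon$ replaced by $A\varepsilon$ and then shrinking by $A$ yields a uniform $\delta_1 := \delta_0(A\varepsilon)/A$. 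Second, the variation of $F(x)$ as $x$ moves over $U$: since $\operatorname{diam}(U) = O(C/c) \to 0$ and $F$ is (uniformly) continuous on the compact $M$, for $c$ large the image $F(U)$ lies in an arbitrarily small subset of $\SK$; by the upper-semicontinuity of $G \mapsto \barSA(G)^c$ obtained above, the whole family $\{\pi_U(\barSA(F(x))^c) : x \in U\}$ is contained in an arbitrarily small neighbourhood of the single obstruction $\pi_U(\barSA(F(u))^c)$. Combining, $\Sigma_U$ sits inside a set only slightly larger than a single empty-interior obstruction, and a $\delta := \delta_1/2$-ball can be found inside any $\varepsilon$-ball of $\barHConf_j(T_uU)$ avoiding it, for all sufficiently large $c$.

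The main obstacle is establishing the uniformity of $\delta$ over the non-compact space $\SR$ and over all cubes $(U,\phi) \in \SV(c)$ at once. This is exactly where one must be careful to (i) restrict to the compact $\SK \supset F(M)$ so that $G \mapsto \barSA(G)^c$ becomes a compact family, (ii) use Lemma \ref{lem:dilations} to absorb the metric distortion of the trivialisation $\pi_U$ into a single constant $A$, and (iii) use $\operatorname{diam}(U) = O(1/c)$ together with uniform continuity of $F$ to guarantee that $F$ is nearly constant on each cube. Once these three uniformity inputs are in place, the density statement follows from a routine compactness-and-contradiction argument applied to the empty-interior obstructions supplied by Property (III).
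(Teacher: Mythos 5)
Your first stage is sound: combining openness of $\barSA$ (closedness of its complement in $\SR \times_M \barHConf(TM)$), compactness of the fibres of $\barHConf_j(TM)$, of $M$ and of a compact neighbourhood of $F(M)$, with the fibrewise density of Property (III), the compactness-and-contradiction argument does produce a $\delta_0$, uniform in $u$, so that every $\varepsilon$-ball in $\barHConf_j(T_uM)$ contains a $\delta_0$-ball disjoint from $\barSA(F(u))^c$. The gap is in the second stage, where the ball must avoid $\Sigma_U$, i.e.\ the obstructions $\pi_U(\barSA(F(x))^c)$ for \emph{all} $x\in U$, not only $x=u$. You justify this by claiming that, since $\operatorname{diam}(U)=O(1/c)$ and $F$ is uniformly continuous, the family $\{\pi_U(\barSA(F(x))^c): x\in U\}$ lies in an arbitrarily small neighbourhood of $\pi_U(\barSA(F(u))^c)$ ``by upper semicontinuity''. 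But closedness of $\barSA^c$ only gives outer semicontinuity of $x\mapsto \barSA(F(x))^c$ \emph{pointwise}: at each fixed $u$ there is a modulus $\sigma(u,\eta)$, whereas your argument needs $\sigma(\eta)$ independent of $u$, because the marked points vary with $c$ and become dense in $M$, so no reduction to finitely many of them is available at this stage. Such uniform outer semicontinuity does not follow from a closed graph with nowhere dense fibres: over the base $[0,1]$ take the closed set $Z=\bigl([0,1]\times\{0\}\bigr)\cup\bigl(\bigl(\{0\}\cup\{\tfrac{1}{2n}\,:\,n\in\N\}\bigr)\times\{\tfrac12\}\bigr)$; every fibre is finite and pointwise outer semicontinuity holds, yet $Z_{1/(2n)}=\{0,\tfrac12\}$ is not contained in any small neighbourhood of $Z_{1/(2n+1)}=\{0\}$ although the base points are arbitrarily close. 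Nothing in the template axioms excludes this behaviour for $\barSA(F)^c$, so the assertion ``for one large $c$, valid for all cubes, the obstructions over the cube are close to the obstruction at its marked point'' is unsupported; consequently the passage from $\delta_1$ to $\delta_1/2$ does not close the argument.

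The paper's proof circumvents exactly this point by never comparing the (closed) obstruction sets at nearby points: by compactness it fixes a finite lattice of sample points and, at each, finitely many good $\delta$-balls covering all positions of the $\varepsilon$-ball, and then uses openness of $\barSA(F)$ in the \emph{total} space $\barHConf(TM)$ to thicken each of these finitely many balls into a $\rho\times\delta$-polydisc in the base direction; for $c$ large every cube of $\SV(c)$ sits inside the $\rho$-disc of a lattice point, and Lemma \ref{lem:dilations} transports the ball to the marked point at the cost of the factor $A$. In other words, the required uniformity is extracted from openness of the \emph{good} set (an open condition thickens uniformly once only finitely many balls are in play), not from semicontinuity of the bad set. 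Your argument can be repaired either along those lines, or by running your compactness-and-contradiction scheme directly on the full statement: if cubes $U_m$ and $\varepsilon$-balls violating the conclusion existed for $\delta=1/m$ and $c_m\to\infty$, limits of the witnessing points of $\Sigma_{U_m}$ would land, by closedness of $\barSA(F)^c$ in $\barHConf_j(TM)$, in $\barSA(F(u))^c$ and fill an $\varepsilon/2$-ball, contradicting Property (III). As written, however, the second stage has a genuine gap.
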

\begin{proof}
Consider $x \in M$ arbitrary but fixed. We claim that there is $\delta_x > 0$ such that every $\varepsilon$-ball in $\barHConf_j(T_xM)$ contains a $\delta_x$-ball fully contained in $\barSA(F)$; see Figure \ref{fig:Lemma512}. Indeed: suppose $B$ is a $\varepsilon/2$-ball in $\barHConf_j(T_xM)$. Since $\barSA(F)$ is fibrewise dense, there exists $\Xi \in \barSA(F) \cap \barHConf_j(T_xM)$. By openness of $\barSA(F)$ there is a $\delta_\Xi$-ball $D \subset \barSA(F) \cap \barHConf_j(T_xM)$ centered at $\Xi$ and contained in $B$. We now use the compactness of $\barHConf_j(T_xM)$ to extract a finite cover by $\varepsilon/2$-balls $\{B_i^x\}$. There are corresponding $\delta_x$-balls $\{D_i^x\}$ contained in $\barSA(F) \cap B_i^x$. Any $\varepsilon$-ball in $\barHConf_j(T_xM)$ contains one of the $B_i^x$ and thus the corresponding $D_i^x$, as claimed.

\begin{figure}[ht]
		\includegraphics[scale=0.61]{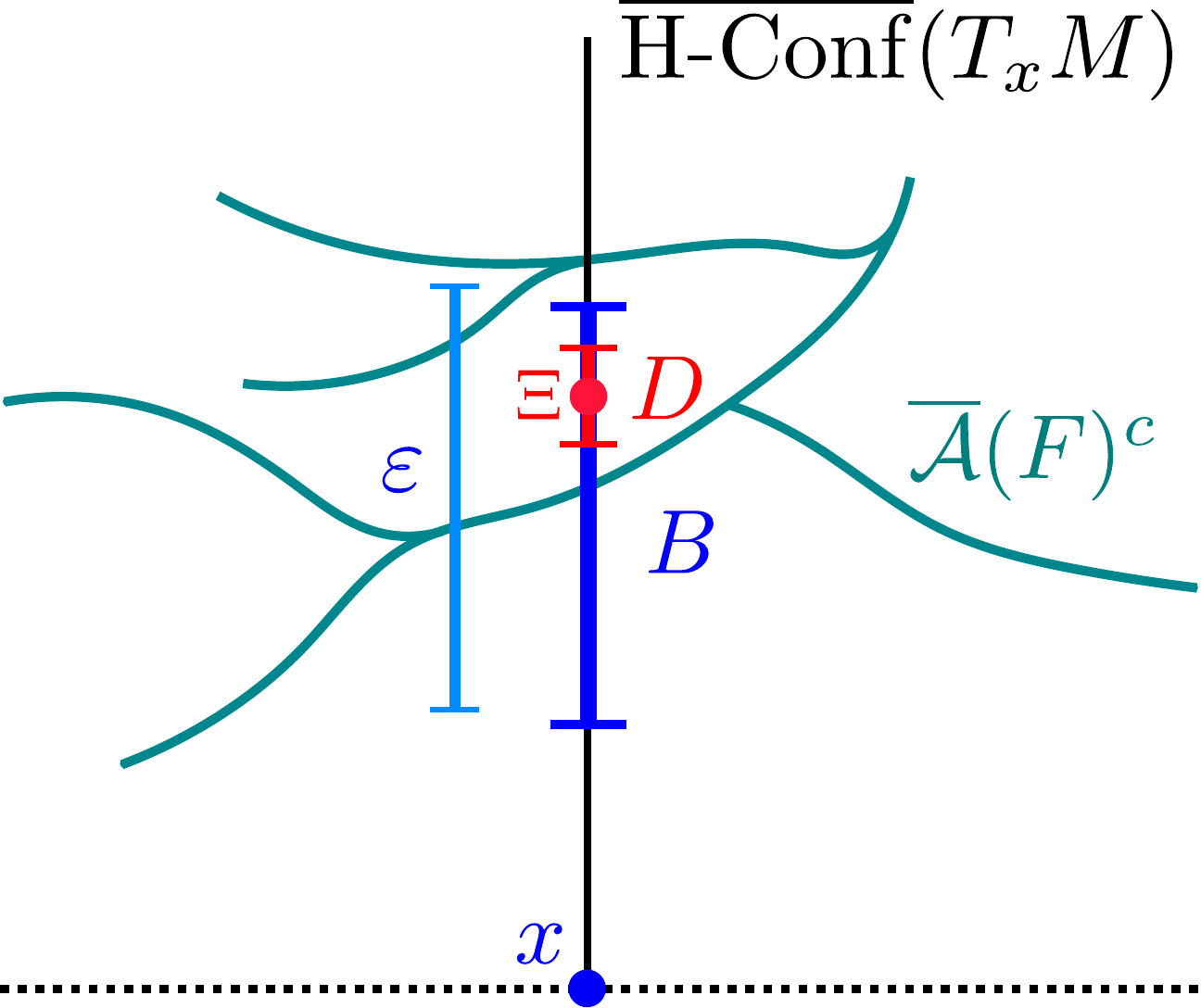}
		\centering
		\caption{The vertical line depicts the fibre of $\barHConf_j(TM)$ over a given $x \in M$. The branching set running more or less horizontally is the complement of $\barSA(F)$. Given a ball $B$ of radius $\varepsilon/2$ in the fibre, we find $\Xi \in \barHConf_j(T_xM)$ and a ball $D \subset B$ around it such that $D$ is fully contained in $\barSA(F)$. This argument uses only that $\barSA(F)$ is dense.} \label{fig:Lemma512}
\end{figure}

Before we address the statement, let us introduce some notation. Fix $(P,\phi_P) \in \SU$ and $p \in P$, not necessarily the marked point. We use $\phi_P$ to trivialise $\barHConf_j(TP) = P \times \barHConf_j(T_pP)$, allowing us to speak of the $\alpha \times \beta$-polydiscs given by such a trivialisation. These are products of an $\alpha$-disc along $P$ (measured by the euclidean metric of the chart) and a $\beta$-disc along the fibre $\barHConf_j(T_pP)$ (measured by the fibrewise metric at $p$). By definition, a $\alpha \times \beta$-polydisc is obtained from a fibrewise $\beta$-disc by parallel transport to the nearby fibres. We can now use the openness of $\barSA(F)$ to thicken the collection $\{D_i^p\}$ to a family of $\rho_p \times \delta_p$-polydiscs contained in $\barSA(F)$, for some $\rho_p > 0$. We abuse notation and still denote these thickenings by $\{D_i^p\}$.

Using the finiteness of $\SU$ and the compactness of each $(P,\phi_P) \in \SU$ we can then find constants $\rho,\delta > 0$, and lattices of points $\{p_l^P \in P\}_{l \in L, P \in \SU}$ spaced as $\rho/2$, such that $\rho < \rho_{p_l^P}/2$ and $\delta < \delta_{p_l^P}/A$, for all $l \in L$ and $P \in \SU$. Here $A$ is the dilation factor given in Lemma \ref{lem:dilations}.

\begin{figure}[ht]
		\includegraphics[scale=0.7]{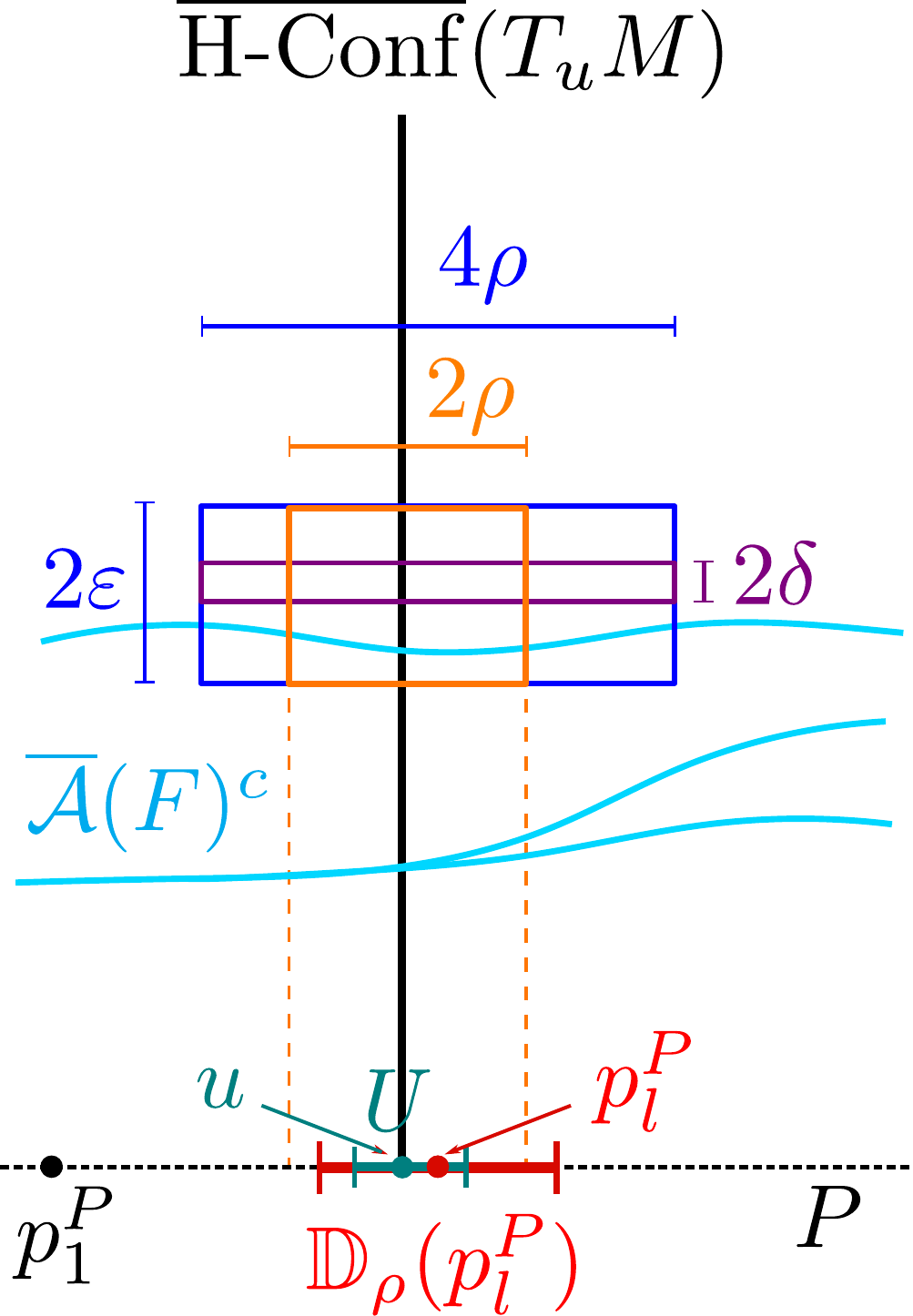}
		\centering
		\caption{The chart $U \in \SV(c)$ is fully contained in the $\rho$-ball centered at some $p_l^P$. The black vertical line depicts the fibre $\barHConf_j(T_uM)$ over its marked point $u \in U$. The branching set in light blue is the complement of $\barSA(F)$. Given a $\rho \times \varepsilon$-polydisc (with orange border) centered somewhere in $\barHConf_j(T_uM)$, there is a $2\rho \times \varepsilon$-polydisc (in blue) that contains it. In turn, the latter contains a $2\rho \times \delta$-polydisc (in purple) which is disjoint from $\barSA(F)^c$. This exhibits a $\delta$-ball disjoint from $\Sigma_U$.} \label{fig:polydisks}
\end{figure}

If $c$ is sufficiently large, any child $(U,\phi) \in \SV(c)$ of a given $(P,\phi_P) \in \SU$ is contained in the $\rho$-disc centered at some $p_l^P \in P$. In particular, any $\rho \times \varepsilon$-polydisc centered at $\barHConf_j(T_uM)$, with $u$ the marked point of $U$, is contained in a $2\rho \times \varepsilon$-polydisc $B_i^{p_l^P}$ centered at $\barHConf_j(T_{p_l^P}M)$. Then, upon projecting with $\pi_U$, the corresponding $2\rho \times \delta$-polydisc $D_i^{p_l^P}$ provides the claimed $\delta$-ball disjoint from $\Sigma_U$. See Figure \ref{fig:polydisks}.
\end{proof} 

\subsubsection{Jiggling}  \label{proof:jiggling}

The proof concludes by applying jiggling. We fix an arbitrary constant $\varepsilon_0 > 0$. By making it smaller we will be proving that the jiggling can be assumed to be as small as we want. We then define a sequence of constants (as many as colours):
\[ \varepsilon_0 > \varepsilon_1 > \cdots > \varepsilon_{d_2} > 0 \]
by iteratively applying Lemma \ref{lem:boundTemplate}. Namely, $4\varepsilon_{i+1}$ should be the ``$\delta$'' corresponding to $\varepsilon_i$. Furthermore, we impose for each $\varepsilon_i$ to be much bigger than the subsequent ones. Concretely, the following inequality should hold:
\begin{equation}\label{eq:jigglingBound}
 \varepsilon_i > 2.A \sum_{j > i} \varepsilon_j.
\end{equation}
The successive applications of Lemma \ref{lem:boundTemplate} provide us then with a lower bound for $c$.

We start with the first colour $\SV(c)^{(1)}$, working simultaneously with all its elements. Let $(U,\phi) \in \SV(c)^{(1)}$ with marked point $u$. Consider all the neighbouring $(V,\psi) \in \SV_U(c)$, each with a corresponding principal frame $\Xi_V$. Together, these define a map $s_U: U \to \barHConf(T_uM)$, as in Subsection \ref{proof:boundCover}. According to Lemma \ref{lem:boundCover}, the image of $s_U$ has diameter $O(1/c)$. In particular, if $c$ is sufficiently large (of magnitude $O(1/\varepsilon_1)$), we can assume that this diameter is smaller than $\varepsilon_1$. Using Lemma \ref{lem:boundTemplate} we can perturb each $\Xi_V$ to a nearby frame $\Xi_V'$ such that the corresponding map $s_U': U \to \barHConf(T_pM)$ satisfies:
\begin{itemize}
\item The $C^0$-distance between $s_U'$ and $s_U$ is bounded above by $\varepsilon_0/2$.
\item The $\varepsilon_1$-neighbourhood of $\image(s_U')$ is contained in $\barSA(F)$.
\end{itemize}
I.e. we have jiggled all the frames in the vicinity of $U$, producing new frames whose distance to the complement of $\barSA(F)$ is controlled.

We do the same inductively on the number of colours. At step $i$ we look at all the $(U,\phi) \in \SV^{(i)}(c)$ at once. Using Lemma \ref{lem:boundTemplate} we perturb the neighbouring frames $s_U: U \to \barHConf(T_uU)$ to a nearby section $s_U'$ satisfying:
\begin{itemize}
\item The $C^0$-distance between $s_U'$ and $s_U$ is bounded above by $\varepsilon_i$.
\item The $\varepsilon_{i+1}$-neighbourhood of $\image(s_U')$ is contained in $\barSA(F)$.
\end{itemize}
This is possible as long as $c$ is large enough; concretely, of magnitude $O(1/\varepsilon_{d_2})$. The second item gives us a lower bound for the distance to $\barSA(F)$ which, in light of Equation \ref{eq:jigglingBound}, is not destroyed in later steps thanks to the first item. After $d_2$ steps, the proofs of Proposition \ref{prop:main}, Theorem \ref{thm:main}, and Theorem \ref{thm:mainRestate} are complete. \hfill$\Box$

\subsection{Detour: Thurston's jiggling} \label{ssec:Thurston}

We invite the reader to compare the upcoming discussion to the proof of Proposition \ref{prop:main}. Let us stress once more that this Subsection is only included for the sake of pointing out the parallels between the two.

The classic jiggling procedure was introduced by W. Thurston in \cite{Th}. It allows the user to produce a triangulation whose simplices are transverse to a given distribution $\xi$.  We work with a manifold $N$ of dimension $n$. We consider triangulations $\ST$ all whose $i$-simplices $\Delta \in \ST^{(i)}$ are endowed with a parametrisation identifying them with the standard simplex in $\R^i$. Furthermore, they come with a parametrised neighbourhood germ, which is then identified with a neighbourhood of the standard simplex in $\R^i \subset \R^n$.

We say that $\Delta$ is in \textbf{general position} with respect to $\xi$ if the quotient map 
\[ \text{mod}(\xi_p): \Delta \subset \R^i \subset \R^n \to \R^{n-k} \]
that quotients by $\xi_p$ has for image a subset diffeomorphic to a simplex of dimension $\min(i,n-k)$. Here we are restricting $\xi$ to the aforementioned coordinates around $\Delta$. Do note that the general position condition is then an strengthening of the condition that $\Delta$ is transverse to $\xi$. A triangulation is in general position if all its simplices are.

\begin{proposition} \label{prop:Thurston}
Let $(N,\xi)$ be an $n$-manifold endowed with a distribution. Then, there exists a triangulation $\ST$ in \textbf{general position} with respect to $\xi$.

Furthermore, if a constant $\varepsilon > 0$ and a triangulation $\ST'$ of $N$ are given, we may assume that $\ST$ is obtained from $\ST'$ by applying finitely many cubical subdivisions and then perturbing the vertices of the resulting triangulation by an amount no larger than $\varepsilon$.
\end{proposition}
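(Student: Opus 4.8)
The plan is to mimic, in the simplicial world, the \emph{subdivide-then-jiggle} structure of Proposition \ref{prop:main}: start from $\ST'$, subdivide it enough that each simplex lies in a single coordinate chart and is so small that $\xi$ is nearly constant along it, and then move the vertices one at a time to put every simplex into general position. The genericity at the heart of the argument is the following linear statement. Fix a \emph{constant} distribution $\eta \subset \R^n$ of rank $k$ and an affine $i$-plane $P \subset \R^n$; then the configurations of $i+1$ points in $P$ spanning a simplex that is \emph{not} in general position with respect to $\eta$ form a closed subset of positive codimension. Indeed, general position of $\Delta$ is equivalent to transversality of every face $\Delta' \subset \Delta$ to $\eta$, and the failure of transversality of a $j$-face is cut out by the vanishing of certain minors of the matrix of the composite $\operatorname{aff}(\Delta') \hookrightarrow \R^n \to \R^n/\eta$ (for $j \le n-k$ one asks the $j+1$ projected vertices to be affinely independent; for $j > n-k$ one asks $\operatorname{aff}(\Delta')$ to surject onto $\R^n/\eta$, in which case the image of $\Delta'$ is automatically a full-dimensional convex polytope, i.e. a ball, which is what ``diffeomorphic to a simplex of dimension $\min(i,n-k)$'' is meant to convey). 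Since transversality is open, this condition is moreover \emph{robust}: there is $\delta > 0$, uniform over the compact family of simplex shapes produced by cubical subdivision, such that a simplex in general position with respect to $\eta$ with margin $\delta$ stays in general position with respect to any distribution that is $\delta$-close to $\eta$ on a neighbourhood of it.

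First I would fix a finite atlas of cubical charts for $N$, write $\xi$ in coordinates in each, and apply finitely many cubical subdivisions to $\ST'$ — the ``standard'' subdivision, which keeps simplex shapes in a fixed compact family and makes all diameters $O(2^{-m})$ — choosing $m$ large enough that every simplex is contained in a chart and that $\xi$, restricted to a neighbourhood of each simplex, is $\delta$-close to its value at the barycentre, with $\delta$ the robustness constant above. Call the resulting triangulation $\ST_0$.

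Next I would order the vertices of $\ST_0$ as $v_1,\dots,v_M$ and process them in turn. When processing $v_j$, all vertices $v_i$ with $i<j$ have already been relocated to final positions $v_i'$, so for each simplex $\Delta$ of $\ST_0$ whose \emph{highest-indexed} vertex is $v_j$, the remaining vertices of $\Delta$ are already fixed, and ``$\Delta$ is in general position with respect to the constant distribution at its barycentre'' becomes a positive-codimension condition on the single point $v_j$. There are finitely many such $\Delta$, so the union of their bad loci is still positive-codimension, and I may pick $v_j' \in B_\varepsilon(v_j)$ — in fact with $|v_j'-v_j|$ as small as I like, hence $\ll$ the simplex size — avoiding it and lying in the open region where general position holds with margin $\delta$; by robustness this makes every such $\Delta$ in general position with respect to the genuine $\xi$. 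Crucially, no simplex with highest vertex of index $<j$ contains $v_j$, so nothing previously achieved is disturbed, and once $v_j$ is placed the simplices with highest vertex $v_j$ are never touched again. After $M$ steps every simplex of the resulting triangulation $\ST$ is in general position with respect to $\xi$, and each vertex has moved by less than $\varepsilon$; since the moves are much smaller than the simplex sizes, $\ST$ is still an embedded triangulation of $N$, obtained from $\ST'$ by finitely many cubical subdivisions followed by an $\varepsilon$-small vertex perturbation. Taking $\ST'$ to be any triangulation of $N$ yields the first assertion as well.

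The main obstacle is the passage from the constant model to the genuine, only-nearly-constant, distribution: the quantifiers must be arranged so that a \emph{single} robustness radius $\delta>0$ works simultaneously for all simplices appearing after subdivision — this is exactly where bounded-shape cubical subdivision, rather than plain barycentric subdivision, is essential — and so that the tiny vertex perturbations never leave the neighbourhood on which $\xi$ is $\delta$-constant. The other delicate point is the combinatorial bookkeeping: the ``highest-indexed vertex'' device is what lets shared vertices be moved consistently (one move per vertex) without ever undoing earlier work, playing here the role that the colouring and the density estimate of Lemma \ref{lem:boundTemplate} play in the proof of Proposition \ref{prop:main}. Both points are elementary once set up, but they are where all the care goes.
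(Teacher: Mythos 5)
Your overall strategy coincides with the paper's: cubically subdivide until every simplex is tiny and $\xi$ is almost constant over it, then jiggle vertices, each moved only once, using positive codimension of the bad locus together with a robustness margin to pass from the constant-coefficient model back to the genuine $\xi$. The one structural difference is that you replace the colouring of the vertices (into $d_2$ colours, with $d_2$ independent of the subdivision parameter) by a global ordering and the ``highest-indexed vertex'' bookkeeping; that bookkeeping is sound as far as it goes, since each simplex is finalized exactly when its top vertex is placed and its vertices never move afterwards.

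The gap is in the genericity step. You claim that, with the other vertices of $\Delta$ frozen, general position of $\Delta$ is a positive-codimension condition on the single point $v_j$, and you justify this by the fact that the bad set in the space of full configurations has positive codimension. That deduction is invalid (a positive-codimension subset of a product may contain entire slices), and the slice-wise statement is false in general: if the frozen facet opposite $v_j$ is degenerate for the projection $\R^n \to \R^n/\eta$ (for instance, two of its vertices project to the same point), then \emph{no} position of $v_j$ puts $\Delta$ in general position. What rescues the argument is the inductive hypothesis that this opposite facet, being a simplex whose highest-indexed vertex has strictly smaller index, was already put in general position at an earlier step; only then is the bad locus for $v_j$ a proper algebraic subset. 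You never state or use this, and once you do, the quantitative side also needs repair: for the later steps to work, the earlier faces must be in general position with a definite margin measured \emph{relative to the simplex size} (any absolute margin is necessarily $O(1/c)$ after subdivision), and the margin achievable for $\Delta$ is controlled by the margin of its facet. Consequently a single uniform $\delta$, attained by moves ``as small as I like'', cannot be correct: to gain a definite scale-invariant margin one must move $v_j$ by a definite fraction of the simplex size, and the margins degrade along the induction. The paper copes with exactly this through the nested constants $\varepsilon_0 > \cdots > \varepsilon_{d_2}$ attached to the $d_2$ colour stages; in your sequential scheme the corresponding finite hierarchy should be indexed by the dimension of the faces (a chain of length at most $n$), not collapsed to one constant. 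With these two corrections your argument closes and is essentially the paper's proof.
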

\begin{proof}
We break the argument into steps, much like we did for Proposition \ref{prop:main}. First, we fix a locally finite atlas $\SU$ of $N$.

Upon subdividing $c$ times, we may assume that $\ST'$ is subordinated to $\SU$. Cubical subdivision guarantees that the radius of the simplices goes to zero as $c \to \infty$, while the cardinality of the star of each simplex is bounded independently of $c$.

A key property, which follows from the previous paragraph, is that there is a number $d_2$, independent of $c$, such that we can colour the set of vertices into $d_2$ colours so that no two vertices of the same colour are contained in the same simplex.

We straighten out the simplices so that every $\Delta \in \ST'$ is linear with respect to some chart in $\SU$.

We tilt/jiggle the positions of the vertices of $\ST'$ in order to change how all simplices are embedded and thus achieve transversality. We do this inductively colour by colour. It follows that, at each inductive step, the vertices we tilt do not interfere with one another.

For a given vertex $p$, we consider those simplices that contain $p$ and whose other vertices are from previous colours. When we tilt $p$, the good choices are those that make said simplices transverse. As $c \to \infty$, the measure of the subset of bad choices goes to zero (since the radii of all simplices go to zero).

Given $\varepsilon_0>0$ and any $\varepsilon_1>0$ sufficiently small, we can always take $c$ large enough so that, in each $\varepsilon_0$-ball of choices, there is a $\varepsilon_1$-ball of good choices. This reasoning can be repeated with $\varepsilon_1$ and some $\varepsilon_2 > 0$. Repeating it $d_2$ times yields a sequence of positive constants $(\varepsilon_i)_{0,\cdots,d_2}$. Our tilt at step $i+1$ can then be taken to be $\varepsilon_i/d_2$ small and contained in a $\varepsilon_{i+1}$-ball of good choices as long as $c$ is sufficiently large. It follows that the transversality achieved in a given step is not destroyed in subsequent ones. The proof concludes after $d_2$ steps.
\end{proof}

\section{An example: Exact, linearly-independent differential forms} \label{sec:exactForms}

In this section we introduce a differential relation $\SR$ for which the thinning process (Subsection \ref{sssec:thinning}) terminates producing an avoidance template. We let $M$ be a $3$-dimensional manifold and we set $X := T^*M \times T^*M$. The relation $\SR \subset J^1(X)$ is of first order and Diff-invariant. Concretely, $\SR$ consists of pairs $(F_1,F_2) \in J^1(X)$, where each $F_i$ is a first-order Taylor polynomial of $1$-form, such that the 2-forms $dF_1$ and $dF_2$ are linearly independent.

In Subsection \ref{ssec:symbol} we recall some notation, involving the exterior differential, that will also be helpful in later sections. In Subsection \ref{ssec:exactForms} we formalise our claim about the thinning of $\SR$, which we then prove in Subsection \ref{ssec:exactFormsProof}.

In Subsection \ref{ssec:linearAlgebra} we will observe that the present example is a bit artificial: a linear algebra lemma due to Gromov shows that $\SR$ was already ample in all directions (but not thin). Nonetheless, we choose to include it as an easy incarnation of the avoidance/thinning approach.

\subsection{Intermezzo: The symbol of the exterior differential} \label{ssec:symbol}

Let $N$ be a manifold. We focus on the bundle $\bigwedge^k T^*N$ of $k$-forms. The corresponding space of $1$-jets $J^1(\wedge^k T^*N)$ is an affine fibration
\[ \pi_0: J^1(\wedge^k T^*N) \longrightarrow \wedge^k T^*N \]
whose model vector bundle is $\Hom(TN,\wedge^k T^*N) \cong T^*N \otimes \wedge^k T^*N$.

It follows that, given a formal datum $F \in J^1(\wedge^k T^*N)$ and a codirection $\lambda \in T_p^*N$, both based at the same point $p \in N$, the principal space associated to $F$ and $\lambda$ can be expressed explicitly as:
\[ \Pr_{\lambda,F} := \{F + (0,\lambda \otimes \beta) \,\mid\, \beta \in \wedge^k T_p^*N\}. \]

We are interested in discussing differential relations defined in terms of the exterior differential $d$. We will abuse notation and still denote its \textbf{symbol} by:
\begin{align*}
d:  J^1(\wedge^k T^*N) &\longrightarrow \wedge^{k+1} T^*N.
\end{align*}
Given $F \in J^1(\wedge^k T^*N)$, the symbol maps $F + (0,\sum_i \lambda_i \otimes \beta_i)$ to $dF + \sum_i \lambda_i \wedge \beta_i$. We also introduce the \textbf{extended symbol}:
\begin{align*}
\id \oplus d:  J^1(\wedge^k T^*N) & \longrightarrow \wedge^k T^*N \oplus \wedge^{k+1} T^*N, \\
																F & \mapsto (\pi_0(F),dF).
\end{align*}

\subsection{The statement} \label{ssec:exactForms}

We now restate the setup of our example. We fix a $3$-manifold $M$ and we focus on the first-order differential relation $\SR \subset J^1(T^*M \oplus T^*M)$ defined as:
\[ \SR := \{(F_1,F_2) \in J^1(T^*M) \oplus J^1(T^*M) \,\mid\, \text{$dF_1$ and $dF_2$ are linearly independent} \}. \]
The main result of this Section reads:
\begin{proposition} \label{prop:exactForms}
The thinning process for $\SR$ terminates in one step and the resulting thinning pre-template $\Thin(\SR)$ is an avoidance template.
\end{proposition}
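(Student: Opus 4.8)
The plan is to keep everything pointwise, analysing the principal subspaces $\Pr_{\lambda,F}$ one covector at a time through the symbol of the exterior differential (Subsection~\ref{ssec:symbol}). For $F=(F_1,F_2)\in\SR$ over $p\in M$ and a covector $\lambda\in T_p^*M$, moving along $\Pr_{\lambda,F}$ replaces the pair $(dF_1,dF_2)$ by $(dF_1+\lambda\wedge\beta_1,\ dF_2+\lambda\wedge\beta_2)$, where $(\beta_1,\beta_2)$ ranges over $T_p^*M\oplus T_p^*M$ (these $6$ parameters coordinatise $\Pr_{\lambda,F}$ in full), and membership in $\SR$ is insensitive to the components of the $\beta_i$ along $\R\lambda$. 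Since $\dim T_pM=3$, the space $\wedge^2 T_p^*M$ is $3$-dimensional and the ``linearly dependent'' locus $\Sigma\subset\wedge^2 T_p^*M\oplus\wedge^2 T_p^*M$ is the variety of $2\times 3$ matrices of rank $\le 1$, of codimension $2$. I would first prove the following dichotomy: the complement of $\SR_{\lambda,F}$ in $\Pr_{\lambda,F}$ is thin when $\lambda\wedge dF_1\neq 0$ or $\lambda\wedge dF_2\neq 0$, and is a codimension-$1$ hypersurface (hence not thin) when $\lambda\wedge dF_1=\lambda\wedge dF_2=0$. The proof is a short case split on whether both $dF_i$ lie in the $2$-plane $\lambda\wedge T_p^*M\subset\wedge^2 T_p^*M$: if not, after possibly swapping factors $dF_1$ has a component transverse to that plane, so on the affine $4$-plane onto which $\Pr_{\lambda,F}$ maps the condition ``dependent'' is a graph over the $\beta_1$-plane and its preimage in $\Pr_{\lambda,F}$ is of codimension $2$; if so, both curvatures stay inside $\lambda\wedge T_p^*M$ and ``dependent'' becomes the classical $2\times 2$ determinant hypersurface, whose preimage is of codimension $1$.

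I would then package this. Because $dF_1$ and $dF_2$ are linearly independent $2$-forms on a $3$-space, the set $\{\lambda\in T_p^*M\mid \lambda\wedge dF_1=\lambda\wedge dF_2=0\}$ is a line; write $\tau(F)\subset T_pM$ for the corresponding hyperplane. The dichotomy then says precisely that the locus deleted by the first thinning step is $\{(F,\Xi)\in\SR\times_M\HConf(TM)\mid \tau(F)\in\Xi\}$. This locus is already closed ($\tau(F)$ depends continuously on $F$, and in $\HConf(TM)$ a convergent configuration containing $\tau(F_n)$ must contain $\lim\tau(F_n)=\tau(F)$), so no closure needs to be taken and $\Thin(\SR)=\Thin^1(\SR)=\{(F,\Xi)\mid\tau(F)\notin\Xi\}$, a proper subset of $\Avoid^0(\SR)$. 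It is a pre-template by the general lemma on thinning, and Property~(III) is immediate: for $F\in\SR$, the configurations in $\HConf_m(T_pM)$ avoiding the single hyperplane $\tau(F)$ form an open dense subset.

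What remains --- Property~(II) and termination in one step --- I expect to fall out of a single computation. Fix $(F,\Xi)\in\Thin^1(\SR)$ and $\tau=\ker\lambda\in\Xi$; I would show that the complement of $\Thin^1(\SR)(\Xi)_{\tau,F}$ in $\Pr_{\tau,F}$ is thin. That complement is contained in the union of the complement of $\SR_{\tau,F}$ --- which is thin, since $\tau\neq\tau(F)$ (because $(F,\Xi)\in\Thin^1(\SR)$ forces $\tau(F)\notin\Xi$ while $\tau\in\Xi$) --- together with, for each $\tau'=\ker\lambda'\in\Xi$, the locus $\{G\in\Pr_{\tau,F}\mid \lambda'\wedge dG_1=\lambda'\wedge dG_2=0\}$. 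For $\tau'=\tau$ this locus is empty, because $\lambda\wedge dG_i=\lambda\wedge dF_i$ is constant along $\Pr_{\tau,F}$ and does not vanish for both $i$; for $\tau'\neq\tau$ the map $\beta\mapsto\lambda'\wedge\lambda\wedge\beta$ is a nonzero functional on $T_p^*M$, so each of the two equations is a genuine affine constraint on the corresponding $\beta_i$ and the locus is an affine subspace of codimension $2$. A finite union of thin sets being thin, this shows at once that $\Thin^1(\SR)(\Xi)$ is ample along the principal directions of $\Xi$ (complement of a thin set in an affine space; Example~\ref{def:thinSet}) --- Property~(II) --- and that the second thinning step removes nothing, so $\Thin^2(\SR)=\Thin^1(\SR)$ and the process stabilises at the first step. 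Hence $\Thin(\SR)=\Thin^\infty(\SR)$ is an avoidance template.

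The step I expect to be the main obstacle is the codimension bookkeeping around the dichotomy: one must track how the codimension-$2$ variety $\Sigma$ pulls back along the non-surjective, non-transverse map $\Pr_{\lambda,F}\to\wedge^2 T_p^*M\oplus\wedge^2 T_p^*M$, pin down that the only hyperplane along which this pullback drops to codimension $1$ is $\tau(F)$, and then confirm that this single bad hyperplane becomes harmless precisely once we restrict to configurations avoiding it --- which is what makes one thinning step both necessary and enough.
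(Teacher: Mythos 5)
Your proposal is correct and follows essentially the same route as the paper: the same pointwise dichotomy along principal subspaces (Lemma \ref{lem:exactFormsThinning}, with your unique bad hyperplane $\tau(F)$ being just a repackaging of the condition $F \in \Sigma(\lambda)$), the same explicit description of $\Thin(\SR)$ (Corollary \ref{cor:exactFormsThinning}), and the same observation that the removed singularity meets every remaining principal subspace in an empty or codimension-$2$ set (Lemma \ref{lem:nonZeroThin}), which yields Property (II), termination in one step, and Property (III) via the non-triviality of the algebraic conditions. Your direct computation with the affine functional $\beta \mapsto \lambda' \wedge \lambda \wedge \beta$ is a slightly streamlined version of the paper's case analysis with the planes $L, L_1, L_2$, but the content is the same.
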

In particular, $\SR$ is ample up to avoidance and thus, according to Theorem \ref{thm:main}, the $h$-principle holds for $\SR$.

The proof of Proposition \ref{prop:exactForms} requires an analysis of the structure of $\SR$ along principal subspaces (Subsection \ref{sssec:exactFormsThinning}). This will then allow us to describe $\Thin(\SR)$ explicitly and deduce that it is an avoidance template (Subsection \ref{sssec:exactFormsTemplate}).

\subsection{The proof} \label{ssec:exactFormsProof}

We introduce some auxiliary notation: Fix a point $p \in M$. Given a principal direction $\lambda \in T_p^*M$, we define the singularity
\[ \Sigma(\lambda) := \{F = (F_1,F_2) \in J^1_p(T^*M \oplus T^*M) \,\mid\, \text{ Both $dF_i$ are multiples of $\lambda$ }\}. \]
The complement of $\Sigma(\lambda)$ in $J^1_p(T^*M \oplus T^*M)$ is a first-order differential constraint, defined only at the point $p$. Nonetheless, we can still talk about it being ample; in fact, we will prove that $\Sigma(\lambda)$ is a thin singularity (see Lemma \ref{lem:nonZeroThin} below).

\subsubsection{The thinning step} \label{sssec:exactFormsThinning}

The following criterion will allow us to compute $\Thin(\SR)$:
\begin{lemma} \label{lem:exactFormsThinning}
Fix $p$, $\lambda$ and $F$ as above. The following two conditions are equivalent:
\begin{itemize}
\item $\SR_{\lambda,F} \subset \Pr_{\lambda,F}$ has thin complement.
\item $F \notin \Sigma(\lambda)$.
\end{itemize}
\end{lemma}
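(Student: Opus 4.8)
The plan is to make everything explicit in coordinates on the fibre. Fix $p \in M$ and pick a basis $dx, dy, dz$ of $T_p^*M$, so that $\wedge^2 T_p^*M$ is spanned by $dx\wedge dy, dy\wedge dz, dz\wedge dx$. A formal datum $F = (F_1, F_2)$ has a zeroth-order part (two covectors) and a first-order part; under the symbol map $d$ the first-order part of each $F_i$ produces a $2$-form $dF_i \in \wedge^2 T_p^*M$. By the description of principal subspaces for $1$-jets of $k$-forms in Subsection \ref{ssec:symbol}, moving $F$ along $\Pr_{\lambda, F}$ (with $\lambda \neq 0$) changes $(dF_1, dF_2)$ by $(\lambda \wedge \beta_1, \lambda \wedge \beta_2)$ as $(\beta_1, \beta_2)$ ranges over $(\wedge^1 T_p^*M)^2$, and leaves the zeroth-order parts (and indeed the rest of the $1$-jet) untouched. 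So along $\Pr_{\lambda,F}$ the relevant data is the pair $(dF_1 + \lambda\wedge\beta_1,\ dF_2 + \lambda\wedge\beta_2)$, living in $\wedge^2 T_p^*M \oplus \wedge^2 T_p^*M$, and $\SR_{\lambda,F}$ is the preimage of the open set $\{(\omega_1,\omega_2) : \omega_1,\omega_2 \text{ linearly independent}\}$ under this affine map.

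First I would pin down the image of the affine map $(\beta_1,\beta_2) \mapsto (dF_1+\lambda\wedge\beta_1,\ dF_2+\lambda\wedge\beta_2)$. The subspace $\lambda\wedge(\wedge^1 T_p^*M) \subset \wedge^2 T_p^*M$ is exactly the set of $2$-forms divisible by $\lambda$, which is $2$-dimensional (for $\lambda\neq 0$), with quotient $\wedge^2 T_p^*M / \lambda\wedge(\cdot)$ one-dimensional; concretely, wedging with $\lambda$ gives a surjection $\wedge^2 T_p^*M \to \wedge^3 T_p^*M$ whose kernel is precisely $\lambda\wedge(\wedge^1 T_p^*M)$. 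Hence along $\Pr_{\lambda,F}$ each $dF_i$ can be moved freely within its coset modulo $\lambda\wedge(\cdot)$, i.e. the quantities $\lambda \wedge dF_i \in \wedge^3 T_p^*M \cong \R$ are the only invariants, and they are fixed along the principal subspace. Write $a_i := \lambda\wedge dF_i$.

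The core computation is then a linear-algebra statement about the pair of cosets. If $(a_1,a_2) \neq (0,0)$, say $a_1 \neq 0$: then for \emph{every} choice the $2$-form $\omega_1 = dF_1 + \lambda\wedge\beta_1$ satisfies $\lambda\wedge\omega_1 = a_1 \neq 0$, so $\omega_1 \notin \lambda\wedge(\cdot)$; the bad set where $\omega_1,\omega_2$ are dependent forces $\omega_2 \in \R\omega_1$, a one-dimensional condition on the $2$-dimensional affine fibre of $\omega_2$, while $\omega_1$ ranges over a $2$-dimensional affine space — so the bad set has codimension $2$ in the $4$-dimensional total space of free parameters (and it is a linear-algebraic, hence stratified, subset), i.e. it is thin; its preimage in $\Pr_{\lambda,F}$ is likewise thin. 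This gives ``$F\notin\Sigma(\lambda) \Rightarrow$ thin complement'' — I note $F\notin\Sigma(\lambda)$ is exactly ``not both $dF_i$ divisible by $\lambda$'', i.e. $(a_1,a_2)\neq(0,0)$. Conversely, if $F \in \Sigma(\lambda)$, i.e. $a_1 = a_2 = 0$, then both $dF_i$ lie in the $2$-dimensional space $\lambda\wedge(\wedge^1 T_p^*M)$ and stay there, so the whole principal subspace maps into $\{\omega_1,\omega_2 \in \lambda\wedge(\cdot)\}$; but $\lambda\wedge(\cdot)$ is only $2$-dimensional, and inside $(\lambda\wedge(\cdot))^2 \cong \R^4$ the dependency locus $\{\omega_1 \parallel \omega_2\}$ is a genuine hypersurface (a determinantal cone of codimension exactly $1$, not $2$), so $\SR_{\lambda,F}$ has complement of codimension $1$, which is not thin. (If $\lambda = 0$, $\Pr_{\lambda,F} = \{F\}$ is a point, so the complement of $\SR_{\lambda,F}$ in it is either empty — trivially thin — or all of it; and $F\in\Sigma(0)$ holds vacuously since ``multiple of $0$'' means zero, so the statement degenerates correctly; I would dispatch this edge case in one line.) The main obstacle is bookkeeping the precise codimension of the determinantal locus in the two cases — showing it drops from $2$ to $1$ exactly when the cosets collapse into $\lambda\wedge(\cdot)$ — but this is elementary once the affine image is identified; no genuine difficulty is expected.
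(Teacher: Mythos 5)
Your argument is essentially the paper's: you identify the image of $\Pr_{\lambda,F}$ under the symbol as the product of the two affine planes (cosets of $L=\lambda\wedge T_p^*M$) through $dF_1$ and $dF_2$, record the invariants $a_i=\lambda\wedge dF_i$, and in the degenerate case $a_1=a_2=0$ reduce to the codimension-$1$ determinantal locus in $\R^2\times\R^2$, exactly as in the paper's identification with $2\times 2$ matrices. The conclusion in every case agrees with the paper.

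One step, however, is mis-stated at the crux of the forward implication. When $a_1\neq 0$ you write that $\omega_2\in\R\omega_1$ is ``a one-dimensional condition on the $2$-dimensional affine fibre of $\omega_2$'' and conclude codimension $2$; as written this arithmetic gives only codimension $1$ (one condition, no constraint on $\omega_1$), which would not be thin and would not prove the lemma. The correct statement, available from your own setup, is that the line $\R\omega_1$ is \emph{transverse} to the affine plane $L_2=\{\omega:\lambda\wedge\omega=a_2\}$ precisely because $\lambda\wedge\omega_1=a_1\neq 0$, so for each $\omega_1$ the bad $\omega_2$ is the single point $(a_2/a_1)\,\omega_1$; hence the bad set is the graph $\{\omega_2=(a_2/a_1)\,\omega_1\}$, an affine subspace of codimension $2$ in $L_1\times L_2$, whose preimage under the affine submersion $\Pr_{\lambda,F}\to L_1\times L_2$ is thin. (This is where the hypothesis $F\notin\Sigma(\lambda)$ actually enters, so it should be said correctly; the paper's sub-cases $L_2\neq L$ and $L_2=L$ are exactly this point.) Separately, your $\lambda=0$ aside is also off: $F\in\Sigma(0)$ would force $dF_1=dF_2=0$, which is impossible for $F\in\SR$, so in that degenerate convention one has $F\notin\Sigma(0)$ and empty (hence thin) complement — the equivalence survives, but not ``vacuously'' as you claim; in any case the paper implicitly takes $\lambda\neq 0$, as a principal direction.
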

\begin{proof}
Recall that the principal space defined by $\lambda$ and $F$ is given by:
\[ \Pr_{\lambda,F} = \{(F_1,F_2) + ((0,\lambda \otimes \beta_1),(0,\lambda \otimes \beta_2)) \,\mid\, \beta_i \in T_p^*M\} \]
In particular, it is parametrised by the pairs $\beta_1 \times \beta_2 \in T_p^*M \times T_p^*M$. The restriction of the relation $\SR$ to $\Pr_{\lambda,F}$ reads:
\[ \SR_{\lambda,F} = \{(F_1,F_2) + ((0,\lambda \otimes \beta_1),(0,\lambda \otimes \beta_2)) \,\mid\, \text{ The forms } dF_i + \lambda \wedge \beta_i \text{ are linearly independent} \}. \]

The symbol of the exterior differential yields then a map
\[ d: \Pr_{\lambda,F} \,\longrightarrow\, \wedge^2 T_p^*M \times \wedge^2 T_p^*M \]
whose image $d\Pr_{\lambda,F}$ is $4$-dimensional. It is the product $L_1 \times L_2$ of the plane of $2$-forms 
\[ L_1 := \{dF_1 + \lambda \wedge \beta_1  \,\mid\, \beta_1 \in T_p^*M\}, \]
passing through $dF_1$, and the plane 
\[ L_2 := \{dF_2 + \lambda \wedge \beta_2 \,\mid\, \beta_2 \in T_p^*M\} \]
passing through $dF_2$. These two planes are parallel to the distinguished plane $L$ consisting of those $2$-forms proportional to $\lambda$.

There are two possible situations. The first possibility (Figure \ref{fig:RelativePositionCase1}) is that both $dF_i$ are contained in $L$ (equivalently, $L=L_1=L_2$; equivalently, $F \in \Sigma(\lambda)$). In this case, there are covectors $\nu_i$ such that $dF_i = \lambda \wedge \nu_i$. Then we can identify $d\Pr_{\lambda,F}$ with the 2-by-2 matrices:
\begin{equation*}
\begin{array}{rrcl}
\tau: & d\Pr_{\lambda,F}													& \longrightarrow & \ker(\lambda)^* \times \ker(\lambda)^* \cong \SM_{2 \times 2} \\
      & (dF_i + \lambda \wedge \beta_i)_{i=1,2}	& \mapsto         & ((\nu_i + \beta_i)|_{\ker(\lambda)})_{i=1,2}.
\end{array}
\end{equation*}
The subset $\SR_{\lambda,F}$ maps precisely to the linearly independent pairs $((\nu_i + \beta_i)|_{\ker(\lambda)})_{i=1,2}$. That is, $\tau \circ d$ is an affine submersion of $\SR_{\lambda,F}$ onto $\GL(2)$. We then conclude that the complement of $\SR_{\lambda,F}$ is not thin, because the complement of $\GL(2)$ (the zero set of the determinant, which has codimension $1$) is not. We will see in Subsection \ref{ssec:linearAlgebra} that $\GL(2)$ is nonetheless ample, and so is $\SR_{\lambda,F}$.

\begin{figure}[ht]
		\includegraphics[scale=1]{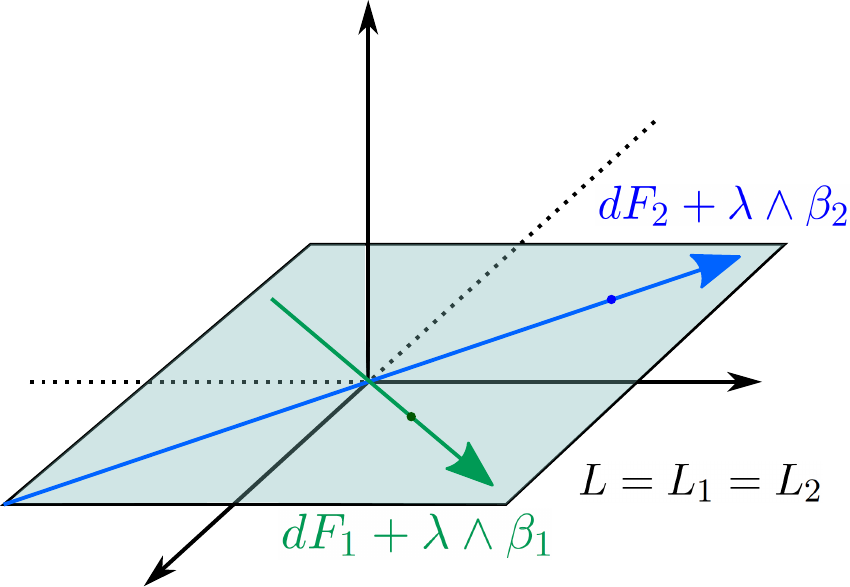}
		\centering
		\caption{The first possibility in the proof of Lemma \ref{lem:exactFormsThinning}: Both $dF_i$ are contained in $L$.} \label{fig:RelativePositionCase1}
\end{figure}

The other possibility is that one of the $dF_i$ is not proportional to $\lambda$ (equivalently, one of the $L_i$ is different from $L$; equivalently, $F \notin \Sigma(\lambda)$). Suppose, without loss of generality, that it is $dF_1$. Then, an element $dF_1 + \lambda \wedge \beta_1$ is colinear with a single element in $L_2$ (if $L_2 \neq L$) or with none (if $L_2 = L$). In the first case (Figure \ref{fig:RelativePositionCase2}), the complement of $\SR_{\lambda,F}$ is of codimension-2. In the second case (Figure \ref{fig:RelativePositionCase3}), the complement of $\SR_{\lambda,F}$ is the set $\{dF_2 = 0\}$, which is also of codimension-$2$. This completes the claim.
\end{proof}
\begin{figure}[ht]
		\includegraphics[scale=1]{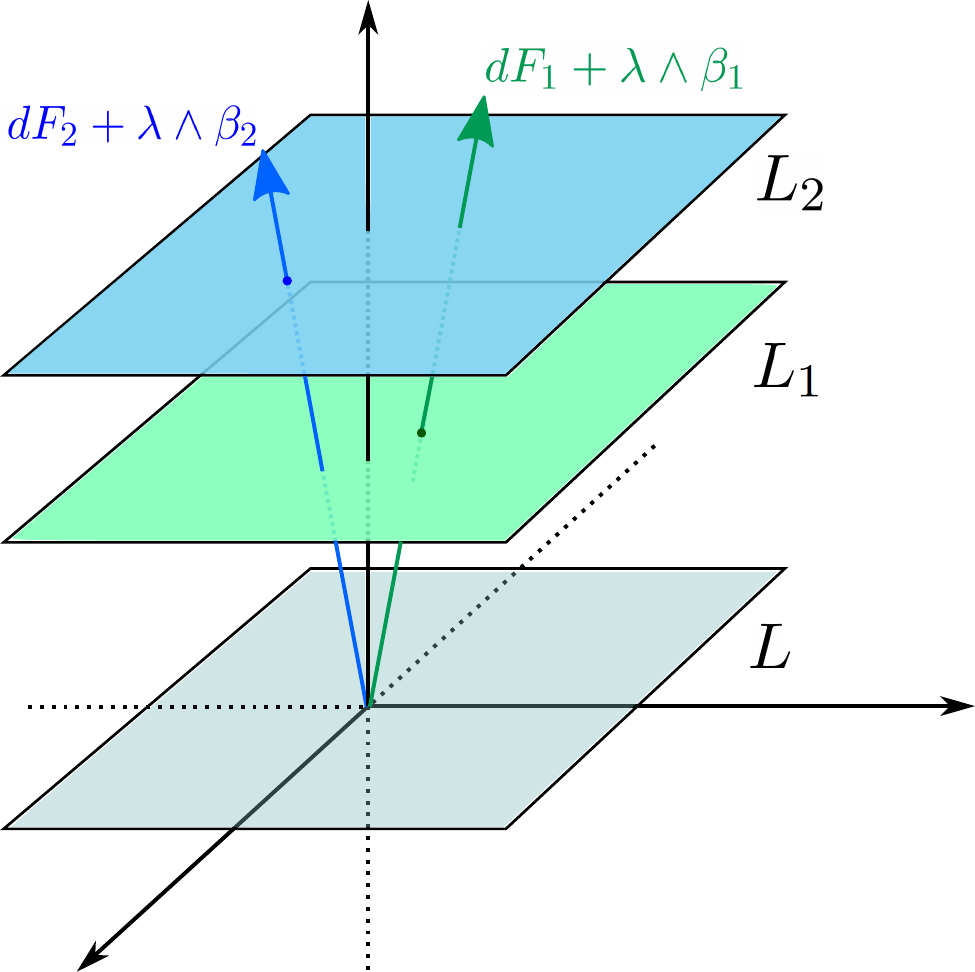}
		\centering
		\caption{The second possibility in the proof of Lemma \ref{lem:exactFormsThinning}: None of the $dF_i$ are contained in $L$.} \label{fig:RelativePositionCase2}
\end{figure}

\begin{figure}[ht]
		\includegraphics[scale=1]{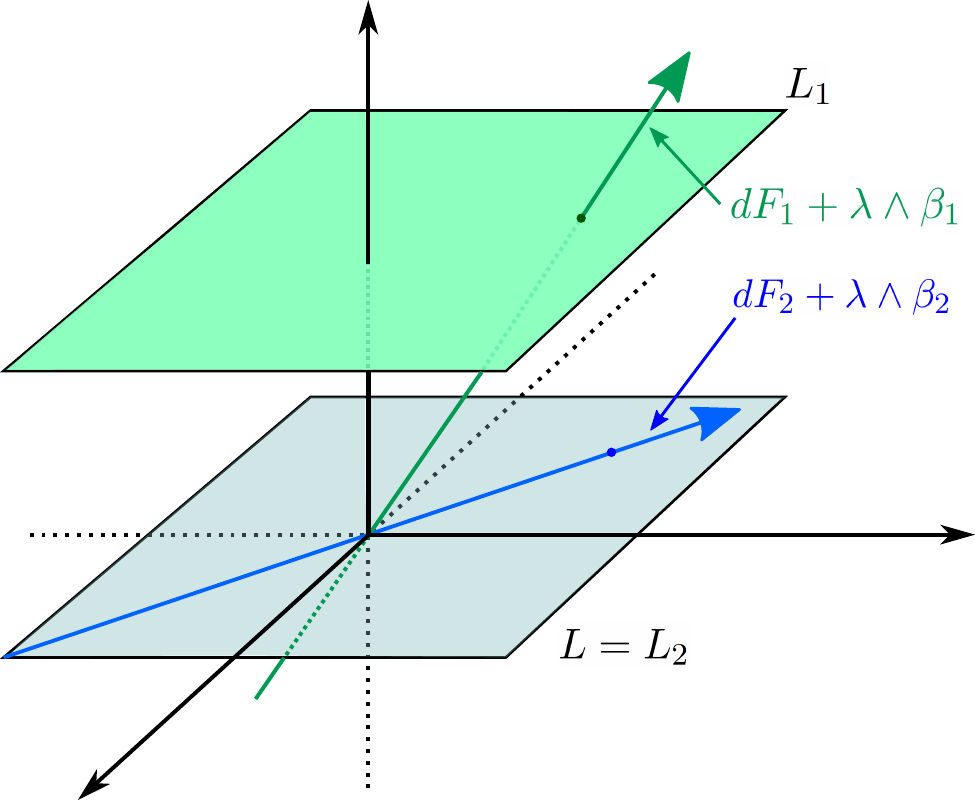}
		\centering
		\caption{The third possibility in the proof of Lemma \ref{lem:exactFormsThinning}: One of the $dF_i$ is contained in $L$ but the other one is not.} \label{fig:RelativePositionCase3}
\end{figure}

Recall the avoidance template notation from Subsection \ref{ssec:avoidanceTemplates}. We remark that the thinning process was defined as a pointwise process and Lemma \ref{lem:exactFormsThinning} indeed applies to each point $p \in M$ individually. An immediate consequence is:
\begin{corollary} \label{cor:exactFormsThinning}
The following statements are equivalent:
\begin{itemize}
\item $(F,\Xi) \in \SR \times_M \HConf(TM)$ belongs to $\Thin(\SR)$.
\item $F$ belongs to $\SR \setminus (\bigcup_{\lambda \in \Xi} \Sigma(\lambda))$.
\end{itemize}
\end{corollary}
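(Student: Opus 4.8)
The plan is to verify that the set removed in the definition of $\Thin(\SR)$ is already closed, so that the closure appearing in Definition \ref{def:thinning} is redundant. Recall that $\Thin(\SR)$ is the complement, inside $\SR\times_M\HConf(TM)$, of the closure of
\[ B \;:=\; \{(F,\Xi)\in\SR\times_M\HConf(TM) \;\mid\; \text{the complement of } \SR_{\tau,F} \text{ is not thin for some } \tau\in\Xi\}. \]
The first step is purely formal: since Lemma \ref{lem:exactFormsThinning} is a pointwise statement (about a single $p\in M$ and a single covector), it rewrites the condition defining $B$ as ``$F\in\Sigma(\tau)$ for some $\tau\in\Xi$'', where $\Sigma(\tau):=\Sigma(\lambda)$ for any covector $\lambda$ with $\ker\lambda=\tau$ — this is well defined because being proportional to $\lambda$ only depends on the line $\langle\lambda\rangle$. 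Hence $B=\{(F,\Xi)\mid F\in\bigcup_{\tau\in\Xi}\Sigma(\tau)\}$, and the Corollary amounts to the single claim $\overline{B}=B$.

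Next I would record that the $\Sigma(\tau)$ vary in a closed fashion. On the $3$-dimensional vector space $T_p^*M$, a $2$-form lies in the plane $L=\{\lambda\wedge\nu\mid\nu\in T_p^*M\}$ of forms proportional to $\lambda$ (the plane denoted $L$ in the proof of Lemma \ref{lem:exactFormsThinning}) if and only if its wedge with $\lambda$ vanishes; this is a one-line check in a basis adapted to $\lambda$. Therefore, using the symbol $d$ of Subsection \ref{ssec:symbol},
\[ \Sigma(\lambda) \;=\; \{(F_1,F_2) \;\mid\; dF_1\wedge\lambda=0 \text{ and } dF_2\wedge\lambda=0\}. \]
Both equations are continuous in $F$, invariant under rescaling $\lambda$, and hence descend to the projectivised cotangent bundle; consequently the total space $\widetilde\Sigma:=\{(F,\tau)\in\SR\times_M\NP T^*M\mid F\in\Sigma(\tau)\}$ is a closed subset.

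Finally I would deduce that $B$ is closed by a short limiting argument. Let $(F_n,\Xi_n)\to(F,\Xi)$ be a convergent sequence inside $\SR\times_M\HConf(TM)$ with every $(F_n,\Xi_n)\in B$. Convergence in $\HConf(TM)=\coprod_a\HConf_a(TM)$ forces the configurations to have a common cardinality $a$ for all large $n$ and the limit $\Xi$ to be collision-free, so after discarding finitely many terms one may label the hyperplanes of $\Xi_n$ as $\tau_n^1,\dots,\tau_n^a$ with $\tau_n^i\to\tau^i$, the hyperplanes of $\Xi$. Picking for each $n$ an index $i_n$ with $F_n\in\Sigma(\tau_n^{i_n})$ and applying the pigeonhole principle, one passes to a subsequence along which $i_n\equiv i$; then $(F_n,\tau_n^i)\in\widetilde\Sigma$ and $(F_n,\tau_n^i)\to(F,\tau^i)$, so closedness of $\widetilde\Sigma$ gives $F\in\Sigma(\tau^i)$ and hence $(F,\Xi)\in B$. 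This proves $\overline{B}=B$, whence $\Thin(\SR)$ consists exactly of those $(F,\Xi)$ with $F\in\SR\setminus\bigcup_{\tau\in\Xi}\Sigma(\tau)$, which is the asserted equivalence. The only point demanding care is the non-compactness of $\HConf(TM)$ in this last step, but since the closure is being taken inside $\HConf(TM)$ — where limit configurations never collide — the argument goes through verbatim; no ingredient beyond Lemma \ref{lem:exactFormsThinning} is needed.
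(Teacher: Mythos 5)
Your proposal is correct and follows the same route as the paper: Corollary \ref{cor:exactFormsThinning} is obtained by applying Lemma \ref{lem:exactFormsThinning} pointwise, which the paper records as an immediate consequence. The only additional content in your write-up is the explicit verification that the removed set is already closed (so the closure in Definition \ref{def:thinning} is redundant), a point the paper leaves implicit since $\Sigma(\lambda)$ is cut out by the closed conditions $dF_1\wedge\lambda = dF_2\wedge\lambda = 0$ varying continuously in $(F,\lambda)$; your limiting argument in $\HConf(TM)$ establishes this correctly.
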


\subsubsection{A second step is not necessary}

It turns out that the set we have removed from $\SR$ during the first step is itself thin:
\begin{lemma} \label{lem:nonZeroThin}
Fix a point $p \in M$ and a codirection $\nu \in T_p^*M$. Then, $\Sigma(\nu) \subset J^1_p(T^*M \oplus T^*M)$ is a thin singularity.
\end{lemma}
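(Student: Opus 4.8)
The plan is to recognise $\Sigma(\nu)$ as the preimage of a linear subspace of codimension exactly $2$ under a surjective symbol map, so that Example \ref{def:thinSet} applies immediately.

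First I would use the symbol calculus of Subsection \ref{ssec:symbol}. Over the point $p$, the extended symbol $\id \oplus d$ of the exterior differential is a surjective affine map, and hence so is the symbol
\[ d:\; J^1_p(T^*M) \longrightarrow \wedge^2 T_p^*M \]
on its own: its underlying linear part is the wedge map $\Hom(T_pM, T_p^*M) \cong T_p^*M \otimes T_p^*M \to \wedge^2 T_p^*M$, which is onto. Taking two copies, $d \oplus d : J^1_p(T^*M \oplus T^*M) \to \wedge^2 T_p^*M \oplus \wedge^2 T_p^*M$ is again a surjective affine map; such a map pulls back a codimension-$j$ affine subspace of its target to a codimension-$j$ affine subspace of its source.

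Then I would pin down the relevant target subspace. By definition of $\Sigma(\nu)$ — and as spelled out in the proof of Lemma \ref{lem:exactFormsThinning}, where one writes $dF_i = \nu \wedge \beta_i$ — a formal datum $F = (F_1,F_2)$ lies in $\Sigma(\nu)$ precisely when $dF_i \in \nu \wedge T_p^*M$ for $i = 1,2$. Now $\dim M = 3$, so $\wedge^2 T_p^*M$ is three-dimensional; for $\nu \neq 0$ the subspace $\nu \wedge T_p^*M$ is the kernel of the nonzero functional $\omega \mapsto \nu \wedge \omega \in \wedge^3 T_p^*M \cong \R$, hence a hyperplane of codimension $1$. (For $\nu = 0$, which never arises in our setting, $\Sigma(0) = \{dF_1 = dF_2 = 0\}$ has codimension $6$.) Therefore
\[ \Sigma(\nu) \;=\; (d \oplus d)^{-1}\big((\nu \wedge T_p^*M) \oplus (\nu \wedge T_p^*M)\big), \]
and the target on the right has codimension $1 + 1 = 2$.

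Putting the two steps together, $\Sigma(\nu)$ is an affine subspace of $J^1_p(T^*M \oplus T^*M)$ of codimension $2$, hence a (trivially stratified) subset of codimension at least $2$, i.e. a thin singularity in the sense of Example \ref{def:thinSet}, as claimed. I do not anticipate any real obstacle: the only points that need care are the surjectivity of the symbol (routine linear algebra) and the observation that it is exactly the hypothesis $\dim M = 3$ that forces $\nu \wedge T_p^*M$ to be a single hyperplane — so the codimension count lands precisely on the threshold $2$ and the conclusion is sharp in this dimension.
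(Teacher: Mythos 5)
Your computation is correct as far as it goes, but it establishes the wrong statement: you bound the codimension of $\Sigma(\nu)$ inside the whole fibre $J^1_p(T^*M \oplus T^*M)$ (an affine space of dimension $24$), whereas the lemma -- as it is proved in the paper and, more importantly, as it is used in the proof of Proposition \ref{prop:exactForms} -- concerns the trace of $\Sigma(\nu)$ on each \emph{principal subspace}. Thinness matters here only because removing a thin set preserves ampleness, and ampleness is always assessed inside the $6$-dimensional affine spaces $\Pr_{\lambda,F}$; accordingly, Proposition \ref{prop:exactForms} invokes the lemma to conclude that $\Sigma(\lambda) \cap \SR_{\nu,F}$ has codimension at least $2$ \emph{inside} $\Pr_{\nu,F}$. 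A codimension-$2$ affine subspace of the total fibre can meet a principal subspace in codimension $1$ or $0$, so your global count does not imply what is needed. In fact $\Sigma(\nu)$ contains entire principal subspaces: if $F \in \Sigma(\nu)$, then every $\widetilde F \in \Pr_{\nu,F}$ satisfies $d\widetilde F_i = dF_i + \nu \wedge \beta_i$, which is still divisible by $\nu$, so $\Pr_{\nu,F} \subset \Sigma(\nu)$. This is precisely why the paper's proof fixes an arbitrary principal direction $\lambda$ together with a basepoint $F \notin \Sigma(\nu)$ and analyses the relative position of $\lambda$ and $\nu$.

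That per-subspace case analysis is the missing content. Restricting the symbol to $\Pr_{\lambda,F}$, its image is $L_1 \times L_2$, where $L_i$ is the plane through $dF_i$ parallel to $L = \lambda \wedge T_p^*M$, and $\Sigma(\nu) \cap \Pr_{\lambda,F}$ is the preimage of $(L' \cap L_1)\times(L' \cap L_2)$ with $L' = \nu \wedge T_p^*M$. If $\lambda$ and $\nu$ define the same hyperplane, then either $F \in \Sigma(\nu)$ (excluded) or some $L_i \neq L'$ and the trace is empty; if they are independent, each $L' \cap L_i$ is an affine line and the trace has codimension $2$ in $\Pr_{\lambda,F}$. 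Your preimage argument can be salvaged by running it relative to each $\Pr_{\lambda,F}$ in exactly this way, but then the conclusion genuinely depends on the position of $\lambda$ relative to $\nu$ and on the hypothesis $F \notin \Sigma(\nu)$, neither of which appears in your proof.
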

\begin{proof}
Fix a principal direction $\lambda \in T_p^*M$ and a formal datum $F \in J^1_p(T^*M \oplus T^*M) \setminus \Sigma(\nu)$. Recall the affine spaces $\Pr_{\lambda,F}$, $d\Pr_{\lambda,F}$, $L$ and $L_i$ from Lemma \ref{lem:exactFormsThinning}. We additionally consider the subspace $L'$ of $2$-forms spanned by $\nu$. The symbol $d$ maps the singularity $\Sigma(\nu) \cap \Pr_{\lambda,F} \subset \Pr_{\lambda,F}$ to the product $(L' \cap L_1) \times (L' \cap L_2) \subset d\Pr_{\lambda,F}$. There are three options:
\begin{itemize}
\item $\lambda$ and $\nu$ are proportional and $L_1 = L_2 = L = L'$. Then $\Pr_{\lambda,F} = \Sigma(\nu)$, which contradicts the fact that $F$ was in the complement.
\item $\lambda$ and $\nu$ are proportional, so $L = L'$, but at least one $L_i$, say $L_1$, is distinct from $L'$. Then $L' \cap L_1$ is empty and so is $\Sigma(\nu) \cap \Pr_{\lambda,F}$.
\item $\lambda$ and $\nu$ are not proportional. Then both intersections $L' \cap L_1$ are lines and thus $\Sigma(\nu)$ has codimension $2$ in $\Pr_{\lambda,F}$.
\end{itemize}
Only the two last possibilities can happen and the claim follows.
\end{proof}

\subsubsection{Completing the proof} \label{sssec:exactFormsTemplate}

The previous Lemmas allow us to conclude:
\begin{proof}[Proof of Proposition \ref{prop:exactForms}]
We want to show that $\Thin(\SR)$ is an thinning template. To do this, we must prove three properties. First, that for all $\Xi \in \HConf(TM)$, the subset $\Thin(\SR)(\Xi)$ has thin complement along each direction in $\Xi$. Second, that for all $F \in \SR$, the subset $\Thin(\SR)(F)$ is fibrewise dense in $\HConf(TM)$. Lastly, that $\Thin(\SR)$ is open.

According to Corollary \ref{cor:exactFormsThinning}, we have the following explicit description:
\[ \Thin(\SR)(\Xi) = \SR \setminus (\bigcup_{\lambda \in \Xi} \Sigma(\lambda)). \]
Fix $\nu \in \Xi$. According to Lemma \ref{lem:exactFormsThinning}, each subspace $\SR_{\nu,F}$ is contained in $\Sigma(\nu)$ (if $F \in \Sigma(\nu)$) or is disjoint from it and has thin complement. In the former case, $\Thin(\SR)(\Xi)_{\nu,F}$ is empty. In the latter case, we use Lemma \ref{lem:nonZeroThin} to note that all other singularities $\Sigma(\lambda) \cap \SR_{\nu,F}$ are thin singularities. This proves the first property.

For the second property, we fix $F \in \SR$.  A configuration $\Xi$ is in the complement of $\Thin(\SR)(F)$ if and only if there is $\lambda \in \Xi$ such that $F \in \Sigma(\lambda)$. Equivalently, if and only if $dF_1 \wedge \lambda = dF_2 \wedge \lambda = 0$. This is a non-trivial algebraic condition for $\Xi$, which proves the claim.

Lastly, note that $dF_1 \wedge \lambda = dF_2 \wedge \lambda = 0$ is an algebraic equality both on $dF_i$ and $\lambda$. Therefore, its complement is open (and, in fact, open in each variable upon freezing the other one), proving the third property.
\end{proof}

\subsection{Linear algebra} \label{ssec:linearAlgebra}

As we stated in the proof of Lemma \ref{lem:exactFormsThinning}, the subspace of non-degenerate matrices is an ample subset of the space of all matrices. This shows that the relation $\SR$ studied in this Section was, in fact, ample. For completeness we provide a proof:
\begin{proposition}\label{prop:amplenessGL}
The subspace $\GL(n)$ of non-singular matrices is an ample subset of the space of $(n\times n)$-matrices $\SM_{n\times n}$ if and only if $n \geq 2$.
\end{proposition}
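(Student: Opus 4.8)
The plan is to prove both directions. For the ``only if'' direction, note that when $n=1$ the space $\SM_{1\times 1}\cong\R$ and $\GL(1)=\R\setminus\{0\}$ has two path-components $(-\infty,0)$ and $(0,\infty)$, each of whose convex hull is a proper half-line, so $\GL(1)$ is not ample. (The $n=0$ case is vacuous/degenerate and can be dismissed.) The substance is the ``if'' direction, $n\geq 2$: I must show that $\GL(n)$ is path-connected and that its convex hull is all of $\SM_{n\times n}$. Path-connectedness is standard: $\GL(n,\R)$ has two components distinguished by the sign of the determinant, but since $n\geq 2$ we can pass between them by a path that stays nonsingular only if\dots\ wait --- actually $\GL(n,\R)$ genuinely has two components for every $n\geq 1$. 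So the correct statement is not that $\GL(n)$ is connected, but that the convex hull of \emph{each} component is all of $\SM_{n\times n}$. This is the key point I would focus on.

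So the main step is: fix $\varepsilon\in\{+1,-1\}$ and let $\GL(n)^\varepsilon$ denote the matrices of determinant sign $\varepsilon$; I claim $\Conv(\GL(n)^\varepsilon)=\SM_{n\times n}$ when $n\geq 2$. First I would show every matrix $A$ is an average $\tfrac12(B+C)$ with $B,C\in\GL(n)^\varepsilon$. Given $A$, choose $t\in\R$ generic so that $A+tI$ and $A-tI$ are both invertible (possible since the characteristic polynomial has finitely many roots), giving $A=\tfrac12((A+tI)+(A-tI))$ with both summands in $\GL(n)$ --- but their determinant signs need not match or equal $\varepsilon$. To fix the sign: since $n\geq 2$, precompose with a conjugation or use the freedom of adding a rank-one perturbation. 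Concretely, I would instead argue as follows: it suffices to realize the zero matrix and then translate. Pick any $P\in\GL(n)^\varepsilon$; then $-P$ has determinant $(-1)^n\det P$. If $n$ is even, $-P\in\GL(n)^\varepsilon$ too and $0=\tfrac12(P+(-P))$, done. If $n$ is odd, $-P$ has the opposite sign, so I need another device: use a diagonal matrix $D$ with a single $-1$ and the rest $+1$ (so $\det D=-1$), and write $0=\tfrac12(P + (-PD\cdot D^{-1}))$\dots\ this is getting fiddly. The clean approach: show directly that for $n\geq 2$, \emph{any} $A\in\SM_{n\times n}$ is a convex combination of two elements of $\GL(n)^\varepsilon$ by the two-root-avoidance trick applied to $A$ conjugated so that adding $\pm tI$ lands in the right component --- equivalently, by writing $A=\tfrac{1}{2}(A+N)+\tfrac{1}{2}(A-N)$ for a suitable rank-structured $N$ chosen so both summands are invertible with determinant sign $\varepsilon$, which is possible precisely because $n\geq 2$ gives room to adjust $n-1$ eigenvalues freely while one eigenvalue is used to avoid singularity.

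The main obstacle is exactly this sign bookkeeping in the odd-$n$ case: a single eigenvalue-shifting trick naturally produces matrices whose determinant signs are hard to control simultaneously. I expect the cleanest fix is: (i) reduce to showing $0\in\Conv(\GL(n)^\varepsilon)$, which by translation gives the full statement; (ii) observe that for $n\geq 2$ the group $\GL(n)^+$ (determinant $>0$) already has $0$ in its convex hull --- e.g. $0=\tfrac{1}{n}\sum_{k=0}^{n-1} R^k D R^{-k}$ for a cyclic permutation-like $R$ and suitable $D$, or more simply $0=\tfrac12\bigl(\mathrm{diag}(1,-1,1,\dots)+\mathrm{diag}(-1,1,-1,\dots)\bigr)$ where each summand has determinant $(-1)^{\lfloor n/2\rfloor}$ of the \emph{same} sign since they are related by an even permutation of diagonal entries when $n\geq 2$ --- and handle $\GL(n)^-$ symmetrically by multiplying one coordinate by $-1$. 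Finally I would invoke Definition~\ref{def:ampleAffine}: since every path-component of $\GL(n)$ has convex hull equal to $\SM_{n\times n}$, $\GL(n)$ is ample, completing the proof. I would write this up carefully, as the sign analysis is where an incautious argument breaks.
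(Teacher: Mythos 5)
Your overall strategy---reduce ampleness to showing that each of the two components $\GL^{\pm}(n)$ has convex hull equal to all of $\SM_{n\times n}$---is the right one, and your $n=1$ discussion of the ``only if'' direction is fine (the paper does not even spell it out). However, two of the concrete steps in your plan do not hold up as written. First, the reduction ``it suffices to realize the zero matrix and then translate'' is not valid: $\GL(n)^{\varepsilon}$ is not translation-invariant, so $0\in\Conv(\GL(n)^{\varepsilon})$ does not transport to an arbitrary matrix $A$ by translating. The reduction can be rescued, but only with an ingredient you never state: either observe that $\GL(n)^{\varepsilon}$ is invariant under positive scalings and that the convex hull of an open set is open, so $\Conv(\GL(n)^{\varepsilon})$ is an open convex cone and therefore equals $\SM_{n\times n}$ as soon as it contains $0$; or, from $0=\sum_i t_iA_i$ with $A_i\in\GL(n)^{\varepsilon}$, write $A=\sum_i t_i(A+sA_i)$ and note that for $s\gg 0$ each $A+sA_i$ is invertible with determinant of the same sign as $\det A_i$, since $\det(A+sA_i)=s^n\det(A_i+s^{-1}A)$ and $s^n>0$. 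Second, your ``more simply'' identity fails in exactly the odd-$n$ case you flagged as the danger: for $n$ odd, $\mathrm{diag}(1,-1,1,\dots)$ and $\mathrm{diag}(-1,1,-1,\dots)$ have \emph{opposite} determinant signs (they are not permutations of each other's diagonal entries), so their average does not exhibit $0$ in the hull of a single component. The cyclic-conjugation sum $\frac1n\sum_k R^kDR^{-k}$ can be made to work, but it requires choosing $D$ with the correct sign of $\det D$ for each parity of $n$ and each component, and those verifications are exactly the sign bookkeeping you defer; as it stands the plan has genuine, though fixable, gaps.

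For comparison, the paper's proof sidesteps all parity analysis with two elementary moves. Any $M$ is the average of the invertible matrices $2M-2\lambda\Id$ and $2\lambda\Id$ for a generic $\lambda\neq 0$ avoiding the spectrum of $M$; and any invertible $M=(v_1,v_2,v_3,\dots,v_n)$ (columns) is the average of $M_1=(-v_1,3v_2,v_3,\dots,v_n)$ and $M_2=(3v_1,-v_2,v_3,\dots,v_n)$, both of determinant $-3\det M$, hence both in the component opposite to $M$. The second move is where $n\geq 2$ enters, and combining the two shows directly that the hull of each component is all of $\SM_{n\times n}$, uniformly in $n$ and with no case distinctions. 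If you pursue your route, adopt the open-cone (or large-$s$) argument for the reduction and fix the odd-$n$ realization of $0$; otherwise the paper's column-scaling trick is the cleaner path.
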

\begin{proof}
The two components $\GL^+(n)$ and $\GL^-(n)$ are connected. We have to show that each is individually ample.

First note that every $n\times n$ matrix can be expressed as the convex combination of two non-singular matrices, since
\[ M=\frac{1}{2}(2M-2\lambda\cdot Id)+\frac{1}{2}\lambda\cdot 2Id \]
and the right hand-side is the sum of two non-singular matrices for any choice of $\lambda\notin \textrm{Spec}(M)\setminus{0}$. Therefore, it is enough to show that any matrix $M\in \GL^+(n)$ can be expressed as a convex combination of matrices in $\GL^-(n)$ (and viceversa). This readily follows by writing $M=\left(v_1,v_2,v_3,\cdots, v_n\right)$ (expressed in column vectors) as $M=\frac{1}{2}M_1+\frac{1}{2}M_2$, where
\[M_1=\left(-v_1, 3 v_2, v_3, \cdots, v_n\right)\]
\[M_2=\left(3 v_1, -v_2, v_3, \cdots, v_n\right).\]
Note that $M_1$ and $M_2$ do not belong to the same connected component of $\GL(n)$ as $M$ and, thus, the claim follows.
\end{proof}

\begin{proof}[Alternate proof]
Observe that $\SM_{n \times n}$ is convexely spanned by those matrices with a single non-zero entry. 

Then: Given a matrix $M$ and a sufficiently large constant $C$, it holds that $M$ is in the interior of the convex hull of the matrices $e_{i,j}^\pm$ whose single non-zero entry is $(i,j)$ with value $\pm C$. The matrix $e_{i,j}^\pm$ has zero determinant so it may be perturbed to yield a matrix $\tilde e_{i,j}^\pm$ with positive (resp. negative) determinant. In doing so, the convex hull is perturbed as well. However, $M$ will remain in the interior if the perturbations are small enough, by continuity. This proves the ampleness of $\GL^+(n)$ (resp. $\GL^-(n)$).
\end{proof}

We conclude:
\begin{corollary} \label{cor:linearAlgebra}
The subspace of non-singular matrices in $\SM_{n\times m}$ is ample unless $n=m=1$.
\end{corollary}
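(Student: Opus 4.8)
The plan is to split according to whether the matrices are square. Transposition is a linear isomorphism $\SM_{n\times m} \cong \SM_{m\times n}$ carrying full-rank matrices to full-rank matrices, so without loss of generality we may assume $n \le m$.

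If $n = m$, the full-rank matrices are exactly $\GL(n)$, and the statement is precisely Proposition \ref{prop:amplenessGL}: ampleness holds if and only if $n \ge 2$, and $n=m=1$ is the asserted exception. So the only remaining case is $n < m$, which I would handle by showing that the complement of the full-rank locus is thin and then invoking Example \ref{def:thinSet}.

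Concretely, write $\Sigma := \{A \in \SM_{n\times m} \,\mid\, \rank(A) \le n-1\}$ for the complement in $\SM_{n\times m}$ of the set of full-rank matrices. This is a real determinantal variety; in particular it is a stratified subset (for instance by the constant-rank pieces $\{\rank = r\}$, $r=0,\dots,n-1$), and its codimension in $\SM_{n\times m}$ equals $(n-(n-1))(m-(n-1)) = m-n+1$, which is $\ge 2$ precisely because $m \ge n+1$. Hence $\Sigma$ is thin, and by Example \ref{def:thinSet} its complement --- the set of full-rank matrices --- is ample, as desired.

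The only delicate point, and thus the main (minor) obstacle, is to quote correctly the \emph{real} codimension $(n-r)(m-r)$ of the rank-$\le r$ determinantal locus and to record that it is genuinely stratified in the sense required by Example \ref{def:thinSet}; both facts are classical, but one should be careful to use the real statement rather than its complex analogue and to check that the codimension bound lands in the ``thin'' regime in exactly the cases not already covered by the square case, i.e. whenever $(n,m) \ne (1,1)$.
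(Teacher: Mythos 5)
Your proof is correct and follows essentially the same route as the paper: reduce the square case to Proposition \ref{prop:amplenessGL} and handle the non-square case by noting that the locus of non-full-rank matrices has codimension $|m-n|+1 \geq 2$, hence is thin and has ample complement. You simply spell out the determinantal codimension count and the transposition reduction, which the paper leaves implicit.
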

\begin{proof}
We may assume $n \neq m$; otherwise the space of singular matrices has codimension greater than $1$. Then the claim follows from the previous result.
\end{proof}

\begin{corollary}
The relation $\SR$ treated in this Section is ample in all principal directions and therefore abides by the $h$-principle.
\end{corollary}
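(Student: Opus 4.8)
The plan is to extract ampleness directly from the case analysis already performed in the proof of Lemma \ref{lem:exactFormsThinning}, promoting the ``thin complement'' conclusion to genuine ampleness in the one remaining case by means of Proposition \ref{prop:amplenessGL}. First I would note that $\SR$ is open: linear independence of the pair of $2$-forms $(dF_1,dF_2)$ is an open condition on the $1$-jet, and since the symbol $d$ is smooth and fibrewise linear, $\SR$ is an open subset of $J^1(T^*M \oplus T^*M)$; its inclusion anchor is therefore a submersion. Since $r=1$, being ample in all principal directions (Definition \ref{def:ampleness1}) only requires ampleness of $\iota_\SR$ along each nonzero covector $\lambda$, with no lower-order bookkeeping.

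So fix $p \in M$, a nonzero $\lambda \in T_p^*M$, and $F=(F_1,F_2)\in\SR$ over $p$. The proof of Lemma \ref{lem:exactFormsThinning} distinguishes precisely two situations. If $F \notin \Sigma(\lambda)$, the complement of $\SR_{\lambda,F}$ in $\Pr_{\lambda,F}$ was shown to have codimension at least $2$, i.e.\ to be a thin singularity, so $\SR_{\lambda,F}$ is ample by Example \ref{def:thinSet}. If $F \in \Sigma(\lambda)$, the same proof exhibits an affine submersion $\tau\circ d : \Pr_{\lambda,F} \to \SM_{2\times 2}$ under which $\SR_{\lambda,F}$ is exactly the preimage of $\GL(2)$. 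It therefore remains to check that the preimage of an ample subset under an affine submersion (equivalently, a trivial affine bundle projection) is again ample, and then apply Proposition \ref{prop:amplenessGL} with $n=2$.

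For this last point, let $\phi : A \to B$ be an affine surjection whose linear part has kernel $V_0$, so that every fibre of $\phi$ is an affine translate of $V_0$, and let $S \subset B$ be ample. Given $x \in \phi^{-1}(S)$, its path-component equals $\phi^{-1}(S_{\phi(x)})$: the inclusion $\subset$ is continuity, and the reverse follows since $\phi$ is a (trivial) fibration with connected fibres, so paths in $S_{\phi(x)}$ lift. Writing $\phi(x) = \sum_i t_i b_i$ as a convex combination of points $b_i \in S_{\phi(x)}$ and letting each $a_i$ range over the whole fibre $\phi^{-1}(b_i)$, the combinations $\sum_i t_i a_i$ sweep out the entire affine fibre $\phi^{-1}(\phi(x))$; carrying this out over every point of $B = \Conv(S_{\phi(x)})$ shows $\Conv(\phi^{-1}(S_{\phi(x)})) = A$. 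Hence $\SR_{\lambda,F}$ is ample in both cases, so $\SR$ is ample in all principal directions, and Theorem \ref{thm:convexIntegration1} delivers the full $C^0$-close $h$-principle for $\SR$.

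The only step that is not pure bookkeeping is the affine-submersion pullback lemma, and even that is routine (fibre-connectedness together with convex combinations along the fibres); the substance lies entirely in Lemma \ref{lem:exactFormsThinning} and Proposition \ref{prop:amplenessGL}, both already available. I expect no real obstacle.
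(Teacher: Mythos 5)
Your proposal is correct and follows essentially the same route as the paper: the case analysis of Lemma \ref{lem:exactFormsThinning} gives thinness of the complement when $F \notin \Sigma(\lambda)$, and when $F \in \Sigma(\lambda)$ ampleness of $\SR_{\lambda,F}$ is pulled back from the ampleness of $\GL(2)$ (Proposition \ref{prop:amplenessGL}) through the affine submersion $\tau\circ d$. The only addition is that you spell out the routine preimage-of-an-ample-set lemma for affine submersions, which the paper uses implicitly.
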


\section{Relations involving functions} \label{sec:1dimBundle}

In this Section we restrict our attention to bundles $X \to M$ with $1$-dimensional fibres. The typical example is the trivial $\R$ bundle over $M$, whose sections are functions. Our claim is that convex integration (even if it includes avoidance) is ill-suited to address relations $\SR \subset J^r(X)$.

In Subsection \ref{ssec:functions} we prove that the the relation defining functions without critical points fails to be ample up to avoidance. In Subsection \ref{ssec:noVassiliev} we generalise this to arbitrary differential relations $\SR \subset J^r(X)$ that satisfy a mild non-triviality condition.

\begin{remark} \label{rem:GromovThin}
Let us compare these claims with \cite[Remark 1.3.4]{Gr73}. Gromov states that a generic codimension-2 singularity $\Sigma$ is thin. This is true if the fibres of $X$ have dimension at least $2$. Indeed, under genericity assumptions, $\Sigma$ intersects every principal subspace in a (maybe empty) codimension-$2$ subset.

However, if the fibres of $X$ have dimension $1$, $\Sigma$ does not intersect all the principal subspaces, only a subset of codimension-$1$. Further, these intersections are necessarily of codimension-$1$. As such, thinness and ampleness (even up to avoidance) fail.

Observe further that a differential relation given by a concrete geometric problem is, by definition, not generic. We claim that our methods can be used to generalise Gromov's statement to such non-generic situations. Suppose $\Sigma$ has codimension $2$ and the fibres of $X$ have dimension at least $2$. Even if $\Sigma$ intersects some principal subspaces in non-thin sets, it intersects most of them transversely, by Sard's theorem. One can then apply our methods to analyse the problematic subspaces.
\end{remark}

\begin{remark} \label{rem:Vassiliev}
We observe that the applicability realms of Vassiliev's $h$-principle \cite{Va} and convex integration are, in some sense, complementary. The former is most interesting when $\SR$ is the complement of a singularity of large codimension (at least $\dim(M)+2$) and $X$ has $1$-dimensional fibres. The latter is effective in the presence of much larger singularities but requires the fibres of $X$ to have dimension at least $2$.
\end{remark}

\subsection{A non-example: Functions without critical points} \label{ssec:functions}

Let $M$ be a manifold and we let $X$ be the trivial $\R$-bundle over $M$. As a differential relation in $J^1(X)$ we take 
\[ \SR = \{ F \in J^1(X) \,\mid\, dF \neq 0\}, \]
i.e. the $1$-jets of functions whose differential is non-zero. With the standard identification $J^1(X) \cong T^*M \times \R$ we see that $\SR$ is the complement of the singularity $M \times \R$.

Fix now a codirection $\lambda$ and a formal datum $F \in J^1(X)$, both based at the same point $p \in M$. The principal subspace associated to them is one-dimensional and explicitly given by:
\[ \Pr_{\lambda,F} = \{ F + (c\lambda,0) \in T^*M \times \R \,\mid\, c \in \R \}. \]
We readily see that there are two possible situations:
\begin{itemize}
\item $dF$ is proportional to $\lambda$. Then the complement of $\SR_{\lambda,F}$ is a point, which is not thin.
\item $dF$ is not proportional to $\lambda$. Then $\SR_{\lambda,F} = \Pr_{\lambda,F}$ so ampleness holds trivially.
\end{itemize}
This shows that Lemma \ref{lem:emptyAvoid} applies to $\SR$, allowing us to conclude that the standard pre-template $\Avoid^\infty(\SR)$ is empty for all configurations of codirections that form a generating set. The same applies to the thinning pre-template $\Thin^\infty(\SR)$. This was to be expected since, due to Morse theory, there is no $h$-principle for functions without critical points.

\subsection{The general case} \label{ssec:noVassiliev}

It follows immediately from Lemma \ref{lem:emptyAvoid} that:
\begin{proposition}
Let $X \to M$ be a bundle with $1$-dimensional fibres. Let $\SR \subset J^r(X)$ be the complement of a singularity $\Sigma$ that intersects non-trivially each fibre of $J^r(X) \to J^{r-1}(X)$.

Let $l_0$ be the dimension of the fibres of $J^r(X) \to J^{r-1}(X)$. Then, $\Avoid^{l_0}(\SR)$ is empty for all configurations of codirections of cardinality $l \geq l_0$ that form a principal basis.

In particular, $\SR$ is not ample nor ample up to avoidance.
\end{proposition}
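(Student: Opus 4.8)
The plan is to verify that the two hypotheses of Lemma \ref{lem:emptyAvoid} hold for the relation $\SR = J^r(X) \setminus \Sigma$ and then simply quote that lemma. So first I would check the second bullet of Lemma \ref{lem:emptyAvoid}: every fibre of $\pi^r_{r-1}$ contains an element not in $\SR$. But this is exactly the hypothesis that $\Sigma$ meets each fibre of $J^r(X) \to J^{r-1}(X)$ non-trivially, so there is nothing to prove here.

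The substantive step is the first bullet of Lemma \ref{lem:emptyAvoid}: for every codirection $\tau$ and every $z$, the slice $\SR_{\tau,z}$ is either trivially ample or all its components are non-ample. Here I would use that the fibres of $X \to M$ are $1$-dimensional, so by Lemma \ref{lem:principalImage} each principal subspace $\Pr_{\tau,z} \subset J^r(X)$ is $1$-dimensional (a line), parametrised by $c \mapsto z + \tau^{\otimes r}\otimes w_0 \cdot c$ for a fixed generator $w_0$ of the fibre of $X$. Intersecting $\Sigma$ with this line gives two cases: either the line misses $\Sigma$ entirely, in which case $\SR_{\tau,z} = \Pr_{\tau,z}$ and ampleness holds trivially; or the line meets $\Sigma$, in which case $\SR_{\tau,z}$ is a line with at least one point removed, hence disconnected (or an open half-line), and none of its connected components is all of $\Pr_{\tau,z}$ — so each component is non-ample (a proper subinterval of a line has convex hull a proper subinterval, never the whole line). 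This dichotomy is precisely the first hypothesis of Lemma \ref{lem:emptyAvoid}, and note it matches the case analysis already carried out in Subsection \ref{ssec:functions} for the first-order example.

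Having checked both hypotheses, Lemma \ref{lem:emptyAvoid} yields $\Avoid^\infty(\SR)(\Xi) = \emptyset$ for any $\Xi$ containing a principal basis; inspecting the inductive claim in the proof of that lemma, the emptiness is already achieved after $k = \dim \Sym^r(V,\R) = l_0$ steps along a basis, since after a principal path of length $l_0$ starting from any $z_0 \notin \SR$ one reaches every element of the fibre of $\pi^r_{r-1}$. Hence $\Avoid^{l_0}(\SR)(\Xi)$ is already empty whenever $\Xi$ has cardinality $l \geq l_0$ and contains a principal basis. Finally, since $\SR \times_M \HConf(TM)$ is a template if and only if $\SR$ is ample in all principal directions (the observation right after Definition \ref{def:main}), and since an empty $\Avoid^\infty$ cannot satisfy Property (III), no avoidance template for $\pi^r_{r-1}\circ\iota_\SR$ exists; thus $\SR$ is neither ample nor ample up to avoidance.

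The only point requiring mild care is making sure the length count in the proof of Lemma \ref{lem:emptyAvoid} genuinely gives $l_0$ rather than something larger: one must observe that a principal basis for the fibre of $\pi^r_{r-1}$ over a point has exactly $\dim\Sym^r(V,\R) = l_0$ elements (Lemma \ref{lem:principalBasis} and the discussion of principal paths in Subsubsection \ref{sssec:principalPaths}, using that $\dim$ of the fibre of $X$ is $1$), and that the inductive claim then forces $\Avoid^{l_0}(\SR)(\Xi)_{\tau,z} = \emptyset$ on the last step. I expect this bookkeeping to be the only real obstacle, and it is a routine unwinding of the definitions rather than a genuine difficulty.
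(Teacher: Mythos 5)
Your proposal is correct and is essentially the paper's own argument: the paper proves this Proposition simply by invoking Lemma \ref{lem:emptyAvoid}, and what you supply---the verification that each one-dimensional principal subspace is either trivially ample or splits into non-ample intervals once it meets $\Sigma$, plus reading off from the lemma's induction that a principal basis has exactly $l_0 = \dim\Sym^r(V,\R)$ elements so emptiness is reached at stage $l_0$---is precisely the bookkeeping the paper leaves implicit. (Only cosmetic quibble: in the last step the relevant template is the one for the order-$r$ relation $\iota_\SR$ itself, whose principal subspaces are the fibres of $\pi^r_{r-1}$, rather than for the projected over-relation $\pi^r_{r-1}\circ\iota_\SR$.)
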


\section{Flexibility of step-$2$ distributions} \label{sec:flexBG}

In this Section we will prove the $h$-principle for step-$2$ distributions (Theorem \ref{thm:step2}) and its corollary about the classification of (3,5) and (3,6) distributions (Theorem \ref{thm:35and36}). The proof can be found in Subsection \ref{ssec:flexBG}. We emphasise that the contents of this Section do not need avoidance and simply rely on classic convex integration.

Before we get to the results, and in order to set notation, we recall some of the basic theory of tangent distributions in Subsection \ref{ssec:dual}. This will allow us, in Subsection \ref{ssec:liftingToAnnihilator}, to translate our statements about distributions to statements about their annihilating forms.

In Subsection \ref{ssec:contact}, for completeness, we look at convex integration in the setting of (even-)contact structures, following the work of McDuff \cite{McD}.

\subsection{The dual picture} \label{ssec:dual}

Fix an $n$-dimensional ambient manifold $M$ and a rank-$k$ distribution $\xi$. Recall that the Lie flag and the nilpotentisation were already introduced in the introductory Subsection \ref{ssec:distributionsIntro}. 

In practice, whenever we impose (natural) differential conditions on distributions, these can be read either using a frame of vector fields or a frame of the annihilator. In this paper, the distributions we look at have greater rank than corank, so it is more convenient to use the annihilator viewpoint:
\[ \xi^{\bot} := \lbrace \alpha\in T^*M \mid \alpha(v)=0, \,\forall v\in\xi \rbrace. \]

In Subsection \ref{sssec:nilpotentisation} we discussed the curvature of $\xi$. Its first entry is a $2$-form with entries in $\xi$ and image in $TM/\xi$. Upon passing to the wedge product, it is equivalent to a bundle morphism
\[ \Omega^\xi: \xi \wedge \xi \longrightarrow TM/\xi. \]
We can then dualise it using the Cartan formula, yielding a bundle map
\begin{equation*}
		\begin{array}{rrcl}
		d^\xi:  & \xi^\bot						 & \longrightarrow &  \wedge^2 \xi^* \\
		        &\alpha                & \longmapsto     & -\alpha \circ \Omega^\xi = d\alpha|_\xi.
		\end{array}
\end{equation*}
We note that $d\alpha|_\xi$ only depends on the pointwise value of $\alpha \in \xi^\perp$. At the risk of overloading our notation, we will say that $d^\xi\alpha$ is the \textbf{curvature} associated to $\alpha \in \xi^\bot$.

In light of the Cartan formula, the kernel of $d^\xi$ is $\xi^{\bot}_2$. In particular:
\begin{lemma} \label{lem:2step}
$\xi$ is of step-$2$ if and only if any of the following equivalent conditions holds:
\begin{itemize}
\item $d_\xi$ is a monomorphism.
\item $\Omega^\xi$ is an epimorphism.
\end{itemize}
\end{lemma}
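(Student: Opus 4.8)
\textbf{Proof plan for Lemma \ref{lem:2step}.}

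The statement is essentially a dualisation exercise, so the plan is to chase the definitions through the Cartan formula. First I would recall that, by definition, $\xi$ is of step $2$ exactly when $\Gamma^2(\xi) = \Gamma(TM)$, i.e. when $\xi_2 = TM$. Dualising, this is equivalent to $\xi_2^\perp = 0$. The key input, stated just before the Lemma, is that $\ker(d^\xi) = \xi_2^\perp$: this follows from the Cartan formula $d\alpha(X,Y) = X\alpha(Y) - Y\alpha(X) - \alpha([X,Y])$, which for $\alpha \in \xi^\perp$ and $X,Y \in \Gamma(\xi)$ reduces to $d\alpha(X,Y) = -\alpha([X,Y])$; hence $d^\xi\alpha = 0$ iff $\alpha$ annihilates $\Gamma^2(\xi)$, iff $\alpha \in \xi_2^\perp$. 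Therefore $\xi$ is of step $2$ $\iff$ $\xi_2^\perp = 0$ $\iff$ $\ker(d^\xi) = 0$ $\iff$ $d^\xi$ is a monomorphism. This settles the equivalence with the first bullet.

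For the second bullet I would argue that $\Omega^\xi$ being an epimorphism is equivalent to $d^\xi$ being a monomorphism, using that $d^\xi\alpha = -\alpha\circ\Omega^\xi$ pointwise. Fix a point $p$. The map $\alpha \mapsto -\alpha\circ\Omega^\xi_p$ is precisely the map $(TM/\xi)^*_p \to \wedge^2\xi^*_p$ obtained by precomposition with $\Omega^\xi_p\colon \wedge^2\xi_p \to (TM/\xi)_p$ (identifying $\xi_p^\perp$ with $(TM/\xi)^*_p$). A linear map $L\colon A \to B$ is surjective if and only if the dual precomposition map $L^*\colon B^* \to A^*$ is injective; applying this with $L = \Omega^\xi_p$, $A = \wedge^2\xi_p$, $B = (TM/\xi)_p$ gives that $\Omega^\xi_p$ is onto iff $d^\xi_p$ is injective. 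Since this holds fibrewise and both conditions are closed/open pointwise conditions on a vector bundle morphism of constant rank (we assumed $\xi$ regular), the global statements are equivalent.

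I do not anticipate a genuine obstacle here; the only mild subtlety is bookkeeping the identification $\xi^\perp \cong (TM/\xi)^*$ and checking that $d^\xi$ as defined (the Cartan-formula dual of $\Omega^\xi$) is literally the transpose of $\Omega^\xi$ under this identification, so that the standard linear-algebra fact ``$L$ epi $\iff$ $L^*$ mono'' applies verbatim. Once that identification is in place, both equivalences are immediate, and the chain $\text{step }2 \iff \xi_2^\perp = 0 \iff d^\xi \text{ mono} \iff \Omega^\xi \text{ epi}$ completes the proof.
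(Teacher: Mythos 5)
Your argument is correct and is exactly the route the paper intends: the paper itself states only ``in light of the Cartan formula, the kernel of $d^\xi$ is $\xi_2^\perp$'' and leaves the rest implicit, and your proof fills in precisely those steps (Cartan formula $\Rightarrow \ker d^\xi = \xi_2^\perp$, step $2 \iff \xi_2^\perp = 0$, and $\Omega^\xi$ epi $\iff$ its transpose $d^\xi$ mono under $\xi^\perp \cong (TM/\xi)^*$). No gaps; the identification of $d^\xi$ with the transpose of $\Omega^\xi$ is immediate from the paper's definition $d^\xi\alpha = -\alpha\circ\Omega^\xi$.
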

In particular, if $\xi$ is of step-$2$, the ambient dimension is at most $\rank(\xi) + \binom{\rank(\xi)}{2}$. Conversely, under this assumption on the dimension, generic distributions are of step-$2$ at a generic point.

\subsection{Step-$2$ as a differential relation}\label{ssec:liftingToAnnihilator}

We see rank-$k$ distributions as sections of the grassmannian bundle $\Gr_k(TM)$. Being bracket--generating in two steps is then a differential relation $\SR^\step \subset J^1(\Gr_k(TM))$ of first order. More concretely, we observe that any element $F \in J^1(\Gr_k(TM))$ defines a $k$-plane $j^0F$, as well as an associated curvature
\[ \Omega^F:  \wedge^2 j^0F \longrightarrow TM/j^0F, \]
simply because the curvature depends only on first order derivatives. Then:
\begin{definition} \label{def:bgDist}
The differential relation $\SR^\step \subset J^1(\Gr_k(TM))$ consists of those $F \in J^1(\Gr_k(TM))$ such that $\Omega^F$ is an epimorphism.
\end{definition}
We will write $\Dist_{(k,n)}^f(M)$ for the space of formal solutions of $\SR^\step$. The subspace of holonomic ones is denoted by $\Dist_{(k,n)}(M)$. Being bracket-generating in $l+1$ steps is similarly a differential relation of order $l$.

Definition \ref{def:bgDist} is not very practical and it is best to pass to a description in terms of forms. Namely, we consider the bundle of tuples $\formConfig$. Over the open set $\overline{\formConfig}$ consisting of linearly-independent tuples, we have a quotient map: 
\[ \pi: \overline{\formConfig} \longrightarrow \Gr_k(TM), \]
which induces a map 
\[ j^r\pi: J^r(\overline{\formConfig}) \longrightarrow J^r(\Gr_k(TM)) \]
between jet spaces. Then:
\begin{definition} \label{def:bgDist2}
A jet $\widetilde F \in J^1(\formConfig)$ is \textbf{formally bracket-generating of step-$2$} if the following conditions hold: 
\begin{itemize}
		\item $\widetilde F \in J^1(\overline{\formConfig})$ and therefore it projects to an element $F \in J^1(\Gr_k(TM))$. Denote $\xi = j^0F$.
    \item The $2$-forms $d\tilde{F}|_\xi$ are linearly independent.
\end{itemize}
The subset of such $\widetilde F$ will be denoted by $\SS^\step \subset J^1(\formConfig)$.
\end{definition}
The second condition is, according to Lemma \ref{lem:2step}, indeed equivalent to $\Omega^F$ being an epimorphism. It follows that:
\begin{lemma} \label{lem:bgDist}
$\SS^\step$ is the preimage of $\SR^\step$ under $j^1\pi$. In particular, $\SS^\step$ fibres affinely over $\SR^\step$.
\end{lemma}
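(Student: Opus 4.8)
The plan is to establish the two assertions separately: the set-theoretic identity is the real content, and the ``fibres affinely'' clause is then a formal consequence.

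For the identity $\SS^\step = (j^1\pi)^{-1}(\SR^\step)$, I would first record that the equation even makes sense: by the first bullet of Definition \ref{def:bgDist2} one has $\SS^\step \subset J^1(\overline{\formConfig})$, which is precisely the domain of $j^1\pi$. The argument is then a direct unwinding of Definitions \ref{def:bgDist} and \ref{def:bgDist2} against Lemma \ref{lem:2step}. Fix $\widetilde F \in J^1(\overline{\formConfig})$ over $p \in M$, put $F := j^1\pi(\widetilde F) \in J^1(\Gr_k(TM))$ and $\xi := j^0 F \in \Gr_k(T_pM)$. I would observe, first, that the components $\widetilde F_1(p), \dots, \widetilde F_{n-k}(p)$ of the underlying $0$-jet form an \emph{ordered basis} of $\xi^\perp_p$ — this is exactly the linear-independence defining $\overline{\formConfig}$, together with $\dim \xi^\perp_p = n-k$. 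Next, the curvature $\Omega^F \colon \wedge^2\xi \to T_pM/\xi$ is determined by the $1$-jet $F$ alone (Subsection \ref{ssec:liftingToAnnihilator}), and its dual is the map $d^\xi \colon \xi^\perp \to \wedge^2 \xi^*$, $\alpha \mapsto -\alpha\circ\Omega^F$ of Subsection \ref{ssec:dual}; since these identifications are linear-algebraic (a form of the Cartan formula), they are valid for the $1$-jet $F$ and do not require holonomicity. Then I would invoke the elementary part of Lemma \ref{lem:2step}: a linear map is an epimorphism iff its transpose is a monomorphism, so $\Omega^F$ is onto iff $d^\xi$ is injective; and since $d^\xi$ is defined on the $(n-k)$-dimensional space $\xi^\perp_p$, it is injective iff it sends the basis $(\widetilde F_i(p))_i$ to a linearly independent tuple. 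Finally, by the fact recalled in Subsection \ref{ssec:dual} that $d\alpha|_\xi$ depends only on the pointwise value of $\alpha \in \xi^\perp$, the entry $d\widetilde F_i|_\xi$ of Definition \ref{def:bgDist2} equals $d^\xi(\widetilde F_i(p))$. Chaining these equivalences — $F\in\SR^\step$ iff $\Omega^F$ is an epimorphism, iff $d^\xi$ is a monomorphism, iff the tuple $(d\widetilde F_i|_\xi)_i$ is linearly independent, iff $\widetilde F\in\SS^\step$ — yields the claimed set equality.

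For the ``in particular'' clause, I would deduce it from the identity: $\SS^\step \to \SR^\step$ is the restriction of $j^1\pi \colon J^1(\overline{\formConfig}) \to J^1(\Gr_k(TM))$ to the preimage of $\SR^\step$, and the restriction of a fibration to the preimage of a subset of its base is again a fibration. It thus suffices to note that $j^1\pi$ fibres affinely, which in turn follows from $\pi\colon \overline{\formConfig}\to\Gr_k(TM)$ being a smooth fibre bundle over $M$ — the fibre over a plane $\xi$ being the ordered framings of $\xi^\perp$, an open subset of the vector space $\Hom(\R^{n-k},\xi^\perp)$ — together with the standard fact that prolonging to $1$-jets only adds an affine fibre direction (the second-entry derivative of the framing, pinned down affinely once the derivative part of $F$ is fixed).

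I do not expect a serious obstacle here; the only point requiring care is that everything must be read in the non-holonomic category: $F$ ranges over all of $J^1(\Gr_k(TM))$, so I must make sure that (i) $\Omega^F$ genuinely comes from first-order data, (ii) the ``epimorphism versus transpose-monomorphism'' equivalence is used purely fibrewise (which is exactly how Lemma \ref{lem:2step} packages it), and (iii) the $2$-forms ``$d\widetilde F|_\xi$'' in Definition \ref{def:bgDist2} are literally the dualized curvature evaluated on the framing, so that the chain of equivalences lands on the nose. No input beyond Subsections \ref{ssec:dual}--\ref{ssec:liftingToAnnihilator} is needed.
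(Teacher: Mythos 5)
Your proposal is correct and follows essentially the same route as the paper, which offers no separate proof beyond the remark preceding the statement: linear independence of the $2$-forms $d\widetilde F|_\xi$ is, via Lemma \ref{lem:2step} (epimorphism of $\Omega^F$ versus monomorphism of its dual $d^\xi$ evaluated on the framing $(\widetilde F_i(p))_i$ of $\xi^\perp$), equivalent to $j^1\pi(\widetilde F)\in\SR^\step$. Your spelled-out chain of equivalences, the care about the non-holonomic reading of $\Omega^F$ and of the symbol $d\widetilde F|_\xi$, and the affine-fibre remark for the ``in particular'' clause all match what the paper leaves implicit.
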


\subsection{Localisation to a ball} \label{ssec:localisation}

As explained before, a formal solution in $F \in \Dist_{(k,n)}^f(M)$ defines a $k$-plane field $j^0F$. It may very well happen that the annihilator $(j^0F)^\bot$ is not trivial as a bundle. This would imply that $F$ cannot be lifted to $\widetilde F \in J^1(\overline{\formConfig})$. In particular, there may be no global lift of $F$ to $\SS^\step$.

Nonetheless, the $h$-principle for $\SR^\step$ reduces to the $h$-principle for $\SS^\step$. We could directly invoke that convex integration is local (i.e. that it is performed chart by chart). This would certainly be enough for our purposes in this Section, which rely on ampleness along all directions. However, it seems more delicate for ampleness up to avoidance.

In order to set the stage for later Sections, we follow a different approach. The following standard trick gets the job done (even in the presence of parameters and relatively): The manifold $M$ (or the product $M \times K$, in the presence of a parameter space $K$) can be triangulated and holonomic approximation \cite[Chapter 3]{EM} can be applied along the codimension-$1$ skeleton $\ST$. This homotopes the formal solution $F$ (resp. $K$-family of formal solutions) to a new formal solution $G \in \Dist_{(k,n)}^f$ (resp. $K$-family of formal solutions) that is holonomic along $\ST$ and $C^0$-close to $F$ everywhere. See \cite{Gav} for the general theory behind this.

The outcome is that now we can restrict our attention to the top dimensional cells, which are contractible. Over each ball, the annihilator of $j^0G$ is now trivial, and thus a lift to $\SS^\step$ exists. We conclude that:
\begin{lemma} \label{lem:localisation}
In order to prove the $h$-principle for $\SR^\step$, it is sufficient to prove it for $\SS^\step$.
\end{lemma}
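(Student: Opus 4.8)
The plan is a localisation-to-balls argument: the only obstruction to lifting a formal solution of $\SR^\step$ to a formal solution of $\SS^\step$ is the possible non-triviality of the annihilator bundle $(j^0F)^\bot$, and this triviality is automatic over contractible pieces of $M$; holonomic approximation will then be used to arrange that these pieces are glued along a region where the formal solution is already holonomic.

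First I would fix a smooth triangulation of $M$ and let $\ST$ be its codimension-$1$ skeleton. Applying the Holonomic Approximation Theorem \cite[Chapter 3]{EM} over $\ST$ (see \cite{Gav}) homotopes a given formal solution $F$ of $\SR^\step$, through formal solutions, to a formal solution $G$ that is holonomic on $\Op(\ST)$, stays $C^0$-close to $F$ throughout, and agrees with $F$ wherever $F$ was already holonomic. After shrinking $\Op(\ST)$, the complement $M\setminus\Op(\ST)$ is a disjoint union of closed balls $\bar B_\alpha$, each contained in an open ball over which $G$ is defined, with $G$ holonomic near $\partial\bar B_\alpha$.

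Over each $\bar B_\alpha$ I would build a lift $\widetilde G_\alpha : \bar B_\alpha \to \SS^\step$ of $G|_{\bar B_\alpha}$ that is still holonomic near $\partial\bar B_\alpha$. As $\bar B_\alpha$ is contractible, $(j^0 G)^\bot\subset T^*\bar B_\alpha$ is a trivial bundle; fixing a coframe $\alpha_1,\dots,\alpha_{n-k}$ trivialising it, the holonomic section $j^1(\alpha_1,\dots,\alpha_{n-k})$ lifts the prolongation $j^1(j^0 G)$, since $\pi(\alpha_1,\dots,\alpha_{n-k}) = j^0 G$. The difference $\sigma := G - j^1(j^0 G)$ is a section over $\bar B_\alpha$ of the pullback along $j^0 G$ of the vertical bundle of $J^1(\Gr_k(TM))\to\Gr_k(TM)$, and it vanishes on $\Op(\partial\bar B_\alpha)$ because $G$ is holonomic there. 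On the open locus of linearly independent tuples the symbol $d(j^1\pi)$ restricts to a fibrewise surjective morphism of vector bundles from the vertical bundle of $J^1(\formConfig)\to\formConfig$ onto that of $J^1(\Gr_k(TM))\to\Gr_k(TM)$; choosing a linear splitting, I would lift $\sigma$ to a section $\widetilde\sigma$ still vanishing near $\partial\bar B_\alpha$, and set $\widetilde G_\alpha := j^1(\alpha_1,\dots,\alpha_{n-k}) + \widetilde\sigma$. Then $j^1\pi\circ\widetilde G_\alpha = j^1(j^0 G) + \sigma = G$, so $\widetilde G_\alpha$ takes values in $\SS^\step = (j^1\pi)^{-1}(\SR^\step)$ by Lemma \ref{lem:bgDist}, and it is holonomic wherever $\widetilde\sigma = 0$, hence near $\partial\bar B_\alpha$.

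Finally I would apply the $h$-principle for $\SS^\step$ --- the hypothesis --- to each $\widetilde G_\alpha$ on $\bar B_\alpha$, in its full $C^0$-close and relative form, relative to $\Op(\partial\bar B_\alpha)$, obtaining a genuine solution $\widetilde g_\alpha$ of $\SS^\step$ agreeing with $\widetilde G_\alpha$ near $\partial\bar B_\alpha$. Then $g_\alpha := \pi\circ\widetilde g_\alpha$ solves $\SR^\step$ over $\bar B_\alpha$ and coincides with $G$ near $\partial\bar B_\alpha$, so, as distinct $\bar B_\alpha$ overlap only inside $\Op(\ST)$ where $G$ is already holonomic, the $g_\alpha$ glue to a global solution of $\SR^\step$, homotopic and $C^0$-close to $F$. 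The parametric statement is obtained by running the same scheme on $M\times K$ (with its fibration to $K$): leafwise holonomic approximation over the codimension-$1$ skeleton, lifting over the --- again contractible --- top cells, and the (foliated) hypothesis for $\SS^\step$, exactly as parametricity is handled elsewhere in the paper. I do not expect a genuinely hard step here; the only point requiring care is organisational, namely guaranteeing that the correction $\widetilde\sigma$ can be taken to vanish near $\partial\bar B_\alpha$ so that the relative $h$-principle for $\SS^\step$ applies cell by cell, and this is precisely what holonomic approximation over a codimension-$1$ skeleton delivers.
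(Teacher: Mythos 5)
Your proposal is correct and follows essentially the same route as the paper: triangulate $M$ (or $M\times K$), apply holonomic approximation along the codimension-$1$ skeleton, and then lift to $\SS^\step$ over the contractible top-dimensional cells, where the annihilator bundle is trivial, finishing with the relative $h$-principle for $\SS^\step$ cell by cell. The paper only sketches this ``standard trick''; your extra detail on constructing the lift via the affine fibration of Lemma \ref{lem:bgDist} and on gluing along $\Op(\ST)$ is a faithful expansion of the same argument rather than a different approach.
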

The $h$-principle for $\SS^\step$ will follow from convex integration, as we prove next.

\subsection{h-Principle for step-2 distributions} \label{ssec:flexBG}

\begin{proof}[Proof of Theorem \ref{thm:step2}]
According to Lemma \ref{lem:localisation}, it is sufficient to check that $\SS^\step$ is ample along all codirections. Fix a formal solution $F = (F_i)_{i=1}^{n-k} \in \SS^\step$ based at some point $p \in M$. By assumption, the $j^0F = (j^0F_i)_{i=1}^{n-k}$ are linearly independent and thus annihilate a $k$-plane $\xi \subset T_pM$. Furthermore, the $dF_i|_\xi$ are linearly independent.

Fix a codirection $\lambda \in T_p^*M$. The principal space associated to $\lambda$ and $F$ reads:
\[ \Pr_{\lambda,F} =  \{(F_i + (0,\lambda \otimes \beta_i))_{i=1}^{n-k}  \,\mid\, \beta_i \in T_p^*M\}. \]
The differential of any $\widetilde F \in \Pr_{\lambda,F}$ reads $(d\widetilde F_i = dF_i + \lambda \wedge \beta_i)_{i=1}^{n-k}$. A tuple $\widetilde F$ belongs to $\SS^\step$ if and only if the tuple of two-forms $d\widetilde F|_\xi$ is linearly independent. Suppose $\lambda \in \xi^\bot$. Then we have that $d\widetilde F|_\xi = dF|_\xi$ and therefore ampleness holds (because all $\widetilde F$ are formal solutions, since $F$ was).

Otherwise, we suppose that $\lambda$ represents a non-trivial element in $\xi^*$. Then, as far as $d\widetilde F|_\xi$ is concerned, only the restriction of $\beta_i$ to $\xi \cap \ker(\lambda)$ is important. The forms $(dF_i|_{\ker(\lambda) \cap \xi})_{i=1}^{n-k}$ span a subspace, say, of dimension $l$. Up to a change of basis, we may then assume that 
\[ dF_i|_{\ker(\lambda) \cap \xi} = 0, \qquad \text{for all $i=l+1, \cdots,n-k$}. \]
Equivalently:
\[ dF_i|_\xi = \lambda \wedge \nu_i, \qquad \text{for all $i=l+1, \cdots,n-k$, for some $\nu_i \in T^*M$}. \]
Then, the tuple $d\widetilde F \in \Pr_{\lambda,F}$ is in $\SS^\step$ if and only if the forms $\{(\lambda \wedge (\beta_i + \nu_i))|_\xi\}_{i=l+1}^{n-k}$ are linearly independent. Equivalently, if and only if the forms $\{(\beta_i + \nu_i)|_{\ker(\lambda) \cap \xi}\}_{i=l+1}^{n-k}$ are linearly independent. This means that the ampleness of $\SS^\step$ along $\Pr_{\lambda,F}$ is equivalent to the ampleness of the subspace $A$ of rank-$(n-k-l)$ matrices within $\SM_{(n-k-l) \times (k-1)}$.

If $n-k-l > k-1$ (equivalently $l < n-2k+1$), the subspace $A$ is empty contradicting the fact that $F$ was a formal solution. Otherwise, $n-k-l \leq k-1$ holds and  $A$ is just the subspace of non-degenerate matrices. Due to our assumptions (step $2$ and dimension at least $4$), we have that $k \geq 3$ and thus we deduce $k-1 \geq 2$. It follows that $A$ is ample according to Lemma \ref{cor:linearAlgebra}, concluding the proof.
\end{proof}

Theorem \ref{thm:step2} proves that step-$2$ distributions are flexible as long as we do not impose any further non-degeneracy constraints. Nonetheless, according to Theorem \ref{thm:35and36}, there are two cases, (3,5) and (3,6), where the $h$-principle for maximally non-involutive distributions readily follows from the Theorem.

\begin{remark}
Do observe that the singularity associated to $\SS^\step$ is thin unless the dimension of $\wedge^2(\xi)$ is the corank of $\xi$. When the two dimensions coincide, the singularity has codimension-$1$ and, in each principal subspace, is either trivial or equivalent to the singularity of $\GL$ inside of all square matrices. This is the case for the distributions of maximal growth $(k,k+\binom{k}{2})$, which includes $(3,6)$, $(4,10)$ and so on.
\end{remark}

\subsection{The contact and even-contact cases} \label{ssec:contact}

As an appetiser for our study of maximally non-involutive distributions in Sections \ref{sec:maximalNonInvolutivity} and \ref{sec:46}, we now revisit the contact and even-contact cases. We will show that the former fails to be ample (as was to be expected, since contact structures do not abide by the $h$-principle \cite{Ben}), whereas the latter is thin along all principal directions (as proven by D. McDuff in \cite{McD}).

\subsubsection{The Pfaffian}

Let $M$ be an $n$-dimensional manifold. Once again, it is convenient, since we are dealing with hyperplane fields, to work with forms. The bundle of interest for us will be the cotangent bundle $T^*M$. In order to measure non-involutivity, we introduce the \emph{Pfaffian} map:
\begin{equation*}\label{eq:pfaffianContact}
		\begin{array}{ccccccl}
		\Gamma \colon  & J^1(T^*M) & \longrightarrow &  T^*M \oplus\wedge^2 T^*M & \longrightarrow & \wedge^{2\lfloor \frac{n-1}{2} \rfloor+1} T^*M \\
		               & F         & \longmapsto     & (j^0F, dF)          &  \longmapsto    & j^0F \wedge(dF)^{\lfloor \frac{n-1}{2} \rfloor},
		\end{array}
\end{equation*}
where the first arrow is the extended symbol of the exterior differential. The Pfaffian measures whether the formal curvature $dF|_{\ker(j^0F)}$ has maximal rank.

\begin{definition}
The (even)-contact differential relation for $1$-forms is defined as:
\[ \SR^\cont := J^1(T^*M \setminus 0) \setminus \Gamma^{-1}(0) \subset J^1(T^*M).\]
\end{definition}
Once again we emphasise that one can pass from distributions to forms locally (see Subsection \ref{ssec:localisation}), so the $h$-principle for (even-)contact structures is equivalent to the $h$-principle for $\SR^\cont$. We study its ampleness next.

\subsubsection{Checking ampleness}

Fix a coordinate direction $\lambda$ and a formal solution $F \in \SR^\cont$, both based at a point $p \in M$. The two together define the principal subspace
\[ \Pr_{\lambda,F} := \{ F + (0,\lambda \otimes \beta) \,\mid\, \beta \in T_p^*M \} \]
which maps, using the extended symbol of $d$, to:
\[ d\Pr_{\lambda,F} := \{ (j^0F, dF + \lambda \wedge \beta) \,\mid\, \beta \in T_p^*M \} \subset T_p^*M \oplus\wedge^2 T_p^*M. \]
We write $\xi = \ker(j^0F)$.

A point $\widetilde F = F + (0,\lambda \otimes \beta) \in \Pr_{\lambda,F}$ is formally (even-)contact if and only if:
\[ \Gamma(\widetilde F) = n \,  j^0F \wedge (dF)^{\lfloor \frac{n-1}{2} \rfloor - 1} \wedge (dF + \lambda \wedge \beta) \neq 0.\]
Since $F$ was a formal solution, there are four possible situations:
\begin{itemize}
\item[1.] $\lambda$ is proportional to $j^0F$. 
\item[2.] $\lambda$ is not proportional to $j^0F$ and $n$ is odd. Then $dF$ has a 1-dimensional kernel $L$ when restricted to $\xi \cap \ker(\lambda)$.
\item[3a.] $\lambda$ is not proportional to $j^0F$, $n$ is even, and $\ker(\lambda)$ contains the $1$-dimensional kernel of $dF|_\xi$. Then $dF$ has a 2-dimensional kernel $L'$ when restricted to $\xi \cap \ker(\lambda)$.
\item[3b.] $\lambda$ is not proportional to $j^0F$, $n$ is even, and $\ker(\lambda)$ is transverse to the $1$-dimensional kernel of $dF|_\xi$. Then $dF$ is non-degenerate when restricted to $\xi \cap \ker(\lambda)$.
\end{itemize}

Situation (1) means that $\Gamma(\widetilde F) = \Gamma(F) \neq 0$, so ampleness holds trivially.  

Situation (2) corresponds to the (non-trivial) contact case. Then, $\SR^\cont \cap \Pr_{\lambda,F}$ corresponds to those choices of $\beta$ that evaluate non-zero on $L$. The complement is then a hyperplane, proving that:
\begin{lemma} \label{lem:contact}
The differential relation describing contact structures is not ample. In fact, along any given principal subspace, $\SR^\cont \cap \Pr_{\lambda,F}$ is either trivially ample or not ample.
\end{lemma}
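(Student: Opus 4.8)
The plan is to exploit the fact that, for contact structures, the Pfaffian $\Gamma$ restricted to a principal subspace is an \emph{affine} map with a non-vanishing constant term, so that the locus of $\Pr_{\lambda,F}$ lying outside $\SR^\cont$ is either empty or an affine hyperplane missing $F$. Concretely, fix $p\in M$, a formal solution $F\in\SR^\cont$ over $p$, a codirection $\lambda$, and set $m:=\lfloor\tfrac{n-1}{2}\rfloor$, $\xi:=\ker(j^0F)$; in the contact case $n=2m+1$. Using Lemma~\ref{lem:principalImage} I parametrise $\Pr_{\lambda,F}$ by $\beta\in T_p^*M\mapsto F+(0,\lambda\otimes\beta)$, whose differential is $dF+\lambda\wedge\beta$. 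Since $(\lambda\wedge\beta)\wedge(\lambda\wedge\beta)=0$, the binomial expansion truncates and
\[ \Gamma\!\left(F+(0,\lambda\otimes\beta)\right)=\Gamma(F)+m\,j^0F\wedge(dF)^{m-1}\wedge\lambda\wedge\beta, \]
which lives in the line $\wedge^{n}T_p^*M$. Choosing an identification $\wedge^{n}T_p^*M\cong\R$, the right-hand side reads $\Gamma(F)+\ell(\beta)$ with $\Gamma(F)\ne 0$ (because $F$ is a formal solution) and $\ell$ linear in $\beta$.

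The next step is the dichotomy. If $\ell\equiv 0$, then $\Gamma$ is constant and non-zero along $\Pr_{\lambda,F}$, hence $\SR^\cont_{\lambda,F}=\Pr_{\lambda,F}$ is trivially ample; this is situation (1), since a short computation using the non-degeneracy of $dF|_\xi$ shows $\ell\equiv 0$ exactly when $\lambda\in\xi^\bot=\langle j^0F\rangle$ (this identification is convenient but not needed for the statement). If $\ell\not\equiv 0$ — situation (2) — then, under the affine identification $\Pr_{\lambda,F}\cong T_p^*M$, the set $\SR^\cont_{\lambda,F}$ is the complement of the affine hyperplane $\{\ell=-\Gamma(F)\}$, which does not contain $F$ (i.e. does not contain $\beta=0$, as $\ell(0)=0\ne-\Gamma(F)$). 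Thus $\SR^\cont_{\lambda,F}$ is a disjoint union of two open half-spaces, the path-component of $F$ being exactly one of them, whose convex hull is itself — a proper subset of $\Pr_{\lambda,F}\cong\R^{n}$. Hence $\SR^\cont_{\lambda,F}$ is not ample, and since both half-spaces are non-empty and proper it is not trivially ample either. This establishes the second assertion of the statement. (If one prefers the description via the line $L$ from case (2): the hyperplane $\{\ell=-\Gamma(F)\}$ is precisely the set of $\beta$ taking a prescribed value on $L$, recovering the phrasing used just before the statement.)

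Finally, to conclude that $\SR^\cont$ itself is not ample I only need a single principal subspace of the second type. I would localise to a Darboux chart as in Subsection~\ref{ssec:localisation}, so that at $p$ one has $j^0F=dz$ and $dF=\sum_{i=1}^{m}dx_i\wedge dy_i$, and take $\lambda=dx_1$ (not proportional to $j^0F$). Then only the summand of $(dF)^{m-1}$ indexed by $\{2,\dots,m\}$ survives the wedge, giving
\[ m\,j^0F\wedge(dF)^{m-1}\wedge\lambda\wedge dy_1 \;=\; \pm\,m!\;dz\wedge dx_1\wedge dy_1\wedge\cdots\wedge dx_m\wedge dy_m\;\ne 0, \]
so $\ell\not\equiv 0$ and the previous paragraph applies.

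I do not anticipate a genuine obstacle here: the whole argument rests on the single structural observation that $(\lambda\wedge\beta)\wedge(\lambda\wedge\beta)=0$ makes $\Gamma$ affine along principal subspaces with a non-zero constant term, which forces its non-vanishing locus to be a half-space. The only point requiring a moment's care is the bookkeeping in the wedge product $j^0F\wedge(dF)^{m-1}\wedge\lambda\wedge\beta$ — a one-line combinatorial check in the Darboux model — and, if one wants it, the identification of the $\ell\equiv 0$ case with situation (1); everything else is formal.
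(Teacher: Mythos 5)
Your proposal is correct and follows essentially the same route as the paper: expand $\Gamma$ along the principal subspace using $(\lambda\wedge\beta)^2=0$ to see it is affine in $\beta$ with non-zero constant term $\Gamma(F)$, so the bad locus is either empty (when $\lambda$ is proportional to $j^0F$, giving trivial ampleness) or an affine hyperplane whose complement consists of two convex half-spaces, hence not ample; the paper phrases the hyperplane as the set of $\beta$ vanishing on the kernel line of $dF|_{\xi\cap\ker\lambda}$, which is the same condition as your $\ell(\beta)=-\Gamma(F)$. Your explicit Darboux computation exhibiting one non-ample subspace is a harmless addition the paper leaves implicit.
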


Situations (3a) and (3b) correspond to the (non-trivial) even-contact case. In (3a), $\SR^\cont_{\lambda,F}$ corresponds to those choices of $\beta$ that evaluate non-zero on $L$. Its complement is codimension-$2$. In (3b), ampleness holds trivially since $\SR^\cont_{\lambda,F} = \Pr_{\lambda,F}$. We have shown:
\begin{lemma} \label{lem:evenContact}
The differential relation describing even-contact structures has thin complement.
\end{lemma}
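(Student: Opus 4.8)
The plan is to verify, directly and pointwise, that the complement of $\SR^\cont$ meets every principal subspace in a thin set; by Example~\ref{def:thinSet} this yields ampleness in all principal directions, and hence, via Theorem~\ref{thm:convexIntegration1}, the $h$-principle. Since we are in the even-contact setting, $n$ is even, so for a fixed formal solution $F\in\SR^\cont$ over $p$ and a fixed codirection $\lambda\in T_p^*M$ only cases (1), (3a) and (3b) of the analysis above can occur. Throughout, I would use that $j^0\widetilde F=j^0F$ for every $\widetilde F\in\Pr_{\lambda,F}$, so the hyperplane $\xi=\ker(j^0F)$ stays fixed and only the restriction $d\widetilde F|_\xi=dF|_\xi+(\lambda\wedge\beta)|_\xi$ varies as $\widetilde F=F+(0,\lambda\otimes\beta)$ ranges over the principal subspace.

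The two trivial cases dispose of themselves. In case (1), $\lambda$ is proportional to $j^0F$, so $(\lambda\wedge\beta)|_\xi=0$ and $\Gamma(\widetilde F)=\Gamma(F)\neq0$ everywhere on $\Pr_{\lambda,F}$; the complement is empty. In case (3b), $\ker(\lambda)$ is transverse to the radical line of $dF|_\xi$, so $\eta:=\xi\cap\ker(\lambda)$ is a hyperplane of $\xi$ transverse to that radical, and hence $dF|_\eta$ is a nondegenerate $2$-form on the even-dimensional $\eta$. Because $(\lambda\wedge\beta)|_\eta=0$, this nondegeneracy persists for all $\widetilde F\in\Pr_{\lambda,F}$, and a rank count on the odd-dimensional $\xi$ (its radical is transverse to $\eta$, hence a line) shows $\Gamma(\widetilde F)\neq0$ throughout; again the complement is empty.

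The substantive case is (3a): $\lambda$ is not proportional to $j^0F$, and $\ker(\lambda)$ contains the radical $K$ of $dF|_\xi$. Here I would pass to $\xi/K$, on which $dF$ descends to a nondegenerate form, and observe that the image $\eta/K$ is an odd-dimensional hyperplane, hence carries a $1$-dimensional radical; pulling back, $dF|_\eta$ has a $2$-dimensional radical $L'\supset K$, which again persists for every $\widetilde F\in\Pr_{\lambda,F}$ since $d\widetilde F|_\eta=dF|_\eta$. Picking $u\in\xi$ with $\lambda(u)=1$, a short computation shows that $w\in L'$ lies in the radical of $d\widetilde F|_\xi$ exactly when $\beta(w)=dF(w,u)$; thus the radical of $d\widetilde F|_\xi$ has dimension $1$ (so $\widetilde F\in\SR^\cont$) precisely when $\beta|_{L'}$ differs from the fixed covector $c:=dF(\cdot,u)|_{L'}\in(L')^{*}$, and it jumps to dimension $\geq 3$ otherwise. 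Hence $\Gamma^{-1}(0)\cap\Pr_{\lambda,F}$ is the affine subspace $\{\beta:\beta|_{L'}=c\}$ of codimension $2$, which is thin. Collecting the three cases, the complement of $\SR^\cont$ meets every principal subspace either emptily or in a codimension-$2$ affine subspace, which is the assertion. The one place demanding care is this last piece of bookkeeping — tracking the radicals of $dF$ across the nested spaces $K\subset\eta\subset\xi$ and confirming the dimension jump — but it is a routine symplectic linear-algebra computation once one reduces modulo $K$, and it is essentially McDuff's argument recast in the present language.
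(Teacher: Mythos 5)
Your proof is correct and follows essentially the same route as the paper: the same pointwise case analysis (1), (3a), (3b) of the principal subspaces via the Pfaffian, with the only nontrivial locus arising in case (3a) as a codimension-$2$ affine subspace, hence thin, so Theorem \ref{thm:convexIntegration1} applies. Your explicit identification of that locus as $\{\beta : \beta|_{L'} = dF(\cdot,u)|_{L'}\}$ is in fact slightly more careful than the paper's phrasing (which ignores the affine shift), but the codimension count and the conclusion are identical.
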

The $h$-principle for even-contact structures follows then from classic convex integration Theorem \ref{thm:convexIntegration1}.

\section{Maximal non-involutivity} \label{sec:maximalNonInvolutivity}

Fix two positive integers $k < n$. We want to define maximal non-involutivity for step-$2$ distributions of rank $k$ in dimension $n$. Just like in Subsection \ref{ssec:contact}, we will use the Pfaffian map to capture this in an algebraic manner (Subsection \ref{ssec:Pfaffian}). In Subsection \ref{ssec:4distributions} we will particularise the discussion to the rank-4 case. Much of what we explain in this Section we learnt from the book by R. Montgomery \cite{Mon02}.

\subsection{The Pfaffian and degenerate differential forms} \label{ssec:Pfaffian}

Let $\xi$ be a distribution of rank $k$ in an $n$-dimensional manifold $M$. We measure the non-involutivity of $\xi$ using the curvature $\Omega^\xi$. Recall that $\Omega^\xi$ is a $2$-form with entries in $\xi$ and values in $TM/\xi$. Dually, we think of it as a $\xi^\bot$-family of 2-forms. As in Subsection \ref{ssec:contact}, we can take the highest potentially-non-trivial power of the curvatures using the map:
\begin{equation*}
\begin{array}{rccl}
		p: & \wedge^2\xi^* & \longrightarrow &  \wedge^{2\lfloor \frac{k}{2} \rfloor}\xi \\
		   & \omega        & \longmapsto     &\omega^{\lfloor \frac{k}{2} \rfloor}.
\end{array}
\end{equation*}

That is, a curvature gets mapped to a top-form when the rank $k$ is even, and to a codimension-1 form when $k$ is odd.
\begin{definition}
A $2$-form $\omega \in \wedge^2\xi^*$ is \textbf{degenerate} if $p(\omega)=0$.
\end{definition}
The map $p$ is algebraic and of degree $\lfloor \frac{k}{2} \rfloor$. Its zero level set $\SC$ consists of the degenerate $2$-forms. It has codimension $1$ inside $\wedge^2 \xi^*$ if $k$ is even and codimension $k$ if $k$ is odd. Then we define:
\begin{definition} \label{def:pfaffian}
The composition $p \circ d^\xi$ is called the \textbf{Pfaffian}:
\begin{equation*}\label{eq:pfaffian}
		\begin{array}{rcclcl}
		\Pf: & \xi^\bot & \longrightarrow &  \wedge^2 \xi^* & \longrightarrow & \wedge^{2\lfloor \frac{k}{2} \rfloor}\xi^* \\
		    & \alpha & \longmapsto     & d\alpha|_\xi    & \longmapsto     &\left(d\alpha|_{\xi}\right)^{\lfloor \frac{k}{2} \rfloor}.
		\end{array}
\end{equation*}
\end{definition}
We are interested in the Diff-invariant non-degeneracy condition:
\begin{definition}
Let $k$ be even. A step-$2$ distribution $\xi$ is \textbf{maximally non-involutive} if $\Pf$ intersects $\SC$ transversely.
\end{definition}
Maximal non-involutivity may be described similarly (but differently) for $k$ odd. This is unnecessary for the purposes of this article. We now focus on rank-$4$ distributions.

\subsection{$4$-distributions}\label{ssec:4distributions}

Since $k=4$ is even, the target of $p$ is a $1$-dimensional line bundle; namely, the determinant of $\xi$. Assuming orientability of $\xi$, which we can do locally, we can fix a volume form on $\xi$ to trivialise it. This allows us to see $p$ as a quadratic form on $\wedge^2 \xi^*$ and study its signature. The signature does not depend on the choice of volume form:
\begin{lemma}
The real quadratic form $p$ has signature $(3,3)$.
\end{lemma}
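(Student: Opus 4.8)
The plan is to pick a concrete $4$-dimensional vector space, say $\xi^*\cong\mathbb R^4$ with basis $e_1,e_2,e_3,e_4$, so that $\wedge^2\xi^*$ is $6$-dimensional with basis $\{e_{ij}:=e_i\wedge e_j\}_{i<j}$, and to compute the quadratic form $p(\omega)=\omega\wedge\omega\in\wedge^4\xi^*$ in these coordinates after trivialising $\wedge^4\xi^*$ by the volume form $e_1\wedge e_2\wedge e_3\wedge e_4$. Writing $\omega=\sum_{i<j}a_{ij}e_{ij}$, a direct expansion gives
\[
\tfrac12\,\omega\wedge\omega=\bigl(a_{12}a_{34}-a_{13}a_{24}+a_{14}a_{23}\bigr)\,e_1\wedge e_2\wedge e_3\wedge e_4,
\]
so $p$ is (up to the harmless factor $2$) the quadratic form $Q(a)=a_{12}a_{34}-a_{13}a_{24}+a_{14}a_{23}$ on $\mathbb R^6$.

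**Computing the signature.**
Now I would just diagonalise $Q$. It is a sum of three hyperbolic planes: grouping the coordinates into the pairs $(a_{12},a_{34})$, $(a_{13},a_{24})$, $(a_{14},a_{23})$, each summand is of the form $\pm xy$, and the change of variables $x=u+v$, $y=u-v$ turns $xy$ into $u^2-v^2$. Hence $Q$ is equivalent to $(u_1^2-v_1^2)-(u_2^2-v_2^2)+(u_3^2-v_3^2)$, which has three $+1$'s and three $-1$'s, i.e. signature $(3,3)$. (The overall sign flips coming from the $-$ in the middle term only permute which coordinate is ``$+$'' and which is ``$-$'', so the signature is unaffected.)

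**Independence of the volume form.**
Finally I would note that rescaling the chosen volume form on $\xi$ by a nonzero scalar $c$ rescales $p$ by $c^{-1}$ (or $c$, depending on convention), and since $\xi$ is $4$-dimensional with $\det\xi$ a single line, any two trivialisations differ by such a scalar. Multiplying a quadratic form by a nonzero real leaves its signature unchanged up to simultaneously swapping the two signs; but here $\dim\xi=4$ is even so $p$ has even degree $\lfloor k/2\rfloor=2$ and the form $p(-\,\cdot\,)=p$ is genuinely even, so multiplication by $c$ does not even swap signs when $c>0$, and when $c<0$ one checks $(3,3)$ is swap-invariant anyway. Hence the signature $(3,3)$ is well defined, completing the proof.

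**Expected obstacle.**
There is no real obstacle: the only point requiring a little care is bookkeeping the signs in the wedge expansion (the middle term carries a minus), and confirming that the parity/rescaling discussion genuinely makes the signature canonical. The computation is short and the ``$(3,3)$'' answer is exactly what one expects from the identification of $\wedge^2\mathbb R^4$ with the quadratic space underlying $\mathfrak{so}(3,3)\cong\mathfrak{sl}(4,\mathbb R)$, which is the conceptual reason the hyperbolic/elliptic dichotomy appears here.
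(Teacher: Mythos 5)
Your proof is correct and is essentially the paper's argument: the paper diagonalises $p$ in the self-dual/anti-self-dual basis $\beta_i\wedge\beta_j\pm\beta_k\wedge\beta_l$, which is exactly the change of variables $x=u+v$, $y=u-v$ you apply to each of the three hyperbolic pairs $(a_{12},a_{34})$, $(a_{13},a_{24})$, $(a_{14},a_{23})$. Your sign bookkeeping in the expansion of $\omega\wedge\omega$ and the remark on independence of the volume form are both fine, so nothing is missing.
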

\begin{proof}
Take a local frame $\xi^* = \langle \beta_1, \beta_2, \beta_3, \beta_4\rangle$ compatible with the chosen orientation. Define now the space of self-dual forms $\bigwedge^{+}(\xi^*)$ and the space of anti self-dual forms $\bigwedge^{+}(\xi^*)$ as follows:
\[ \bigwedge^{+}(\xi^*) = \langle a_1 = \beta_1\wedge\beta_2+\beta_3\wedge\beta_4,\,
                                  a_2 = \beta_1\wedge\beta_3+\beta_4\wedge\beta_2,\,
																	a_3 = \beta_1\wedge\beta_4+\beta_2\wedge\beta_3 \rangle \subset \wedge^2\xi^*. \]
\[ \bigwedge^{-}(\xi^*) = \langle b_1 = \beta_1\wedge\beta_2-\beta_3\wedge\beta_4,\,
																	b_2 = \beta_1\wedge\beta_3-\beta_4\wedge\beta_2,\,
																	b_3 = \beta_1\wedge\beta_4-\beta_2\wedge\beta_3 \rangle \subset \wedge^2\xi^*.\]
A straightforward computation shows that the matrix associated to the bilinear form $p$ with respect to the basis $\langle a_1, a_2, a_3, b_1, b_2, b_3\rangle$ consists of an upper-left $\Id_{3\times 3}$ identity block and another $-\Id_{3\times 3}$ in the right-down corner. I.e. $p$ diagonalises with the claimed signature.
\end{proof}

Since $\xi$ is bracket--generating, the exterior differential $d^\xi$ maps $\xi^\bot$ injectively into $\wedge^2 \xi^*$. We can then talk about the signature of $p$ restricted to the image. This is equivalent to:
\begin{definition}
The signature of a distribution $\xi$ is the signature of the quadratic form $\Pf: \xi^\bot \to \wedge^2(\xi^*) \cong \R$.
\end{definition}
Two remarks are in order. First: the signature is well-defined only once a volume form on $\xi$ has been chosen. Otherwise, we cannot distinguish the signatures $(i,j,k)$ and $(j,i,k)$. Furthermore, the signature of $\xi$ may vary from point to point.

We focus on the case of $(4,6)-$distributions:
\begin{definition}
A bracket--generating 4-distribution in a $6$-dimensional manifold $M$ is said to be \textbf{maximally non-involutive} if, at all points, any of the following equivalent conditions holds:
\begin{itemize}
\item The map $d^\xi$ is transverse to the locus of degenerate $2$-forms.
\item The Pfaffian is transverse to zero.
\item The Pfaffian is non-degenerate as a quadratic form.
\end{itemize}
\end{definition}

Furthermore, we can distinguish two different types of $(4,6)$-distributions:
\begin{definition} \label{def:hyperbolic}
A maximally non-involutive $(4,6)$-distribution is
\begin{itemize}
\item[i.] \textbf{elliptic} or \textbf{fat} if the signature is definite.
\item[ii.] \textbf{hyperbolic} if the signature is mixed.
\end{itemize}
\end{definition}
Since $p$ has signature $(3,3)$ and $\xi^\bot$ has dimension $2$, there are, up to changing the orientation, four possible signatures for $\xi$: $(0,0,2), (1,0,1), (1,1,0)$ and $(2,0,0)$. Only the last two cases are maximally non-involutive. They correspond, respectively, to the hyperbolic and elliptic cases.

\subsection{Formal maximally non-involutive $4$-distributions} \label{ssec:formal4distributions}

As in Section \ref{sec:flexBG}, it is more convenient not to work with the distribution itself but with its annihilating forms. We define:
\begin{definition} \label{def:formal4distributions} 
A formal datum $F = (F_i)_{i=1,2} \in J^1(T^*M \oplus T^*M)$ is said to be \textbf{formally elliptic} if:
\begin{itemize}
\item the $1$-forms $j^0F_i$ are linearly independent and thus span a $4$-plane $\xi$.
\item the $2$-forms $dF_i|_\xi$ are linearly independent and span a $2$-plane of definite signature.
\end{itemize}

The formal datum $F$ is \textbf{formally hyperbolic} if, instead:
\begin{itemize}
\item the $1$-forms $j^0F_i$ are linearly independent and thus span a $4$-plane $\xi$.
\item the $2$-forms $dF_i|_\xi$ are linearly independent and span a $2$-plane of mixed signature.
\end{itemize}
\end{definition}
A first jet of distribution is elliptic/hyperbolic if and only if any pair of forms $(F_i)_{i=1,2}$ representing it is elliptic/hyperbolic.

We write $\SR^\ellip$ for the differential relation defining elliptic $(4,6)$ distributions. Similarly, $\SR^\hyp$ denotes the differential relation consisting of formal hyperbolic $(4,6)$-distributions. Their counterparts at the level of forms are denoted by $\SS^\ellip$ and $\SS^\hyp$, respectively. As in Subsection \ref{ssec:localisation}, one can use holonomic approximation to reduce to the case of a ball, proving that:
\begin{lemma} \label{lem:reductionToFormsHyp}
The full $C^0$-close $h$-principle for $\SR^\hyp$ reduces to the full $C^0$-close $h$-principle for $\SS^\hyp$.
\end{lemma}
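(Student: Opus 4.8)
The plan is to follow the same localisation-to-a-ball strategy that was used in Section \ref{sec:flexBG} (see Lemma \ref{lem:localisation} and Subsection \ref{ssec:localisation}), adapting it to the relation $\SR^\hyp$. The point is that a formal solution $F \in \Dist^f(M)$ lying in $\SR^\hyp$ defines a $4$-plane field $\xi = j^0F$, but the rank-$2$ annihilator bundle $\xi^\bot$ need not be trivial, so there may be no global lift of $F$ to a pair of $1$-forms, i.e. to a section of $\SS^\hyp \subset J^1(T^*M \oplus T^*M)$. The strategy is to first replace $F$ by a formal solution that \emph{is} holonomic on the codimension-$1$ skeleton of a triangulation, so that on the complementary top-dimensional cells — which are balls, hence contractible — the annihilator bundle is trivial and a lift exists. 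Convex integration / avoidance is then carried out cell by cell.

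First I would triangulate $M$ (or $M \times K$, in the parametric and relative setting) and apply the holonomic approximation theorem \cite[Chapter 3]{EM}, exactly as in the proof of Lemma \ref{lem:localisation}, along the codimension-$1$ skeleton $\ST$. This homotopes the given formal solution $F$ (or $K$-family of formal solutions, relative to a closed set where it is already holonomic) to a new formal solution $G$ of $\SR^\hyp$ that is genuinely holonomic on $\Op(\ST)$ and $C^0$-close to $F$ everywhere; because $\SR^\hyp$ is open, $C^0$-closeness guarantees $G$ is still a formal solution. Next I would restrict attention to each top-dimensional cell $\Delta$. Over $\Delta$ the plane field $j^0G$ is trivial together with its annihilator, so we may choose a frame $(G_1, G_2)$ of $(j^0G)^\bot$ over $\Delta$; the pair $(G_1, G_2)$ assembles into a section of $J^1(T^*M \oplus T^*M)$ over $\Delta$ lifting $G|_\Delta$, and since $G|_\Delta$ solves $\SR^\hyp$, this lift solves $\SS^\hyp$ (using the equivalence recorded after Definition \ref{def:formal4distributions}: a $1$-jet of distribution is hyperbolic iff any representing pair of forms is). Invoking the $h$-principle for $\SS^\hyp$ over the ball $\Delta$, relative to $\partial\Delta$ (where $G$ is already holonomic), produces a genuine solution of $\SS^\hyp$ over $\Delta$ agreeing with $G$ near $\partial\Delta$; these patch to a global solution of $\SR^\hyp$ homotopic to the original $F$.

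The main obstacle — and the reason the statement is phrased as a reduction rather than a triviality — is making the cell-by-cell argument coherent and relative: one must ensure the holonomic approximation and the subsequent convex integration on each $\Delta$ can be performed \emph{rel} $\partial\Delta$ and rel the already-holonomic locus, so that solutions on adjacent cells glue without reintroducing failures of the relation. This is precisely the kind of delicate bookkeeping alluded to in Subsection \ref{ssec:localisation} (``it seems more delicate for ampleness up to avoidance''), but it is handled by the standard machinery of holonomic approximation on skeleta (see \cite{Gav}) together with the relative, parametric form of the $h$-principle for $\SS^\hyp$ that we will establish. Everything else — the triviality of $(j^0G)^\bot$ over a ball, the affine fibration $\SS^\hyp \to \SR^\hyp$ analogous to Lemma \ref{lem:bgDist}, and the equivalence of the hyperbolicity condition at the level of distributions and of forms — is routine and parallels Section \ref{sec:flexBG} verbatim.
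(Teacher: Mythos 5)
Your proposal is correct and follows essentially the same route as the paper: the paper proves this lemma exactly by invoking the localisation argument of Subsection \ref{ssec:localisation} (holonomic approximation along the codimension-$1$ skeleton of a triangulation, then lifting to a pair of annihilating $1$-forms over the contractible top cells, where the rank-$2$ annihilator is trivial, and applying the relative $h$-principle for $\SS^\hyp$ cell by cell). Your write-up simply spells out the details the paper leaves implicit.
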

The upcoming final Section of the paper deals with the construction of an avoidance template for $\SS^\hyp$.

\section{h-Principle for hyperbolic $(4,6)$-distributions}\label{sec:46}

In this Section we tackle the proof of Theorem \ref{thm:46}. According to Lemma \ref{lem:reductionToFormsHyp}, the $h$-principle for $\SR^\hyp$ will follow from the $h$-principle for formally hyperbolic pairs of $1$-forms $\SS^\hyp \subset J^1(T^*M \oplus T^*M)$. Applying Theorem \ref{thm:main} we see that we just need to construct an avoidance template for $\SS^\hyp$. 

\subsection{First avoidance step}

As we advanced in Subsection \ref{ssec:resultsConvexIntegration}, $\SS^\hyp$ (and thus $\SR^\hyp$) does not intersect all principal subspaces in ample sets, so avoidance will act non-trivially. Before we provide a precise statement, we need to introduce some notation.

\subsubsection{The singularity associated to non-ampleness} \label{sssec:condition1}

We define $\Sigma^{(1)} \subset \SS^\hyp \times_M T^*M$ as the subspace of pairs $(F,\lambda)$ such that 
\begin{equation}\label{eq:condition1}
j^0F_1 \wedge j^0 F_2 \wedge \lambda \wedge dF_1
\quad\text{and}\quad j^0F_1
\wedge j^0 F_2 \wedge \lambda \wedge dF_2
\quad\text{are linearly dependent.}
\end{equation}
We write $\Sigma^{(1)}(\lambda) \subset \SS^\hyp$ for the subset of those $F$ such that $(F,\lambda) \in \Sigma^{(1)}$. Similarly, $\Sigma^{(1)}(F) \subset T^*M$ denotes those $\lambda$ such that $(F,\lambda) \in \Sigma^{(1)}$.

\begin{lemma} \label{lem:algebraic1}
The following statements hold:
\begin{itemize}
\item $\Sigma^{(1)}$ is a closed subset of $\SS^\hyp \times_M T^*M$.
\item All the fibres $(\Sigma^{(1)}_p)_{p \in M}$ are isomorphic algebraic subvarieties.
\item Fix $F \in \SS^\hyp$ lying over $p \in M$. The subspace $\Sigma^{(1)}(F)$ has positive codimension in $T_p^*M$.
\end{itemize}
\end{lemma}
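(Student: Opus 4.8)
\textbf{Proof plan for Lemma \ref{lem:algebraic1}.}

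The key observation is that every defining condition for $\Sigma^{(1)}$ is polynomial in the components of the $1$-jet $F$ and the covector $\lambda$, and that all of the constructions involved ($j^0F_i$, the exterior differential symbol $dF_i$, wedge products, and the linear-dependence condition) are equivariant under $\GL$ acting on $T_pM$. The plan is to exploit this in three steps. First, I would set up local coordinates: over a chart, identify $J^1(T^*M\oplus T^*M)$ with a product of vector spaces as in Subsubsection \ref{sssec:jetsCoords}, so that $j^0F_i$, $dF_i$ and $\lambda$ all become tuples of real numbers depending polynomially on the jet coordinates. The expressions in \eqref{eq:condition1} are then $5$-forms on a $6$-manifold, i.e. elements of a $6$-dimensional space; linear dependence of two such vectors is the vanishing of all $2\times 2$ minors of the $2\times 6$ matrix formed by them, hence a finite system of polynomial equations. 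This immediately gives closedness of $\Sigma^{(1)}$ inside $\SS^\hyp\times_M T^*M$ (it is the intersection of $\SS^\hyp \times_M T^*M$ with an ambient algebraic set) and exhibits each fibre $\Sigma^{(1)}_p$ as (the intersection of $\SS^\hyp_p\times T_p^*M$ with) an algebraic subvariety; the isomorphism between different fibres follows because any linear identification $T_pM\cong T_qM$ transports the defining equations to one another, the condition being $\GL$-natural.

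The substantive point is the third bullet: for fixed $F\in\SS^\hyp$ over $p$, the locus $\Sigma^{(1)}(F)\subset T_p^*M$ has positive codimension, i.e. it is a proper algebraic subvariety, equivalently it is not all of $T_p^*M$. Since $\Sigma^{(1)}(F)$ is cut out by polynomials in $\lambda$, it suffices to exhibit a single $\lambda_0\in T_p^*M$ for which the two $5$-forms in \eqref{eq:condition1} are linearly independent. Here I would use that $F$ is formally hyperbolic: writing $\xi=\ker(j^0F_1)\cap\ker(j^0F_2)$, the restricted $2$-forms $dF_1|_\xi, dF_2|_\xi$ span a $2$-plane of mixed signature in $\wedge^2\xi^*$. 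Concretely, pick a coframe $\beta_1,\dots,\beta_4$ of $\xi^*$ in which $dF_1|_\xi$ and $dF_2|_\xi$ are put in a normal form reflecting the hyperbolic signature (this uses the signature computation of Subsection \ref{ssec:4distributions}); the two $5$-forms $j^0F_1\wedge j^0F_2\wedge\lambda\wedge dF_i$, for $\lambda$ chosen to complete $\{j^0F_1, j^0F_2\}$ together with an appropriate covector from $\xi^*$ to a coframe of $T_p^*M$, then become (up to a nonzero scalar) $j^0F_1\wedge j^0F_2\wedge\lambda\wedge (dF_i|_\xi)$, and linear independence of these is equivalent to linear independence of $\lambda\wedge dF_1|_\xi$ and $\lambda\wedge dF_2|_\xi$ as $3$-forms on the $3$-dimensional complement of $\ker\lambda$ inside $\xi$ — which one arranges directly from the normal form. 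Hence $\lambda_0\notin\Sigma^{(1)}(F)$, so $\Sigma^{(1)}(F)\neq T_p^*M$, and being algebraic it has positive codimension.

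The main obstacle I anticipate is bookkeeping in this last step: choosing $\lambda_0$ so that the wedge with $j^0F_1\wedge j^0F_2$ does not kill the relevant information, and verifying that in a hyperbolic normal form the two resulting forms really are independent. The cleanest route is probably to reduce, as sketched, to a purely linear-algebraic statement about a pencil of $2$-forms of mixed signature on $\xi$ wedged with a generic covector, and to check it on one explicit representative (e.g. $dF_1|_\xi=\beta_1\wedge\beta_2+\beta_3\wedge\beta_4$, $dF_2|_\xi=\beta_1\wedge\beta_2-\beta_3\wedge\beta_4$, and $\lambda=\beta_1^\vee$-dual direction), relying on $\GL$-invariance to know the generic covector behaves the same way. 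The other two bullets are essentially formal once the polynomial description is in place.
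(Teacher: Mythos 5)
Your overall strategy is the same as the paper's: the first two bullets are exactly the observation that the expressions in Equation~\ref{eq:condition1} are algebraic in their entries and that linear dependence is a closed algebraic condition (together with Diff-invariance of $\SS^\hyp$ to identify the fibres), and for the third bullet both you and the paper reduce to showing that, for fixed $F \in \SS^\hyp$ over $p$, the algebraic condition on $\lambda$ is not identically satisfied, so that $\Sigma^{(1)}(F)$ is a proper subvariety of $T_p^*M$. The paper gets the non-triviality more cheaply: since $F$ is formally hyperbolic, the $4$-forms $j^0F_1 \wedge j^0F_2 \wedge dF_1$ and $j^0F_1 \wedge j^0F_2 \wedge dF_2$ are linearly independent, and this is already enough; no normal form for the hyperbolic pencil is invoked.

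There is, however, a concrete problem with the witness you sketch. With $dF_1|_\xi = \beta_1\wedge\beta_2 + \beta_3\wedge\beta_4$, $dF_2|_\xi = \beta_1\wedge\beta_2 - \beta_3\wedge\beta_4$ and $\lambda$ restricting to $\beta_1$ on $\xi$, the restrictions of $dF_1$ and $dF_2$ to $\xi \cap \ker(\lambda)$ are $\beta_3\wedge\beta_4$ and $-\beta_3\wedge\beta_4$, which are proportional; equivalently, the two $5$-forms in Equation~\ref{eq:condition1} are linearly dependent, so this $\lambda$ lies in $\Sigma^{(1)}(F)$ and witnesses nothing. The appeal to $\GL$-invariance to conclude that ``the generic covector behaves the same way'' does not repair this: the whole point of the lemma is that $\Sigma^{(1)}(F)$ is a non-empty proper subvariety, so covectors genuinely behave differently, and invariance under the (small) stabiliser of $F$ gives no genericity for free. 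Also note that the correct pointwise reduction is to linear independence of the $2$-forms $dF_i|_{\xi\cap\ker(\lambda)}$ (two $3$-forms on a $3$-dimensional space are always proportional, so your phrasing of that equivalence needs care). The fix is easy: either exhibit an actual good covector in your normal form, e.g. $\lambda|_\xi = \beta_1 + \beta_4$, for which the restrictions to $\xi\cap\ker(\lambda)$ are linearly independent, or argue as the paper does via the linear independence of $j^0F_1 \wedge j^0F_2 \wedge dF_i$. With such a witness in hand, your conclusion that the algebraic set $\Sigma^{(1)}(F)$ has positive codimension is correct, and the first two bullets are fine as you argue them.
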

\begin{proof}
$\SS^\hyp$ is Diff-invariant, and therefore all its fibres are isomorphic to one another. Furthermore, the expressions in Equation \ref{eq:condition1} are algebraic on their entries and linear dependence is itself a closed algebraic condition. These statements prove the first two claims.

For the last claim, we observe that fixing $F$ yields still an algebraic equality for $\lambda$ that is non-trivial as long as $j^0F_1 \wedge j^0 F_2 \wedge dF_1$ and $j^0F_1 \wedge j^0 F_2 \wedge dF_2$ are linearly independent. This is indeed the case if $F \in \SS^\hyp$.
\end{proof}

Write $\xi \subset T_pM$ for the $4$-plane given as the kernel of $j^0F$. We note that the following are equivalent:
\begin{itemize}
\item $(F,\lambda) \in \Sigma^{(1)}$.
\item The $3$-forms $(\lambda \wedge dF)|_\xi$ are linearly-dependent.
\item $\lambda|_\xi$ is zero or the $2$-forms $dF|_{\xi \cap \ker(\lambda)}$ are linearly-dependent.
\end{itemize}

\subsubsection{Main statement}

We claim that $\Sigma^{(1)}$ is precisely the set to be removed in order to carry out the first avoidance step.
\begin{proposition} \label{prop:firstAvoidance}
Let $F \in \SS^\hyp$ and $\lambda \in T_p^*M$, both based at the same point $p \in M$. Write $\xi \subset T_pM$ for the $4$-plane defined by $j^0F$. Then:
\begin{itemize}
\item[i.]   $\SS^\hyp_{\lambda,F}$ is non-trivially ample if and only if $(F,\lambda) \notin \Sigma^{(1)}$.
\item[ii.]  $\SS^\hyp_{\lambda,F}$ is trivially ample if and only if $\lambda|_\xi = 0$. In particular, $(F,\lambda) \in \Sigma^{(1)}$.
\item[iii.] $\SS^\hyp_{\lambda,F}$ is not ample otherwise. I.e. if $(F,\lambda) \in \Sigma^{(1)}$ but $\lambda|_\xi \neq 0$.
\end{itemize}
\end{proposition}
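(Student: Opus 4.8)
The plan is to reduce the analysis of $\SS^\hyp_{\lambda,F}$ to a concrete linear-algebra question about pencils of $2$-forms on the $2$-plane $\xi\cap\ker(\lambda)$, and then split into the three cases listed. First I would recall from Section~\ref{ssec:symbol} that a general point of the principal subspace $\Pr_{\lambda,F}$ is $\widetilde F=(F_i+(0,\lambda\otimes\beta_i))_{i=1,2}$ with $\beta_i\in T_p^*M$, so that $d\widetilde F_i=dF_i+\lambda\wedge\beta_i$ and hence $d\widetilde F_i|_\xi=dF_i|_\xi+(\lambda\wedge\beta_i)|_\xi$. The condition defining $\SS^\hyp$ is that $j^0\widetilde F_1\wedge j^0\widetilde F_2$ is nonzero (unchanged along $\Pr_{\lambda,F}$, so automatically satisfied) and that $(d\widetilde F_i|_\xi)_{i=1,2}$ span a $2$-plane of mixed signature in $\wedge^2\xi^*$ with respect to the quadratic form $p$. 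So the whole question is: as $(\beta_1,\beta_2)$ ranges over $T_p^*M\times T_p^*M$, describe the set of pairs that land in the hyperbolic locus, and check whether its $F$-component is ample in the affine space $\Pr_{\lambda,F}$.

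\textbf{Case (ii): $\lambda|_\xi=0$.} Here $(\lambda\wedge\beta_i)|_\xi=\lambda|_\xi\wedge\beta_i|_\xi=0$, so $d\widetilde F_i|_\xi=dF_i|_\xi$ for \emph{every} $\widetilde F\in\Pr_{\lambda,F}$. Thus $\SS^\hyp_{\lambda,F}=\Pr_{\lambda,F}$ (since $F$ itself is a formal solution), which is trivial ampleness. One checks $(F,\lambda)\in\Sigma^{(1)}$ directly: $\lambda\wedge dF_i$ restricted to $\xi$ is $\lambda|_\xi\wedge(dF_i|_\xi)=0$, so the two $3$-forms are (trivially) linearly dependent — equivalently, $\lambda|_\xi=0$ is exactly the first alternative in the reformulation of $\Sigma^{(1)}$ given just before the Proposition. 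So (ii) is essentially a bookkeeping verification.

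\textbf{Cases (i) and (iii): $\lambda|_\xi\neq 0$.} Now choose an adapted basis $\beta_1',\dots,\beta_4'$ of $\xi^*$ with $\beta_1'=\lambda|_\xi$; wedging with $\lambda$ kills the $\beta_1'$-component, so the relevant moduli are the restrictions $\gamma_i:=\beta_i|_{\ker(\lambda)\cap\xi}$, an element of the $3$-dimensional space $(\xi\cap\ker\lambda)^*$, and $d\widetilde F_i|_\xi=dF_i|_\xi+\lambda|_\xi\wedge\gamma_i$. The key observation is that $\lambda|_\xi\wedge(\,\cdot\,)$ embeds $(\xi\cap\ker\lambda)^*$ as a $3$-dimensional \emph{isotropic-type} subspace $W\subset\wedge^2\xi^*$ (the forms divisible by $\lambda|_\xi$); a direct computation in the $a_j,b_j$ basis of the signature lemma shows $W$ is a null $3$-plane for $p$, or at least that $p|_W$ has rank $\le 1$. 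Writing $\omega_i:=dF_i|_\xi$, the pencil we must analyse is $\langle\omega_1+w_1,\ \omega_2+w_2\rangle$ with $w_i\in W$. Whether this pencil can be made mixed-signature — and whether the set of good $(w_1,w_2)$ is ample in $W\times W$ — depends precisely on the position of the ``restricted pencil'' $(\omega_i|_{\xi\cap\ker\lambda})_{i=1,2}$, i.e. on whether those two $3$-forms $(\lambda\wedge dF_i)|_\xi$ are independent or not; that independence is exactly the negation of $\Sigma^{(1)}$. When $(F,\lambda)\notin\Sigma^{(1)}$ one produces, for generic $w_1$, a $w_2$-family of hyperbolic pencils that traces out a non-ample-obstructing configuration — the convex hull computation reduces to the ampleness of $\SH^-$ (Example~\ref{ex:symmetricMatrices}), which is the outer cone $\{xy-z^2<0\}$ and is ample. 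When $(F,\lambda)\in\Sigma^{(1)}$ with $\lambda|_\xi\neq0$, the two restricted $3$-forms are proportional, forcing $\SS^\hyp_{\lambda,F}$ (in suitable coordinates on $\Pr_{\lambda,F}$) to look like the preimage of $\SH^-$ under a map that misses the third variable, so its image lies in a fixed half-space $\{xy-z^2<0,\ z\ \text{or sign of }x\ \text{constrained}\}$ — precisely one of the \emph{non}-ample components, and ampleness fails.

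\textbf{Main obstacle.} The delicate point is the convex-geometry computation in case (i): one must show that the set of pencils through a fixed non-degenerate configuration that are of mixed signature has convex hull equal to the whole ambient affine fibre. This is where I expect to spend the most effort — normalising the pencil $(\omega_1,\omega_2)$ to a standard form using the $GL(\xi)$-action (exploiting the classification of pencils of $2$-forms on a $4$-space / the Kronecker normal form), identifying the relevant $2$- or $3$-dimensional slice of $\wedge^2\xi^*\times\wedge^2\xi^*$ on which the hyperbolicity condition becomes literally $\{xy-z^2<0\}$, and then invoking Example~\ref{ex:symmetricMatrices}. The case split into (ii) versus (i)/(iii) and the reformulation of $\Sigma^{(1)}$ stated before the Proposition already do most of the structural work; the signature lemma of Subsection~\ref{ssec:4distributions} supplies the quadratic form $p$; what remains genuinely computational is tracking how $p$ restricts to the affine slices cut out along $\Pr_{\lambda,F}$ and verifying connectedness of the relevant component of $\SS^\hyp_{\lambda,F}$ so that ``ample'' (which refers to a single path-component) is correctly established.
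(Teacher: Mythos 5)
Your strategy matches the paper's proof essentially step for step: both pass to the Gram matrix of the Pfaffian form on the span of $d\widetilde F|_\xi$, viewed as an affine map from $\Pr_{\lambda,F}$ to the symmetric $2\times 2$ matrices, dispose of case (ii) by noting the map is constant, reduce case (i) to the ampleness of $\SH^-=\{xy-z^2<0\}$ from Example \ref{ex:symmetricMatrices} via surjectivity of that map, and in case (iii) observe that the image degenerates to a $2$-plane tangent to the cone, which $\SH^-$ meets in two half-planes whose preimages are non-ample. The only points left implicit in your sketch --- that the sub-case where both $dF_i|_{\xi\cap\ker(\lambda)}$ vanish is excluded because $F$ itself is hyperbolic, and the exact identification of the tangent plane as $\{g_{11}=0\}$ in case (iii), where your ``half-space'' phrasing is a bit loose --- are precisely the short computations the paper carries out, so no new ideas are needed.
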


Let us provide some geometric insight before we get into the proof. An element $\widetilde F$ in the principal subspace $\Pr_{\lambda,F}$ maps under the exterior differential to a pair $(d\widetilde F_i = dF_i + \lambda \wedge \beta_i)_{i=1,2}$, where the $\beta_i$ range over $T_p^*M$.

According to Definition \ref{def:formal4distributions}, the pair $dF|_\xi$ spans a plane of $2$-forms $L$. The restriction $p|_L$ is a bilinear form of mixed signature, due to hyperbolicity. Now consider the subspace 
\[ K := \{(\lambda \wedge \beta)|_\xi \,\mid\, \beta \in T_p^*M\} \subset \wedge^2\xi^*. \]
By definition, given any other element $\widetilde F \in \Pr_{\lambda,F}$, the pair $d\widetilde F|_\xi$ is obtained from $dF|_\xi$ by shifting each form $dF_i|_\xi$ along $K$. As such, when $K$ and $L$ are transverse, the pair $d\widetilde F|_\xi$ will span a plane $\widetilde L$ that is a graph over $L$ in the direction of $K$. If transversality fails, it may very well happen that the pair $d\widetilde F|_\xi$ is linearly dependent; however, we still think of its span $\widetilde L$ as a degenerate plane.

We further note that being a graph over $L$ in the direction of $K$ is an intrinsic characterisation of the planes $\widetilde L$ associated to elements $\widetilde F \in \Pr_{\lambda,F}$. I.e. the set of all such planes does not depend on the concrete basis $dF$ of $L$. We furthermore note that $\widetilde F \in \SS_{\lambda,F}$ if and only if $p|_{\widetilde L}$ is non-degenerate of mixed signature. This means that all relevant properties of $\widetilde F$ can be read from $\widetilde L$. We conclude that we are allowed to choose a convenient basis of $L$ in order to simplify our computations.

\begin{proof}[Proof of Proposition \ref{prop:firstAvoidance}]
Consider $\widetilde F \in \Pr_{\lambda,F}$ and restrict $p$ to its (possibly degenerate) span $\widetilde L$. This restriction can be represented by the $2$-by-$2$ matrix
\begin{align}\label{determinanteMetrica}
\begin{pmatrix}
g_{11} & g_{12} \\
g_{12} & g_{22} \\
\end{pmatrix}
\end{align} 
whose coefficients read:
\begin{align*}
g_{11}(\beta_1,\beta_2) & = (dF_1 + \lambda \wedge \beta_1)^2 
         = dF_1^2 + 2\lambda \wedge \beta_1\wedge dF_1, \\
g_{22}(\beta_1,\beta_2) & = (dF_2 + \lambda \wedge \beta_2)^2
				 = dF_2^2 + 2\lambda\wedge\beta_2\wedge dF_2, \\
g_{12}(\beta_1,\beta_2) & = (dF_1 + \lambda \wedge \beta_1)\wedge(dF_2 + \lambda \wedge \beta_2)
         = dF_1\wedge dF_2 + \lambda\wedge(\beta_2\wedge dF_1 + \beta_1\wedge dF_2).
\end{align*} 
Each of these expressions can be identified with a real function by fixing a volume form on $\xi$. We fix such a volume; all our upcoming computations do not depend on this auxiliary choice.

We have effectively defined an affine map that takes values in the space of symmetric $2$-by-$2$ matrices, which we think of as $\R^3$:
\begin{equation*}\label{eq:cuadratic}
\begin{array}{rccl}
	\Psi \colon & \xi^* \oplus \xi^* & \longrightarrow &  \R^3\\
		          & (\beta_1, \beta_2)  & \longmapsto & \left(g_{11}(\beta_1,\beta_2), g_{22}(\beta_1,\beta_2), g_{12}(\beta_1,\beta_2)\right)
\end{array}
\end{equation*}
It is now convenient to introduce the determinant, which we see as a quadratic form in the space of symmetric $2$-by-$2$ matrices:
\begin{equation*}\label{eq:det}
		\begin{array}{rccl}
		\det \colon & \R^3   & \longrightarrow &  \R \\
		            & (x,y,z)& \longmapsto     & xy-z^2.
		\end{array}
\end{equation*}
We saw back in Example \ref{ex:symmetricMatrices} that the signature of $\det$ was $(1,2,0)$, so its zero set $\SC$ is a cone. See Figure \ref{fig:cone}. The cone $\SC$ divides the space in $3$ components: The two positive ones we called $\SH^+$; they are convex and thus not ample. The third component $\SH^-$ is the exterior of the cone; it corresponds to the matrices with negative determinant and it is ample. In particular, hyperbolicity is equivalent to $\det \circ \Psi(-) < 0$ and thus equivalent to $\Psi(-) \in \SH^-$. Similarly, ellipticity is equivalent to $\det \circ \Psi(-) > 0$ and thus equivalent to $\Psi(-) \in \SH^+$. We will come back to ellipticity in Lemma \ref{lem:ellipticity} below; for now we focus on hyperbolicity. We want to check ampleness; there are various possibilities, depending on what the image of $\Psi$ is.

\begin{figure}[ht]
		\includegraphics[scale=0.81]{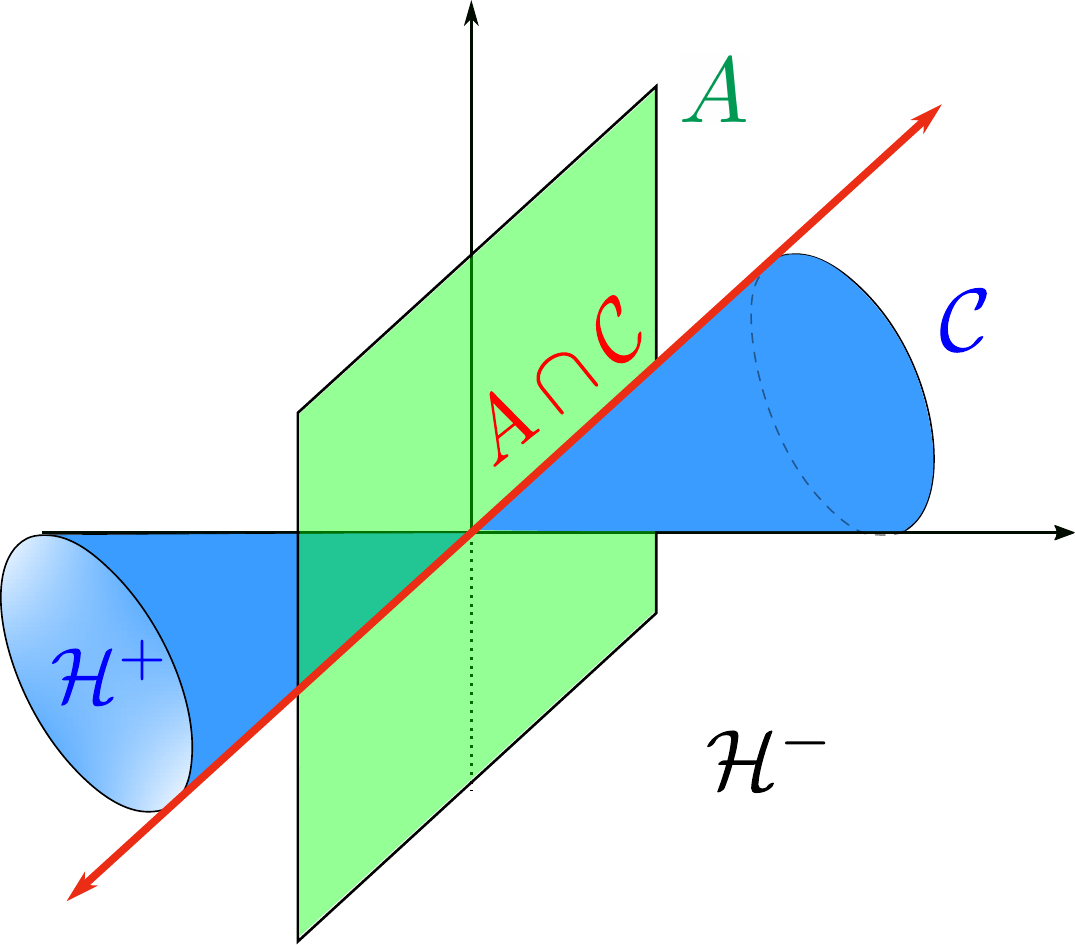}
		\centering
		\caption{The space of 2-by-2 symmetric matrices. In blue, the cone of degenerate matrices. The outside of the cone $\SH^-$ corresponds to matrices with negative determinant. It has a single ample component. The subspace of matrices with positive determinant $\SH^+$ has two non-ample components. The image of $\Psi$ can be a single point, the whole space, or a vertical plane $A$, shown in green. In the latter case, the intersection of $A$ with the cone is a single line, cutting $A$ in two non-ample components.} \label{fig:cone}
\end{figure}

Suppose that $\lambda|_\xi$ is zero. Then the subspace $K$ defined before the proof is zero as well. In particular, $\Psi$ is constant and its image must be in $\SH^-$, since $F$ is a formal solution. It follows that $\SS^\hyp_{\lambda,F} = \Pr_{\lambda,F}$, so ampleness holds trivially. Situation (ii.) holds. We henceforth assume $\lambda|_\xi \neq 0$ and thus $K \neq 0$.

Suppose that the forms $dF|_{\xi \cap \ker(\lambda)}$ are both zero. This is equivalent to $L \subset K$. This means that both $dF_i|_\xi$ are proportional to $\lambda$. However, this readily implies that $dF_i \wedge dF_i = dF_1 \wedge dF_2 = 0$, contradicting the fact that $F$ was a formal solution.

Suppose that the forms $(dF_i|_{\xi \cap \ker(\lambda)})_{i=1,2}$ are linearly independent. This amounts to transversality of $L$ and $K$. It follows that $\Psi$ is surjective. Then, ampleness of $\SS^\hyp$ along $\Pr_{\lambda,F}$ is equivalent to the ampleness of the subspace $\SH^-$ of symmetric matrices with negative determinant (and thus, of mixed signature). This space is indeed ample, but not trivially. Situation (i.) holds.

Lastly, suppose that the $dF|_{\xi \cap \ker(\lambda)}$ are linearly dependent but not identically zero; i.e. $L \cap K$ is $1$-dimensional. Up to changing basis we may assume that $dF_1|_{\xi \cap \ker(\lambda)} = 0$; i.e. $dF_1$ spans the line $L \cap K$. In this case, $dF_2|_{\xi \cap \ker(\lambda)} \neq 0$. Then $dF_1 \wedge dF_1 = 0$, so the image of $\Psi$ is a $2$-dimensional plane $A$ through the origin, tangent to the cone. The restriction $\SH^- \cap A$ consists of two half-spaces, separated by the line $A \cap \SC$. Since this complement $A \cap \SC$ is linear and of codimension-$1$, it is not a thin singularity. Situation (iii.) holds.
\end{proof}

\subsubsection{Conclusion of the first avoidance step}

We now describe $\Avoid(\SS^\hyp)$ using Proposition \ref{prop:firstAvoidance}.
\begin{corollary} \label{cor:firstAvoidance}
$\Avoid(\SS^\hyp)$ consists of those pairs $(F,\Xi) \in \SS^\hyp \times_M \HConf(TM)$ such that 
\[ (F,\lambda) \notin \Sigma^{(1)} \quad \text{for every codirection $\lambda \in \Xi$}. \]
\end{corollary}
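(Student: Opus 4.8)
The plan is to unwind the definition of $\Avoid^1(\SS^\hyp)$ and match it against the trichotomy established in Proposition \ref{prop:firstAvoidance}. Recall that $\Avoid^0(\SS^\hyp) = \SS^\hyp \times_M \HConf(TM)$, and that $\Avoid^1(\SS^\hyp)$ is the complement, inside $\Avoid^0(\SS^\hyp)$, of the closure of the set of pairs $(F,\Xi)$ for which there exists some $\tau \in \Xi$ such that the component of $F$ in $\Avoid^0(\SS^\hyp)(\Xi)_{\tau,F} = \SS^\hyp_{\tau,F}$ is not ample. So the first step is purely bookkeeping: identify this ``bad set'' explicitly. By Proposition \ref{prop:firstAvoidance}, for a codirection $\lambda$ with basepoint $p$ and $F \in \SS^\hyp$ over $p$, the subspace $\SS^\hyp_{\lambda,F}$ fails to be ample precisely in case (iii.), i.e. when $(F,\lambda) \in \Sigma^{(1)}$ but $\lambda|_\xi \neq 0$; in cases (i.) and (ii.) it is ample (non-trivially and trivially, respectively). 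Hence the bad set is
\[ \mathcal{B} := \{(F,\Xi) \,\mid\, \text{there is $\lambda \in \Xi$ with $(F,\lambda)\in\Sigma^{(1)}$ and $\lambda|_\xi \neq 0$}\}, \]
where $\xi = \ker(j^0F)$, and $\Avoid^1(\SS^\hyp) = \Avoid^0(\SS^\hyp) \setminus \overline{\mathcal{B}}$.

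The second step is to compute the closure $\overline{\mathcal{B}}$ and show that removing it amounts exactly to imposing $(F,\lambda)\notin\Sigma^{(1)}$ for every $\lambda \in \Xi$. The containment $\overline{\mathcal{B}} \subset \{(F,\Xi) \,\mid\, (F,\lambda)\in\Sigma^{(1)} \text{ for some } \lambda\in\Xi\}$ follows from the closedness of $\Sigma^{(1)}$ (Lemma \ref{lem:algebraic1}, first item) together with continuity of $\Xi \mapsto$ its constituent hyperplanes; note we must be mildly careful because $\HConf$ is non-compact and configurations can lose cardinality in the limit, but a limit of configurations containing a $\lambda$ with $(F,\lambda)\in\Sigma^{(1)}$ still contains such a $\lambda$ unless that hyperplane escapes to infinity, which cannot happen within a fixed fibre $\NP T_p^* M$. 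For the reverse containment, suppose $(F,\lambda) \in \Sigma^{(1)}$ for some $\lambda \in \Xi$. If $\lambda|_\xi \neq 0$ then $(F,\Xi) \in \mathcal{B}$ directly. If $\lambda|_\xi = 0$ (case (ii.)), then we approximate: since $\Sigma^{(1)}(F)$ has positive codimension in $T_p^*M$ (Lemma \ref{lem:algebraic1}, third item) but is not contained in the single hyperplane $\{\mu \,\mid\, \mu|_\xi = 0\}$ — indeed $\Sigma^{(1)}(F)$ is an algebraic variety strictly larger than that hyperplane, as the condition also includes $\mu$ with $\mu|_\xi \neq 0$ making $dF|_{\xi\cap\ker\mu}$ dependent, and such $\mu$ exist because $\SS^\hyp$ is codimension-one, not thin, along those directions — one can perturb $\lambda$ within $\Sigma^{(1)}(F)$ to a nearby $\lambda'$ with $\lambda'|_\xi \neq 0$, and perturb $F$ correspondingly, exhibiting $(F,\Xi)$ as a limit of points of $\mathcal{B}$. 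This is the step I expect to require the most care: verifying that case-(ii.) pairs genuinely lie in $\overline{\mathcal{B}}$, which hinges on the geometry of $\Sigma^{(1)}(F)$ near the locus $\{\lambda|_\xi = 0\}$ and the fact that the ``degenerate forms'' stratum meeting $\ker\lambda$ transversally is non-empty.

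Finally, combining the two containments gives $\overline{\mathcal{B}} = \{(F,\Xi) \,\mid\, (F,\lambda)\in\Sigma^{(1)} \text{ for some } \lambda \in \Xi\}$, so that
\[ \Avoid(\SS^\hyp) = \Avoid^1(\SS^\hyp) = \{(F,\Xi) \in \SS^\hyp \times_M \HConf(TM) \,\mid\, (F,\lambda)\notin\Sigma^{(1)} \text{ for all } \lambda\in\Xi\}, \]
which is precisely the claimed description. (Strictly speaking one should also check that $\Avoid^1 = \Avoid^l$ for all $l \geq 1$, i.e. that the process stabilises after one step, but that is the content of the subsequent subsections rather than of this corollary, whose statement only concerns $\Avoid(\SS^\hyp) = \Avoid^1(\SS^\hyp)$.)
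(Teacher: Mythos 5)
Your overall route is the same as the paper's: use Proposition \ref{prop:firstAvoidance} to identify the non-ample locus (case (iii.)), observe that $\Avoid(\SS^\hyp)$ is the complement of its closure, and show that this closure is exactly the locus where some $\lambda \in \Xi$ satisfies $(F,\lambda) \in \Sigma^{(1)}$. The first containment (closure contained in the $\Sigma^{(1)}$-locus, via closedness of $\Sigma^{(1)}$ from Lemma \ref{lem:algebraic1}) is fine. The gap is in the step you yourself flag as the crux: showing that a pair $(F,\Xi)$ with some $\lambda \in \Xi$ in case (ii.) (i.e. $\lambda|_\xi = 0$) is a limit of case-(iii.) pairs. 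Your justification for the existence of codirections $\mu$ with $\mu|_\xi \neq 0$ and $dF_1,dF_2$ restricting to linearly dependent forms on $\xi \cap \ker\mu$ --- ``such $\mu$ exist because $\SS^\hyp$ is codimension-one, not thin, along those directions'' --- is not an argument: the codimension-one statement in Proposition \ref{prop:firstAvoidance}(iii.) describes the singularity \emph{inside} a principal subspace already known to be of type (iii.), and says nothing about whether type-(iii.) codirections exist for the given $F$. The correct reason is hyperbolicity: the pencil spanned by $dF_1|_\xi, dF_2|_\xi$ carries a Pfaffian of mixed signature, hence contains nonzero forms $\omega$ with $\omega \wedge \omega = 0$; on a $4$-dimensional space such forms are decomposable, $\omega = \mu \wedge \beta$, and any such $\mu$ (more precisely, any covector in the span of $\mu,\beta$) gives the required dependence. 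Note this is genuinely signature-dependent --- for elliptic $F$ no such $\mu$ exists --- so it cannot be waved through.

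Even granting existence, you still need such $\mu$ \emph{arbitrarily close to} the given $\lambda$ with $\lambda|_\xi = 0$; your appeal to ``the geometry of $\Sigma^{(1)}(F)$ near the locus $\{\lambda|_\xi=0\}$'' does not supply this. The paper's argument (which you should reconstruct) is: the case-(iii.) condition depends only on the restriction $\mu|_\xi$ and is invariant under scaling, so the bad restrictions form a non-empty cone in $\xi^*$ accumulating at $0$; since the restriction map $T_p^*M \to \xi^*$ is a submersion, every neighbourhood of $\lambda$ contains codirections $\nu$ with $\nu|_\xi$ a small nonzero point of that cone, i.e. $(F,\nu)$ of type (iii.). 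Replacing $\lambda$ by $\nu$ in $\Xi$ (keeping $F$ fixed --- your ``perturb $F$ correspondingly'' is unnecessary and muddies the argument, since the closure is computed with $F$ free to stay put) exhibits $(F,\Xi)$ as a limit of points of your set $\mathcal{B}$. With these two repairs the proof matches the paper's.
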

\begin{proof}
Write $\xi$ for the kernel of $j^0F$. Recall Situations (i.), (ii.) and (iii.) from Proposition \ref{prop:firstAvoidance}. We define $\Delta_3 \subset \SS^\hyp \times_M \HConf(TM)$ as the subspace of pairs $(F,\Xi)$ for which Situation (iii.) holds for $F$ and some $\lambda \in \Xi$. We define $\Delta_2 \subset \SS^\hyp \times_M \HConf(TM)$ as the subspace of those $(F,\Xi) \notin \Delta_3$ such that Situation (ii.) holds for some $\lambda \in \Xi$.

According to Proposition \ref{prop:firstAvoidance}, $\Delta_3$ consists exactly of those pairs $(F,\Xi)$ such that $\SS^\hyp_{\lambda,F}$ is not ample, for some $\lambda \in \Xi$. By definition, it follows that:
\[ \Avoid(\SS^\hyp) = \SS^\hyp \times_M \HConf(TM) \,\setminus\, \overline{\Delta_3}. \]
We claim that the closure $\overline{\Delta_3}$ is exactly $\Delta_2 \cup \Delta_3$.

We first observe that the closure is contained in the union. Indeed, the pairs $(F,\lambda)$ satisfying Situation (ii.) or (iii.) are precisely the elements of $\Sigma^{(1)}$. Lemma \ref{lem:algebraic1} states that this is a closed subset.

We then prove $\Delta_2 \subset \overline{\Delta_3}$. For fixed $F$, the set of $\nu \in T^*M$ such that Situation (iii.) holds for $(F,\nu)$ is non-empty, invariant under scalings of $\nu$, and depends only on the restriction $\nu|_\xi$. Take $\lambda \in \Xi$ such that $\lambda|_\xi = 0$. Then there is a neighbourhood of $\lambda$ in $T^*M$ that submerses onto a neighbourhood of the zero section in $\xi^*$. This implies that any neighbourhood of $\lambda$ contains codirections $\nu$ such that Situation (iii.) holds for $(F,\nu)$. This proves the claim and concludes the proof.
\end{proof}

\subsubsection{Ampleness fails for (4,6) elliptic distributions}

For completeness, we observe:
\begin{lemma} \label{lem:ellipticity}
Let $F \in \SS^\ellip$ and $\lambda \in T_p^*M$, both based at the same point. Write $\xi \subset T_pM$ for the $4$-plane defined by $j^0F$. Then:
\begin{itemize}
\item $\SS^\ellip_{\lambda,F}$ is trivially ample if and only if $\lambda|_\xi = 0$.
\item $\SS^\ellip_{\lambda,F}$ is not ample otherwise.
\end{itemize}
In particular, for fixed $F$, the set of $\lambda \in T_p^*M$ such that $\SS^\ellip_{\lambda,F}$ is not ample is open and dense.
\end{lemma}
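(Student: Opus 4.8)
The strategy is to run exactly the same local analysis as in the proof of Proposition \ref{prop:firstAvoidance}, but now tracking $\det \circ \Psi$ being \emph{positive} rather than negative. Recall from that proof that, for $F \in \SS^\ellip$ and a covector $\lambda$ based at $p$, with $\xi = \ker(j^0F)$, an element $\widetilde F \in \Pr_{\lambda,F}$ is described (after choosing a volume form on $\xi$ and identifying the symmetric $2$-by-$2$ matrices with $\R^3$) by the affine map $\Psi \colon \xi^* \oplus \xi^* \to \R^3$, $(\beta_1,\beta_2) \mapsto (g_{11},g_{22},g_{12})$, and that $\widetilde F \in \SS^\ellip_{\lambda,F}$ if and only if $\Psi(\beta_1,\beta_2) \in \SH^+$, i.e. $\det \circ \Psi > 0$. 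Since $F$ itself is a formal solution, $\Psi(0,0) \in \SH^+$, so $\SS^\ellip_{\lambda,F}$ is always non-empty and its component containing $F$ sits inside one of the two convex components of $\SH^+$.

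The case split is then identical to the one in Proposition \ref{prop:firstAvoidance}, governed by the image of $\Psi$, which depends only on the subspace $K := \{(\lambda \wedge \beta)|_\xi \mid \beta \in T_p^*M\}$ relative to the plane $L$ spanned by $dF|_\xi$. First I would treat $\lambda|_\xi = 0$: then $K = 0$, $\Psi$ is constant with image the single point $\Psi(0,0) \in \SH^+$, so $\SS^\ellip_{\lambda,F} = \Pr_{\lambda,F}$ and ampleness holds trivially. Next, if $\lambda|_\xi \neq 0$, the subcase where both $dF_i|_{\xi \cap \ker\lambda}$ vanish is impossible exactly as before (it would force $dF_1^2 = dF_2^2 = dF_1 \wedge dF_2 = 0$, contradicting $F \in \SS^\ellip$). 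If the two forms $(dF_i|_{\xi \cap \ker\lambda})_{i=1,2}$ are linearly independent, $\Psi$ is surjective, so $\SS^\ellip_{\lambda,F} \to \Pr_{\lambda,F}$ is ample iff $\SH^+ \subset \R^3$ is ample; but $\SH^+$ has two connected components, each the interior of a convex cone, hence not ample (nor is the relevant component). Finally, if they are linearly dependent but not both zero, $\image(\Psi)$ is a $2$-plane $A$ through the origin tangent to the cone $\SC$, and $\SH^+ \cap A$ is a single open convex sector, which is again not ample. In all cases with $\lambda|_\xi \neq 0$, $\SS^\ellip_{\lambda,F}$ fails to be ample, and it is never trivially ample since its image is a proper subset of $A$ or of $\R^3$ (the point and the whole space are the only trivially ample options, and neither occurs). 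This proves the first two bullet points.

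For the last assertion, fix $F \in \SS^\ellip$ over $p$; I want to show $\{\lambda \in T_p^*M \mid \SS^\ellip_{\lambda,F} \text{ not ample}\}$ is open and dense. By the dichotomy just proved, this set is precisely $\{\lambda \mid \lambda|_\xi \neq 0\}$, i.e. the complement in $T_p^*M$ of the annihilator $\xi^\perp$, which is a linear subspace of codimension $4$ (as $\dim\xi = 4$). Its complement is therefore open and dense, giving the claim. I would also record, as in Corollary \ref{cor:firstAvoidance} and Remark \ref{rem:ellipticFail}, that this forces the standard pre-template $\Avoid^\infty(\SS^\ellip)(\Xi)$ to be empty whenever $\Xi$ contains a principal basis: for any configuration $\Xi$ with a basis, some $\lambda \in \Xi$ has $\lambda|_\xi \neq 0$, so Lemma \ref{lem:emptyAvoid} applies (each $\SS^\ellip_{\lambda,F}$ is trivially ample or has all components non-ample, and every fibre of $\pi^1_0$ contains non-solutions), yielding Corollary \ref{cor:ellipticity}.

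The only mild subtlety — rather than a genuine obstacle — is being careful that "non-ample" is claimed for the component of $\SS^\ellip_{\lambda,F}$ containing $F$, which is what the definition of avoidance requires: since that component lies in one convex cone component of $\SH^+$ (when $\Psi$ is surjective) or in one convex sector of $\SH^+ \cap A$, its convex hull is a proper convex subset of the relevant affine space, never all of it. Everything else is a direct transcription of the computation already carried out for the hyperbolic case, with $\SH^-$ replaced by $\SH^+$ and "ample" replaced by "not ample".
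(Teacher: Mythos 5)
Your proposal follows the paper's own route: reuse the map $\Psi$ and the cone picture from the proof of Proposition \ref{prop:firstAvoidance}, and replace $\SH^-$ by $\SH^+$. The cases $\lambda|_\xi = 0$ (constant $\Psi$, trivial ampleness), both restrictions zero (impossible), and $\Psi$ surjective (not ample, since the components of $\SH^+$ are convex, hence so are the components of its preimage under the affine surjection $\Psi$) are handled exactly as in the paper, and your density argument is the paper's as well.

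There is, however, one incorrect claim in the remaining subcase, where $(dF_i|_{\xi \cap \ker\lambda})_{i=1,2}$ are linearly dependent but not both zero. You assert that $\SH^+ \cap A$ is a single open convex sector. It is in fact empty: the plane $A$ is tangent to the cone $\SC$ along a ruling (in suitable coordinates $A = \{x=0\}$, on which $\det = -z^2 \le 0$), so $A$ is disjoint from $\SH^+ = \{\det > 0\}$. Consequently this subcase simply cannot occur for $F \in \SS^\ellip$: a nonzero element of the span $L$ of $dF|_\xi$ would be of the form $\lambda \wedge \nu$ on $\xi$, hence isotropic for $p$, contradicting definiteness. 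This is exactly how the paper disposes of it (``the second case cannot happen, as the plane would be disjoint from $\SH^+$''). Because the subcase is vacuous, your slip does not invalidate the lemma --- every pair $(F,\lambda)$ with $\lambda|_\xi \neq 0$ actually falls into the surjective case, which you treat correctly --- but the statement about a nonempty sector should be replaced by the observation that ellipticity excludes this configuration altogether.
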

\begin{proof}
We reason as in the proof of Proposition \ref{prop:firstAvoidance}. Using the same notation as there, we have that the image of $\Psi$ is either a point, a plane through the origin tangent to the cone $\SC$, or the whole of $\R^3$. The first case corresponds to trivial ampleness and to $\lambda|_\xi = 0$. The second case cannot happen, as the plane would be disjoint from $\SH^+$. The last case, corresponding to $(F,\lambda) \notin \Sigma^{(1)}$, is not ample as $\SH^+$ consists of two convex components. Density follows from the fact that $\lambda|_\xi = 0$ cuts out a linear subspace.
\end{proof}

Reasoning as in Corollary \ref{cor:firstAvoidance} implies that:
\begin{corollary} \label{cor:ellipticity}
$\Avoid(\SS^\ellip)$ is empty.
\end{corollary}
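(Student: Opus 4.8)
\textbf{Proof plan for Corollary \ref{cor:ellipticity}.}

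The statement is that $\Avoid(\SS^\ellip) = \emptyset$, i.e. the first avoidance step already kills everything. I would derive this directly from Lemma \ref{lem:ellipticity} by verifying that the hypotheses of the emptiness criterion Lemma \ref{lem:emptyAvoid} are satisfied, and then observing that the conclusion of Lemma \ref{lem:emptyAvoid} is actually already attained at step $1$ of the removal process. Concretely, the two bullet hypotheses of Lemma \ref{lem:emptyAvoid} are: (a) each $\SS^\ellip_{\tau,z}$ is trivially ample or all its components are non-ample; (b) each fibre of $\pi^1_0$ contains an element not in $\SS^\ellip$.

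The first step is to check hypothesis (a). This is exactly the content of Lemma \ref{lem:ellipticity}: for $F \in \SS^\ellip$ and a codirection $\lambda$, either $\lambda|_\xi = 0$ and $\SS^\ellip_{\lambda,F} = \Pr_{\lambda,F}$ is trivially ample, or $\lambda|_\xi \neq 0$ and $\SS^\ellip_{\lambda,F}$ is not ample (its image under $\Psi$ is a plane tangent to the cone, disjoint from $\SH^+$, or the whole of $\R^3$, and $\SH^+$ has two convex, hence non-ample, components; in either subcase no component of $\SS^\ellip_{\lambda,F}$ is ample). Here I would be slightly careful to phrase Lemma \ref{lem:ellipticity}'s statement about ``$\SS^\ellip_{\lambda,F}$ is not ample'' as the stronger ``no component is ample'', which is what its proof actually shows since $\SH^+$ is a disjoint union of two convex sets. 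The second step is hypothesis (b): a generic first jet of a pair of $1$-forms is not formally elliptic — for instance one can take $dF_1|_\xi = 0$, which forces the Pfaffian quadratic form to be degenerate, hence certainly not of definite signature; since modifying along a principal subspace of order $1$ lets us reach such a jet from any starting jet, every fibre of $\pi^1_0$ meets the complement of $\SS^\ellip$.

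With (a) and (b) in hand, Lemma \ref{lem:emptyAvoid} gives $\Avoid^\infty(\SS^\ellip)(\Xi) = \emptyset$ for every $\Xi$ containing a principal basis. To get the sharper claim that $\Avoid(\SS^\ellip) = \Avoid^1(\SS^\ellip)$ is already empty for all relevant $\Xi$ (the Corollary as stated, referring to the one-step construction $\Avoid = \Avoid^1$), I would simply run the inductive claim inside the proof of Lemma \ref{lem:emptyAvoid} with $k = 1$: given $(F,\Xi)$ with $\Xi$ containing a principal basis, $F$ lies in some principal subspace $\Pr_{\tau, z_0}$ with $\tau \in \Xi$ and $z_0 \notin \SS^\ellip$ (using (b) and the fact that $\Xi$ spans the fibre of $\pi^1_0$), and then $\SS^\ellip_{\tau,z_0}$ has no ample component by (a), so $(F,\Xi)$ is removed already at the first step. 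This is the analogue of how Corollary \ref{cor:firstAvoidance} reads off $\Avoid(\SS^\hyp)$ from Proposition \ref{prop:firstAvoidance}, except that now the removed locus is everything. I expect no real obstacle here; the only mild subtlety is making sure that ``$\Xi$ includes a principal basis'' is indeed the hypothesis one needs (for smaller $\Xi$ the statement $\Avoid^\infty(\SR)(\Xi) = \emptyset$ is neither expected nor claimed, as remarked after Lemma \ref{lem:emptyAvoid}), and that the phrase ``$\Avoid(\SS^\ellip)$ is empty'' is understood in that same sense. I would state the Corollary and its proof accordingly, citing Lemma \ref{lem:ellipticity} for hypothesis (a), a one-line genericity remark for hypothesis (b), and Lemma \ref{lem:emptyAvoid} (or its proof) for the conclusion.
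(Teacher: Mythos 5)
Your check of hypothesis (a) via Lemma \ref{lem:ellipticity} is fine, but the step you use to upgrade the conclusion from $\Avoid^\infty$ to $\Avoid^1=\Avoid$ is a non sequitur. You claim that, since each fibre of $\pi^1_0$ meets the complement of $\SS^\ellip$ and $\Xi$ contains a principal basis, $F$ must lie in a principal subspace $\Pr_{\tau,F}$, $\tau\in\Xi$, containing a non-solution $z_0$. Hypothesis (b) plus spanning only produces a $\Xi$-principal path of some length $\leq\dim\Sym^1(V,W)$ from $F$ to a non-solution, not a path of length one; this is exactly why the proof of Lemma \ref{lem:emptyAvoid} is an induction on the length and why in general only $\Avoid^{l_0}$ is guaranteed to be empty (cf.\ the Proposition in Subsection \ref{ssec:noVassiliev}). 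Conditions (a) and (b) alone do not give one-step emptiness: already for the relation $\{dF\neq dx+dy\}$ in $J^1$ of functions on $\R^2$, with $\Xi=\{dx,dy\}$, the formal solution with vanishing linear part has both coordinate principal lines entirely contained in the relation, so it survives the first removal step even though (a) and (b) hold. What makes the one-step statement true for $\SS^\ellip$ is the stronger dichotomy of Lemma \ref{lem:ellipticity} itself: for \emph{every} $F\in\SS^\ellip$ and every $\lambda$ with $\lambda|_\xi\neq 0$, the set $\SS^\ellip_{\lambda,F}$ is non-ample (its two components are convex), i.e.\ the failure occurs along $F$ itself, with no need to pass through a non-solution $z_0$. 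Since a principal basis spans $T_p^*M$ and $\xi^\perp$ is a proper subspace, some $\lambda\in\Xi$ has $\lambda|_\xi\neq0$, and $(F,\Xi)$ is removed at the first step.

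The paper's proof is both shorter and stronger: it repeats the reasoning of Corollary \ref{cor:firstAvoidance}. By Lemma \ref{lem:ellipticity}, the set $\Delta_3$ of pairs $(F,\Xi)$ for which some $\lambda\in\Xi$ yields a non-ample $\SS^\ellip_{\lambda,F}$ is characterised by the open, dense condition that some $\lambda\in\Xi$ satisfies $\lambda|_\xi\neq0$; its complement (all $\lambda\in\Xi$ annihilate $\xi$) is cut out by a positive-codimension condition, so $\Delta_3$ is fibrewise dense and its closure, which is what the definition of $\Avoid^1$ removes, is everything. Hence $\Avoid(\SS^\ellip)(\Xi)=\emptyset$ for every non-empty configuration $\Xi$, not merely for those containing a principal basis, so your proposed weakening of the statement is unnecessary, and the detour through Lemma \ref{lem:emptyAvoid} and $\Avoid^\infty$ can be dropped altogether.
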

Which is exactly the same situation as in the contact case (Lemma \ref{lem:contact}). This leads us to conjecture that there is no full $h$-principle for elliptic (4,6) distributions.

\subsection{Second avoidance step}

We will prove now that $\Avoid(\SS^\hyp)$ is not ample along all principal subspaces, so further elements have to be removed. This is different from the example in Section \ref{sec:exactForms}, where a single thinning step was sufficient to produce a thinning template.

Applying standard avoidance would lead us to study $\Avoid^2(\SS^\hyp)$. It turns out that it is difficult to determine whether $\Avoid^2(\SS^\hyp)$ is an avoidance template. The reason is that we do not have a explicit description of the elements removed from $\Avoid(\SS^\hyp)$ to yield $\Avoid^2(\SS^\hyp)$.

Due to this, it is more fruitful to ignore $\Avoid^2(\SS^\hyp)$ altogether and instead construct an avoidance template $\SA \subset \Avoid(\SS^\hyp)$ with more transparent properties.

\subsubsection{The singularity of interest} \label{sssec:condition2}

We define a new singularity
\[ \Sigma^{(2)} \subset \SS^\hyp \times_M T^*M \times_M T^*M \]
as the subspace of pairs $(F,\lambda_1,\lambda_2)$ such that
\begin{equation}\label{eq:condition2}
j^0F_1 \wedge j^0 F_2 \wedge \lambda_1 \wedge \lambda_2 \wedge dF_1 = j^0F_1 \wedge j^0 F_2 \wedge \lambda_1 \wedge \lambda_2 \wedge dF_2 = 0.
\end{equation}
It is also convenient to denote $\Sigma^{(2)}(\lambda_1,\lambda_2) \subset \SS^\hyp$ for the subset of elements $F$ such that $(F,\lambda_1,\lambda_2) \in \Sigma^{(2)}$. Similarly we define $\Sigma^{(2)}(F)$.

We note again that the expressions in Equation \ref{eq:condition2} are algebraic on their entries. Furthermore, these expressions are non-trivial on the $\lambda_i$ as long as $j^0F_1 \wedge j^0F_2 \wedge dF \neq 0$. This proves:
\begin{lemma} \label{lem:algebraic2}
The following statements hold:
\begin{itemize}
\item $\Sigma^{(2)}$ is a closed subset of $\SS^\hyp \times_M T^*M \times_M T^*M$.
\item All the fibres $(\Sigma^{(2)}_p)_{p \in M}$ are isomorphic algebraic subvarieties.
\item Fix $F \in \SS^\hyp$ lying over $p$. The subspace $\Sigma^{(2)}(F)$ has positive codimension in $T_p^*M \oplus T_p^*M$.
\end{itemize}
\end{lemma}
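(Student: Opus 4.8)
The plan is to reproduce, almost verbatim, the three-step argument used for Lemma \ref{lem:algebraic1}. First I would observe that, in any local chart, the entries $j^0F_i$, $dF_i$, $\lambda_1$, $\lambda_2$ are polynomial functions of the coordinates on $\SS^\hyp \times_M T^*M \times_M T^*M$, and that the wedge products appearing in Equation \ref{eq:condition2} depend multilinearly, hence polynomially, on these entries; their simultaneous vanishing is therefore a (Zariski-)closed condition, which gives the first bullet. For the second bullet I would invoke the Diff-invariance of $\SS^\hyp$: at a point this amounts to $\GL$-invariance on $1$-jets, so every fibre $\SS^\hyp_p$ is identified with a fixed model fibre compatibly with a chart trivialisation of $T_p^*M$, and transporting Equation \ref{eq:condition2} through these identifications carries each $\Sigma^{(2)}_p$ to one and the same model algebraic subvariety.

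The only genuinely substantive point is the third bullet. Fixing $F \in \SS^\hyp$ over $p$ and writing $\xi = \ker(j^0F) \subset T_pM$, the set $\Sigma^{(2)}(F)$ is the common zero locus in $T_p^*M \oplus T_p^*M$ of the two polynomial maps
\[
(\lambda_1,\lambda_2) \longmapsto j^0F_1 \wedge j^0F_2 \wedge \lambda_1 \wedge \lambda_2 \wedge dF_i \in \wedge^6 T_p^*M, \qquad i = 1,2.
\]
It suffices to check that at least one of these is not identically zero. By Definition \ref{def:formal4distributions}, membership of $F$ in $\SS^\hyp$ forces the $2$-forms $dF_i|_\xi$ to be linearly independent, in particular $dF_1|_\xi \neq 0$; this is precisely the statement that the $4$-form $j^0F_1 \wedge j^0F_2 \wedge dF_1$ is non-zero in $\wedge^4 T_p^*M$ (the terms of $dF_1$ involving $j^0F_1$ or $j^0F_2$ are killed by the wedge, and what survives records exactly $dF_1|_\xi$). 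Since $\dim T_pM = 6$, I can then pick $\lambda_1,\lambda_2$ so that the decomposable form $\lambda_1 \wedge \lambda_2$ pairs non-trivially with this $4$-form, producing a non-zero element of the $1$-dimensional space $\wedge^6 T_p^*M$. Hence the $i=1$ polynomial is not identically zero, its zero locus is a proper Zariski-closed subset of $T_p^*M \oplus T_p^*M$, and $\Sigma^{(2)}(F)$, being contained in it, has positive codimension.

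I do not anticipate a serious obstacle: the argument is purely algebraic and the only input beyond the proof of Lemma \ref{lem:algebraic1} is the remark that $F \in \SS^\hyp$ implies $j^0F_1 \wedge j^0F_2 \wedge dF_1 \neq 0$. The single place to be mildly careful is not to conflate ``positive codimension'' with ``codimension at least $2$'' --- only the former is claimed and only the former is needed here, and it follows at once from the non-triviality of the defining polynomial. Everything else is bookkeeping parallel to the proof of Lemma \ref{lem:algebraic1}.
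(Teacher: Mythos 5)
Your proposal is correct and follows essentially the same route as the paper: closedness and the isomorphism of fibres come from the algebraic nature of Equation \ref{eq:condition2} together with Diff-invariance, and positive codimension of $\Sigma^{(2)}(F)$ comes from the non-triviality of the defining polynomials in $(\lambda_1,\lambda_2)$, which holds because $F \in \SS^\hyp$ forces $j^0F_1 \wedge j^0F_2 \wedge dF_i \neq 0$. Your explicit verification (using the perfect pairing $\wedge^4 T_p^*M \times \wedge^2 T_p^*M \to \wedge^6 T_p^*M$ and the fact that decomposable $2$-forms span) just spells out the one-line remark the paper makes, so there is nothing to correct.
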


Write $\xi \subset T_pM$ for the $4$-plane given as the kernel of $j^0F$. We note that $(F,\lambda_1,\lambda_2) \in \Sigma^{(2)}$ if and only if at least one of the following conditions holds:
\begin{itemize}
\item $(\lambda_1 \wedge \lambda_2)|_\xi$ is zero.
\item  Both $2$-forms $(dF_i)|_{\xi \cap \ker(\lambda_1) \cap \ker(\lambda_2)}$ are zero. Equivalently, both $(dF_i)|_{\xi}$ are proportional to $(\lambda_1 \wedge \lambda_2)|_\xi$.
\end{itemize}

\subsubsection{Main statement}

We now study the ampleness of $\Avoid(\SS^\hyp)$. This amounts to the following: Given a collection of codirections $\Xi$ and a codirection $\lambda \in \Xi$, we try to determine whether 
\[ \SS^\hyp_{\lambda,F} \setminus (\bigcup_{\nu \in \Xi} \Sigma^{(1)}(\nu)) \]
is an ample subset of $\Pr_{\lambda,F}$, for each formal solution $F$.
\begin{proposition} \label{prop:secondAvoidance}
Fix codirections $\lambda, \nu \in T_p^*M$ and a formal datum $F \in \SS^\hyp$, based also at $p$. Write $\xi \subset T_pM$ for the $4$-plane cut out by $j^0F$.

The following statements hold:
\begin{itemize}
\item Suppose $F \notin \Sigma^{(2)}(\lambda,\nu)$. Then $\Pr_{\lambda,F} \cap \Sigma^{(1)}(\nu)$ is a thin singularity.
\item Suppose $F \in \Sigma^{(2)}(\lambda,\nu)$ but $\lambda \wedge \nu|_\xi \neq 0$. Then $\Pr_{\lambda,F} \setminus \Sigma^{(1)}(\nu)$ is ample but its complement is of codimension $1$.
\end{itemize}
\end{proposition}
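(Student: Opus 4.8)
The plan is to run exactly the same computation as in the proof of Proposition \ref{prop:firstAvoidance}, but now track two codirections at once: the one $\lambda$ defining the principal subspace $\Pr_{\lambda,F}$ along which we are testing ampleness, and the one $\nu$ whose singularity $\Sigma^{(1)}(\nu)$ we are removing. So first I would fix the setup: let $\xi = \ker(j^0F)$, let $L = \langle dF_1|_\xi, dF_2|_\xi\rangle \subset \wedge^2\xi^*$ be the (mixed-signature) plane of $2$-forms, let $K_\lambda = \{(\lambda\wedge\beta)|_\xi : \beta \in T_p^*M\}$, and recall the affine map $\Psi: \xi^*\oplus\xi^* \to \R^3 = \Sym^2$ sending $(\beta_1,\beta_2)$ to the Gram matrix $(g_{11},g_{22},g_{12})$ of $p$ restricted to $\widetilde L = \langle dF_i|_\xi + (\lambda\wedge\beta_i)|_\xi\rangle$. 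An element $\widetilde F \in \Pr_{\lambda,F}$ is in $\SS^\hyp_{\lambda,F}$ iff $\Psi(\widetilde F) \in \SH^-$, so ampleness of $\SS^\hyp_{\lambda,F} \setminus \Sigma^{(1)}(\nu)$ is governed by the image of $\Psi$ and by the subset of $\Pr_{\lambda,F}$ cut out by $\Sigma^{(1)}(\nu)$.

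Next I would translate the two hypotheses into linear-algebra conditions and identify the locus $\Pr_{\lambda,F} \cap \Sigma^{(1)}(\nu)$. By the characterisation of $\Sigma^{(1)}$ recorded after Lemma \ref{lem:algebraic1}, $\widetilde F \in \Sigma^{(1)}(\nu)$ iff $\nu|_\xi = 0$ or the two $3$-forms $(\nu \wedge d\widetilde F_i)|_\xi$ are linearly dependent; writing $d\widetilde F_i|_\xi = dF_i|_\xi + (\lambda\wedge\beta_i)|_\xi$, the wedge with $\nu$ kills the part of $\beta_i$ that is ``$\nu$-old'' and only depends on $\beta_i$ modulo $\ker(\lambda\wedge\nu)$, so this is a determinantal (hence algebraic) condition in the $\beta_i$ variables, vanishing on a subvariety $Z \subset \Pr_{\lambda,F}$. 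The point of the hypothesis $F \notin \Sigma^{(2)}(\lambda,\nu)$ is, by the characterisation after Lemma \ref{lem:algebraic2}, precisely that $(\lambda\wedge\nu)|_\xi \ne 0$ \emph{and} at least one $dF_i|_{\xi\cap\ker(\lambda)\cap\ker(\nu)}$ is nonzero; I would show that under this genericity $Z$ is the zero set of a $2\times2$ determinant whose two rows vary independently in a nondegenerate way, hence $Z$ has codimension $\ge 2$, i.e. it is thin. For the first bullet I also need to know the ambient relation $\SS^\hyp_{\lambda,F}$ itself has thin (here: empty, since $\Psi$ is onto $\R^3$, so $\SS^\hyp_{\lambda,F}$ is $\SH^-$, whose complement is only codimension $1$ — so I must be careful) complement along $\Pr_{\lambda,F}$; this is exactly where I should invoke Proposition \ref{prop:firstAvoidance}(i), which tells me $F \notin \Sigma^{(1)}(\lambda)$ is what makes $\SS^\hyp_{\lambda,F}$ non-trivially ample, and combine: removing a thin set from an ample set keeps it ample, and the \emph{claim} in the first bullet is only that $\Pr_{\lambda,F}\cap\Sigma^{(1)}(\nu)$ is thin, not that the whole complement is — so I would be careful to state the conclusion as written (thinness of that intersection) and derive it from the determinantal description above, without needing to compare with the $\SH^-$ singularity at all.

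For the second bullet, the hypothesis $F \in \Sigma^{(2)}(\lambda,\nu)$ but $(\lambda\wedge\nu)|_\xi \ne 0$ forces, by the dichotomy after Lemma \ref{lem:algebraic2}, that both $dF_i|_\xi$ are proportional to $(\lambda\wedge\nu)|_\xi$, equivalently $L \subset K_{\lambda} + \R(\lambda\wedge\nu)|_\xi$ in a controlled way; I would unwind this to see that the $3$-forms $(\nu\wedge d\widetilde F_i)|_\xi$ become genuinely proportional on a full hyperplane of $\beta$-space rather than a codimension-$2$ subspace, so that $\Pr_{\lambda,F}\cap\Sigma^{(1)}(\nu)$ is now codimension $1$ — this is the ``$\widetilde L$ tangent to the cone'' phenomenon from the last paragraph of the proof of Proposition \ref{prop:firstAvoidance}, now applied to the $\nu$-singularity inside $\Pr_{\lambda,F}$. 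To see $\Pr_{\lambda,F}\setminus\Sigma^{(1)}(\nu)$ is still ample despite this codimension-$1$ complement, I would argue as in Example \ref{ex:symmetricMatrices}/Proposition \ref{prop:amplenessGL}: the complement is (the preimage of) the zero set of a single determinant, so its two components each have an ample side, or more directly the relevant slice of $\Psi$-image still meets both sheets so the surviving set is connected and convexly spans $\Pr_{\lambda,F}$; I should also re-check that $F \notin \Sigma^{(1)}(\lambda)$ here (we are inside $\Avoid(\SS^\hyp)$) so that $\SS^\hyp_{\lambda,F}$ is itself ample, and then ampleness of the intersection with a codimension-$1$-complement set follows.

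The main obstacle I anticipate is the bookkeeping in the second bullet: distinguishing when $\Pr_{\lambda,F}\cap\Sigma^{(1)}(\nu)$ is exactly codimension $1$ (as claimed) versus when it could degenerate to codimension $0$ (all of $\Pr_{\lambda,F}$) or become codimension $2$ again, and making sure the surviving region is genuinely ample and not merely ``non-empty with codimension-$1$ complement''. Concretely, I would need a clean normal form: choose a basis of $L$ adapted to $(\lambda\wedge\nu)|_\xi$ (legitimate by the intrinsic remark in the proof of Proposition \ref{prop:firstAvoidance} that only the plane $\widetilde L$, not the basis, matters), compute the $2\times2$ determinant cutting out $\Sigma^{(1)}(\nu)\cap\Pr_{\lambda,F}$ explicitly in these coordinates, and read off its rank. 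Everything else — closedness, algebraicity, the reduction to $\R^3$ — is routine and already available from Lemmas \ref{lem:algebraic1}, \ref{lem:algebraic2} and the machinery of the previous proposition.
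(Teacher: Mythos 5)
Your plan follows the paper's own route: you restrict the curvatures to the $3$-dimensional space $\xi \cap \ker(\nu)$ (equivalently, wedge with $\nu$), observe that along $\Pr_{\lambda,F}$ the singularity $\Sigma^{(1)}(\nu)$ becomes the locus where the pair of restricted $2$-forms is linearly dependent, use the dichotomy encoded by $\Sigma^{(2)}(\lambda,\nu)$ to decide the codimension of that locus, and obtain ampleness in the second bullet from the ampleness of $\GL(2)\subset \SM_{2\times 2}$ (Proposition \ref{prop:amplenessGL}); the paper does exactly this, phrasing the analysis as an instance of Lemma \ref{lem:exactFormsThinning}. You also correctly isolate that the first bullet asks only for thinness of $\Pr_{\lambda,F}\cap\Sigma^{(1)}(\nu)$, with no reference to $\SS^\hyp_{\lambda,F}$. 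The genuine problem is your justification of the codimension count in the first bullet: you claim that under $F\notin\Sigma^{(2)}(\lambda,\nu)$ the locus is ``the zero set of a $2\times 2$ determinant whose rows vary independently, hence of codimension $\geq 2$''. The vanishing of a single determinant cuts out codimension $1$, not $2$ -- that is precisely the situation of the \emph{second} bullet. The correct mechanism, which is the content of Lemma \ref{lem:exactFormsThinning}, is affine: the image of $\Pr_{\lambda,F}$ under restriction to $\xi\cap\ker(\nu)$ is a product $L_1\times L_2$ of $2$-planes parallel to the plane $L$ of multiples of $\lambda|_{\xi\cap\ker(\nu)}$ inside the $3$-dimensional space $\wedge^2(\xi\cap\ker(\nu))^*$; linear dependence of a pair of vectors there is a codimension-$2$ condition as soon as at least one $L_i$ misses the origin, i.e.\ at least one $dF_i|_{\xi\cap\ker(\lambda)\cap\ker(\nu)}\neq 0$, which (together with $(\lambda\wedge\nu)|_\xi\neq 0$) is exactly what $F\notin\Sigma^{(2)}(\lambda,\nu)$ provides; only when $L_1=L_2=L$ does the locus degenerate to a codimension-$1$ determinantal hypersurface. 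You must run this case analysis (or cite Lemma \ref{lem:exactFormsThinning} directly); your determinant heuristic, as written, does not yield the claimed codimension.

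A secondary issue is the closing remark of your second bullet: re-checking $F\notin\Sigma^{(1)}(\lambda)$ so that $\SS^\hyp_{\lambda,F}$ is ample and then asserting that ``ampleness of the intersection with a codimension-$1$-complement set follows'' is both unnecessary and invalid. The statement only concerns $\Pr_{\lambda,F}\setminus\Sigma^{(1)}(\nu)$, and intersecting two ample sets each with codimension-$1$ complement can destroy ampleness -- this is exactly the paper's Remark \ref{rem:thinSingularitiesAreGood} and Figure \ref{fig:cubicas}, and it is the very reason why $\Sigma^{(2)}$-configurations are excised from the template rather than being handled inside this Proposition. Drop that inference; the reduction to $\GL(2)$ via Proposition \ref{prop:amplenessGL} (applied to the fibres of the affine surjection onto $L\times L$) is all that the second bullet requires. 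Similarly, your rewriting of the $\Sigma^{(2)}$-condition as ``$L\subset K_\lambda+\R(\lambda\wedge\nu)|_\xi$'' is not the right translation -- the condition is that both $dF_i$ vanish on $\xi\cap\ker(\lambda)\cap\ker(\nu)$, i.e.\ both $dF_i|_{\xi\cap\ker(\nu)}$ lie in $L$ -- though this imprecision does not affect the endpoint you aim for.
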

\begin{proof}
In both situations we are assuming that the forms $\lambda|_\xi$ and $\nu|_\xi$ are linearly independent. This allows us to define the restriction map
\[ \Phi: \Pr_{\lambda,F} \longrightarrow \wedge^2 (\xi \cap \ker(\nu))^* \oplus \wedge^2 (\xi \cap \ker(\nu))^* \]
that sends $\widetilde F \in \Pr_{\lambda,F}$ to $d\widetilde F|_{\xi \cap \ker(\nu)}$. Recall that $\widetilde F \in \Sigma^{(1)}(\nu)$ if and only if the pair $d\widetilde F|_{\xi \cap \ker(\nu)}$ is linearly dependent. The upcoming argument follows the proof of Lemma \ref{lem:exactFormsThinning} (Section \ref{sec:exactForms}), since the linear dependence problems under consideration are exactly the same.

Write $L \subset \wedge^2 (\xi \cap \ker(\nu))^*$ for the subspace of $2$-forms proportional to $\lambda|_{\xi \cap \ker(\nu)}$. Due to the linear independence of $\lambda|_\xi$ and $\nu|_\xi$, $L$ is a $2$-dimensional plane. We write $L_i$ for the plane passing through $dF_i$ parallel to $L$. Then, the image of $\Pr_{\lambda,F}$ under $\Phi$ is the sum $L_1 \oplus L_2$, which is $4$-dimensional.

The condition $F \in \Sigma^{(2)}(\lambda,\nu)$ is equivalent to $\Phi(F) = 0$, which in turn is equivalent to $L_1=L_2=L$. Ampleness of $\Pr_{\lambda,F} \setminus \Sigma^{(1)}(\nu)$ is thus equivalent to ampleness of $\GL_2 \subset M_{2 \times 2}$, proving the second claim.

Similarly, $F \notin \Sigma^{(2)}(\lambda,\nu)$ means that $\Phi(F) \neq 0$. I.e. at least one $L_i$ is different from $L$ and therefore does not pass through the origin. We deduce the singularity $\Sigma^{(1)}(\nu)$ has codimension $2$.
\end{proof}

\begin{remark} \label{rem:thinSingularitiesAreGood}
Consider the following elementary facts:
\begin{itemize}
\item Removing a thin singularity from an ample set still yields an ample set.
\item The intersection of two ample sets need not be ample. For instance, the union of two codimension-1 singularities, both having ample complement individually, may separate the space into non-ample pieces. See Figure \ref{fig:cubicas}.
\end{itemize}
Our claim is that these statements largely determine the steps to be taken during avoidance.

Indeed, suppose in our current example that $\SS^\hyp_{\lambda,F}$ is ample, for some $\lambda$ and $F$. Then, the condition $F \notin \Sigma^{(2)}(\lambda,\nu)$ implies that $\SS^\hyp_{\lambda,F} \setminus \Sigma^{(1)}(\nu)$ is ample, according to the first fact. This may still be true even if $F \in \Sigma^{(2)}(\lambda,\nu)$, but the second fact tells us that it need not be. This is even more delicate when there are several codirections involved (which will always be the case in the construction of a template). Avoiding the uncertainty of the second situation effectively forces us to consider the singularity $\Sigma^{(2)}$, and thus prescribes what the second avoidance step must be. 

Our claim is that this type of analysis, which is algorithmic in nature, is not specific to $\SS^\hyp$. Indeed, it must guide the avoidance process of any given differential relation.
\end{remark}

\begin{figure}[ht]
		\includegraphics[scale=0.12]{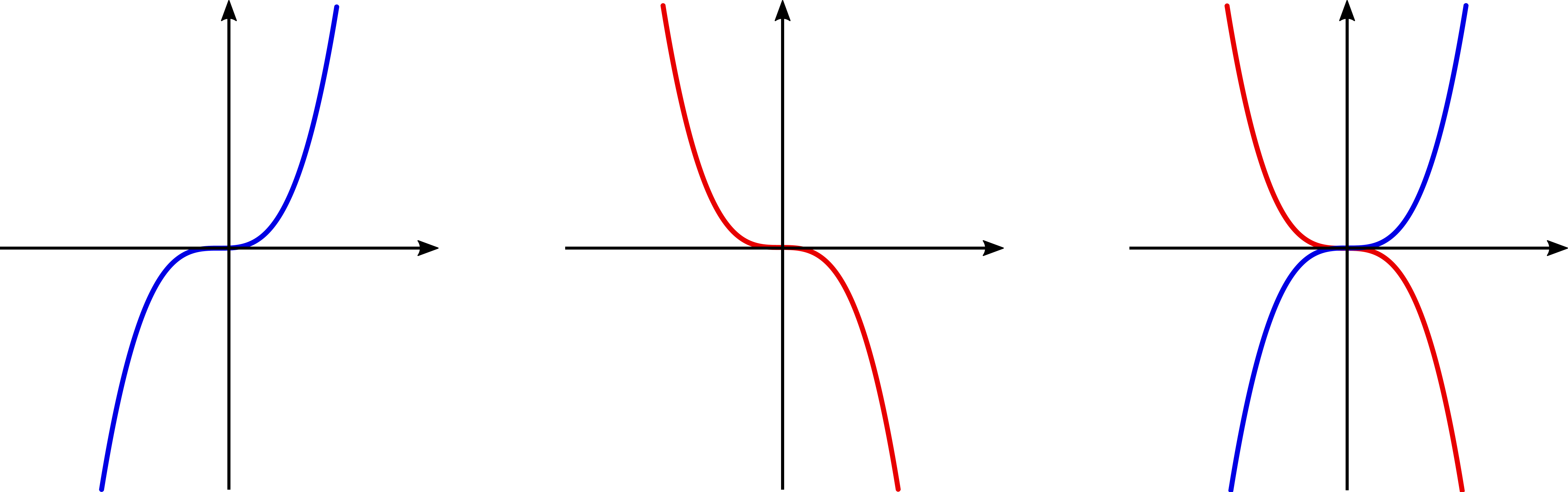}
		\centering
		\caption{On the left and middle, two cubics in $\R^2$. Their complements (as well as themselves) are ample subsets. On the right, we intersect the complements, yielding four components, none of which is ample. The intersection of the two cubics is a point, also not ample.} \label{fig:cubicas}
\end{figure}

\subsubsection{Conclusion of the second step}

Using Proposition \ref{prop:secondAvoidance} we now define:
\[ \SA := \{ (F,\Xi) \in \Avoid(\SS^\hyp) \,\mid\, (F,\lambda_1,\lambda_2) \notin \Sigma^{(2)} \text{ for all } \lambda_1 \neq \lambda_2 \in \Xi \}. \]
\begin{lemma} \label{lem:secondAvoidance}
$\SA$ is a pretemplate.
\end{lemma}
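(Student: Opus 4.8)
The plan is to verify that the set
\[
\SA = \{(F,\Xi) \in \Avoid(\SS^\hyp) \mid (F,\lambda_1,\lambda_2) \notin \Sigma^{(2)}\ \text{for all}\ \lambda_1 \neq \lambda_2 \in \Xi\}
\]
satisfies the two requirements in Definition \ref{def:avoidanceTemplate} of a pre-template: that $\SA$ is open in $\SS^\hyp \times_M \HConf(TM)$, and that it satisfies coherence Property (I), i.e. $\SA(\Xi) \subset \SA(\Xi')$ whenever $\Xi' \subset \Xi$. I would work through the lifted formulation of Lemma \ref{lem:liftingTemplate}, i.e. check that $\bar\SA \subset \SS^\hyp \times_M \sbarHConf(TM)$ is open, $\Sigma_*$-invariant, and satisfies $(\overline{\text{I}})$; this is the cleanest way to deal with the unordered configuration space.

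First, openness. We already know $\Avoid(\SS^\hyp)$ is open: by Corollary \ref{cor:firstAvoidance} it is the complement in $\SS^\hyp \times_M \HConf(TM)$ of the set of $(F,\Xi)$ with $(F,\lambda) \in \Sigma^{(1)}$ for some $\lambda \in \Xi$, and $\Sigma^{(1)}$ is closed by Lemma \ref{lem:algebraic1} (an algebraic, hence closed, condition cut out in a way that varies continuously with $\Xi$ since the configuration is a finite unordered tuple). Intersecting with $\SA$ removes, in addition, those $(F,\Xi)$ with $(F,\lambda_1,\lambda_2) \in \Sigma^{(2)}$ for some pair $\lambda_1 \neq \lambda_2$ in $\Xi$; by Lemma \ref{lem:algebraic2} the set $\Sigma^{(2)}$ is closed in $\SS^\hyp \times_M T^*M \times_M T^*M$. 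Passing to the lifted picture, $\bar\SA$ is the intersection of $\overline{\Avoid(\SS^\hyp)}$ (the lift, which is open) with the complement of the preimage of $\Sigma^{(2)}$ under the continuous evaluation maps $(F,[H_1,\dots,H_a]) \mapsto (F, H_i, H_j)$ — noting that, away from the diagonal (which is excluded in $\sbarHConf$), these maps are continuous and the relevant closed conditions assemble into a closed subset. Hence $\bar\SA$ is open, and it is manifestly $\Sigma_*$-invariant because the defining condition ``$(F,\lambda_i,\lambda_j) \notin \Sigma^{(2)}$ for all $i \neq j$'' is symmetric in the hyperplanes, and $\Sigma^{(2)}$ itself is symmetric in $\lambda_1,\lambda_2$ (the two equations in \eqref{eq:condition2} are symmetric under swapping $\lambda_1 \leftrightarrow \lambda_2$).

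Second, Property (I): if $\Xi' \subset \Xi$ and $F \in \SA(\Xi)$, we must show $F \in \SA(\Xi')$. This is immediate from the logical structure of the definition: the condition defining $\SA(\Xi)$ is a conjunction of clauses indexed by the codirections in $\Xi$ — namely $(F,\lambda) \notin \Sigma^{(1)}$ for each $\lambda \in \Xi$ (inherited from membership in $\Avoid(\SS^\hyp)$, via Corollary \ref{cor:firstAvoidance}) together with $(F,\lambda_1,\lambda_2) \notin \Sigma^{(2)}$ for each unordered pair $\{\lambda_1,\lambda_2\}$ of distinct elements of $\Xi$. Since $\Xi' \subset \Xi$, every codirection of $\Xi'$ and every pair of distinct codirections of $\Xi'$ is among those of $\Xi$, so the conjunction defining $\SA(\Xi)$ contains the one defining $\SA(\Xi')$; a fortiori $F \in \SA(\Xi')$. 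One should also record, for cleanliness, that $\SA$ is by construction a subset of $\Avoid(\SS^\hyp)$, which is itself a pre-template, so Property (I) for the $\Sigma^{(1)}$-clauses is already known and only the new $\Sigma^{(2)}$-clauses need the (trivial) monotonicity argument just given.

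I do not expect a genuine obstacle here: the statement is deliberately modest — it asserts only that $\SA$ is a pre-template, not a template — and both pre-template axioms follow from the closedness statements in Lemmas \ref{lem:algebraic1} and \ref{lem:algebraic2} plus the purely set-theoretic monotonicity of a conjunction under shrinking the index set. The one point requiring a little care is making the openness argument rigorous in the non-compact configuration space $\HConf(TM)$: one must be sure that ``$(F,\lambda_1,\lambda_2) \in \Sigma^{(2)}$ for some distinct pair in $\Xi$'' defines a \emph{closed} subset of $\SS^\hyp \times_M \HConf(TM)$ despite collisions being non-convergent — and this is exactly why the lifted formulation via $\sbarHConf(TM)$ (where the evaluation-to-pairs maps are globally continuous on the repetition-free locus) is the right bookkeeping device, as in Lemma \ref{lem:liftingTemplate}. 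The harder work — verifying Properties (II) and (III), i.e. that $\SA$ is in fact a \emph{template} — is postponed to the subsequent lemmas and uses Propositions \ref{prop:firstAvoidance} and \ref{prop:secondAvoidance}; it is not needed for the present claim.
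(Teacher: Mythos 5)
Your proof is correct and follows essentially the same route as the paper: the paper's own argument simply cites the closedness/algebraicity of $\Sigma^{(2)}$ from Lemma \ref{lem:algebraic2} and leaves the coherence and invariance checks implicit, which is exactly what you spell out (via Lemma \ref{lem:liftingTemplate}). Your additional care with the lifted formulation on $\sbarHConf(TM)$ and the $\Sigma_*$-invariance is a fuller version of the same argument, not a different one.
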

\begin{proof}
As we noted in Lemma \ref{lem:algebraic2}, the condition $(F,\lambda_1,\lambda_2) \in \Sigma^{(2)}$ is closed, smooth in its entries, and algebraic over each given point $p \in M$. Furthermore, for a given $F$ formal solution of $\SS^\hyp$, the condition is non-trivial on $\lambda_1$ and $\lambda_2$. This proves the claim. 
\end{proof}

\subsection{End of the proof}

The proof of Theorem \ref{thm:46} will be complete once we show that:
\begin{theorem} \label{thm:avoidanceAchieved}
$\SA$ is a template for $\SS^\hyp$.
\end{theorem}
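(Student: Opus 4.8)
The plan is to verify the three defining properties of a template (Definition \ref{def:avoidanceTemplate}) for $\SA$, namely openness, coherence (Property I), ampleness along the configuration (Property II), and fibrewise density (Property III). Openness and density are the easy parts: $\SA$ is obtained from the open set $\Avoid(\SS^\hyp)$ by removing, for each ordered pair of distinct codirections in $\Xi$, the closed algebraic condition $\Sigma^{(2)}$, so $\SA$ is open; and for fixed $F$ the conditions $(F,\lambda)\in\Sigma^{(1)}$ and $(F,\lambda_1,\lambda_2)\in\Sigma^{(2)}$ cut out proper algebraic subvarieties of the relevant fibres (Lemmas \ref{lem:algebraic1} and \ref{lem:algebraic2}), so $\SA(F)$ is dense in each $\HConf_m(T_p^*M)$. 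Coherence is immediate from the description of $\SA$ as the locus of $(F,\Xi)$ avoiding $\Sigma^{(1)}$ on all $\lambda\in\Xi$ and $\Sigma^{(2)}$ on all pairs from $\Xi$: removing hyperplanes from $\Xi$ only removes constraints, so $\SA(\Xi)\subset\SA(\Xi')$ whenever $\Xi'\subset\Xi$.

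First I would set up the reduction for Property II. Fix $\Xi=[\lambda_1,\dots,\lambda_a]$ and one distinguished $\lambda=\lambda_j\in\Xi$; we must show $\SA(\Xi)_{\lambda,F}$ is ample in $\Pr_{\lambda,F}$ for every $F\in\SA(\Xi)$. By the definition of $\SA$ and Corollary \ref{cor:firstAvoidance}, we have
\[ \SA(\Xi)_{\lambda,F} \;=\; \SS^\hyp_{\lambda,F} \,\setminus\, \Big( \bigcup_{i} \Sigma^{(1)}(\lambda_i) \cup \bigcup_{i<k} \Sigma^{(2)}(\lambda_i,\lambda_k)\Big) \cap \Pr_{\lambda,F}, \]
intersected with the further open condition that the full configuration avoids $\Sigma^{(1)},\Sigma^{(2)}$ — but the latter is an open condition that does not change whether the set is ample, so I would discard it and work with the displayed set. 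Since $F\in\SA(\Xi)$, in particular $(F,\lambda)\notin\Sigma^{(1)}$, so $\SS^\hyp_{\lambda,F}$ is \emph{non-trivially} ample (Proposition \ref{prop:firstAvoidance}(i)); its complement in $\Pr_{\lambda,F}$ is the codimension-$1$ set analysed there (the preimage of the cone $\SC$ under $\Psi$, which is surjective). The task is thus to show that removing the additional pieces $\Sigma^{(1)}(\lambda_i)\cap\Pr_{\lambda,F}$ and $\Sigma^{(2)}(\lambda_i,\lambda_k)\cap\Pr_{\lambda,F}$ (for $i,k\neq j$) keeps the set ample.

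The key step — and the main obstacle — is to control each $\Sigma^{(1)}(\nu)\cap\Pr_{\lambda,F}$ for $\nu=\lambda_i\in\Xi$, $\nu\neq\lambda$. Here Proposition \ref{prop:secondAvoidance} does the work: because $F\in\SA(\Xi)$ we have $F\notin\Sigma^{(2)}(\lambda,\nu)$ (if $\lambda\wedge\nu|_\xi\neq0$) and one checks that the degenerate sub-case ($\lambda\wedge\nu|_\xi=0$, equivalently $\nu|_\xi$ proportional to $\lambda|_\xi$) forces $\Pr_{\lambda,F}\cap\Sigma^{(1)}(\nu)$ to be either empty or thin by a direct linear-algebra argument mirroring Lemma \ref{lem:exactFormsThinning}. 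So in all cases $\Sigma^{(1)}(\nu)\cap\Pr_{\lambda,F}$ is a thin singularity. Likewise $\Sigma^{(2)}(\lambda_i,\lambda_k)\cap\Pr_{\lambda,F}$ for $i,k\neq j$ is carved out by the algebraic equations of Equation \ref{eq:condition2}, which, restricted to the affine plane-bundle $\Pr_{\lambda,F}$ and generically with respect to $F$, is again thin (one must check that the $\lambda_i,\lambda_k$ with $i,k\neq j$ impose conditions of codimension $\geq 2$ on the pair $(\beta_1,\beta_2)$ parametrising $\Pr_{\lambda,F}$; this is where I expect the bookkeeping to be delicate, and where one may need the precise form of $\Sigma^{(2)}$ rather than a soft argument). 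Granting that all the removed pieces are thin, we conclude by the elementary fact recalled in Remark \ref{rem:thinSingularitiesAreGood}: removing finitely many thin singularities from the ample set $\SS^\hyp_{\lambda,F}$ — whose complement is itself cut out inside the surjective-image picture of $\Psi$ — leaves an ample set. More carefully, one argues in the $\R^3$-model: $\Psi:\Pr_{\lambda,F}\to\R^3$ is an affine surjection, $\SS^\hyp_{\lambda,F}=\Psi^{-1}(\SH^-)$ with $\SH^-$ ample, and each extra removed piece pulls back from (or is) a thin subset of $\Pr_{\lambda,F}$; since $\Psi$ has affine fibres, the image of the union of thin pieces together with $\SH^-{}^c$ still has ample complement, hence so does its preimage. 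This establishes Property II, and with openness, coherence and density already in hand, $\SA$ is a template, completing the proof of Theorem \ref{thm:avoidanceAchieved} and hence of Theorem \ref{thm:46}.
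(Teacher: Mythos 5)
Your overall strategy coincides with the paper's: describe $\SA(\Xi)_{\lambda,F}$ as $\SS^\hyp_{\lambda,F}$ with the singularities $\Sigma^{(1)}(\nu)$ and $\Sigma^{(2)}(\nu_1,\nu_2)$ determined by $\Xi$ removed, note that $\SS^\hyp_{\lambda,F}$ is ample by Proposition \ref{prop:firstAvoidance} because $F\notin\Sigma^{(1)}(\lambda)$, that each $\Pr_{\lambda,F}\cap\Sigma^{(1)}(\nu)$ is thin by Proposition \ref{prop:secondAvoidance} because $F\notin\Sigma^{(2)}(\lambda,\nu)$, and conclude since removing thin singularities preserves ampleness; openness, Property (I) and Property (III) are handled essentially as in the paper. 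However, there is a genuine gap exactly at the step you flag but do not carry out: you never prove that $\Pr_{\lambda,F}\cap\Sigma^{(2)}(\nu_1,\nu_2)$ is thin. The paper isolates this as Proposition \ref{prop:thirdAvoidance}, and the relevant hypothesis is not genericity of $F$ (``generically with respect to $F$'' is not available and not the right input) but precisely $F\notin\Sigma^{(2)}(\nu_1,\nu_2)$, which holds because $(F,\Xi)\in\SA$. The actual argument is a short case analysis: $F\notin\Sigma^{(2)}(\nu_1,\nu_2)$ forces $\nu_1|_\xi,\nu_2|_\xi$ to be linearly independent and at least one $dF_i$ to be non-zero on $\xi\cap\ker(\nu_1)\cap\ker(\nu_2)$; if $\lambda|_\xi$ lies in the span of $\nu_1|_\xi,\nu_2|_\xi$ then every $\widetilde F\in\Pr_{\lambda,F}$ restricts to $dF$ on that $2$-plane and the intersection is empty, while otherwise Equation \ref{eq:condition2} imposes two independent affine codimension-$1$ constraints, one on each $\beta_i$, giving codimension $2$. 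Without this, your Property (II) argument is incomplete: the $\Sigma^{(2)}$-pieces you discard could a priori meet $\Pr_{\lambda,F}$ in codimension $1$ and destroy ampleness.

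Two smaller points. The ``degenerate sub-case'' $\lambda\wedge\nu|_\xi=0$ you treat separately is vacuous: $(\lambda\wedge\nu)|_\xi=0$ already places $(F,\lambda,\nu)$ in $\Sigma^{(2)}$, contradicting $(F,\Xi)\in\SA$, so Proposition \ref{prop:secondAvoidance} applies to every $\nu\in\Xi$ distinct from $\lambda$ with no extra argument. Also, your closing passage through the $\R^3$-model is not sound as written: the image under $\Psi$ of a thin subset of $\Pr_{\lambda,F}$ need not be thin, since the fibres of $\Psi$ are positive-dimensional, so one cannot push the removed pieces forward and argue in $\R^3$; the correct (and sufficient) argument is the elementary fact you already cited from Remark \ref{rem:thinSingularitiesAreGood}, applied directly inside $\Pr_{\lambda,F}$.
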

This statement requires the following auxiliary result:
\begin{proposition} \label{prop:thirdAvoidance}
Fix codirections $\lambda, \nu_1, \nu_2 \in T_p^*M$ and a formal datum $F \notin \Sigma^{(2)}(\nu_1, \nu_2)$, based also at $p$. Write $\xi \subset T_pM$ for the $4$-plane cut out by $j^0F$.

Then $\Pr_{\lambda,F} \cap \Sigma^{(2)}(\nu_1,\nu_2)$ is a thin singularity.
\end{proposition}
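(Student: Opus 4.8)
The plan is to show that $\Pr_{\lambda,F}\cap\Sigma^{(2)}(\nu_1,\nu_2)$ has codimension at least $2$ inside $\Pr_{\lambda,F}$ by an argument entirely parallel to Lemma \ref{lem:nonZeroThin} (the second-step analogue of the corresponding statement in the exact-forms example). First I would reduce to the generic case: if $\lambda|_\xi=0$ then $\Pr_{\lambda,F}\subset\Sigma^{(1)}$ and in fact $\widetilde F = F$ for all elements, so $\Pr_{\lambda,F}\cap\Sigma^{(2)}(\nu_1,\nu_2)$ is either empty or the single point $F$, which contradicts $F\notin\Sigma^{(2)}(\nu_1,\nu_2)$; hence it is empty and thin. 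So assume $\lambda|_\xi\ne 0$ and split further according to whether $(\nu_1\wedge\nu_2)|_\xi$ vanishes: if it does, then $\Sigma^{(2)}(\nu_1,\nu_2)$ is defined by the vanishing of $(dF_i)|_\xi$ restricted to a subspace of $\xi$ of dimension $\ge 3$, and one checks that this is cut out inside $\Pr_{\lambda,F}$ by linear equations of codimension $\ge 2$ on the $\beta_i$ (using $\lambda|_\xi\ne 0$), or is empty; either way it is thin, and moreover $F\notin\Sigma^{(2)}$ can only help. The main case is $\lambda|_\xi\ne 0$ and $(\nu_1\wedge\nu_2)|_\xi\ne 0$.

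In that main case I would set up the two restriction maps: the symbol map
\[ d:\ \Pr_{\lambda,F}\ \longrightarrow\ \wedge^2\xi^*\oplus\wedge^2\xi^*,\qquad \widetilde F\ \longmapsto\ (d\widetilde F_i|_\xi)_{i=1,2}, \]
whose image is $L_1\oplus L_2$, where $L_i$ is the plane through $dF_i|_\xi$ parallel to $K:=\{(\lambda\wedge\beta)|_\xi\mid\beta\in T_p^*M\}$ (a $3$-dimensional subspace of $\wedge^2\xi^*$ since $\lambda|_\xi\ne0$); and the further restriction to $\xi':=\xi\cap\ker(\nu_1)\cap\ker(\nu_2)$ (which has dimension $\ge 2$, with equality exactly when $(\nu_1\wedge\nu_2)|_\xi\ne 0$ and $\nu_i|_\xi$ are independent — the subcase $\nu_1|_\xi,\nu_2|_\xi$ dependent but $(\nu_1\wedge\nu_2)|_\xi$ nonzero cannot happen, so $\dim\xi'=2$). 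The condition $(F,\nu_1,\nu_2)\in\Sigma^{(2)}$ is exactly $d F_i|_{\xi'}=0$ for both $i$, i.e. that both $dF_i|_\xi$ lie in the plane $W:=\ker(\,\cdot|_{\wedge^2(\xi')^*})\subset\wedge^2\xi^*$ of $2$-forms that vanish on $\xi'$. So $\Pr_{\lambda,F}\cap\Sigma^{(2)}(\nu_1,\nu_2)$ is the preimage under $d$ of $(W\cap L_1)\oplus(W\cap L_2)$. Since $F\notin\Sigma^{(2)}(\nu_1,\nu_2)$, at least one $dF_i|_\xi$, say $dF_1|_\xi$, is not in $W$; then $L_1\not\subset W$, so $W\cap L_1$ has codimension $\ge1$ in $L_1$, giving codimension $\ge 1$ in that factor. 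The remaining codimension must be extracted from a short transversality computation showing $W$ is not too degenerate with respect to $K$ (equivalently $L_1,L_2$): the point is that $W$, $K$ and the $L_i$ are all linear subspaces of the $6$-dimensional $\wedge^2\xi^*$ whose dimensions and relative positions are pinned down by $\dim\xi'=2$ and $\lambda|_\xi\ne0$, forcing $W\cap L_i$ to have codimension $\ge1$ in $L_i$ whenever $L_i\not\subset W$, and to be such that the two constraints on $(L_1,L_2)$ are independent. Assembling these gives total codimension $\ge 2$ of $\Pr_{\lambda,F}\cap\Sigma^{(2)}(\nu_1,\nu_2)$ in $\Pr_{\lambda,F}$, i.e. thinness, after noting (as in Lemma \ref{lem:algebraic2}) that the locus is algebraic and hence stratified.

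The main obstacle I anticipate is bookkeeping the dimension count in the borderline configurations — precisely, ruling out that $W\cap L_1$ and $W\cap L_2$ both have codimension exactly $1$ but the two conditions "$dF_1|_\xi\in W$" and "$dF_2|_\xi\in W$" fail to be transverse on $L_1\oplus L_2$, which would drop the total codimension to $1$. This is handled by observing that $L_1\oplus L_2\subset\wedge^2\xi^*\oplus\wedge^2\xi^*$ is a product and $(W\cap L_1)\oplus(W\cap L_2)$ is literally a product subspace, so its codimension is the sum of the two individual codimensions with no interaction term; hence as soon as one factor contributes $\ge1$ and the locus is nonempty with the $L_i$ in general enough position (guaranteed by $F$ being a genuine formal hyperbolic datum, so $dF_1|_\xi\wedge dF_2|_\xi\ne0$ inside $\wedge^4\xi^*$), the second factor also contributes $\ge1$ unless it is empty — and if either factor is empty the whole intersection is empty, hence thin. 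The only genuinely delicate point, which I would spell out carefully, is the subcase where exactly one of $dF_1|_\xi,dF_2|_\xi$ lies in $W$ while the other does not: here one factor is a full plane $L_i$ (codimension $0$) and the other has codimension $\ge1$ in $L_{3-i}$; I would then argue, as in the third possibility of Lemma \ref{lem:exactFormsThinning}, that the genuine constraint is $\{dF_j=0\}$ inside the relevant plane, which is codimension $2$ there, restoring thinness. Once these cases are enumerated, the statement follows.
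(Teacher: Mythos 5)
Your overall skeleton (restrict everything to $\xi' := \xi\cap\ker(\nu_1)\cap\ker(\nu_2)$ and exploit that the constraints on $\beta_1$ and $\beta_2$ decouple) is the same as the paper's, but the execution goes wrong exactly where precision is needed. First, you never use the observation that drives the paper's proof: the hypothesis $F\notin\Sigma^{(2)}(\nu_1,\nu_2)$ already forces $\nu_1|_\xi,\nu_2|_\xi$ to be linearly independent, because $(\nu_1\wedge\nu_2)|_\xi=0$ implies $j^0F_1\wedge j^0F_2\wedge\nu_1\wedge\nu_2=0$, so both defining $6$-forms vanish identically. Hence your branch ``$(\nu_1\wedge\nu_2)|_\xi=0$'' is vacuous, and the analysis you give for it is wrong: in that branch membership in $\Sigma^{(2)}(\nu_1,\nu_2)$ depends only on the $0$-jet, so the \emph{whole} of $\Pr_{\lambda,F}$ would lie in the singularity -- it is not ``cut out by linear equations of codimension $\geq 2$''. (Similarly, when $\lambda|_\xi=0$ it is false that $\widetilde F=F$ for all $\widetilde F\in\Pr_{\lambda,F}$; what is true is that $d\widetilde F|_{\xi}=dF|_{\xi}$, and that is what yields emptiness there.)

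Second, in the main case the decisive dichotomy is not ``does $dF_i|_\xi$ lie in $W$'' but ``does $K$ lie in $W$'', i.e.\ whether $\lambda|_\xi\in\langle\nu_1|_\xi,\nu_2|_\xi\rangle$ (equivalently $\lambda|_{\xi'}=0$); note also that $W$ is a hyperplane of $\wedge^2\xi^*$, not a plane. If $K\not\subset W$, then each affine $3$-plane $L_i=dF_i|_\xi+K$ meets $W$ in codimension exactly $1$ regardless of where $dF_i|_\xi$ sits, and since the two conditions involve the independent variables $\beta_1,\beta_2$, the total codimension is $2$. If $K\subset W$, then $d\widetilde F_i|_{\xi'}=dF_i|_{\xi'}$ for every $\widetilde F\in\Pr_{\lambda,F}$ and the hypothesis $F\notin\Sigma^{(2)}(\nu_1,\nu_2)$ makes the intersection empty. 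Your ``delicate subcase'' conflates ``$dF_i|_\xi\in W$'' with ``$L_i\subset W$'' (the latter also requires $K\subset W$), and the fix you propose there -- that the genuine constraint is $\{dF_j=0\}$, of codimension $2$ -- is not a constraint occurring in $\Sigma^{(2)}$, which only asks for vanishing on the $2$-plane $\xi'$; in that configuration the correct conclusion is that $L_j\cap W=\emptyset$, so the whole intersection is empty. Once these two points are repaired, your argument collapses to the paper's short case distinction: $\lambda|_\xi\in\langle\nu_1|_\xi,\nu_2|_\xi\rangle$ gives the empty set, and otherwise the two conditions $dF_i|_{\xi'}+\lambda|_{\xi'}\wedge\beta_i|_{\xi'}=0$ are independent affine codimension-$1$ constraints, one in each $\beta_i$, hence a codimension-$2$ (thin) locus.
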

\begin{proof}
Recall that $\Sigma^{(2)}$ is defined by Equation \ref{eq:condition2}. The condition $F \notin \Sigma^{(2)}(\nu_1,\nu_2)$ implies that the forms $\nu_i|_\xi$ are linearly independent. As such, Equation \ref{eq:condition2} reads:
\[ (dF_1)|_{\xi \cap \ker(\nu_1) \cap \ker(\nu_2)} = (dF_2)|_{\xi \cap \ker(\nu_1) \cap \ker(\nu_2)} = 0. \]
Suppose first that $\lambda$ is in the span $\langle \nu_1|_\xi, \nu_2|_\xi\rangle$. Any given element $\widetilde F \in \Pr_{\lambda,F}$ then satisfies 
\[ d \widetilde F|_{\xi \cap \ker(\nu_1) \cap \ker(\nu_2)} = dF|_{\xi \cap \ker(\nu_1) \cap \ker(\nu_2)}, \]
proving that $\Pr_{\lambda,F} \cap \Sigma^{(2)}(\nu_1,\nu_2)$ must be empty.

Suppose instead that $\lambda$ is linearly independent from the other two forms. We can write out $d \widetilde F|_{\xi \cap \ker(\nu_1) \cap \ker(\nu_2)}$ as a pair of $1$-forms 
\[ (dF_i + \lambda \wedge \beta_i)|_{\xi \cap \ker(\nu_1) \cap \ker(\nu_2)}. \]
Equation \ref{eq:condition2} provides then two independent, affine, codimension-$1$ constraints, one for each $\beta_i$. The locus cut out by both has then codimension-$2$, proving thinness.
\end{proof}

\begin{proof}[Proof of Theorem \ref{thm:avoidanceAchieved}]
First we prove that $\SA$ satisfies Property (II) in the definition of template. Fix $(F,\Xi) \in \SA$ and $\lambda \in \Xi$. By construction, such a pair is characterised by the following properties:
\begin{itemize}
\item $F \in \SS^\hyp$.
\item $F \notin \Sigma^{(1)}(\nu)$ for all $\nu \in \Xi$.
\item $F \notin \Sigma^{(2)}(\nu_1, \nu_2)$ for all pairs $\nu_1 \neq \nu_2 \in \Xi$.
\end{itemize}

We need to show that $\SA(\Xi)$ is ample along $\Pr_{\lambda,F}$. An explicit description reads:
\[ \SA(\Xi)_{\lambda,F} = \SS^\hyp_{\lambda,F} \setminus \left[\bigcup_{\nu \in \Xi} \Sigma^{(1)}(\nu) \,\cup\, \bigcup_{\nu_1 \neq \nu_2 \in \Xi} \Sigma^{(2)}(\nu_1, \nu_2) \right]. \]
We then observe:
\begin{itemize}
\item Using condition $F \notin \Sigma^{(1)}(\lambda)$, we apply Proposition \ref{prop:firstAvoidance} to deduce that $\SS^\hyp_{\lambda,F}$ is ample.
\item Using condition $F \notin \Sigma^{(2)}(\lambda, \nu)$, we apply Proposition \ref{prop:secondAvoidance} and deduce that each $\Sigma^{(1)}(\nu)$ is thin.
\item According to Proposition \ref{prop:thirdAvoidance}, the singularities $\Sigma^{(2)}(\nu_1, \nu_2)$ are all thin.
\end{itemize}
The claim follows because ampleness is preserved upon removal of thin singularities.

Secondly, we prove that $\SA$ satisfies Property (III) in the definition of template. We need to show that, for each $F \in \SS^\hyp$ lying over $p \in M$, the subspace $\SA(F)$ is dense in $\HConf(T_pM)$. Its complement can be explicitly written down as
\[ \SA(F)^c \cap \HConf(T_pM) = \{ \Xi \in \HConf(T_pM) \,\mid\, F \in \Sigma^{(1)}(\nu_1) \cup \Sigma^{(2)}(\nu_1, \nu_2) \text{ for some } \nu_1 \neq \nu_2 \in \Xi \}. \]
The conditions $\nu_1 \in \Sigma^{(1)}(F)$ or $(\nu_1, \nu_2) \in \Sigma^{(2)}(F)$ are given, respectively, by Equations \ref{eq:condition1} and \ref{eq:condition2}. Both of them are non-trivial on their entries, since $F$ is in $\SS^\hyp$; see Lemmas \ref{lem:algebraic1} and \ref{lem:algebraic2}. We deduce that $\SA(F)^c \cap \HConf(T_pM)$ is a finite union of positive-codimension subvarieties, proving the claim.
\end{proof}

\begin{proof}[Proof of Theorem \ref{thm:46}]
According to Lemma \ref{lem:reductionToFormsHyp}, the $h$-principle for $\SR^\hyp$ reduces to the $h$-principle for $\SS^\hyp$. Theorem \ref{thm:main} says that the $h$-principle for $\SS^\hyp$ follows from the existence of an avoidance template. Theorem \ref{thm:avoidanceAchieved} yields such a template $\SA$.
\end{proof}

\end{document}